


\documentclass{amsart}

\usepackage[mathscr]{eucal}
\usepackage{amssymb}
\usepackage[usenames,dvipsnames]{color}
\usepackage[normalem]{ulem}
\usepackage{amsthm}
\usepackage{bbold}
\usepackage{enumitem}
\usepackage{array}
\usepackage{amsmath}
\usepackage{wasysym}


\usepackage{hyperref}

\hypersetup{colorlinks=true,citecolor=Brown,urlcolor=Brown,linkcolor=Brown}

\overfullrule=1mm


\numberwithin{equation}{section}
\setcounter{tocdepth}{1}


\usepackage[all]{xy}

\newdir{ >}{{}*!/-10pt/\dir{>}}

\hyphenation{right-equivariant}



\swapnumbers 

\newtheorem{Thm}[equation]{Theorem}
\newtheorem*{Thm*}{Theorem}
\newtheorem{Prop}[equation]{Proposition}
\newtheorem{Lem}[equation]{Lemma}
\newtheorem{Cor}[equation]{Corollary}

\newtheorem{Conj}[equation]{Conjecture}

\theoremstyle{remark}
\newtheorem{Def}[equation]{Definition}
\newtheorem{Ter}[equation]{Terminology}
\newtheorem{Not}[equation]{Notation}
\newtheorem{Exa}[equation]{Example}

\newtheorem{Rem}[equation]{Remark}



\newcommand{\nc}{\newcommand}
\nc{\dmo}{\DeclareMathOperator}

\nc{\Beren}[1]{{\color{Red}#1}}
\nc{\Paul}[1]{{\color{BlueViolet}#1}}

\dmo{\Id}{Id}
\dmo{\Loc}{Loc}
\dmo{\rmK}{\textrm{\rm K}}
\dmo{\Spc}{Spc}
\dmo{\thick}{thick}
\nc{\thickt}[1]{\thick_\otimes(#1)}
\dmo{\cone}{cone}
\dmo{\End}{End}
\dmo{\rmH}{H}
\dmo{\Hom}{Hom}
\dmo{\id}{id}
\dmo{\incl}{incl}
\dmo{\Img}{Im}
\dmo{\Ker}{Ker}
\dmo{\Ind}{ind}
\dmo{\CoInd}{coind}
\dmo{\Res}{res}
\dmo{\Infl}{infl}
\dmo{\triv}{triv}
\dmo{\Tel}{Tel} 
\dmo{\Mod}{Mod}%
\dmo{\opname}{op}
\dmo{\SH}{SH}
\dmo{\smallb}{b}
\dmo{\Spec}{Spec}
\dmo{\supp}{supp}
\nc{\SHc}{{\SH^c}}
\nc{\SHp}{{\SH_{(p)}}}
\nc{\SHcp}{{\SH^c_{(p)}}}
\nc{\SHG}{\SH(G)}
\nc{\SHGp}{\SH(G)_{(p)}}
\nc{\SHGc}{\SHG^c}
\nc{\SHGcp}{\SHG^c_{(p)}}
\nc{\quadtext}[1]{\quad\textrm{#1}\quad}
\nc{\qquadtext}[1]{\qquad\textrm{#1}\qquad}
\nc{\adj}{\dashv}
\nc{\adjto}{\rightleftarrows}
\nc{\bbN}{\mathbb{N}}
\nc{\bbQ}{\mathbb{Q}}
\nc{\bbZ}{\mathbb{Z}}
\nc{\cat}[1]{\mathscr{#1}}
\nc{\ie}{{\sl i.e.}, }
\nc{\into}{\mathop{\rightarrowtail}}
\nc{\inv}{^{-1}}
\nc{\isoto}{\mathop{\overset{\sim}\to}}
\nc{\isotoo}{\mathop{\overset{\sim}\too}}
\nc{\onto}{\mathop{\twoheadrightarrow}}
\nc{\too}{\mathop{\longrightarrow}\limits}
\nc{\mapstoo}{\longmapsto}
\nc{\adh}[1]{\overline{#1}}
\nc{\adhpt}[1]{\adh{\{#1\}}}
\nc{\aka}{{a.\,k.\,a.}\ }
\nc{\calF}{\mathcal{F}}
\nc{\eg}{{\sl e.\,g.}}
\nc{\Homcat}[1]{\Hom_{\cat #1}}
\nc{\hook}{\hookrightarrow}
\nc{\ideal}[1]{\langle #1\rangle}
\nc{\ihom}{{\underline{\hom}}}
\nc{\Mid}{\,\big|\,}
\nc{\MMod}{\,\text{-}\Mod}%
\nc{\op}{^{\opname}}
\nc{\oto}[1]{\overset{#1}\to}
\nc{\otoo}[1]{\overset{#1}{\,\too\,}}
\nc{\sminus}{\!\smallsetminus\!}
\nc{\poplus}[1]{^{\oplus #1}}%
\nc{\potimes}[1]{^{\otimes #1}}
\nc{\sbull}{{\scriptscriptstyle\bullet}}
\nc{\SET}[2]{\big\{\,#1\Mid#2\,\big\}}
\nc{\SpcK}{\Spc(\cat K)}
\nc{\then}{\Rightarrow}
\nc{\unit}{\mathbb{1}}
\nc{\xra}{\xrightarrow}
\nc{\phigeom}[1]{\widetilde{\Phi}^{#1}}
\nc{\phigeomb}[1]{\Phi^{#1}}
\dmo{\Oname}{O}
\dmo{\proper}{proper}
\dmo{\lenormal}{\unlhd}
\dmo{\lnormal}{\lhd}
\nc{\normal}{\trianglelefteq}
\nc{\Op}{\Oname^p}
\nc{\Oq}{\Oname^q}

\newcounter{enum-resume-hack}

\begin{document}


\title[The spectrum of the equivariant stable homotopy category]{The spectrum of the equivariant stable homotopy category of a finite group}
\author{Paul Balmer}
\author{Beren Sanders}
\date{August 1, 2016}

\address{Paul Balmer, Mathematics Department, UCLA, Los Angeles, CA 90095-1555, USA}
\email{balmer@math.ucla.edu}
\urladdr{http://www.math.ucla.edu/$\sim$balmer}

\address{Beren Sanders, Department of Mathematical Sciences, University of Copenhagen, Universitetsparken 5, 2100 Copenhagen {\O}, Denmark}
\email{sanders@math.ku.dk}
\urladdr{http://beren.sites.ku.dk}

\begin{abstract}
We study the spectrum of prime ideals in the tensor-triangulated category of compact equivariant spectra over a finite group. We completely describe this spectrum \emph{as a set} for all finite groups.  We also make significant progress in determining its topology and obtain a complete answer for groups of square-free order. For general finite groups, we describe the topology up to an unresolved indeterminacy, which we reduce to the case of \mbox{$p$-groups.} We then translate the remaining unresolved question into a new chromatic blue-shift phenomenon for Tate cohomology. Finally, we draw conclusions on the classification of thick tensor ideals.
\end{abstract}

\subjclass{}
\keywords{}

\thanks{First-named author supported by NSF grant~DMS-1303073.}
\thanks{Second-named author supported by the Danish National Research Foundation through the Centre for Symmetry and Deformation (DNRF92).}

\maketitle

\tableofcontents

\section{Introduction}
\medskip

Let $G$ be a finite group and let $\SHG$ denote the $G$-equivariant stable homotopy category, with respect to a complete $G$-universe. It is a tensor-triangulated category. We want to determine the \mbox{tensor-triangular} spectrum $\Spc(\SHGc)$ of its subcategory of compact objects~$\SHGc$.

As we explain below, this question is the equivariant analogue of the celebrated chromatic filtration in classical stable homotopy theory, due to Devinatz-Hopkins-Smith~\cite{DevinatzHopkinsSmith88,HopkinsSmith98}. Important work on this topic has been done by Strickland~\cite{Strickland12up}; see also Joachimi~\cite[Chap.\,3]{Joachimi15pp}. However, the question remained open even for the group of order two. We return to a discussion of earlier literature at the end of the Introduction, after stating our results.

\smallbreak

For a tensor-triangulated category~$\cat K$, like the above $\SHGc$, the topological space $\Spc(\cat K):=\SET{\cat P\subsetneq \cat K}{\cat P\textrm{ is prime}}$ consists of all proper \emph{tt-ideals} $\cat P$ (thick triangulated $\otimes$-ideal subcategories of~$\cat K$) which are \emph{prime} (\mbox{$x\otimes y \in \cat P$} implies $x\in \cat P$ or $y\in\cat P$); the \emph{support} of an object $x \in \cat K$ is the closed subset $\supp(x):=\SET{\cat P\in\Spc(\cat K)}{x \notin\cat P}$ and these closed subsets form a closed basis for the topology of $\Spc(\cat K)$. Conceptually, this spectrum is the universal space which carries a decent notion of support for objects of~$\cat K$. (See~\cite{Balmer05a,BalmerICM} for more details.)

Computing the topological space $\Spc(\cat K)$ is essentially equivalent to classifying all tt-ideals of~$\cat K$, and the latter may be thought of as classifying the objects of $\cat K$ up to the naturally available tensor-triangulated structure.  More precisely, a classification of tt-ideals describes when two objects can be built from each other by taking suspensions, direct sums, direct summands, cofiber sequences, and tensor products.  It is a major development of the last twenty years that such classification theorems can be obtained for highly non-trivial categories arising in stable homotopy theory, algebraic geometry, and modular representation theory \cite{HopkinsSmith98,Thomason97,BensonCarlsonRickard97,FriedlanderPevtsova07}.

Up until now, such classification theorems were obtained before the determination of the corresponding tensor-triangular spectrum. The category~$\cat K=\SHGc$ is a first case where the computation of~$\SpcK$ can be achieved beforehand, and the classification of tt-ideals deduced as a corollary.  Indeed, the fact that we attack the spectrum directly via methods of tensor-triangular geometry is one of the novel aspects of our approach.

The subject of classifying tt-ideals was born in non-equivariant stable homotopy theory. In that case, the $\otimes$-unit (the sphere spectrum) generates the category, hence every thick subcategory is automatically a $\otimes$-ideal. It then follows from the Thick Subcategory Theorem of Hopkins and Smith~\cite{HopkinsSmith98} that the corresponding $\Spc(\SHc)$ is the space depicted at the top of the following picture (see~\cite[\S 9]{Balmer10b}):
\begin{equation}
\label{eq:Spc(SH)}%
\vcenter{\xymatrix@C=.8em @R=.4em{
&&\cat C_{2,\infty} \ar@{-}[d]
&\cat C_{3,\infty} \ar@{-}[d]
&& \kern-2em{\cdots}
&\cat C_{p,\infty} \ar@{-}[d]
& {\cdots}
\\
\Spc(\SHc)= \ar[ddddddd]_-{\displaystyle\rho_\SHc}
&&{\vdots} \ar@{-}[d]
& {\vdots} \ar@{-}[d]
&&& {\vdots} \ar@{-}[d]
\\
&&\cat C_{2,n+1} \ar@{-}[d]
& \cat C_{3,n+1} \ar@{-}[d]
&& \kern-2em{\cdots}
& \cat C_{p,n+1} \ar@{-}[d]
& {\cdots}
\\
&&\cat C_{2,n} \ar@{-}[d]
& \cat C_{3,n} \ar@{-}[d]
&& \kern-2em{\cdots}
& \cat C_{p,n} \ar@{-}[d]
& {\cdots}
\\
&&{\vdots} \ar@{-}[d]
& {\vdots} \ar@{-}[d]
&&& {\vdots} \ar@{-}[d]
\\
&&\cat C_{2,2} \ar@{-}[rrd]
& \cat C_{3,2} \ar@{-}[rd]
&& \kern-2em{\cdots}
& \cat C_{p,2} \ar@{-}[lld]
& {\cdots}
\\
&&&& \cat C_{0,1}
\\\\
\Spec(\bbZ)=
&&2\bbZ \ar@{-}[rrd]
& 3\bbZ \ar@{-}[rd]
&& \kern-2em{\cdots}
& p\bbZ \ar@{-}[lld]
& {\cdots}
\\
&&&& (0)}}
\end{equation}
Here $\cat C_{p,n}$ is the kernel in~$\SHc$ of the $p$-local $(n-1)$-th Morava $K$-theory (composed with localization $\SHc\to \SHcp$ at~$p$).  In particular, $\cat C_{p,1}=\SH^{c,\textrm{tor}}=:\cat C_{0,1}$ is the subcategory of torsion finite spectra, independently of~$p$, while $\cat C_{p,\infty}=\cap_{n\geq 1}\cat C_{p,n}=\Ker(\SHc\to \SH^c_{(p)})$ is the subcategory of $p$-acyclic finite spectra. The closure of each point consists of the points above it in the picture (related by an upwards line), so that each $\cat C_{p,\infty}$ is a closed point, while $\cat C_{0,1}$ is a dense point. A general closed subset is the closure of finitely many points.

For an arbitrary $\otimes$-triangulated category~$\cat K$, there is a so-called \emph{comparison map}
\begin{equation}
\label{eq:comp}%
\begin{array}{ccc}
\rho_{\cat K}:\quad\Spc(\cat K) & \too & \Spec(\End_{\cat K}(\unit))
\\[.5em]
\qquad \cat P & \mapstoo & \SET{f:\unit\to \unit}{\cone(f)\notin\cat P}
\end{array}
\end{equation}
from the tensor-triangular spectrum of~$\cat K$ to the Zariski spectrum of the endomorphism ring of the $\otimes$-unit object $\unit$. For $\cat K = \SHc$, we have $\End_{\cat K}(\unit) = \mathbb Z$ and $\rho_{\SHc}$ is the projection $\cat C_{p,*}\mapsto p\bbZ$ displayed in picture~\eqref{eq:Spc(SH)}. This viewpoint highlights how the spectrum of the non-equivariant stable homotopy category is a chromatic refinement of $\Spec(\mathbb Z)$: The Morava $K$-theories provide an interpolation between $\rmH\mathbb Q$ and~$\rmH\mathbb{F}_p$, or in other words, between the prime $(0)$ and the prime~$p\bbZ$.

For our tensor-triangulated category $\cat K = \SHGc$, the ring $\End_{\cat K}(\unit)$ is isomorphic to the Burnside ring~$A(G)$, whose Zariski spectrum was determined by Dress~\cite{Dress69}, and the comparison map $\rho_{\SHGc} : \Spc(\SHGc) \to \Spec(A(G))$ will similarly exhibit $\Spc(\SHGc)$ as a kind of ``chromatic refinement'' of~$\Spec(A(G))$.

We review Dress's result in Section~\ref{se:burnside}.
In short, $\Spec(A(G))$ is covered by copies of $\Spec(\bbZ)$ coming via the
``$H$-fixed points'' ring homomorphisms
\begin{eqnarray*}
\kern5em	f^H:A(G) &\too& A(1)=\bbZ \\
	\left[ X \right] &\mapstoo& | X^H | \qquad(\textrm{for all finite $G$-sets }X)
\end{eqnarray*}
for each subgroup~$H \le G$ (up to conjugacy). These copies of~$\Spec(\bbZ)$ overlap at some primes~$p$ dividing the order of~$G$; namely, for two subgroups $H,K\le G$, the primes $\mathfrak{p}(H,p):=(f^H)\inv(p\bbZ)$ and $\mathfrak{p}(K,p)=(f^K)\inv(p\bbZ)$ coincide in~$\Spec(A(G))$ when $\Op(H)$ and $\Op(K)$ are conjugate in~$G$; here $\Op(H)$ denotes the smallest normal subgroup of~$H$ of index a power of~$p$ (see details in Definition~\ref{def:p-perfect}).

For every subgroup~$H\le G$, the above $H$-fixed-points homomorphism $f^H:A(G)=\End_{\SHGc}(\unit) \to \End_{\SHc}(\unit)=A(1)$ is precisely the map induced on endomorphism rings by the \emph{geometric $H$-fixed points} functor
\[
\phigeomb{H} :
\SHGc \to \SHc
\]
recalled in Section~\ref{se:background}, Part~\ref{it:phi^N} below.  Moreover, since $\phigeomb{H}$ is a tensor-triangulated functor it provides a means for pulling back the non-equivariant primes~$\cat C_{p,n}$ to obtain ``$G$-equivariant primes", \ie primes in~$\SHGc$, as follows:
\[
\cat P(H,p,n) :=
(\phigeomb{H})^{-1}(\cat C_{p,n})\quad \in\,\Spc(\SHGc).
\]
Our first main result states that every prime of~$\SHGc$ is of this form, and that the overlap is minimal.
\begin{Thm*}[Theorems~\ref{thm:the-set} and~\ref{thm:prime-uniqueness}]
All $G$-equivariant primes are obtained by pulling back non-equivariant primes via geometric fixed point functors with respect to the various subgroups~$H\le G$. Moreover, there is no redundancy, in the sense that the primes $\cat P(H,p,n)$ and $\cat P(H',p',n')$ coincide only if the subgroups $H$ and $H'$ are conjugate in~$G$ and the chromatic primes~$\cat C_{p,n}$ and~$\cat C_{p',n'}$ coincide in~$\SHc$.
\end{Thm*}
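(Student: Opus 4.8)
The plan is to prove the two assertions separately: \emph{surjectivity} (Theorem~\ref{thm:the-set}), that every prime of $\SHGc$ equals some $\cat P(H,p,n)=(\phigeomb{H})\inv(\cat C_{p,n})$, and \emph{no redundancy} (Theorem~\ref{thm:prime-uniqueness}), that $\cat P(H,p,n)=\cat P(H',p',n')$ forces $H\sim_G H'$ and $\cat C_{p,n}=\cat C_{p',n'}$. Equivalently, surjectivity says the map $\coprod_{H\le G}\Spc(\SHc)\to\Spc(\SHGc)$ assembled from the continuous maps $\Spc(\phigeomb{H})$ is onto, and the second claim describes its fibres.

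For surjectivity I would induct on $|G|$, the case $G=1$ being trivial since $\phigeomb{1}=\Id$. The geometric fixed-point functor $\phigeomb{G}\colon\SHG\to\SH$ is a finite (smashing) localization, with kernel on compact objects the thick $\otimes$-ideal $\cat K_G:=\thickt{G/H_+\Mid H\lneq G}$; hence $\phigeomb{G}$ induces, up to idempotent completion, an equivalence $\SHGc/\cat K_G\isoto\SHc$, and since $\Spc$ is insensitive to idempotent completion, $\{\cat P\Mid\cat P\supseteq\cat K_G\}$ is exactly the image of $\Spc(\phigeomb{G})$. On the other hand, for a proper subgroup $H\lneq G$ the restriction functor $\Res^G_H\colon\SHGc\to\SH(H)^c$ is an \'etale extension, so by tensor-triangular descent the image of $\Spc(\Res^G_H)$ is $\supp(G/H_+)=\{\cat P\Mid G/H_+\notin\cat P\}$. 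Now take any $\cat P\in\Spc(\SHGc)$. If $\cat P\supseteq\cat K_G$ then $\cat P=(\phigeomb{G})\inv(\cat Q)$ for a prime $\cat Q$ of $\SHc$, and by the Thick Subcategory Theorem~\cite{HopkinsSmith98} $\cat Q=\cat C_{p,n}$ for some $p$ and some $1\le n\le\infty$, so $\cat P=\cat P(G,p,n)$. If instead $\cat P\not\supseteq\cat K_G$, then $G/H_+\notin\cat P$ for some proper $H$, hence $\cat P=(\Res^G_H)\inv(\cat P')$ for a prime $\cat P'\subsetneq\SH(H)^c$; by induction $\cat P'=(\phigeomb{K})\inv(\cat C_{p,n})$ for some $K\le H$, and since $\phigeomb{K}\circ\Res^G_H\cong\phigeomb{K}$ we get $\cat P=(\phigeomb{K})\inv(\cat C_{p,n})=\cat P(K,p,n)$. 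This proves surjectivity.

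For no redundancy, suppose $\cat P(H,p,n)=\cat P(H',p',n')$. Note that the comparison map to $\Spec(A(G))$ and Dress's computation (Section~\ref{se:burnside}) alone do not suffice: they recover only the prime $p$ and the conjugacy class of $\Op(H)$, so in particular they cannot tell apart distinct $p$-subgroups. Instead I would test against the permutation spectra $G/K_+$. Since $\phigeomb{H}(G/K_+)=\Sigma^\infty_+\bigl((G/K)^H\bigr)$ is a finite wedge of spheres, and a finite wedge of spheres lies in a proper tt-ideal of $\SHc$ precisely when it is the zero object, we obtain
\[
G/K_+\in\cat P(H,p,n)\iff (G/K)^H=\varnothing\iff H\text{ is not subconjugate to }K,
\]
independently of $p$ and $n$. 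Applying the hypothesis to $K=H$ and to $K=H'$ shows $H$ and $H'$ are mutually subconjugate in $G$, hence $H\sim_G H'$. We may then assume $H=H'$, so that $(\phigeomb{H})\inv(\cat C_{p,n})=(\phigeomb{H})\inv(\cat C_{p',n'})$. It remains to see that $\Spc(\phigeomb{H})$ is injective; but the inflation functor $\triv\colon\SHc\to\SHGc$ (trivial $G$-action) satisfies $\phigeomb{H}\circ\triv\cong\Id_{\SHc}$ — restricting a trivial action to $H$ is again trivial, and geometric fixed points of an inflated spectrum recovers it — so $\Spc(\triv)$ is a retraction of $\Spc(\phigeomb{H})$. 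Applying $\Spc(\triv)$ to the displayed equality of primes yields $\cat C_{p,n}=\cat C_{p',n'}$.

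The main difficulty is the surjectivity step, and within it the two structural inputs that must be established first: that $\phigeomb{G}$ is a finite localization whose acyclics are compactly generated by $\{G/H_+\}_{H\lneq G}$ (isotropy separation), and that $\Res^G_H$ is an \'etale extension whose spectrum realizes $\supp(G/H_+)$ (tensor-triangular descent). A secondary subtlety is that the relevant Bousfield idempotents $E\mathcal{P}_+$ and $\widetilde{E\mathcal{P}}$ are \emph{not} compact, so the recollement underlying the dichotomy above must be set up on $\SHGc$ through the thick $\otimes$-ideal $\cat K_G$ rather than by splitting idempotents in $\SHG$. By contrast, once one hits upon $\{G/K_+\}$ as the right family of test objects, the no-redundancy claim is essentially formal.
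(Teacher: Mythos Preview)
Your proposal is correct and follows essentially the same approach as the paper. The surjectivity argument is the paper's inductive dichotomy (Theorem~\ref{thm:the-set}): primes either contain the thick $\otimes$-ideal generated by $\{G/H_+\}_{H\lneq G}$ and hence lie in the image of $\Spc(\phigeomb{G})$, or else miss some $G/H_+$ and hence lie in $\supp(G/H_+)=\Img(\Spc(\Res^G_H))$ where induction applies; your uniqueness argument likewise matches the paper's, testing membership of $G/K_+$ to force $H\sim_G H'$ (Proposition~\ref{prop:G/K^H} and Corollary~\ref{cor:G/K^H}) and then applying $\Spc(\triv)$ to recover $\cat C_{p,n}=\cat C_{p',n'}$ (Corollary~\ref{cor:project-H}).
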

We thus obtain a complete description of $\Spc(\SHGc)$ as a set, for every finite group~$G$.  Interestingly, this already shows that the height one collisions observed by Dress in the Zariski spectrum of the Burnside ring do not occur in the tensor-triangular spectrum of~$\SHGc$. Thus, $\Spc(\SHGc)$ is not only a chromatic refinement of $\Spec(A(G))$ but it also remembers some group-theoretic information which was lost in~$\Spec(A(G))$. Another interesting consequence of these results is an equivariant version of the Nilpotence Theorem (Theorem~\ref{thm:nilpotence-thm}). The latter was obtained by Strickland~\cite{Strickland12up} by different methods.

There are two main ingredients in the proofs of Theorems~\ref{thm:the-set} and~\ref{thm:prime-uniqueness}. The first is the exploitation of the fact that the geometric fixed point functor $\phigeomb{G} : \SH(G) \to \SH$ is a finite Bousfield localization. Although this property already appears in~\cite{LewisMaySteinbergerMcClure86}, it does not seem to have been given the attention it deserves in the literature. We will make a systematic use of finite localizations on the category of~$G$-equivariant spectra --- both in the equivariant and chromatic directions.  However, we will need to go beyond such ``classical" localizations, towards the tensor-triangular analogue of the \'etale topology. In this vein, the second crucial ingredient is a result of~\cite{BalmerDellAmbrogioSanders15} which states that the restriction functor $\Res^G_H : \SH(G) \to \SH(H)$ is a separable extension (\aka a finite \'{e}tale extension). We remind the reader in~\ref{it:A-Mod} below. This fact enables us to apply results from~\cite{Balmer13ppb} on the tensor-triangular geometry of separable extensions.  These techniques give us complete control of the geometric fixed point functors $\phigeomb{H}$ at the level of tensor-triangular spectra, as each $\phigeomb{H}$ can be understood as a finite \'{e}tale extension followed by a finite localization.

With this knowledge of the underlying set of~$\Spc(\SHGc)$, we focus on its topology from Section~\ref{se:pre-top} onwards. This question reduces (cf.~Proposition~\ref{prop:finite-union-irred}) to understanding the inclusions between equivariant primes: $\cat P(K,q,n) \subset \cat P(H,p,m)$. For primes corresponding to the same conjugacy class ($K\sim_G H$), these inclusions precisely match the inclusions in the original non-equivariant case, \ie the inclusions displayed in diagram~\eqref{eq:Spc(SH)}. On the other hand, the comparison map $\rho_{\SHGc}:\Spc(\SHGc)\to \Spec(A(G))$ greatly restricts the possible inclusions among primes associated to different conjugacy classes of subgroups. Ultimately, the determination of the topology reduces to the question of understanding possible inclusions $\cat P(K,p,n) \subset \cat P(H,p,m)$ for the same prime~$p$ dividing the order of~$G$ and for ``$p$-subnormal subgroups'' $K \le H$ in the sense of~\ref{ter:subnormal}.

We first attack this question for $G=C_p$, where we are able to give a complete answer by utilizing results of Hovey-Sadofsky~\cite{HoveySadofsky96} and Kuhn~\cite{Kuhn04} on blue-shift phenomena in Tate cohomology. It turns out that there is a shift by one: $\cat P(1,p,n) \subset \cat P(C_p,p,m)$ iff $n \ge m+1$. Here is a complete picture of the spectrum in this case, together with the comparison map to the Zariski spectrum of~$A(C_p)$:
\begin{equation}
\label{eq:Spc(C_p)}%
\xy
{\ar@{->}_{\rho_{\SH(C_p)^c}} (-40,12.5)*{};(-40,-32.5)*{}};
(-40,15)*{\Spc(\SH(C_p)^c) = };
(-40,-35)*{\Spec(A(C_p)) = };
{\ar@{-} (15,-15)*{};(20,0)*{}};
{\ar@{-} (15,-15)*{};(22.5,0)*{}};
{\ar@{-} (15,-15)*{};(25,0)*{}};
{\ar@{-} (15,-15)*{};(27.5,0)*{}};
{\ar@{-} (40,-15)*{};(45,0)*{}};
{\ar@{-} (40,-15)*{};(47.5,0)*{}};
{\ar@{-} (40,-15)*{};(50,0)*{}};
{\ar@{-} (40,-15)*{};(52.5,0)*{}};
{\ar@{-} (15,-15)*{};(-7,0)*{}};
{\ar@{-} (40,-15)*{};(-7,0)*{}};
{\ar@{-} (40,-15)*{};(-2,0)*{}};
{\ar@{-} (-2,0)*{};(-7,5)*{}};
{\ar@{-} (-2,5)*{};(-7,10)*{}};
{\ar@{-} (-2,10)*{};(-7,15)*{}};
{\ar@{-} (-2,15)*{};(-6.5,19.5)*{}};
{\ar@{-} (-2,25)*{};(-7,25)*{}};
  (-15,25)*{\scriptstyle \cat P(1,p,\infty)};
  (-15,0)*{\scriptstyle \cat P(1,p,2)};
  (-15,5)*{\scriptstyle \cat P(1,p,3)};
  (-15,10)*{\scriptstyle \cat P(1,p,4)};
  (-15,15)*{\scriptstyle \cat P(1,p,5)};
{\ar@{-} (-7,0)*{};(-7,5)*{}};
{\ar@{-} (-7,5)*{};(-7,10)*{}};
{\ar@{-} (-7,10)*{};(-7,15)*{}};
{\ar@{-} (-7,15)*{};(-7,19.5)*{}};
 (-7,0)*{\color{green}\bullet};
 (-7,0)*{\circ};
  (-7,5)*{\color{green}\bullet};
  (-7,5)*{\circ};
  (-7,10)*{\color{green}\bullet};
  (-7,10)*{\circ};
  (-7,15)*{\color{green}\bullet};
  (-7,15)*{\circ};
  (-7,23)*{\vdots};
  (-7,25)*{\color{green}\bullet};
  (-7,25)*{\circ};
  (7,25)*{\scriptstyle \cat P(C_p,p,\infty)};
  (6,0.75)*{\scriptstyle \cat P(C_p,p,2)};
  (6,5)*{\scriptstyle \cat P(C_p,p,3)};
  (6,10)*{\scriptstyle \cat P(C_p,p,4)};
  (6,15)*{\scriptstyle \cat P(C_p,p,5)};
{\ar@{-} (-2,0)*{};(-2,5)*{}};
{\ar@{-} (-2,5)*{};(-2,10)*{}};
{\ar@{-} (-2,10)*{};(-2,15)*{}};
{\ar@{-} (-2,15)*{};(-2,19.5)*{}};
 (-2,0)*{\color{red}\bullet};
 (-2,0)*{\circ};
  (-2,5)*{\color{red}\bullet};
  (-2,5)*{\circ};
  (-2,10)*{\color{red}\bullet};
  (-2,10)*{\circ};
  (-2,15)*{\color{red}\bullet};
  (-2,15)*{\circ};
  (-2,23)*{\vdots};
  (-2,25)*{\color{red}\bullet};
  (-2,25)*{\circ};
 (25,30)*{\scriptstyle \cat P(1,q,n)\ \ldots};
 (25,27.5)*{\scriptscriptstyle (q\neq p,\, n\ge2)};
{\ar@{-} (20,0)*{};(20,5)*{}};
{\ar@{-} (20,5)*{};(20,10)*{}};
{\ar@{-} (20,10)*{};(20,15)*{}};
{\ar@{-} (20,15)*{};(20,19.5)*{}};
 (20,0)*{\color{green}\bullet};
 (20,0)*{\circ};
  (20,5)*{\color{green}\bullet};
  (20,5)*{\circ};
  (20,10)*{\color{green}\bullet};
  (20,10)*{\circ};
  (20,15)*{\color{green}\bullet};
  (20,15)*{\circ};
  (20,23)*{\vdots};
  (20,25)*{\color{green}\bullet};
  (20,25)*{\circ};
{\ar@{-} (22.5,0)*{};(22.5,5)*{}};
{\ar@{-} (22.5,5)*{};(22.5,10)*{}};
{\ar@{-} (22.5,10)*{};(22.5,15)*{}};
{\ar@{-} (22.5,15)*{};(22.5,19.5)*{}};
 (22.5,0)*{\color{green}\bullet};
 (22.5,0)*{\circ};
  (22.5,5)*{\color{green}\bullet};
  (22.5,5)*{\circ};
  (22.5,10)*{\color{green}\bullet};
  (22.5,10)*{\circ};
  (22.5,15)*{\color{green}\bullet};
  (22.5,15)*{\circ};
  (22.5,23)*{\vdots};
  (22.5,25)*{\color{green}\bullet};
  (22.5,25)*{\circ};
{\ar@{-} (25,0)*{};(25,5)*{}};
{\ar@{-} (25,5)*{};(25,10)*{}};
{\ar@{-} (25,10)*{};(25,15)*{}};
{\ar@{-} (25,15)*{};(25,19.5)*{}};
     (25,0)*{\color{green}\bullet};
	 (25,0)*{\circ};
  (25,5)*{\color{green}\bullet};
  (25,5)*{\circ};
  (25,10)*{\color{green}\bullet};
  (25,10)*{\circ};
  (25,15)*{\color{green}\bullet};
  (25,15)*{\circ};
  (25,23)*{\vdots};
  (25,25)*{\color{green}\bullet};
  (25,25)*{\circ};
{\ar@{-} (27.5,0)*{};(27.5,5)*{}};
{\ar@{-} (27.5,5)*{};(27.5,10)*{}};
{\ar@{-} (27.5,10)*{};(27.5,15)*{}};
{\ar@{-} (27.5,15)*{};(27.5,19.5)*{}};
(27.5,0)*{\color{green}\bullet};
(27.5,0)*{\circ};
  (27.5,5)*{\color{green}\bullet};
  (27.5,5)*{\circ};
  (27.5,10)*{\color{green}\bullet};
  (27.5,10)*{\circ};
  (27.5,15)*{\color{green}\bullet};
  (27.5,15)*{\circ};
  (27.5,23)*{\vdots};
  (27.5,25)*{\color{green}\bullet};
  (27.5,25)*{\circ};
  (31.25,2.5)*{\hdots};
  (31.25,7.5)*{\hdots};
  (31.25,12.5)*{\hdots};
  (31.25,17.5)*{\hdots};
  (31.25,22.5)*{\hdots};
 (50,30)*{\scriptstyle \cat P(C_p,q,n)\ \ldots};
 (50,27.5)*{\scriptscriptstyle (q\neq p,\,n\ge2)};
{\ar@{-} (45,0)*{};(45,5)*{}};
{\ar@{-} (45,5)*{};(45,10)*{}};
{\ar@{-} (45,10)*{};(45,15)*{}};
{\ar@{-} (45,15)*{};(45,19.5)*{}};
 (45,0)*{\color{red}\bullet};
 (45,0)*{\circ};
  (45,5)*{\color{red}\bullet};
  (45,5)*{\circ};
  (45,10)*{\color{red}\bullet};
  (45,10)*{\circ};
  (45,15)*{\color{red}\bullet};
  (45,15)*{\circ};
  (45,23)*{\vdots};
  (45,25)*{\color{red}\bullet};
  (45,25)*{\circ};
{\ar@{-} (47.5,0)*{};(47.5,5)*{}};
{\ar@{-} (47.5,5)*{};(47.5,10)*{}};
{\ar@{-} (47.5,10)*{};(47.5,15)*{}};
{\ar@{-} (47.5,15)*{};(47.5,19.5)*{}};
(47.5,0)*{\color{red}\bullet};
(47.5,0)*{\circ};
  (47.5,5)*{\color{red}\bullet};
  (47.5,5)*{\circ};
  (47.5,10)*{\color{red}\bullet};
  (47.5,10)*{\circ};
  (47.5,15)*{\color{red}\bullet};
  (47.5,15)*{\circ};
  (47.5,23)*{\vdots};
  (47.5,25)*{\color{red}\bullet};
  (47.5,25)*{\circ};
{\ar@{-} (50,0)*{};(50,5)*{}};
{\ar@{-} (50,5)*{};(50,10)*{}};
{\ar@{-} (50,10)*{};(50,15)*{}};
{\ar@{-} (50,15)*{};(50,19.5)*{}};
  (50,0)*{\color{red}\bullet};
  (50,0)*{\circ};
  (50,5)*{\color{red}\bullet};
  (50,5)*{\circ};
  (50,10)*{\color{red}\bullet};
  (50,10)*{\circ};
  (50,15)*{\color{red}\bullet};
  (50,15)*{\circ};
  (50,23)*{\vdots};
  (50,25)*{\color{red}\bullet};
  (50,25)*{\circ};
{\ar@{-} (52.5,0)*{};(52.5,5)*{}};
{\ar@{-} (52.5,5)*{};(52.5,10)*{}};
{\ar@{-} (52.5,10)*{};(52.5,15)*{}};
{\ar@{-} (52.5,15)*{};(52.5,19.5)*{}};
  (52.5,0)*{\color{red}\bullet};
  (52.5,0)*{\circ};
  (52.5,5)*{\color{red}\bullet};
  (52.5,5)*{\circ};
  (52.5,10)*{\color{red}\bullet};
  (52.5,10)*{\circ};
  (52.5,15)*{\color{red}\bullet};
  (52.5,15)*{\circ};
  (52.5,23)*{\vdots};
  (52.5,25)*{\color{red}\bullet};
  (52.5,25)*{\circ};
  (56.25,2.5)*{\hdots};
  (56.25,7.5)*{\hdots};
  (56.25,12.5)*{\hdots};
  (56.25,17.5)*{\hdots};
  (56.25,22.5)*{\hdots};
  (15,-15)*{\color{green}\bullet};
  (15,-15)*{\circ};
  (40,-15)*{\color{red}\bullet};
  (40,-15)*{\circ};
  (15,-17.5)*{\scriptstyle \cat P(1,0,1)};
  (40,-17.5)*{\scriptstyle \cat P(C_p,0,1)};
  (-4.5,-8)*{\underbrace{\ }};
{\ar@{|->} (-4.5,-12)*{};(-4.5,-17)*{}};
%
%
{\ar@{-} (15,-40)*{};(20,-30)*{}};
{\ar@{-} (15,-40)*{};(22.5,-30)*{}};
{\ar@{-} (15,-40)*{};(25,-30)*{}};
{\ar@{-} (15,-40)*{};(27.5,-30)*{}};
{\ar@{-} (40,-40)*{};(45,-30)*{}};
{\ar@{-} (40,-40)*{};(47.5,-30)*{}};
{\ar@{-} (40,-40)*{};(50,-30)*{}};
{\ar@{-} (40,-40)*{};(52.5,-30)*{}};
{\ar@{-} (15,-40)*{};(-4.5,-30)*{}};
{\ar@{-} (40,-40)*{};(-4.5,-30)*{}};
(-4.5,-27)*{\scriptstyle \mathfrak{p}(1,p)=\mathfrak{p}(C_p,p)};
(-4.5,-30)*{{\scriptscriptstyle{\color{green}\LEFTCIRCLE}\kern-.77em{\color{red}\RIGHTCIRCLE}}};
(-4.5,-30)*{\Circle};
(25,-25)*{\scriptstyle \mathfrak{p}(1,q)\ \ldots};
(25,-27)*{\scriptscriptstyle (q\neq p)};
(20,-30)*{\color{green}\bullet};
(20,-30)*{\circ};
(22.5,-30)*{\color{green}\bullet};
(22.5,-30)*{\circ};
(25,-30)*{\color{green}\bullet};
(25,-30)*{\circ};
(27.5,-30)*{\color{green}\bullet};
(27.5,-30)*{\circ};
(31.25,-30)*{\hdots};
(50,-25)*{\scriptstyle \mathfrak{p}(C_p,q)\ \ldots};
(50,-27)*{\scriptscriptstyle (q\neq p)};
(45,-30)*{\color{red}\bullet};
(45,-30)*{\circ};
(47.5,-30)*{\color{red}\bullet};
(47.5,-30)*{\circ};
(50,-30)*{\color{red}\bullet};
(50,-30)*{\circ};
(52.5,-30)*{\color{red}\bullet};
(52.5,-30)*{\circ};
(56.25,-30)*{\hdots};
(15,-40)*{\color{green}\bullet};
(15,-40)*{\circ};
(40,-40)*{\color{red}\bullet};
(40,-40)*{\circ};
(15,-42.5)*{\scriptstyle \mathfrak{p}(1,0)};
(40,-42.5)*{\scriptstyle \mathfrak{p}(C_p,0)};
\endxy
\end{equation}
Looking only at the primes involving the subgroup~$C_p$ itself (the red dots), we see a copy of the chromatic picture in~\eqref{eq:Spc(SH)}; similarly with the trivial subgroup (the green dots). Note how the collision $\mathfrak{p}(1,p)=\mathfrak{p}(C_p,p)$ in the spectrum of the Burnside ring $A(C_p)$ disappears in the spectrum of the $C_p$-equivariant stable homotopy category~$\SH(C_p)^c$, in which the fiber over that green-red point is a connected pair of ``chromatic towers". We provide further comments in Remark~\ref{rem:more-dress}.

More generally, we can fully determine the topology of~$\Spc(\SHGc)$ for groups whose order is square-free (Theorem~\ref{thm:square-free}). For instance, the spectrum of the symmetric group $G=S_3$ is discussed in Example~\ref{ex:S_3}.

For arbitrary finite groups, we can completely describe the topology of our spectrum up to an unresolved indeterminacy in the case of $p$-groups.  This indeterminacy can be explained as follows. For a $p$-group $G$ of order $p^r$ and for $n>r$, we can prove that $\cat P(1,p,n) \subset \cat P(G,p,n-r)$ but it might be possible \emph{a priori} that this is not the best such inclusion: We could have an inclusion into a smaller prime, \ie $\cat P(1,p,n) \subset \cat P(G,p,n-r+1)$. Our current guess is that the inclusion $\cat P(1,p,n) \subset \cat P(G,p,n-r)$ is the sharpest one, and we introduce this as Conjecture~\ref{conj:log-p}, mainly to fix the ideas. If the answer turns out to be different than the one predicted by Conjecture~\ref{conj:log-p}, the complete determination of the topology of~$\Spc(\SHGc)$ will nevertheless follow, \textsl{mutatis mutandis}. Here is a summary of what we know about the topology, for $G$ an arbitrary finite group:

\begin{Thm}[Corollary~\ref{cor:summary}]
Every closed subset of $\Spc(\SH(G)^c)$ is a finite union of irreducible closed subsets, and every irreducible closed set is of the form $\adhpt{\cat P} = \SET{\cat Q \in \Spc(\SHGc)}{\cat Q \subseteq \cat P}$ for a unique prime $\cat P \in \Spc(\SHGc)$. The only inclusions $\cat Q \subseteq \cat P$ between equivariant primes happen when $\cat Q = \cat P(K,p,n)$ and $\cat P=\cat P(H,p,m)$ with $K$ conjugate in $G$ to a $p$-subnormal subgroup of $H$ (see~\ref{ter:subnormal}) and when $n$ is greater or equal to a certain chromatic integer $n_{\min}$, which depends on $p$, $H$, $K$ and $m$. We have that $n_{\min} \le m + \log_p(|H|/|K|)$. Conjecture~\ref{conj:log-p} is equivalent to $n_{\min} = m + \log_p(|H|/|K|)$. This holds if $G$ has square-free order.
\end{Thm}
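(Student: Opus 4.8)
The plan is to assemble the statement from results established earlier in the paper, the only genuinely new content being the bookkeeping that pins down and bounds the chromatic integer~$n_{\min}$. First, the two purely topological assertions --- every closed set is a finite union of irreducible closed subsets, and each irreducible closed set has a unique generic point --- are Proposition~\ref{prop:finite-union-irred}: soberness of $\Spc$ of a tt-category is automatic, and the finiteness follows because $\rho_{\SHGc}$ is a spectral map onto the noetherian space $\Spec(A(G))$ whose fibres, by Theorems~\ref{thm:the-set} and~\ref{thm:prime-uniqueness}, are finite disjoint unions of linearly ordered chromatic towers. So everything reduces to deciding which inclusions $\cat P(K,q,n)\subseteq\cat P(H,p,m)$ occur; and since within a fixed tower $\cat P(K,p,n)\supseteq\cat P(K,p,n+1)$, for fixed conjugacy data the admissible values of $n$ form an up-set, i.e.\ have the form $\{n\ge n_{\min}\}$ for a well-defined $n_{\min}$ (a priori possibly $\infty$).

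Second, the constraints on the subgroups. Applying $\rho_{\SHGc}$ --- which reverses inclusions, a larger tt-prime having a smaller image in $A(G)$ --- to $\cat P(K,q,n)\subseteq\cat P(H,p,m)$ gives $\mathfrak{p}(H,p)\subseteq\mathfrak{p}(K,q)$ in $\Spec(A(G))$; since $\mathfrak{p}(H,p)$ is a maximal ideal with residue characteristic $p$, this forces $\mathfrak{p}(K,q)=\mathfrak{p}(H,p)$, hence $q=p$ and, by Dress's computation (Section~\ref{se:burnside}), $\Op(H)$ conjugate in $G$ to $\Op(K)$ (the chromatic endpoints $n,m\in\{1,\infty\}$ being handled the same way). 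Upgrading ``$\Op(H)\sim_G\Op(K)$'' to ``$K$ conjugate to a $p$-subnormal subgroup of $H$'' is the point where the finer tensor-triangular structure --- which sees group theory invisible to $\Spec(A(G))$ --- must be used: here I would restrict along $\Res^G_H\colon\SHGc\to\SH(H)^c$, a separable (finite \'etale) extension by~\cite{BalmerDellAmbrogioSanders15} whose effect on spectra is governed by~\cite{Balmer13ppb}, to reduce to $G=H$, and then analyze the inclusion inside $\SH(H)^c$ using that the geometric fixed point functors factor through those of a subnormal chain; this is carried out in Section~\ref{se:pre-top}.

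Third, the bound $n_{\min}\le m+\log_p(|H|/|K|)$ and the link with Conjecture~\ref{conj:log-p}. Fix a chain $K=K_0\normal K_1\normal\cdots\normal K_s=H$ with each $[K_i:K_{i-1}]=p$, so $s=\log_p(|H|/|K|)$. For a single step, the geometric fixed point functor $\Phi^{K_{i-1}}\colon\SH(K_i)^c\to\SH(K_i/K_{i-1})^c\simeq\SH(C_p)^c$ identifies the primes $\cat P(K_{i-1},p,n)$ and $\cat P(K_i,p,\ell)$ with the preimages of $\cat P(1,p,n)$ and $\cat P(C_p,p,\ell)$ respectively; since preimages preserve inclusions, the one-step blue-shift $\cat P(1,p,n)\subseteq\cat P(C_p,p,n-1)$ for $G=C_p$ --- a consequence of the Tate blue-shift of Hovey--Sadofsky~\cite{HoveySadofsky96} and Kuhn~\cite{Kuhn04} --- yields $\cat P(K_{i-1},p,n)\subseteq\cat P(K_i,p,n-1)$ inside $\SH(K_i)^c$. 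Transferring each of these to $\SH(H)^c$ by pullback along the appropriate restriction functor, composing the $s$ of them, and finally pulling back along $\Res^G_H$ gives $\cat P(K,p,n)\subseteq\cat P(H,p,n-s)$ for all $n$, whence $n_{\min}\le m+s$. That Conjecture~\ref{conj:log-p} is equivalent to equality is then a matter of unwinding definitions together with the reduction of the sharpness question to $p$-groups: one replaces $K\le H$ by $\bar K\le\bar H$ with $\bar H=H/\Op(H)$ a $p$-group, using that the non-equivariant geometric fixed point functors factor through $\Phi^{\Op(H)}$.

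Finally, when $|G|$ is square-free every power of $p$ dividing $|G|$ is $1$ or $p$, so $\log_p(|H|/|K|)\in\{0,1\}$; the case $0$ is trivial and, in the case $1$, the chain has the single step $K\normal H$ with $H/K\simeq C_p$, so equality $n_{\min}=m+1$ reduces to the \emph{full} equivalence $\cat P(1,p,n)\subseteq\cat P(C_p,p,m)\iff n\ge m+1$ for $G=C_p$. Its $\subseteq$-direction is the one-step blue-shift used above; its $\not\subseteq$-direction is where Kuhn's sharp Tate blue-shift enters, producing a compact $C_p$-spectrum whose restriction is of type $n$ but whose geometric fixed points are of type exactly $n-1$. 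One then transports this $C_p$-non-inclusion up to $\SH(G)^c$: apply the inflation section $\Infl^H_{H/K}$ of $\Phi^K$ to obtain a witness in $\SH(H)^c$, and descend the non-inclusion from $\SH(H)^c$ to $\SH(G)^c$ using that $\Res^G_H$ is separable. The main difficulty, running through all of this, is not any single step but the passage from the single-prime $C_p$ blue-shift to arbitrary $p$-subnormal chains: one must verify that the one-step shifts genuinely compose --- compatibility of the inflation/geometric-fixed-point factorizations with the restriction functors --- and, more seriously, that no shortcut is ever available, which is precisely the point left open, for $p$-groups of order greater than~$p$, as Conjecture~\ref{conj:log-p}.
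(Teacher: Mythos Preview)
Your proposal is essentially correct and mirrors the paper's approach of assembling Corollary~\ref{cor:summary} from Propositions~\ref{prop:finite-union-irred}, \ref{prop:burnside-info}, \ref{prop:non-inclusion} and Theorems~\ref{thm:shift-by-s}, \ref{thm:complete-statement}. One correction, though it does not break the assembled proof since you cite the proposition anyway: your parenthetical justification for Proposition~\ref{prop:finite-union-irred} is not valid. That $\rho_{\SHGc}$ is spectral onto the noetherian $\Spec(A(G))$ with fibres that are finite unions of linearly ordered towers does not imply that every closed set has finitely many irreducible components---each tower is infinite, and nothing a priori prevents a closed set from having generic points over infinitely many rational primes~$p$. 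The paper's proof of Proposition~\ref{prop:finite-union-irred} instead pulls a closed set back along each of the finitely many $\varphi^{H,G}:\Spc(\SHc)\to\Spc(\SHGc)$ and uses the non-equivariant fact \cite[Cor.~9.5(e)]{Balmer10b} that closed subsets of $\Spc(\SHc)$ have finitely many irreducible components as the essential input.

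A minor remark on your handling of the non-inclusion in the square-free case: ``descending the non-inclusion from $\SH(H)^c$ to $\SHGc$ via separability'' is workable but requires invoking Going-Up explicitly, and the lift one obtains is only a $G$-conjugate of~$K$ inside~$H$, so some bookkeeping is needed. The paper (Proposition~\ref{prop:non-inclusion}) runs the contrapositive instead: assume the inclusion holds in~$\SHGc$, reduce to a $p$-group via Proposition~\ref{prop:reduce-to-p-groups}, then push along $\Spc(\Infl)$ to a $C_p$-quotient, contradicting Proposition~\ref{prop:C_p-non-inclusion}. This avoids descent altogether.
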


In Section~\ref{se:log-p-conjecture} we translate our conjecture into a new chromatic blue-shift phenomenon for the Tate construction, which would enhance and clarify known results in the subject. Existing blue-shift results in the literature usually consider the functor $t_G(\triv(-))^G : \SH \to \SH$ from non-equivariant spectra to itself and roughly state that this functor lowers chromatic degree (at~$p$) by one \emph{for all finite groups~$G$} whose order is divisible by~$p$. (See Theorem~\ref{thm:kuhovsky} for a precise statement.) In particular, the chromatic degree only goes down by one even if $p^2$ divides the order of~$G$, or $p^3$, etc. In contrast, we conjecture that the functor $\phigeomb{G}(t_G(\triv(-))) : \SH \to \SH$, obtained by replacing categorical fixed points by geometric fixed points, reduces chromatic degree by $\log_p(|G|)$ for any $p$-group~$G$, and more generally for Tate cohomology with respect to suitable families of subgroups. The precise statements can be found in Section~\ref{se:log-p-conjecture}. Specifically, we prove in Theorem~\ref{thm:equiv-Tate} that this conjectural property of Tate cohomology is equivalent to Conjecture~\ref{conj:log-p} and would therefore complete the determination of the topology of~$\Spc(\SHGc)$ for all finite groups. Again, if future research isolates a different behavior of Tate cohomology than the shift by~$\log_p(|G|)$, the methods we provide in Section~\ref{se:log-p-conjecture} will still allow the determination of the topology of~$\Spc(\SHGc)$, which is the ultimate goal.

Finally, Section~\ref{se:classif} contains the translation of the computation of~$\Spc(\SHGc)$ into the classification of tt-ideals. See Corollary~\ref{cor:classif}.

Let us conclude our introduction with a word about the existing literature. Beyond of course the non-equivariant case~\cite{HopkinsSmith98} already mentioned, the problem of classifying tt-ideals in~$\SHGc$ is seriously considered only in Strickland's unpublished notes~\cite{Strickland12up}, parts of which have been written up in Joachimi~\cite[Chap.\,3]{Joachimi15pp}.  Although we do not rely on any results from this source, we are extremely grateful to Neil Strickland for sharing this material with us while we were working on the project.  The inspiration for Proposition~\ref{prop:C_p-inclusion} clearly came from his work.  It is our understanding that~\cite{Strickland12up} does not contain a complete classification of tt-ideals, even for the cyclic group~$G=C_p$; in contrast, we do provide a complete answer in this case, among others. Nevertheless, \cite{Strickland12up} remains a highly valuable source, discussing several other topics. For instance, Strickland proves the equivariant version of the Nilpotence Theorem, which is also our already mentioned Theorem~\ref{thm:nilpotence-thm}.

\subsection*{Acknowledgements\,:}
We are very grateful to Neil Strickland, for the reasons explained above. We also thank John Greenlees and Mike Hill for several stimulating discussions.

\smallskip
\section{Background on equivariant stable homotopy theory}
\label{se:background}%
\medskip

Everywhere, ``tt" is short for ``tensor-triangulated" or ``tensor-triangular".

\smallbreak

In this preliminary section we isolate some basic features of equivariant stable homotopy theory, seen from the angle of \mbox{tt-geometry}. All results we need concern the stable homotopy category~$\SHG$, not any given model, and we will remain agnostic about the choice of such model-theoretic foundations.  All of the following facts can be checked using, \eg, the Lewis-May approach to $G$-spectra~\cite{LewisMaySteinbergerMcClure86}, but more recent foundations such as equivariant orthogonal spectra~\cite{MandellMay02} would serve equally well.  Our goal is not to be exhaustive, nor brilliantly pedagogical, but simply to exhibit key features of the tt-category $\SH(G)$ which enable our proofs to work.  One motivation is that our techniques may have some success in computing the spectrum of other equivariant \mbox{tt-categories} which exhibit similar features (\eg, have a nice enough set of generators and well-behaved analogues of the geometric fixed point functors).  In any case, the reader will find a modern detailed discussion of the fundamental features of equivariant stable homotopy theory in the Appendices of \cite{HillHopkinsRavenel16}, while the triangulated category aspects are discussed in \cite{HoveyPalmieriStrickland97}. Before we begin, let us get the following notation and terminology out of the way:
\begin{Not}
\label{not:le_G}%
We write $H \sim_G H^g=\SET{g\inv h\,g}{h\in H}$ to indicate conjugation.
We say that $H$ is \emph{subconjugate} to~$K$ if $H^g \leq K$ for some $g \in G$ and write $H \leq_G K$.
\end{Not}
\begin{Ter}
	\label{ter:tt-cat}
We use standard terminology from the theory of tensor triangulated categories (such as thick subcategory, localizing subcategory, $\otimes$-ideal, compact, rigid) all of which may be found in \cite[\S\S 1-2]{BalmerFavi11}.  Note that the topology literature sometimes uses ``small'' or ``finite'' as a synonym for ``compact'', and ``strongly dualizable'' for ``rigid''.  The compatibility axioms for our tt-categories are those of \cite[Appendix~A]{HoveyPalmieriStrickland97}.  Finally, by a tt-functor we mean a triangulated functor which is also a strong tensor functor.
\end{Ter}

\subsection*{The equivariant stable homotopy category}
\begin{enumerate}[label=(\Alph*)]
\item
 \label{it:SH(G)}%
For any finite group $G$, the equivariant stable homotopy category of genuine $G$-spectra~$\SH(G)$ is a rigidly-compactly generated tt-category; \ie it is a compactly generated triangulated category with a compatible closed symmetric monoidal structure having the property that the compact objects coincide with the rigid (\aka strongly dualizable) objects.  Every pointed $G$-space $X$ defines a suspension $G$-spectrum $\Sigma^\infty_G X$ in~$\SHG$ and the collection of~$\Sigma^\infty_G G/H_+$ for all subgroups $H \le G$ provides a set of compact-rigid generators (see \eg\;\cite[\S 9.4]{HoveyPalmieriStrickland97}). We denote the tt-category of compact-rigid objects by $\SHGc \subset \SH(G)$.
\smallbreak
\item
\label{it:alpha*}%
Any group homomorphism $\alpha : H \to G$ induces a tt-functor $\alpha^*:\SH(G) \to \SH(H)$ which
necessarily preserves compact objects (since compact=rigid in these categories and any strong tensor functor preserves rigid objects).  For example, inclusion of a subgroup $H \hook G$ provides the restriction functor $\Res^G_H : \SH(G) \to \SH(H)$, while a quotient $G \twoheadrightarrow G/N$ provides the inflation functor $\Infl_{G/N}^G : \SH(G/N) \to \SH(G)$.
\smallbreak
\item
\label{it:cat-fixed-pts}%
Restriction admits adjoints on both sides (induction and coinduction) which are isomorphic: $\Ind_H^G \simeq \CoInd_H^G$.  On the other hand, the right adjoint to inflation is the ``categorical'' fixed-point functor $(-)^N : \SH(G) \to \SH(G/N)$. This does not recover usual fixed-points on suspension spectra, nor does it preserve compact objects (cf.~the tom Dieck splitting theorem) and therefore inflation does not have a left adjoint (by~\cite{BalmerDellAmbrogioSanders16pp}).  Although $(-)^N \circ \Infl_{G/N}^G \not\simeq \Id_{\SH(G/N)}$, we do have a projection formula: \[(\Infl_{G/N}^G(x) \otimes y)^N \simeq x \otimes y^N\] for every $x\in \SH(G/N)$ and $y \in \SH(G)$.
\end{enumerate}
\subsection*{Restriction as a separable-extension}
\begin{enumerate}[resume*]
\item
\label{it:A-Mod}%
For every subgroup $H \leq G$, the $G$-spectrum $A^G_H := G/H_+$ (we often drop the $\Sigma^\infty_G$ for readability) is a separable commutative ring object (=a tt-ring, or a finite \emph{\'etale} ring) in the tt-category $\SHG$.  (See \cite{Balmer11} and \cite{Balmer13ppb} for the definition of separable and for a discussion of how such rings provide tt-geometry with an analogue of the {\'etale} topology.)
By~\cite{BalmerDellAmbrogioSanders15}, there is a tt-equivalence
\[
A^G_H\MMod_{\SHG}\cong \SH(H)
\]
such that extension-of-scalars $\SH(G)\too A^G_H\MMod_{\SHG}$ along~$A^G_H$ is isomorphic to the restriction functor $\Res^G_H:\SHG\too \SH(H)$. Since $A^G_H$ is compact, the same relation holds for the compact objects\,: $\SH(H)^c\cong A^G_H\MMod_{\SHGc}$.

\smallbreak
\item
\label{it:res-supp}%
By~\cite{Balmer13ppb}, for any tt-category $\cat K$ and any tt-ring $A$ in~$\cat K$, the map induced on spectra $\Spc(A\MMod_{\cat K})\to \Spc(\cat K)$ has image exactly $\supp(A)$. Hence, in view of~\ref{it:A-Mod}, we have the following equality of subsets of~$\Spc(\SHGc)$:
\[
\Img\big(\Spc(\Res^G_H)\big) = \supp(G/H_+).
\]
\smallbreak
\item
\label{it:going-up}
Also by~\cite{Balmer13ppb}, if a tt-ring $A$ has finite degree (in the sense of~\cite{Balmer14}) then $\varphi_A : \Spc(A\MMod_{\cat K}) \to \Spc(\cat K)$ satisfies the ``Going-Up Theorem'': For every $\cat Q \in \Spc(A\MMod_{\cat K})$ and every $\cat P' \in \overline{\{\varphi_A(\cat Q)\}}$ there exists $\cat Q' \in \overline{\{\cat Q\}}$ such that $\varphi_A(\cat Q') = \cat P'$.  It follows from~\cite[Corollary~4.8]{Balmer14} and the fact that the geometric fixed point functors (see below) are jointly conservative that every tt-ring in $\SHGc$ has finite degree. So, in particular, the \mbox{Going-Up} Theorem holds for $A=G/H_+$, that is, for~$\Res^G_H$.
\end{enumerate}
\subsection*{Geometric fixed points}
\begin{enumerate}[label=(\Alph*),resume*]
\item
\label{it:geom-motiv}%
To quickly motivate the construction below -- in light of the incompatibility between categorical fixed-points and suspension spectra -- note that for subgroups $H,K\le G$, the $H$-fixed points of the $G$-set $G/K$ is the set $N_G(H,K)/K$ where $N_G(H,K):=\SET{g\in G}{H^g\subseteq K}$. For instance, $(G/K)^H$ is empty if $N_G(H,K)=\varnothing$, that is, if $H\not\le_G K$. In particular, when $H=N\normal G$ is normal, the $N$-fixed points of the $G$-set $G/K$ is either empty when $N\not\leq K$, or $G/K$ when $N\leq K$. So, adding base-points, we expect $N$-fixed points to ``kill" $G/K_+$ (\ie map it to $\varnothing_+=0$) when $N\not\le K$. This ``killing" is nothing but a localization, as we formalize below.
\smallbreak
\item
\label{it:phi^N}%
Let $N \lenormal G$ be a normal subgroup and let
\[
\cat L_N:=\Loc_{\otimes}(G/K_+\Mid K\leq G,\ N \not\leq K )
\]
denote the localizing $\otimes$-ideal of $\SH(G)$ generated by the compact objects
$\SET{G/K_+}{K \le G, N \not\leq K}$; see~\ref{it:geom-motiv}.
Then inflation $\SH(G/N) \to \SHG$ composed with the (Bousfield) localization of $\SHG$ with respect to $\cat L_N$
$$
\xymatrix@C=4em{\SH(G/N) \ar[r]^-{\Infl_{G/N}^G} \ar@/_1em/[rr]_-{\cong} & \SHG \ar@{->>}[r] & \SHG / \cat L_N}
$$
is an equivalence of tt-categories (by \cite[Cor.\,II.9.6]{LewisMaySteinbergerMcClure86}).
Then, the composite
\begin{equation}
	\label{eq:phi^N-def}
\SH(G) \onto \SH(G)/\cat L_N \xra{\cong} \SH(G/N)
\end{equation}
is the so-called \emph{geometric $N$-fixed point functor} $\phigeom{N} : \SH(G) \onto \SH(G/N)$. By construction, inflation splits the geometric fixed point functor:
\begin{equation}
\label{eq:phinfl}%
\phigeom{N}\circ\Infl_{G/N}^G\cong\Id_{\SH(G/N)}.
\end{equation}
Since $\cat L_N$ is compactly generated, \eqref{eq:phi^N-def} presents $\SH(G/N)$ as a finite localization of~$\SH(G)$, with $\phigeom{N}$ as the localization functor. It follows from Neeman~\cite[Thm.\,2.1]{Neeman92b} that the same is also true on the subcategories of compact objects, up to an idempotent completion (denoted~${}^\natural$):
\[ \SHGc\onto \SHGc/\cat J_N\hook (\SHGc/\cat J_N)^\natural\xra{\cong} \SH(G/N)^c \]
where $\cat J_N := \thick_\otimes(G/K_+ | N \not\le K)$.  (We will discuss such finite localizations in more detail in Section~\ref{se:e,f,Tate}; see especially Remark~\ref{rem:normal-family}.) Note that in this special case, the idempotent completion~$(-)^\natural$ is not necessary, since~$\phigeom{N}$ is split by~$\Infl_{G/N}^G$, which preserves compact objects.
\smallbreak
\item
\label{it:phixed-pts}%
For an arbitrary subgroup $H \le G$, we define the \emph{(absolute) geometric \mbox{$H$-fixed} point functor}
$\phigeomb{H} : \SH(G) \to \SH$
as the composite
\[ \phigeomb{H}:\ \SH(G) \xra{\Res^G_H} \SH(H) \xra{\phigeom{H}} \SH.\]
Note that this functor is the composition of an ``\'{e}tale'' extension in the sense of~\ref{it:A-Mod} with a finite localization in the sense of~\ref{it:phi^N}.  When necessary to distinguish the ambient group, we shall write $\phigeomb{H,G} : \SH(G) \to \SH$. Since the functor $\phigeomb{H,G} : \SH(G) \to \SH$ is a tt-functor which preserves compact objects, it induces a continuous map on tt-spectra, that we denote
\[ \varphi^{H,G} : \Spc(\SHc) \to \Spc(\SHGc).\]
It is defined by $\cat P\mapsto (\phigeomb{H,G})\inv(\cat P)=\SET{x\in \SHGc}{\phigeomb{H,G}(x)\in\cat P}$.
\smallbreak
\item
\label{it:phixed-nested}%
For any $N \le N' \normal G$, with $N \lenormal G$, we have $\phigeom{N'/N} \circ \phigeom{N} \cong \phigeom{N'}$ from $\SHG$ to $\SH(G/N')$, as localization can be performed in two ``nested" steps.  In fact, for any $N \le H \le G$, with $N \normal G$, we also have $\phigeomb{H/N} \circ \phigeom{N} \cong \phigeomb{H}$ from $\SHG$ to $\SH$.  For example, $\phigeomb{G/N} \circ \phigeom{N} \cong \phigeomb{G}$ and $\Res^{G/N}_1 {\mkern-9mu} \circ \phigeom{N} \cong \phigeomb{N}$ for any $N \normal G$.
\smallbreak
\item
\label{it:Phixed-alpha}%
Let $\alpha:G\to G'$ be a group homomorphism. Consider $H\leq G$ and its image \mbox{$\alpha(H)\le G'$}. Then we have an isomorphism $\phigeomb{H,G}\circ\alpha^*=\phigeomb{\alpha(H),G'}$. Indeed, the following diagram commutes up to isomorphism, for $N:=\ker(\alpha)\cap H$:
\[
\xymatrix @R=.5em{
\SH(G') \ar[r]^-{\Res} \ar[dd]_-{\alpha^*} \ar@/^3em/[rrrrd]^(.7){\ \phigeomb{\alpha(H),G'}}
& \SH(\alpha(H)) \ar@/^/[rd]^-{\id} \ar[dd]_-{\alpha_{|H}^*=\Infl_{\alpha(H)}^H}
&
\\
&& \SH(\alpha(H)) \ar@{->>}[rr]^-{\phigeom{\alpha(H)}}
&& \SH\,.
\\
\SHG \ar[r]_{\Res} \ar@/_3em/[rrrru]_(.7){\phigeomb{H,G}}
& \SH(H) \ar@{->>}@/_/[ru]^-{\phigeom{N}} \ar@{->>}@/_.5em/[rrru]_{\phigeom{H}}
}\]
It uses~\eqref{eq:phinfl} for the middle triangle and~\ref{it:phixed-nested} on the right, plus the definitions. Hence the induced maps from $\Spc(\SHc)$ to~$\Spc(\SH(G')^c)$ coincide:
\[
\Spc(\alpha^*)\circ \varphi^{H,G}=\varphi^{\alpha(H),G'}.
\]
\smallbreak
\item
\label{it:conj}%

For any $g \in G$, the isomorphism $(-)^g : H \xra{\sim} H^g$ induces an equivalence $\SH(H^g) \xra{\cong} \SH(H)$ such that the diagram
\[\xymatrix @R=.5em{
				& \SH(H^g) \ar@/^/[dr]^{\phigeom{H^g}} \ar[dd]^{\cong} &\\
				\SH(G) \ar@/_/[dr]_{\Res^G_{H}} \ar@/^/[ur]^{\Res^G_{H^g}} && \SH \\
				& \SH(H) \ar@/_/[ur]_{\phigeom{H}}
}\]
commutes up to isomorphism. (The equivalence $\SHG \xra{\cong} \SHG$ induced by the inner automorphism $(-)^g:G \xra{\sim} G$ is naturally isomorphic to the identity functor.) Hence $\phigeomb{H,G} \cong \phigeomb{H^g,G}$ and $\varphi^{H,G} = \varphi^{H^g,G}$.
\smallbreak
\item
\label{it:triv}%
The trivial homomorphism $G \to 1$ induces a tt-functor $\Infl_1^G$ abbreviated
\[\triv:\SH \to \SHG
\]
which sends a spectrum to the ``trivial'' $G$-spectrum. It splits the (absolute) $H$-fixed point functor $\phigeomb{H} : \SH(G) \to \SH$ for every $H \leq G$. Hence $\Spc(\triv)$ retracts $\varphi^{H,G}:\Spc(\SHc)\to \Spc(\SHGc)$ for every~$H\le G$, which shows that the maps~$\varphi^{H,G}$ are all injective.
\smallbreak
\item
\label{it:X^H}%
The geometric fixed point functors have the nice property that they commute with suspension spectra (see \eg\;\cite[Cor.\,II.9.9]{LewisMaySteinbergerMcClure86}).  In particular, for a finite $G$-set $X$ and for any subgroup $H \le G$, we have
$\phigeomb{H}(\Sigma^\infty_G X_+) \cong \Sigma^\infty X_+^H$ in~$\SHc$.
\end{enumerate}

\smallskip
\section{The spectrum of the Burnside ring}
\label{se:burnside}%
\medskip

The purpose of this section is to briefly recall Dress's description of the spectrum of the Burnside ring~\cite{Dress69} and related ideas concerning $p$-perfect subgroups.
\begin{Def}
\label{def:p-perfect}
A group is said to be \emph{$p$-perfect} if it admits no non-trivial homomorphism to a $p$-group, or equivalently if the only normal subgroup of index a power of~$p$ is the group itself. Every finite group $G$ has a unique normal $p$-perfect subgroup $\Op (G)$ such that the quotient $G/\Op(G)$ is a $p$-group; $\Op(G)$ is the intersection of all normal subgroups of index a power of~$p$. So, $\Op (G)$ contains all $p$-perfect subgroups $H$ of~$G$ (since the composite $H\to G\to G/\Op(G)$ must be trivial) and $\Op(G)$ is contained in every normal subgroup of~$G$ of index a power of~$p$.
\end{Def}

\begin{Ter}
\label{ter:subnormal}%
A \emph{$p$-subnormal tower} $H = H_r \lnormal_p H_{r-1} \lnormal_p$ $\cdots$ $\lnormal_p H_1 \lnormal_p H_0 = G$ is a subnormal tower $H_{i+1}\lnormal H_{i}$ all of whose subquotients $H_i/H_{i+1}$ have order~$p$.
\end{Ter}
\begin{Lem}
\label{lem:p-subnormal}
Let $H \le G$ be a subgroup. The following are equivalent:
\begin{enumerate}[label=\rm(\arabic*)]
\item $\Op (H) = \Op (G)$;
\item $H \supseteq \Op (G)$;
\item There exists a $p$-subnormal tower from $H$ to~$G$.
\end{enumerate}
In this case, we say that $H$ is a \emph{$p$-subnormal subgroup} of~$G$.
\end{Lem}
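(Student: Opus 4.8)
The plan is to deduce $(1)\Leftrightarrow(2)$ directly from the elementary properties of $\Op(-)$ recorded in Definition~\ref{def:p-perfect}, and then to close the loop by proving $(2)\Rightarrow(3)$ and $(3)\Rightarrow(2)$; nothing beyond finite group theory is needed. For $(1)\Leftrightarrow(2)$: the implication $(1)\Rightarrow(2)$ is immediate because $\Op(H)\le H$. For $(2)\Rightarrow(1)$, suppose $\Op(G)\le H\le G$. Since $\Op(G)$ is normal in $G$ it is normal in the intermediate subgroup $H$, and $H/\Op(G)$ is a subgroup of the $p$-group $G/\Op(G)$, hence itself a $p$-group; thus $\Op(G)$ is a normal subgroup of $H$ of $p$-power index, so $\Op(H)\le\Op(G)$. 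Conversely $\Op(G)$ is $p$-perfect and lies in $H$, so the last sentence of Definition~\ref{def:p-perfect}, applied to $H$ in place of $G$, gives $\Op(G)\le\Op(H)$. Hence $\Op(H)=\Op(G)$.

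For $(2)\Rightarrow(3)$, I would use the standard fact that in a finite $p$-group $P$ every subgroup $Q$ lies at the bottom of a tower $Q=Q_n\normal Q_{n-1}\normal\cdots\normal Q_0=P$ with $[Q_i:Q_{i+1}]=p$ for all $i$: induct on $[P:Q]$, noting that if $Q\ne P$ then $Q<N_P(Q)$ since normalizers grow in nilpotent groups, so $N_P(Q)/Q$ is a nontrivial $p$-group and contains a subgroup of order $p$ whose preimage $R$ in $N_P(Q)$ satisfies $Q\normal R$ and $[R:Q]=p$; now apply the inductive hypothesis to $R\le P$ and prepend the step $Q\normal R$. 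Applying this to the subgroup $H/\Op(G)$ of the $p$-group $G/\Op(G)$ and pulling the tower back along $G\onto G/\Op(G)$ yields a $p$-subnormal tower from $H$ to $G$, since each term of the pulled-back tower contains $\Op(G)$.

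For $(3)\Rightarrow(2)$, given a $p$-subnormal tower $H=H_r\normal H_{r-1}\normal\cdots\normal H_0=G$, I would prove $\Op(G)\le H_i$ for all $i$ by induction on $i$, the case $i=0$ being trivial: if $\Op(G)\le H_i$ then the image of $\Op(G)$ under $H_i\onto H_i/H_{i+1}$ is at once a quotient of the $p$-perfect group $\Op(G)$ and a subgroup of the order-$p$ group $H_i/H_{i+1}$, hence trivial, so $\Op(G)\le H_{i+1}$; taking $i=r$ gives $\Op(G)\le H$, which is $(2)$. The whole argument is routine; the only point that requires a little attention is the index-$p$ refinement used in $(2)\Rightarrow(3)$ (equivalently, that every proper subgroup of a finite $p$-group is contained in a normal subgroup of index $p$), together with the observation --- used on both sides --- that $\Op(G)$ is at once normal in $G$ and $p$-perfect, so that it behaves well under passage to subgroups lying between it and $G$. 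I do not expect any genuine obstacle.
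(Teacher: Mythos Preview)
Your proof is correct and follows essentially the same route as the paper's: both deduce $(1)\Leftrightarrow(2)$ from the elementary properties of $\Op(-)$, pass to the $p$-group $G/\Op(G)$ to build the tower for $(2)\Rightarrow(3)$, and use that $\Op(G)$ has trivial image in each order-$p$ subquotient for $(3)\Rightarrow(2)$. The only cosmetic differences are that you justify the existence of $p$-subnormal towers in $p$-groups via ``normalizers grow'' while the paper gestures at the nontrivial center, and you give more detail in $(1)\Leftrightarrow(2)$ and $(3)\Rightarrow(2)$ than the paper's one-line remarks.
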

\begin{proof}
Since $\Op(G)$ contains all $p$-perfect subgroups of~$G$, we have $\Op (H) \subseteq \Op (G)$. The equivalence $(1) \Leftrightarrow (2)$ follows.  To prove $(1) \Rightarrow (3)$, assume~$(1)$, let $N:=\Op (H)=\Op (G)$ and consider $H/N \le G/N$ in the $p$-group $G/N$.  Every subgroup of a $p$-group admits a $p$-subnormal tower to the ambient group (by an easy inductive argument using the non-trivial center, for instance) and a $p$-subnormal tower from $H/N$ to $G/N$ lifts to a $p$-subnormal tower from $H$ to~$G$.  Finally, $(3) \Rightarrow (1)$ follows inductively from the observation that if $N \lnormal_p G$ then $N \supseteq \Op (G)$.
\end{proof}

\begin{Rem}
Recall that the \emph{Burnside ring}, $A(G)$, is the Grothendieck ring of the category of finite $G$-sets with respect to disjoint union, and equipped with Cartesian product as multiplication. For each subgroup $H \le G$, there is a ring homomorphism ${f^{H}}:A(G) \to \mathbb Z$ which sends (the class of) a finite $G$-set $X$ to the number of $H$-fixed points~$|X^H|$.  By pulling back the prime ideals of $\mathbb Z$, we get prime ideals
\begin{equation}
\label{eq:p(H)}%
\mathfrak{p}(H,p) := (f^{H})\inv(p\mathbb Z)\qquadtext{and}\mathfrak{p}(H,0) := (f^{H})\inv((0))
\end{equation}
of~$A(G)$. Dress's result~\cite{Dress69} can be summarized as follows:
\end{Rem}
\begin{Thm}[Dress]
\label{thm:dress}%
Every prime ideal of $A(G)$ is of the form $\mathfrak{p}(H,p)$ or $\mathfrak{p}(H,0)$ for some $H \le G$. Moreover, for any subgroups $H,K \le G$ and any primes $p,q$:
\begin{enumerate}
\item $\mathfrak{p}(H,0) \subseteq \mathfrak{p}(K,0)$ iff $\mathfrak{p}(H,0) = \mathfrak{p}(K,0)$ iff $H \sim_G K$.
\item $\mathfrak{p}(H,p) \subseteq \mathfrak{p}(K,q)$ implies $p=q$ and $\mathfrak{p}(H,p) = \mathfrak{p}(K,p)$.
\item $\mathfrak{p}(H,0) \subseteq \mathfrak{p}(K,p)$ iff $\mathfrak{p}(H,p) = \mathfrak{p}(K,p)$ iff $\Op (H) \sim_G \Op (K)$.
\item $\mathfrak{p}(H,0) \subset \mathfrak{p}(H,p)$ and $\mathfrak{p}(H,p) \not\subseteq \mathfrak{p}(K,0)$.
\end{enumerate}
\end{Thm}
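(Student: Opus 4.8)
The plan is to reduce everything to the \emph{table of marks} together with elementary commutative algebra. Let
\[
\varphi=(f^H)_{(H)}\colon A(G)\longrightarrow \prod_{(H)}\bbZ
\]
be the mark homomorphism, the product running over the conjugacy classes $(H)$ of subgroups of~$G$. First I would record two structural facts. Ordering the conjugacy classes by a linear refinement of the subconjugacy preorder $\le_G$, the square matrix $\big(f^H([G/K])\big)_{(H),(K)}$ is triangular with diagonal entries $|N_G(H)/H|\neq 0$, hence invertible over~$\bbQ$; therefore $\varphi$ is injective with finite cokernel, so $\prod_{(H)}\bbZ$ is module-finite -- hence integral -- over~$A(G)$ and $\varphi\otimes\bbQ$ is an isomorphism. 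Now $\Spec\big(\prod_{(H)}\bbZ\big)=\coprod_{(H)}\Spec(\bbZ)$, and the induced map to $\Spec(A(G))$ is surjective (Lying-Over for an integral extension) and closed. Surjectivity yields that every prime of~$A(G)$ is a contraction of $(0)$ or of $p\bbZ$ in some factor, \ie is $\mathfrak{p}(H,0)$ or $\mathfrak{p}(H,p)$; closedness yields $\overline{\{\mathfrak{p}(H,0)\}}=\{\mathfrak{p}(H,0)\}\cup\SET{\mathfrak{p}(H,p)}{p\text{ prime}}$, namely the image of the $(H)$-th copy of $\Spec(\bbZ)$.

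Next I would analyse the two extreme families of primes. Since $\varphi\otimes\bbQ$ is an isomorphism, $A(G)\otimes\bbQ\cong\prod_{(H)}\bbQ$ is a finite product of fields, so $\Spec(A(G)\otimes\bbQ)$ is discrete; as every non-zero integer is a non-zero-divisor in~$A(G)$, the minimal primes of~$A(G)$ are exactly those disjoint from~$\bbZ\sminus\{0\}$, hence precisely the $\mathfrak{p}(H,0)=\Ker(f^H)$. Two of these coincide iff $f^H=f^K$ (both being unital ring maps to~$\bbZ$) iff the $(H)$- and $(K)$-rows of the table of marks agree iff $H\sim_G K$ (the rows being $\bbQ$-linearly independent); this is part~(1), the implication ``$\subseteq\then=$'' holding automatically for minimal primes. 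At the maximal end, $f^H$ surjects onto~$\bbZ$, so it induces $A(G)/\mathfrak{p}(H,p)\isoto\mathbb{F}_p$; thus $\mathfrak{p}(H,p)$ is maximal with residue field~$\mathbb{F}_p$, and any inclusion between two such ideals is an equality, forcing $\mathbb{F}_p\cong\mathbb{F}_q$, \ie $p=q$ -- this is part~(2). Finally for part~(4): $\mathfrak{p}(H,0)\subsetneq\mathfrak{p}(H,p)$ strictly since $f^H$ is onto, and $\mathfrak{p}(H,p)$ cannot be contained in the minimal prime~$\mathfrak{p}(K,0)$, since $\mathfrak{p}(H,0)\subsetneq\mathfrak{p}(K,0)$ would contradict minimality.

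For part~(3), the direction ``$\Op(H)\sim_G\Op(K)\then\mathfrak{p}(H,p)=\mathfrak{p}(K,p)$'' comes from conjugation-invariance $f^{H^g}=f^H$ together with the congruence $f^H\equiv f^{\Op(H)}\pmod p$: for a finite $G$-set~$X$, the $p$-group $H/\Op(H)$ acts on $X^{\Op(H)}$ with fixed set~$X^H$, and a $p$-group acting on a finite set fixes a number of points congruent mod~$p$ to the cardinality of that set; hence $\mathfrak{p}(H,p)=\mathfrak{p}(\Op(H),p)$, reducing us to $p$-perfect subgroups, where conjugation-invariance concludes. For the converse I would pass to $A(G)\otimes\bbQ$, whose primitive idempotents $e_L$ (over conjugacy classes $(L)$) are determined by $f^M(e_L)=\delta_{(L),(M)}$ (solve the triangular system). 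The key step is that for every $p$-perfect subgroup~$H'$ the idempotent $\varepsilon_{H'}:=\sum_{(L)\,:\,\Op(L)\sim_G H'}e_L$ actually lies in~$A(G)_{(p)}$; granting this, the $\bbQ$-computation combined with the mod-$p$ congruence gives $f^M(\varepsilon_{H'})=1$ if $\Op(M)\sim_G H'$ and $0$ otherwise, so $\varepsilon_{H'}\in\mathfrak{p}(M,p)$ iff $\Op(M)\not\sim_G H'$; hence $\mathfrak{p}(H,p)=\mathfrak{p}(K,p)$ forces $\Op(H)\sim_G\Op(K)$. It remains to observe ``$\mathfrak{p}(H,0)\subseteq\mathfrak{p}(K,p)\Leftrightarrow\mathfrak{p}(H,p)=\mathfrak{p}(K,p)$'': the implication $\Leftarrow$ follows from part~(4), and $\Rightarrow$ from the computation of $\overline{\{\mathfrak{p}(H,0)\}}$ in the first paragraph together with part~(2).

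The genuine obstacle is the integrality statement $\varepsilon_{H'}\in A(G)_{(p)}$ -- that the rational idempotent $\sum_{\Op(L)\sim_G H'}e_L$, expanded in the basis $\{[G/L]\}$, has coefficients whose denominators are prime to~$p$. This is the true arithmetic content; everything else is formal commutative algebra or elementary finite group theory. To prove it I would compute the coefficient of $[G/L]$ in $\varepsilon_{H'}$ by Möbius inversion over the subgroup lattice of~$G$ and verify $p$-integrality using, once more, the mod-$p$ congruence for fixed points of $p$-groups; this is essentially Dress's original argument.
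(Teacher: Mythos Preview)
The paper does not prove this theorem; it is stated as a summary of Dress's result~\cite{Dress69} and used as background, with no argument given in the paper itself. So there is no ``paper's own proof'' to compare your proposal against.

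That said, your outline is the standard route and is correct as far as it goes. The mark-homomorphism reduction, the identification of minimal and maximal primes via residue characteristics, the closure computation via closedness of the integral map, the mod-$p$ congruence $f^H\equiv f^{\Op(H)}\pmod p$ from the $p$-group fixed-point count, and the use of the idempotents $\varepsilon_{H'}$ to separate primes are exactly the right ingredients. You correctly flag the one nontrivial step---that $\varepsilon_{H'}\in A(G)_{(p)}$---as the ``genuine obstacle'' and defer to Dress for it. That is honest, but it means your proposal is not self-contained: the entire theorem really does reduce to this $p$-integrality statement, and everything else is, as you say, formal commutative algebra. If you want a complete proof you must actually carry out the M\"obius-inversion/denominator computation you sketch, or cite an explicit source (e.g.\ Gluck's or Yoshida's formulas for the primitive idempotents of $A(G)_{(p)}$).
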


\begin{Rem}
In other words, the ring $A(G)$ has Krull dimension 1 and $\Spec(A(G))$ is covered by copies of~$\Spec(\bbZ)$ indexed by conjugacy classes of subgroups~$H\le G$. The copy of $\Spec(\bbZ)$ in~$\Spec(A(G))$ corresponding to $H\le G$ is an irreducible closed subset with generic point $\mathfrak{p}(H,0)$. In the closure of~$\mathfrak{p}(H,0)$ are height 1 closed points $\mathfrak{p}(H,p)$ for all prime numbers~$p$. The only overlap between two such copies, say for two subgroups $H$ and $K$, happens at closed points, corresponding to the same prime~$p$ in each copy, exactly when $\Op(H)$ and $\Op(K)$ are conjugate in~$G$. For given~$H$ and~$K$, this can happen at several primes~$p$.  However, note that if $p$ does not divide the order of~$G$ then $\Op (H) = H$ for all $H \le G$. Hence there can only be non-trivial collisions for $p$ dividing the order of~$G$.  See the bottom of~\eqref{eq:Spc(C_p)} for a picture of the $G=C_p$ case (which is representative of the situation for all $p$-groups) and see Example~\ref{ex:S_3} for a picture of the $G=S_3$ case.
\end{Rem}

\begin{Rem}
The ring homomorphism $A(G) \cong \End_{\SHGc}(\unit) \to \End_{\SHc}(\unit) \cong \mathbb Z$ induced by the geometric fixed point functor $\phigeomb{H} : \SHGc \to \SHc$ is exactly the map ${f^{H}} : A(G) \to \mathbb Z$ used above (see the proof of Prop.\,\ref{prop:comp-map}, if necessary). From this point of view, it is very natural to attempt a categorified version of what Dress accomplished for the Burnside ring, as announced in the Introduction. Furthermore, this compatibility between $\phigeomb{H}$ and ${f^{H}}$ will allow (cf.~Proposition~\ref{prop:comp-map}) for an easy description of the comparison map $\rho_{\SHGc} : \Spc(\SHGc) \to \Spec(A(G))$ once we know what the primes in $\Spc(\SHGc)$ actually are.
\end{Rem}

\smallskip
\section{The set $\Spc(\SHGc)$}
\label{se:the-set}
\medskip

Recall the Hopkins-Smith non-equivariant primes~$\cat C_{p,n}\in\Spc(\SHc)$ from~\eqref{eq:Spc(SH)}. We now consider their image under the map~$\varphi^{H,G} : \Spc(\SHc) \to \Spc(\SHGc)$ induced by geometric $H$-fixed points $\phigeomb{H,G}:\SHG\to \SH$, as explained in~\ref{it:phixed-pts}.

\begin{Def}
\label{def:P(H,p,n)}%
For every subgroup~$H\leq G$, every prime number~$p$ and every ``chromatic integer" $1\le n\le \infty$, we have a tt-prime in~$\SHGc$ given by
\[
\cat P(H,p,n):=\varphi^{H,G}(\cat C_{p,n})=\SET{x\in\SHGc}{\phigeomb{H}(x)\in\cat C_{p,n}\textrm{ in }\SHc}.
\]
By~\ref{it:conj}, we have that $\cat P(H,p,n)=\cat P(H^g,p,n)$ for every $g\in G$.  So $\cat P(H,p,n)$ only depends on the conjugacy class of~$H$ in~$G$.  Also, since $\cat C_{p,1} = \cat C_{q,1}$ for all primes $p,q$, the same is true for $\cat P(H,p,1) = \cat P(H,q,1)$ and we could replace $p$ by~$0$ in the notation, if it avoids putting the wrong emphasis on a particular~$p$:
\[
\cat P(H,0,1) := \cat P(H,p,1) \ {\scriptstyle(\textrm{for any~}p)} =\SET{x\in\SHGc}{\phigeomb{H}(x)\textrm{ is torsion}}.
\]
Finally, when necessary, we will write $\cat P_G(H,p,n)$ to indicate the ambient group~$G$.
\end{Def}

\begin{Rem}
\label{rem:P-infty}%
Since $\cat C_{p,\infty}=\cap_{n\geq 1}\cat C_{p,n}$ we get $\cat P(H,p,\infty)=\cap_{n\geq 1}\cat P(H,p,n)$.
\end{Rem}

Let us say a word about the functoriality of these primes~$\cat P_G(H,p,n)$ in the group~$G$. We recall that for every tt-functor $F:\cat K\to \cat L$ the induced map $\Spc(F):\Spc(\cat L)\to \SpcK$ sends $\cat Q$ to~$F\inv(\cat Q)$ and is inclusion-preserving.

\begin{Prop}
\label{prop:induced-alpha}%
Let $\alpha : G \to G'$ be a group homomorphism, and $\alpha^* : \SH(G') \to \SH(G)$ the induced tt-functor.
Then the induced map
$\Spc(\alpha^*) : \Spc(\SHGc) \to \Spc(\SH(G')^c)$ maps $\cat P_G(H,p,n)$ to $\cat P_{G'}(\alpha(H),p,n)$ for every~\mbox{$H \le G$}.
\end{Prop}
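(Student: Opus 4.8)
The plan is to chase the definition of $\cat P_G(H,p,n)$ through the identity $\Spc(\alpha^*) \circ \varphi^{H,G} = \varphi^{\alpha(H),G'}$, which is exactly the content of Part~\ref{it:Phixed-alpha} of Section~\ref{se:background}. First I would recall that, by Definition~\ref{def:P(H,p,n)}, $\cat P_G(H,p,n) = \varphi^{H,G}(\cat C_{p,n})$ and similarly $\cat P_{G'}(\alpha(H),p,n) = \varphi^{\alpha(H),G'}(\cat C_{p,n})$, where the relevant maps on spectra are those induced by the geometric fixed point functors $\phigeomb{H,G} : \SH(G)^c \to \SH^c$ and $\phigeomb{\alpha(H),G'} : \SH(G')^c \to \SH^c$. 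So the statement to be proved, applied to the point $\cat C_{p,n} \in \Spc(\SH^c)$, reads
\[
\Spc(\alpha^*)\big(\varphi^{H,G}(\cat C_{p,n})\big) = \varphi^{\alpha(H),G'}(\cat C_{p,n}),
\]
and this is an immediate consequence of the equality of continuous maps $\Spc(\alpha^*)\circ \varphi^{H,G} = \varphi^{\alpha(H),G'}$ established at the end of Part~\ref{it:Phixed-alpha}.

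The one genuine point to address is that Part~\ref{it:Phixed-alpha} phrases the isomorphism $\phigeomb{H,G}\circ\alpha^* \cong \phigeomb{\alpha(H),G'}$ and the resulting identity of spectrum maps at the level of the big categories (or, equivalently, at the level of the induced maps $\Spc(\SH^c)\to\Spc(\SH(G')^c)$), whereas here we want the statement for the compact categories $\SHGc$ and $\SH(G')^c$. This is not an obstacle: $\alpha^*$, the restriction functors, the inflation functors, and the geometric fixed point functors all preserve compact objects (Parts~\ref{it:alpha*}, \ref{it:A-Mod}, \ref{it:phi^N}, \ref{it:phixed-pts}), so the commuting diagram of Part~\ref{it:Phixed-alpha} restricts verbatim to compact objects, and the induced maps on $\Spc(-)$ of the compact categories satisfy the same relation. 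Thus I would simply invoke Part~\ref{it:Phixed-alpha} with the understanding that everything takes place on compact objects; no new verification is needed beyond noting compactness-preservation, which was already recorded in Section~\ref{se:background}.

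Finally, I would remark that the proof makes no use of the specific structure of the chromatic primes $\cat C_{p,n}$; it only uses that $\cat P_G(H,p,n)$ is defined as $\varphi^{H,G}$ applied to a fixed point of $\Spc(\SH^c)$. The main (and only) conceptual ingredient is therefore the naturality statement~\ref{it:Phixed-alpha}, which in turn rests on $\phigeomb{N}\circ\Infl_{G/N}^G \cong \Id$ (equation~\eqref{eq:phinfl}) and the two-step factorization of nested geometric fixed points (Part~\ref{it:phixed-nested}). I do not expect any real difficulty; the statement is essentially a bookkeeping corollary of the functoriality already set up in Section~\ref{se:background}.
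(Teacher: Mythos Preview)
Your proposal is correct and follows exactly the paper's own argument: the paper's proof is the single sentence ``This follows directly from~\ref{it:Phixed-alpha} and Definition~\ref{def:P(H,p,n)},'' and you have unpacked precisely those two ingredients. Your additional remark about compactness-preservation is harmless but unnecessary, since the identity $\Spc(\alpha^*)\circ\varphi^{H,G}=\varphi^{\alpha(H),G'}$ in~\ref{it:Phixed-alpha} is already stated at the level of the spectra of compact objects.
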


\begin{proof}
This follows directly from~\ref{it:Phixed-alpha} and Definition~\ref{def:P(H,p,n)}.
\end{proof}

\begin{Cor}
\label{cor:induced-res}%
Consider $H\le G$ and the tt-functor $\Res^G_H : \SH(G) \to \SH(H)$.
The induced map $\Spc(\Res^G_H) : \Spc(\SH(H)^c) \to \Spc(\SHGc)$ maps
$\cat P_{H}(K,p,n)$ to $\cat P_G(K,p,n)$ for any $K \le H$.
\qed
\end{Cor}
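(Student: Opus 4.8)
The statement is precisely the special case of Proposition~\ref{prop:induced-alpha} attached to the inclusion homomorphism, so the plan is simply to unwind that reduction. Let $\alpha : H \hook G$ be the inclusion of the subgroup. By~\ref{it:alpha*} the induced tt-functor $\alpha^* : \SH(G) \to \SH(H)$ is precisely the restriction functor $\Res^G_H$, hence $\Spc(\Res^G_H) = \Spc(\alpha^*)$. Proposition~\ref{prop:induced-alpha} then asserts that this map sends $\cat P_H(K,p,n)$ to $\cat P_G(\alpha(K),p,n)$ for every $K \le H$; since $\alpha$ is an inclusion, $\alpha(K)=K$, which is the claim.

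Alternatively, one can argue directly at the level of the functors without invoking Proposition~\ref{prop:induced-alpha}: by Definition~\ref{def:P(H,p,n)} one has $\cat P_H(K,p,n) = (\phigeomb{K,H})^{-1}(\cat C_{p,n})$, so $\Spc(\Res^G_H)$ sends it to $(\phigeomb{K,H}\circ\Res^G_H)^{-1}(\cat C_{p,n})$; and since $\phigeomb{K,H} = \phigeom{K}\circ\Res^H_K$ and $\Res^H_K\circ\Res^G_H \cong \Res^G_K$ by transitivity of restriction, we get $\phigeomb{K,H}\circ\Res^G_H \cong \phigeom{K}\circ\Res^G_K = \phigeomb{K,G}$, so the image is $(\phigeomb{K,G})^{-1}(\cat C_{p,n}) = \cat P_G(K,p,n)$. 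Either way there is essentially no obstacle: the only non-formal input is the identification $\alpha^* = \Res^G_H$ from~\ref{it:alpha*} (equivalently, transitivity of restriction) together with the compatibility of geometric fixed points with $\alpha^*$ recorded in~\ref{it:Phixed-alpha}, which is exactly what powers Proposition~\ref{prop:induced-alpha}.
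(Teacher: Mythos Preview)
Your proof is correct and matches the paper's approach exactly: the corollary is marked with an immediate \qed\ in the paper precisely because it is the special case of Proposition~\ref{prop:induced-alpha} for the inclusion $\alpha:H\hookrightarrow G$, which is just what you do.
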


\begin{Cor}
\label{cor:induced-infl}%
Consider $N\normal G$ and the tt-functor $\Infl^G_{G/N} : \SH(G/N) \to \SHG$.
The induced map $\Spc(\Infl^G_{G/N}) : \Spc(\SHGc) \onto \Spc(\SH(G/N)^c)$ maps
$\cat P_{G}(H,p,n)$ to $\cat P_{G/N}(HN/N,p,n)$ for any $H\le G$.
\qed
\end{Cor}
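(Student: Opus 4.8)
The plan is to obtain the displayed formula as an immediate specialization of Proposition~\ref{prop:induced-alpha}, and then to deal separately with the surjectivity encoded in the double-headed arrow.

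For the formula, I would take $\alpha : G \onto G/N$ to be the canonical quotient homomorphism. By~\ref{it:alpha*} the induced tt-functor $\alpha^*$ is precisely the inflation functor $\Infl^G_{G/N}$, and clearly $\alpha(H) = HN/N$ for every $H \le G$. Feeding this into Proposition~\ref{prop:induced-alpha} gives at once
\[ \Spc(\Infl^G_{G/N})\big(\cat P_G(H,p,n)\big) = \cat P_{G/N}(HN/N,p,n), \]
as claimed. If one prefers to see this directly rather than through the black box of Proposition~\ref{prop:induced-alpha}, it unpacks to the isomorphism $\phigeomb{H,G} \cong \phigeomb{HN/N,\,G/N} \circ \Infl^G_{G/N}$ furnished by~\ref{it:Phixed-alpha} applied to the subgroup $H$ (here $\ker(\alpha) \cap H = N \cap H$), combined with Definition~\ref{def:P(H,p,n)}.

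For the surjectivity I would invoke the splitting~\eqref{eq:phinfl}: restricted to compact objects it reads $\phigeom{N} \circ \Infl^G_{G/N} \cong \Id_{\SH(G/N)^c}$ — no idempotent completion being needed, as observed in~\ref{it:phi^N}. Applying the contravariant functor $\Spc(-)$ on tt-categories then yields $\Spc(\Infl^G_{G/N}) \circ \Spc(\phigeom{N}) = \Id_{\Spc(\SH(G/N)^c)}$, so that $\Spc(\Infl^G_{G/N})$ is a split retraction and in particular surjective. I do not anticipate a genuine obstacle: both inputs are already in place, and the only point requiring attention is keeping track of which functor plays which role under the contravariance of $\Spc$.
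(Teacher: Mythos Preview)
Your proposal is correct and matches the paper's approach: the corollary is marked \qed precisely because it is an immediate specialization of Proposition~\ref{prop:induced-alpha} with $\alpha$ the quotient map $G\onto G/N$. Your additional justification of the surjectivity via the splitting~\eqref{eq:phinfl} is a welcome piece of explicitness that the paper leaves to the reader.
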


The special case $N=G$ shows that we can, in some sense, drop the group-theoretic information
by projecting onto the chromatic information; see also~\ref{it:triv}:

\begin{Cor}
\label{cor:project-H}%
The tt-functor $\triv=\Infl_1^G:\SH\to \SHG$ induces an inclusion-preserving map $\Spec(\triv):\Spc(\SHGc)\to \Spc(\SHc)$
which sends the equivariant prime $\cat P(H,p,n)$
to the non-equivariant prime $\cat C_{p,n}$.
\qed
\end{Cor}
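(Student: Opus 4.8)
The plan is to deduce this as an immediate special case of Proposition~\ref{prop:induced-alpha}, applied to the trivial homomorphism. First I would observe that $\triv = \Infl_1^G$ is precisely the functor $\alpha^*$ for $\alpha : G \to 1$ the unique homomorphism onto the trivial group. Proposition~\ref{prop:induced-alpha} then says that the induced map $\Spc(\alpha^*) : \Spc(\SHGc) \to \Spc(\SH(1)^c)$ sends $\cat P_G(H,p,n)$ to $\cat P_1(\alpha(H),p,n) = \cat P_1(1,p,n)$ for every $H \le G$. Since $\Spc(F)$ is inclusion-preserving for every tt-functor $F$ (as recalled just before Proposition~\ref{prop:induced-alpha}), the map $\Spc(\triv)$ is automatically inclusion-preserving.

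It then remains only to identify $\SH(1)$ with $\SH$ and $\cat P_1(1,p,n)$ with the Hopkins--Smith prime $\cat C_{p,n}$. For the trivial group, restriction $\Res^1_1$ is the identity, and the localizing $\otimes$-ideal $\cat L_1$ of~\ref{it:phi^N} is generated by the empty collection (there is no subgroup $K \le 1$ with $1 \not\le K$), hence $\cat L_1 = 0$ and $\phigeom{1} = \Id_{\SH}$. Therefore the absolute geometric fixed point functor $\phigeomb{1,1} : \SH(1) \to \SH$ of~\ref{it:phixed-pts} is the identity functor on $\SH$, so $\varphi^{1,1} = \Id$ on $\Spc(\SHc)$, and hence $\cat P_1(1,p,n) := \varphi^{1,1}(\cat C_{p,n}) = \cat C_{p,n}$ by Definition~\ref{def:P(H,p,n)}. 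Combining this with the previous paragraph yields $\Spc(\triv)\big(\cat P(H,p,n)\big) = \cat C_{p,n}$, as claimed.

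There is no genuine obstacle here: the entire content already resides in~\ref{it:Phixed-alpha} (equivalently Proposition~\ref{prop:induced-alpha}), and the only care required is the harmless verification that geometric fixed points along the trivial subgroup of the trivial group is the identity functor. One could, alternatively, argue directly from~\ref{it:triv}: there $\triv$ is exhibited as a section of $\phigeomb{H,G}$, so $\Spc(\triv) \circ \varphi^{H,G} = \Id$, and applying this to $\cat C_{p,n}$ gives $\Spc(\triv)(\cat P(H,p,n)) = \Spc(\triv)(\varphi^{H,G}(\cat C_{p,n})) = \cat C_{p,n}$; either route is equally short.
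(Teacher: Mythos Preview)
Your proof is correct and takes essentially the same approach as the paper: the corollary is stated with a \qed\ because it is the special case $N=G$ of Corollary~\ref{cor:induced-infl}, which in turn is just Proposition~\ref{prop:induced-alpha} for the quotient map, so your direct appeal to Proposition~\ref{prop:induced-alpha} for $\alpha:G\to 1$ is the same argument one step earlier in the chain. Your identification of $\cat P_1(1,p,n)$ with $\cat C_{p,n}$ and the alternative route via~\ref{it:triv} are both fine.
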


\begin{Prop}
\label{prop:induced-geom}%
Consider $N \lenormal G$ and the tt-functor $\phigeom{N} : \SH(G) \onto \SH(G/N)$.  The induced map $\Spc(\phigeom{N}) : \Spc(\SH(G/N)^c) \hook \Spc(\SHGc)$ is injective and maps $\cat P_{G/N}(H/N,p,n)$ to $\cat P_G(H,p,n)$ for any $N \le H \le G$. The image of~$\Spc(\phigeom{N})$ is exactly~$\SET{\cat P\in\Spc(\SHGc)}{G/H_+\in\cat P\mathrm{\ for\ all\ }H\mathrm{\ s.t.\ }N\not\le H}$.
\end{Prop}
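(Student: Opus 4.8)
The plan is to reduce everything to the standard tt-geometry of finite (Verdier) localizations, using the description of $\phigeom{N}$ recalled in~\ref{it:phi^N}. On compact objects, $\phigeom{N}$ is the composite of the Verdier quotient $\SHGc\onto\SHGc/\cat J_N$ with a tt-equivalence $\SHGc/\cat J_N\xra{\sim}\SH(G/N)^c$, where $\cat J_N=\thick_\otimes(G/K_+\mid K\le G,\ N\not\le K)$; crucially, as noted in~\ref{it:phi^N}, no idempotent completion is needed here because $\phigeom{N}$ is split by~$\Infl^G_{G/N}$, which preserves compact objects by~\ref{it:alpha*}. Injectivity of $\Spc(\phigeom{N})$ then follows formally: by~\eqref{eq:phinfl} and the contravariance of~$\Spc$, the map $\Spc(\Infl^G_{G/N})$ is a retraction of $\Spc(\phigeom{N})$, exactly as in~\ref{it:triv}. (In fact the general theory gives that $\Spc(\phigeom{N})$ is a homeomorphism onto its image, though only injectivity is claimed.)

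For the image, I would invoke the description of the spectrum of a Verdier quotient from~\cite{Balmer05a}: for a thick $\otimes$-ideal $\cat J$ of a \mbox{tt-category} $\cat K$, the quotient functor induces a homeomorphism from $\Spc(\cat K/\cat J)$ onto the subspace $\SET{\cat P\in\Spc(\cat K)}{\cat J\subseteq\cat P}$ (recall that every thick $\otimes$-ideal of the rigid category $\SHGc$ is radical, so there is no subtlety about passing to the radical). Applied with $\cat K=\SHGc$ and $\cat J=\cat J_N$, and combined with the equivalence $\SHGc/\cat J_N\simeq\SH(G/N)^c$, this identifies the image of $\Spc(\phigeom{N})$ with $\SET{\cat P}{\cat J_N\subseteq\cat P}$. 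Finally, since $\cat P$ is a thick $\otimes$-ideal and $\cat J_N$ is by definition the thick $\otimes$-ideal \emph{generated} by the objects $G/K_+$ with $N\not\le K$, the inclusion $\cat J_N\subseteq\cat P$ holds if and only if $G/K_+\in\cat P$ for every subgroup $K$ with $N\not\le K$. This is precisely the asserted description of the image.

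It remains to compute $\Spc(\phigeom{N})$ on the prime $\cat P_{G/N}(H/N,p,n)$, for $N\le H\le G$. Since $\Spc(\phigeom{N})$ sends a prime $\cat Q$ to $(\phigeom{N})\inv(\cat Q)$, and $\cat P_{G/N}(H/N,p,n)=(\phigeomb{H/N,G/N})\inv(\cat C_{p,n})$ by Definition~\ref{def:P(H,p,n)}, we get
\[
\Spc(\phigeom{N})\big(\cat P_{G/N}(H/N,p,n)\big)=\big(\phigeomb{H/N,G/N}\circ\phigeom{N}\big)\inv(\cat C_{p,n}).
\]
By the nesting isomorphism $\phigeomb{H/N}\circ\phigeom{N}\cong\phigeomb{H}$ of~\ref{it:phixed-nested}, the right-hand side equals $(\phigeomb{H,G})\inv(\cat C_{p,n})=\cat P_G(H,p,n)$, as wanted. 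The only step requiring genuine care --- and thus the main obstacle --- is the very first one: one must be sure that passing to compact objects in~\ref{it:phi^N} yields an \emph{honest} finite localization, \ie that $\SHGc/\cat J_N\simeq\SH(G/N)^c$ with no intervening idempotent completion, so that the quotient-spectrum statement of~\cite{Balmer05a} applies on the nose; this is exactly what the splitting by $\Infl^G_{G/N}$ guarantees. Everything after that is formal bookkeeping with preimages and the (contravariant) functoriality of~$\Spc$.
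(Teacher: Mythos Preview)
Your proof is correct and follows essentially the same approach as the paper: both reduce to the description of $\phigeom{N}$ on compacts as the Verdier quotient by $\cat J_N$ from~\ref{it:phi^N}, invoke the general fact $\Spc(\cat K/\cat J)\cong\SET{\cat P}{\cat J\subseteq\cat P}$, and compute the value on primes via the nesting relation $\phigeomb{H/N}\circ\phigeom{N}\cong\phigeomb{H}$ of~\ref{it:phixed-nested}. The only minor difference is that you give an additional, independent argument for injectivity via the retraction $\Spc(\Infl^G_{G/N})$, whereas the paper simply reads injectivity off from the quotient-spectrum identification; your extra care about the idempotent completion is well placed and matches exactly what the paper notes in~\ref{it:phi^N}.
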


\begin{proof}
In view of~\ref{it:phi^N}, $\phigeom{N}:\SHGc\onto\SH(G/N)^c$ is the localization with respect to $\cat J_N := \thick_\otimes(G/K_+ | N \not\le K)$. It is a general fact that $\Spc(\cat K/\cat J)\cong\SET{\cat P\in\SpcK}{\cat J\subseteq\cat P}$, hence the injectivity of the induced map on spectra and the description of its image. The description of its value on $\cat P_{G/N}(H/N,p,n)$ comes from the functoriality of~$\Spc(-)$ and the relation $\phigeomb{H/N} \circ \phigeom{N} \cong \phigeomb{H}$
in~\ref{it:phixed-nested}.
\end{proof}

\begin{Rem}
Contrary to what happens in Proposition~\ref{prop:induced-geom}, the apparently harmless map of Corollary~\ref{cor:induced-res} is not injective in general. There is some fusion phenomenon happening, since $\cat P_H(K,p,n)$ depends on the conjugacy class of~$K$ in~$H$, whereas $\cat P_G(K,p,n)$ only depends on the conjugacy class of~$K$ in the larger group~$G$.
\end{Rem}

\begin{Thm}
\label{thm:the-set}
Every prime $\cat P\in \Spc(\SHGc)$ is of the form $\cat P(H,p,n)$ for some $H\leq G$, some prime~$p$ and some $1\leq n\leq \infty$.
\end{Thm}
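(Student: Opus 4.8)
The plan is to proceed by induction on the order of $G$, using the geometric fixed point functors to reduce to smaller groups and ultimately to the non-equivariant case. Given a prime $\cat P \in \Spc(\SHGc)$, the first step is to find a subgroup $H \le G$ which is maximal with the property that $G/H_+ \notin \cat P$. (Such subgroups exist: for $H$ trivial, $G/1_+ = \Ind_1^G(\unit)$ satisfies $\phigeomb{K}(G/1_+) = \Sigma^\infty(G/1)_+^K$ by~\ref{it:X^H}, which is torsion-free for $K=1$, so $G/1_+ \notin \cat P(1,0,1)$ at least; more to the point, the $\otimes$-unit $\unit$ is never in $\cat P$, and by the nilpotence/conservativity of geometric fixed points some $G/H_+$ must avoid $\cat P$.) Replacing $H$ by a conjugate does not change the subsequent analysis by~\ref{it:conj}.

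The key case is $H = G$, i.e.\ when $G/G_+ = \unit \notin \cat P$ (which always holds) and moreover \emph{every} proper $G/K_+ \in \cat P$; but more usefully, consider whether some $G/K_+ \notin \cat P$ for $K \lneq G$. If so, pick a normal subgroup $1 \ne N \normal G$ contained in such a $K$ — concretely, choose $N$ minimal normal; then $N \le K$ forces $G/K_+ \in \cat J_N^{\,c}$'s complement is irrelevant, rather we want: if $G/K_+ \notin \cat P$ for some $K$ with $N \le K$, then the localizing ideal $\cat L_N = \Loc_\otimes(G/K_+ \mid N \not\le K)$ has all its compact generators in $\cat P$ (since those are among the $G/K_+$ that we can hope lie in $\cat P$). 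The cleaner route: show directly that there is a normal subgroup $N \normal G$ with $N \ne 1$ such that $\cat J_N \subseteq \cat P$, where $\cat J_N = \thick_\otimes(G/K_+ \mid N \not\le K)$. Given such an $N$, by Proposition~\ref{prop:induced-geom} the prime $\cat P$ lies in the image of $\Spc(\phigeom{N})$, so $\cat P = \Spc(\phigeom{N})(\cat P')$ for some $\cat P' \in \Spc(\SH(G/N)^c)$; by induction $\cat P' = \cat P_{G/N}(\bar H, p, n)$ for some $\bar H \le G/N$, and Proposition~\ref{prop:induced-geom} then gives $\cat P = \cat P_G(H,p,n)$ for the preimage $H$ of $\bar H$. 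This completes the induction provided the base case and the production of $N$ both go through.

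For the base case, when no such $N \ne 1$ works: this means that for every nontrivial normal $N \normal G$, the ideal $\cat J_N \not\subseteq \cat P$, i.e.\ some $G/K_+$ with $N \not\le K$ avoids $\cat P$. I claim this forces $G/1_+ \notin \cat P$. Indeed, combining over all minimal normal subgroups and using that $G/K_+ \otimes G/L_+$ is a sum of $G/M_+$'s with $M \le_G K \cap L$, one builds an object not in $\cat P$ supported on $K$'s with trivial intersection, hence $G/1_+ \notin \cat P$ — at which point $\cat P$ pulls back from $\SH(1)^c = \SHc$ along the conservative-on-this-piece structure, more precisely along $\Res^G_1$, via the separable extension~\ref{it:A-Mod}: $\cat P \in \Img(\Spc(\Res^G_1)) = \supp(G/1_+)$ by~\ref{it:res-supp}, so $\cat P = \Spc(\Res^G_1)(\cat Q)$ for some $\cat Q \in \Spc(\SHc)$. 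By Hopkins--Smith (the description~\eqref{eq:Spc(SH)}), $\cat Q = \cat C_{p,n}$, and then $\cat P = \cat P_G(1,p,n)$ by Corollary~\ref{cor:induced-res} (with $H = 1$) together with the computation that $\Spc(\Res^G_1)$ sends $\cat C_{p,n} = \cat P_1(1,p,n)$ to $\cat P_G(1,p,n)$.

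The main obstacle I expect is the combinatorial heart of the argument: showing that if $\cat J_N \not\subseteq \cat P$ for \emph{all} nontrivial normal $N$, then $G/1_+ \notin \cat P$. This is where the primality of $\cat P$ and the multiplicative structure of the Burnside-ring-like objects $G/K_+$ must be exploited carefully — one needs that a product of objects not in a prime is not in the prime, and one needs good control over the support of tensor products $G/K_+ \otimes G/L_+$ in terms of double cosets. It may be cleanest to instead argue via the comparison map $\rho_{\SHGc}$ and Dress's theorem to pin down which $\mathfrak p(H,p)$ the prime $\cat P$ lies over, then use the étale descent along $\Res^G_H$ for the corresponding $H$ rather than only $H = 1$; this replaces the induction-on-$N$ step with a single application of~\ref{it:res-supp} and~\ref{it:phi^N}, at the cost of first identifying $H$ from the Burnside-ring data. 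Either way, the delicate point is passing from ``$\cat P$ lies over $\mathfrak p(H,p)$'' to ``$\cat P$ is captured by $\phigeomb H$'', i.e.\ controlling $\Spc(\phigeomb{H,G})$ well enough — which is exactly what the combination of the separable-extension machinery of~\cite{Balmer13ppb} (via~\ref{it:A-Mod}, \ref{it:res-supp}, \ref{it:going-up}) and the finite-localization description of $\phigeom{H}$ (via~\ref{it:phi^N}) is designed to provide.
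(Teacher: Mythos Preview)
Your overall framework---induction on $|G|$ using the interplay between geometric fixed points and restriction---is exactly right, and matches the paper's approach. But you have made the argument much harder than it needs to be by looking for an \emph{intermediate} nontrivial normal subgroup $N$ with $\cat J_N \subseteq \cat P$, and then trying to handle the case where none exists via a combinatorial argument about minimal normal subgroups. That combinatorial argument is both unnecessary and, as you suspected, the weak point: from ``$\cat J_N \not\subseteq \cat P$ for all nontrivial normal $N$'' you cannot directly conclude $G/1_+ \notin \cat P$; your tensor-product-of-orbits argument only yields $G/M_+ \notin \cat P$ for some core-free $M$, not necessarily $M=1$.

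The paper's proof avoids all of this by taking $N = G$ from the start. The dichotomy is simply:
\begin{itemize}
\item If $G/K_+ \in \cat P$ for \emph{every} proper $K \lneq G$, then $\cat J_G \subseteq \cat P$, so by Proposition~\ref{prop:induced-geom} (with $N=G$) the prime $\cat P$ lies in $\Img(\varphi^{G,G}) = \Img(\Spc(\phigeomb{G}))$, and hence $\cat P = \cat P(G,p,n)$ for some non-equivariant $\cat C_{p,n}$.
\item Otherwise, $G/K_+ \notin \cat P$ for some proper $K \lneq G$, i.e.\ $\cat P \in \supp(G/K_+) = \Img(\Spc(\Res^G_K))$ by~\ref{it:res-supp}. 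Induction applied to $K$ gives the result, using~\ref{it:Phixed-alpha} to push $\Img(\varphi^{L,K})$ forward to $\Img(\varphi^{L,G})$.
\end{itemize}
No search for intermediate normal $N$, no minimal-normal-subgroup combinatorics, no appeal to Dress or the comparison map. Your own ``salvage'' at the end---using $\Res^G_H$ for the right $H$---is essentially this, but you don't need the Burnside-ring detour to find $H$: any proper $K$ with $G/K_+ \notin \cat P$ will do.
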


\begin{proof}
We need to show that $\Spc(\SHGc) = \bigcup_{H \le G} \Img(\varphi^{H,G})$.
We proceed by induction on $|G|$. The result is clear for $G$ trivial. So, let us suppose the result known for all proper subgroups of~$G$.
By Proposition~\ref{prop:induced-geom} for $N=G$, we can then identify the image of $\varphi^{G,G} = \Spc(\phigeomb{G})$ as follows:
\[
\Img(\varphi^{G,G})=\SET{\cat P\in\Spc(\SHGc)}{\cat P\ni G/H_+\textrm{ for all }H\lneq G}.
\]
Since $\SET{\cat P}{\cat P\not\ni G/H_+}=\supp(G/H_+)$ by definition of the support, we have
\[
\Spc(\SHGc)=\Img(\varphi^{G,G})\cup \bigcup_{H\lneq G}\supp(G/H_+)\,.
\]
By~\ref{it:res-supp}, we have $\supp(G/H_+)=\Img(\Spc(\Res^G_H))$. By the induction hypothesis applied to $H\lneq G$, we know that $\Spc(\SH(H)^c)$ is covered by the images of $\varphi^{K,H}$ for $K\leq H$ and we need to know what happens to those images $\Img(\varphi^{K,H})$ under $\Spc(\Res^G_H)$.
By~\ref{it:Phixed-alpha}, we know that
$\Spc(\Res^G_H)$ will map $\Img(\varphi^{K,H})$ into $\Img(\varphi^{K,G})$ and this completes the proof.
\end{proof}

We now want to describe the support of some basic objects in~$\SHGc$.

\begin{Lem}
\label{lem:G/K^H}%
For two subgroups $H,K\leq G$, we have in~$\SH$ that
\[
\phigeomb{H}(G/K_+)=\left\{
\begin{array}{cl}
\unit^{\oplus \ell} & \textrm{for some $\ell>0$ if $H\leq_G K$ (Notation~\ref{not:le_G})},
\\
0 &\textrm{if $H\not\leq_G K$}.
\end{array}\right.
\]
\end{Lem}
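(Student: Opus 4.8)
The plan is to reduce the statement to the elementary computation of fixed-point sets recalled in~\ref{it:geom-motiv}, using that geometric fixed points commute with suspension spectra.

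First I would invoke property~\ref{it:X^H}: since $G/K$ is a finite $G$-set, we have $\phigeomb{H}(\Sigma^\infty_G (G/K)_+) \cong \Sigma^\infty\big((G/K)^H\big)_+$ in~$\SHc$, so it suffices to understand the based space $\big((G/K)^H\big)_+$. Next, by the computation recalled in~\ref{it:geom-motiv}, a coset $gK$ is $H$-fixed iff $g\inv H g \subseteq K$, so $(G/K)^H = \SET{gK \in G/K}{H^g \subseteq K} = N_G(H,K)/K$. By Notation~\ref{not:le_G}, this set is empty exactly when $H \not\leq_G K$, and it is a nonempty finite discrete set, with $\ell := |N_G(H,K)/K| > 0$ elements, exactly when $H \leq_G K$.

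In the first case, $\big((G/K)^H\big)_+$ is a single (base)point and $\Sigma^\infty\varnothing_+ = 0$, giving $\phigeomb{H}(G/K_+) = 0$. In the second case, $\big((G/K)^H\big)_+ \cong \bigvee_{i=1}^{\ell} S^0$, hence $\phigeomb{H}(G/K_+) \cong \bigvee_{i=1}^{\ell} \Sigma^\infty S^0 \cong \unit^{\oplus \ell}$ with $\ell > 0$; here the identification of a finite wedge with a finite direct sum is the standard fact that finite coproducts and finite products agree in any additive category, in particular in~$\SHc$.

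There is essentially no serious obstacle: the argument is a direct combination of the background facts~\ref{it:X^H} and~\ref{it:geom-motiv}. The only point worth stating explicitly is the passage from finite wedges to finite biproducts in the additive category~$\SHc$, and the observation that $\ell > 0$ whenever $(G/K)^H \neq \varnothing$, which is immediate.
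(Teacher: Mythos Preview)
Your proof is correct and follows essentially the same approach as the paper: invoke~\ref{it:X^H} to reduce to computing the $H$-fixed points of the $G$-set $G/K$, then use the identification $(G/K)^H = N_G(H,K)/K$ from~\ref{it:geom-motiv}, with $\ell = |N_G(H,K)/K|$. Your version simply spells out the final identifications $\Sigma^\infty\varnothing_+ = 0$ and $\Sigma^\infty(\textrm{finite set})_+ \cong \unit^{\oplus\ell}$ that the paper leaves implicit.
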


\begin{proof}
By~\ref{it:X^H}, it suffices to compute the $H$-fixed subset of the finite $G$-set $G/K$. The result holds with $\ell=|N_G(H,K)/K|$ by the discussion in~\ref{it:geom-motiv}.
\end{proof}

\begin{Prop}
\label{prop:G/K^H}%
Let $H,K\leq G$ be two subgroups, $p$ a prime and $1\le n\le\infty$. Then $G/K_+\in \cat P(H,p,n)$ if and only if $H\not\leq_G K$.
\end{Prop}

\begin{proof}
This is immediate from the definition of $\cat P(H,p,n)$ as~$(\phigeomb{H})\inv(\cat C_{p,n})$ and Lemma~\ref{lem:G/K^H} since $\unit\notin\cat P$ and $0\in \cat P$ for every prime $\cat P=\cat C_{p,n}$ in~$\Spc(\SHc)$.
\end{proof}

\begin{Cor}
\label{cor:G/K^H}%
If $\cat P(K,q,n)\subseteq\cat P(H,p,m)$ then $K\le_G H$.
\end{Cor}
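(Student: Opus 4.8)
The plan is to argue by contraposition: assuming $K \not\leq_G H$, I will exhibit a single compact object of~$\SHGc$ lying in $\cat P(K,q,n)$ but not in $\cat P(H,p,m)$, which contradicts the assumed inclusion. The natural candidate is the orbit~$G/H_+$, since Proposition~\ref{prop:G/K^H} tells us exactly when an orbit belongs to an equivariant prime of the form $\cat P(L,r,k)$, namely precisely when the defining subgroup is \emph{not} subconjugate to the orbit subgroup.

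First I would apply Proposition~\ref{prop:G/K^H} with the orbit subgroup equal to~$H$ and the prime $\cat P(H,p,m)$: since $H \leq_G H$ trivially (subconjugacy is reflexive, taking $g=1$ in Notation~\ref{not:le_G}), the proposition gives $G/H_+ \notin \cat P(H,p,m)$, independently of $p$ and $m$. Next I would apply the same Proposition~\ref{prop:G/K^H}, this time with orbit subgroup~$H$ and prime $\cat P(K,q,n)$: the hypothesis $K \not\leq_G H$ is exactly the condition under which the proposition yields $G/H_+ \in \cat P(K,q,n)$. Combining the two, $G/H_+$ witnesses $\cat P(K,q,n) \not\subseteq \cat P(H,p,m)$, contradicting the hypothesis; hence $K \leq_G H$.

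I do not anticipate a genuine obstacle here: the statement is an immediate formal consequence of Proposition~\ref{prop:G/K^H} together with reflexivity of the subconjugacy relation, and does not require the (more substantial) chromatic or separability input used elsewhere. The only point to be careful about is that the containment $\cat P(K,q,n) \subseteq \cat P(H,p,m)$ is used purely as a set-theoretic inclusion of subcategories, so producing one object in the symmetric difference suffices; no compatibility of the chromatic parameters $(q,n)$ and $(p,m)$ is needed for this particular conclusion (that finer information is what later results address).
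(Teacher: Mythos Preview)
Your proposal is correct and matches the paper's own proof essentially verbatim: the paper also applies Proposition~\ref{prop:G/K^H} twice to the object~$G/H_+$, using reflexivity $H\le_G H$ to get $G/H_+\notin\cat P(H,p,m)$ and then the inclusion to force $G/H_+\notin\cat P(K,q,n)$, whence $K\le_G H$. The only cosmetic difference is that the paper phrases it as a direct implication rather than by contraposition.
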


\begin{proof}
Use (the contrapositive of) Proposition~\ref{prop:G/K^H} twice: As $H\le_G H$, we have $G/H_+\notin\cat P(H,p,m)$ and therefore $G/H_+\notin\cat P(K,q,n)$, which implies $K\le_G H$.
\end{proof}

\begin{Cor}
\label{cor:supp(G/K)}%
Let $K\le G$ be a subgroup. We have
\[
\supp(G/K_+)=\SET{\cat P(H,p,n)}{H\leq_G K}=\Img(\Spc(\Res^G_K)).
\]
\end{Cor}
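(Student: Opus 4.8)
The plan is to chain together three facts already established. First, the right-hand equality $\supp(G/K_+)=\Img(\Spc(\Res^G_K))$ requires no work: it is exactly the instance of~\ref{it:res-supp} for the tt-ring $A^G_K=G/K_+$, using the identification $A^G_K\MMod_{\SHG}\cong\SH(K)$ from~\ref{it:A-Mod} under which extension-of-scalars is $\Res^G_K$. So the only content is the middle equality, and I would attack it by simply unwinding the definition of support.

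By definition, $\supp(G/K_+)=\SET{\cat P\in\Spc(\SHGc)}{G/K_+\notin\cat P}$. Theorem~\ref{thm:the-set} tells us that every prime $\cat P$ of $\SHGc$ has the form $\cat P(H,p,n)$ for some $H\le G$, some prime~$p$ and some $1\le n\le\infty$; hence it suffices to decide membership of $G/K_+$ in each such prime. This is precisely what Proposition~\ref{prop:G/K^H} provides: $G/K_+\in\cat P(H,p,n)$ if and only if $H\not\le_G K$. Taking the contrapositive, $G/K_+\notin\cat P(H,p,n)$ if and only if $H\le_G K$, which at once yields both inclusions between $\supp(G/K_+)$ and $\SET{\cat P(H,p,n)}{H\le_G K}$, where the latter set ranges over all admissible $p$ and $n$ as well.

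I do not expect any real obstacle: the statement is a direct corollary of Theorem~\ref{thm:the-set}, Proposition~\ref{prop:G/K^H} and~\ref{it:res-supp}. The only mild point of care is notational: one should read $\SET{\cat P(H,p,n)}{H\le_G K}$ as a subset of $\Spc(\SHGc)$, and recall from Definition~\ref{def:P(H,p,n)} that $\cat P(H,p,n)$ depends only on the conjugacy class of~$H$, which is exactly why the subconjugacy condition $H\le_G K$ (rather than literal containment $H\le K$) is the correct one appearing on the right-hand side.
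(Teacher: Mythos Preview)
Your proposal is correct and follows essentially the same approach as the paper's own proof: invoke~\ref{it:res-supp} for the equality $\supp(G/K_+)=\Img(\Spc(\Res^G_K))$, then combine Theorem~\ref{thm:the-set} with Proposition~\ref{prop:G/K^H} to obtain the middle description. Your added remarks about~\ref{it:A-Mod} and the conjugacy-class dependence are accurate but inessential elaborations.
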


\begin{proof}
By definition, $\supp(G/K_+)=\SET{\cat P}{G/K_+\notin\cat P}$ and by Theorem~\ref{thm:the-set}, every prime~$\cat P$ is of the form~$\cat P(H,p,n)$. So, the first equality results from Proposition~\ref{prop:G/K^H}. The other equality $\supp(G/K_+)=\Img(\Spc(\Res^G_K))$ is~\ref{it:res-supp} again.
\end{proof}

\begin{Thm}
\label{thm:prime-uniqueness}%
The primes $\cat P(H,p,n)$ are uniquely characterized by the conjugacy class of~$H$, the prime~$p$ and the chromatic integer $1\leq n\leq \infty$. More precisely, if $\cat P(H,p,m)=\cat P(K,q,n)$ then $H^g=K$ for some $g\in G$ and $\cat C_{p,m}=\cat C_{q,n}$ in~$\SHc$ (meaning $m=n$, and, as long as $m=n>1$ then also $p=q$).
\end{Thm}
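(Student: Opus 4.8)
The plan is to extract all the required information from the already-established functoriality of the primes under the functors $\triv$, $\Res$, $\Infl$ and the geometric fixed point functors. First I would dispose of the chromatic data: applying Corollary~\ref{cor:project-H}, the map $\Spc(\triv):\Spc(\SHGc)\to\Spc(\SHc)$ sends $\cat P(H,p,m)\mapsto\cat C_{p,m}$ and $\cat P(K,q,n)\mapsto\cat C_{q,n}$, so from $\cat P(H,p,m)=\cat P(K,q,n)$ we immediately get $\cat C_{p,m}=\cat C_{q,n}$ in~$\SHc$. The parenthetical remark (that this forces $m=n$, and forces $p=q$ once $m=n>1$) is just the known structure of $\Spc(\SHc)$ recalled in diagram~\eqref{eq:Spc(SH)}, since $\cat C_{p,1}=\cat C_{q,1}=\cat C_{0,1}$ for all primes but the $\cat C_{p,n}$ are otherwise distinct.

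Next I would pin down the conjugacy class of the subgroup. Suppose $\cat P(H,p,m)=\cat P(K,q,n)=:\cat P$. Apply Corollary~\ref{cor:G/K^H} in both directions: from $\cat P(K,q,n)\subseteq\cat P(H,p,m)$ we get $K\le_G H$, and from $\cat P(H,p,m)\subseteq\cat P(K,q,n)$ we get $H\le_G K$. Thus $H^g\le K$ and $K^{g'}\le H$ for some $g,g'\in G$; comparing orders gives $|H|=|K|$, hence $H^g=K$, i.e. $H$ and $K$ are conjugate in~$G$. (Equivalently, one can use Corollary~\ref{cor:supp(G/K)} and the fact that $G/K_+\in\cat P(H,p,m)\iff H\not\le_G K$ from Proposition~\ref{prop:G/K^H}, applied symmetrically.)

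I expect the subgroup argument above is actually the main point, and it is short because all the work has already been done in Proposition~\ref{prop:G/K^H} and its corollaries: the key insight is that the orbit objects $G/K_+$ detect subconjugacy of the ``defining'' subgroup of a prime, which turns an a~priori containment of primes into a containment of subgroups up to conjugacy. There is no real obstacle here; the only mild care needed is keeping track that $\le_G$ is not symmetric in general, so one genuinely needs both inclusions to conclude $H\sim_G K$, and one should state explicitly that finiteness of $G$ (hence of $H$, $K$) is what upgrades the two subconjugacies to a conjugacy via the order count. Assembling the two parts — conjugacy of subgroups from the orbit objects, equality of chromatic primes from $\Spc(\triv)$ — completes the proof.
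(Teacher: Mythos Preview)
Your proposal is correct and takes essentially the same approach as the paper: use Corollary~\ref{cor:G/K^H} in both directions to get $H\le_G K$ and $K\le_G H$, hence $H\sim_G K$, and use Corollary~\ref{cor:project-H} to get $\cat C_{p,m}=\cat C_{q,n}$. The paper simply does these two steps in the reverse order and omits the order-count justification for why mutual subconjugacy gives conjugacy.
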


\begin{proof}
Suppose that $\cat P(H,p,m)=\cat P(K,q,n)$. By Corollary~\ref{cor:G/K^H}, we have $H\leq_G K$ and $K\leq_G H$, so $H\sim_G K$. Finally Corollary~\ref{cor:project-H} gives~$\cat C_{p,m}=\cat C_{q,n}$ as desired.
\end{proof}

At this stage, we have a complete description of the \emph{set} $\Spc(\SHGc)$. We also have a complete description of the maps $\Spc(\alpha^*)$ induced by group homomorphisms~$\alpha:G\to G'$.  Before moving on to the topology of~$\Spc(\SHGc)$, we can prove the following result, which is also obtained in~\cite{Strickland12up}. It will not be used in the rest of the paper.

\begin{Thm}[Equivariant Nilpotence Theorem]
\label{thm:nilpotence-thm}
The collection of functors
\[\rmK(p,n)_*(\phigeomb{H}(-)) : \SHGc \to \rmK(p,n)_*\MMod_*\]
detects $\otimes$-nilpotence. That is, a morphism $f:x\to y$ between compact $G$-spectra $x,y \in \SHGc$ is $\otimes$-nilpotent in~$\SHGc$ (\ie $f\potimes{\ell}=0\,:\ x\potimes{\ell}\to y\potimes{\ell}$ for some $\ell\geq 1$) if and only if $\rmK(p,n)_*(\phigeomb{H}(f)) = 0$ for all $H \le G$, all primes $p$ and all $1 \le n \le \infty$.
\end{Thm}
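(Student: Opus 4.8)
The plan is to reduce the equivariant nilpotence statement to the non-equivariant Nilpotence Theorem of Devinatz--Hopkins--Smith via the general tt-geometric principle that $\otimes$-nilpotence of a morphism is detected by the supports of its ``cones'' over the tt-spectrum, combined with the fact that the maps $\varphi^{H,G}$ jointly cover $\Spc(\SHGc)$ (Theorem~\ref{thm:the-set}). Concretely, recall that for a morphism $f : x \to y$ in a tt-category $\cat K$, $f$ is $\otimes$-nilpotent if and only if $\varphi(f) = 0$ in the residue field of $\cat K$ at every point of $\Spc(\cat K)$; more elementarily, one uses that $f\potimes{\ell} = 0$ for some $\ell$ iff for every prime $\cat P \in \Spc(\cat K)$ the image of $f$ in $\cat K / \cat P$ (or rather in its local category) is ``eventually zero'', which for a single morphism between compact objects is governed by the non-equivariant situation after applying a tt-functor to $\SHc$. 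The key input making this concrete is that the non-equivariant Nilpotence Theorem says exactly that the functors $\rmK(p,n)_*(-) : \SHc \to \rmK(p,n)_*\MMod_*$ (ranging over all $p$ and $1 \le n \le \infty$, where $n = \infty$ records rational homology $\rmH\bbQ_*$ and $n=1$ records $\rmH\bbF_p$) jointly detect $\otimes$-nilpotence.

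First I would record the reduction: a morphism $f : x \to y$ in $\SHGc$ is $\otimes$-nilpotent if and only if for every $H \le G$, the morphism $\phigeomb{H}(f) : \phigeomb{H}(x) \to \phigeomb{H}(y)$ is $\otimes$-nilpotent in $\SHc$. The ``only if'' direction is trivial since $\phigeomb{H}$ is a tt-functor, so $\phigeomb{H}(f)\potimes{\ell} = \phigeomb{H}(f\potimes{\ell}) = 0$. For the ``if'' direction, I would argue at the level of tt-spectra: $f$ is $\otimes$-nilpotent iff $\supp(\cone(f\potimes{\ell})) \supseteq \supp(x\potimes{\ell}) \cap \supp(y\potimes{\ell})$ stabilizes appropriately — more cleanly, one uses the criterion that $f$ is $\otimes$-nilpotent iff for every $\cat P \in \Spc(\SHGc)$ the image $\bar f$ of $f$ in the local category at $\cat P$ becomes zero after finitely many tensor powers, and this can be checked since by Theorem~\ref{thm:the-set} every $\cat P$ equals $(\phigeomb{H,G})\inv(\cat C_{p,n})$ for some $H, p, n$, so it suffices to know $\phigeomb{H}(f)$ is killed by a $\rmK(p,n)$-local condition, i.e.\ that $\phigeomb{H}(f)$ is $\otimes$-nilpotent. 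The cleanest route here is to cite the abstract fact (e.g.\ from Balmer's work on the spectrum) that $f$ is $\otimes$-nilpotent iff $\rho_{\cat P}(f) = 0$ in every residue field, then observe that the residue field of $\SHGc$ at $\cat P(H,p,n)$ receives a tt-functor from the residue field of $\SHc$ at $\cat C_{p,n}$ via $\phigeomb{H}$.

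Then the statement follows by applying the non-equivariant Nilpotence Theorem to each $\phigeomb{H}(f)$: the morphism $\phigeomb{H}(f)$ in $\SHc$ is $\otimes$-nilpotent iff $\rmK(p,n)_*(\phigeomb{H}(f)) = 0$ for all primes $p$ and all $1 \le n \le \infty$ (including the edge cases $\rmH\bbQ$ and $\rmH\bbF_p$). Composing the two reductions yields precisely the claimed criterion: $f$ is $\otimes$-nilpotent in $\SHGc$ iff $\rmK(p,n)_*(\phigeomb{H}(f)) = 0$ for all $H \le G$, all $p$, and all $1 \le n \le \infty$.

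The main obstacle is justifying the ``if'' direction of the first reduction, i.e.\ that joint geometric-fixed-point detection of $\otimes$-nilpotence on $\SHGc$ genuinely follows from the set-level computation $\Spc(\SHGc) = \bigcup_H \Img(\varphi^{H,G})$. One must be careful that detecting $\otimes$-nilpotence via a family of tt-functors $\{F_i : \cat K \to \cat L_i\}$ requires not merely that the induced maps $\Spc(F_i)$ are jointly surjective, but that the $F_i$ are \emph{jointly conservative} — or one argues directly with the Balmer filtration/spectrum-level criterion for nilpotence. Since the geometric fixed point functors $\phigeomb{H}$ are indeed jointly conservative on $\SHG$ (this is used already in~\ref{it:going-up}), joint conservativity passes to the compact objects, and this is exactly what powers the reduction; I would spell out that a nilpotence-detecting family composed with nilpotence-detecting families (the $\rmK(p,n)_*$ on each $\SHc$) again detects nilpotence. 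Everything else is formal.
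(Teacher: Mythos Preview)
Your reduction strategy is sound in spirit---show that the $\phigeomb{H}$ jointly detect $\otimes$-nilpotence on $\SHGc$, then invoke the non-equivariant theorem---but the justification you give for the key step has a genuine gap. There is no general theorem that a tt-category has ``residue fields'' at every prime, so the criterion ``$f$ is $\otimes$-nilpotent iff $\rho_{\cat P}(f)=0$ in every residue field'' cannot simply be cited from Balmer's work. Likewise, joint conservativity of the $\phigeomb{H}$ is a statement about \emph{objects}, and it does not formally imply detection of $\otimes$-nilpotence of \emph{morphisms}: knowing $\phigeomb{H}(f^{\otimes N})=0$ for all $H$ tells you each $\phigeomb{H}(\cone(f^{\otimes N}))$ splits, not that $f^{\otimes N}=0$. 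The missing mechanism is a telescope argument in the \emph{big} category: after using rigidity to replace $f$ by its adjoint $g:\unit\to x^\vee\otimes y$ and taking a common power so that $\phigeomb{H}(g)=0$ for all $H$, form $\Tel(g)=\mathrm{hocolim}\,(x^\vee\otimes y)^{\otimes n}$; since the $\phigeomb{H}$ preserve coproducts and are jointly conservative on $\SHG$, one gets $\Tel(g)=0$, and compactness of $\unit$ then forces $g^{\otimes m}=0$ for some $m$. With this in place your route works, but you should spell it out rather than invoke residue fields or bare conservativity.

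For comparison, the paper argues quite differently and stays entirely within $\SHGc$: after passing to a power so that $\phigeomb{H}(f)=0$ for all $H$, an induction on $|G|$ (using $\phigeomb{K,G}=\phigeomb{K,H}\circ\Res^G_H$) lets one further assume $\Res^G_H(f)=0$, hence $f\otimes G/H_+=0$, for all proper $H\lneq G$. Since $\phigeomb{G}$ is precisely the Verdier localization $\SHGc\onto\SHGc/\cat J$ with $\cat J=\thick_\otimes\{G/H_+:H\lneq G\}$, the vanishing $\phigeomb{G}(f)=0$ means $f$ factors through some $z\in\cat J$; but $f^{\otimes\ell}\otimes z=0$ for suitable $\ell$ because the ``$\otimes$-nilpotence locus'' of $f$ is a tt-ideal containing all the generators of $\cat J$. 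This avoids both big-category arguments and any appeal to abstract nilpotence criteria.
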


\begin{proof}
	By the non-equivariant Nilpotence Theorem, up to replacing $f$ by some $\otimes$-power, we can assume that $\phigeomb{H}(f)=0$ for all of the finitely many subgroups~$H\le G$. By induction on the order of the group, applying the result to proper subgroups ${H\lneq G}$, together with the fact that $\phigeomb{K,G}=\phigeomb{K,H}\circ\Res^G_H$ for all ${K\le H\le G}$, we can similarly assume that $\Res^G_H(f)=0$ for all $H\lneq G$.  Consequently, ${f\otimes G/H_+=0}$ for all $H\lneq G$. Since the subcategory ``on which $f$ is $\otimes$-nilpotent", $\SET{z\in \SHGc}{f\potimes{\ell}\otimes z=0\textrm{ for some }\ell\ge 1}$, is a tt-ideal (see~\cite[Prop.\,2.12]{Balmer10b}) we deduce that for every $z\in\cat J:=\thick_\otimes{\SET{G/H_+}{H\lneq G}}$ there exists $\ell=\ell(z)\ge 1$ such that $f\potimes{\ell}\otimes z=0$. Finally, from $\phigeomb{G}(f)=0$ and the description of~$\phigeomb{G}$ as a localization (see~\ref{it:phi^N} for $N=G$), we see that $f\mapsto 0$ under $\SHGc\onto \SHGc/\cat J$.  Therefore, by a general fact on Verdier localization, $f$ factors via some object~$z\in\cat J$. Combining the two results, we see that $f\potimes{(\ell+1)}$ factors via $f\potimes{\ell}\otimes z=0$ for $\ell=\ell(z)$ as above.
\end{proof}

\smallskip
\section{Tensor idempotents and the Tate construction}
\label{se:e,f,Tate}%
\medskip

In the second part of the paper, about the topology of $\Spc(\SHGc)$, we shall need additional tools relating to generalized Rickard idempotents in the sense of~\cite{BalmerFavi11} and generalized Tate cohomology in the sense of Greenlees~\cite{Greenlees01}.

\smallbreak
Let $\cat T$ be a rigidly-compactly generated tt-category, like $\SHG$.  We shall use $[-,-]$ to denote the internal hom functor, which restricts to $\cat T^c$ by assumption, and we shall sometimes write $x^\vee:=[x,\unit]$ for the dual of an object $x \in \cat T$.

\begin{Rem}
\label{rem:Thom+class}%
Let $\cat K$ be an essentially small and rigid tt-category, like $\cat K=\cat T^c$.  Recall that a \emph{Thomason subset} of $\Spc(\cat K)$ is a set which can be written as a (possibly infinite) union of closed subsets each having a quasi-compact complement. By~\cite[Thm.\,4.10]{Balmer05a}, the tt-ideals of $\cat K$ correspond to the Thomason subsets of $\Spc(\cat K)$ via $\cat C \mapsto \supp(\cat C):=\bigcup_{x \in \cat C} \supp(x)$, with inverse given by $Y \mapsto \cat K_Y := \SET{{x \in \cat K}}{\supp(x)\subseteq Y}$. This is the \emph{classification of tt-ideals} resulting from a description of the space~$\SpcK$.
\end{Rem}

\begin{Not}
\label{not:idempotents}%
For any Thomason subset $Y\subseteq \Spc(\cat T^c)$, recall from~\cite{BalmerFavi11} that
$$
\Delta_Y:=\big(\ e_Y \to \unit \to f_Y \to \Sigma e_Y\ \big)
$$
denotes the \emph{idempotent triangle} in~$\cat T$ for the finite localization associated to the \mbox{tt-ideal} $(\cat T^c)_Y=\SET{x\in \cat T^c}{\supp(x)\subseteq Y}$.  The localizing subcategory $\cat T_Y$ of acyclic objects is by definition the one generated by $(\cat T^c)_Y$ (and we have $(\cat T_Y)^c=(\cat T^c)_Y$), and the idempotents $e_Y\potimes{2}\cong e_Y$ and $f_Y\potimes{2}\cong f_Y$ satisfy $\cat T_Y=e_Y\otimes \cat T=\Ker(f_Y\otimes-)$ and $(\cat T_Y)^\perp=f_Y\otimes \cat T=\Ker(e_Y\otimes-)=\Ker([e_Y,-])$.
\end{Not}

\begin{Rem}
\label{rem:loc}%
For $Y \subseteq \Spc(\cat T^c)$ a Thomason subset and $V:= \Spc(\cat T^c)\sminus Y$ its complement, we have the following diagram of adjunctions (\aka recollement)
$$
\xymatrix@C=.5em@R=4em{
{}^\perp\cat T(V)=e_Y\otimes \cat T=\cat T_Y \vphantom{\int_{\int_\int}}
 \ar@<-1em>@{ >->}[rd]_-{\incl} \ar@<1em>@{ >->}[rd]^-{[e_Y,-]}
 \ar@/^.5em/[rr]^-{[e_Y,-]}_-{\cong}
&&
[e_Y,\cat T]=\cat T(V)^\perp=(\cat T_Y)^{\perp\perp} \vphantom{\int_{\int_\int}}
 \ar@<-1em>@{ >->}[ld]_-{e_Y\otimes-} \ar@<1em>@{ >->}[ld]^-{\incl}
 \ar@/^.5em/[ll]^-{e_Y\otimes-}
\\
& \cat T \vphantom{\int^{\int^{\int^\int}}}
 \ar@{->>}[lu]|-{e_Y\otimes-}
 \ar@{->>}[ru]|-{[e_Y,-]}
 \ar@<-1.5em>@{->>}[d]_-{f_Y\otimes-} \ar@<1.5em>@{->>}[d]^-{[f_Y,-]}
\\
& \kern-5em
 \cat T(V):=f_Y\otimes \cat T=[f_Y,\cat T]=(\cat T_Y)^{\perp}
 \kern-5em
 \ar@{ >->}[u]|-{\incl}
}
$$
in which each of the six ``vertical" sequences $\sbull\into \cat T \onto \sbull$ is a (Bousfield) localization sequence of triangulated categories. In particular, the localization \mbox{$f_Y\otimes-:$} $\cat T\onto \cat T(V)=f_Y\otimes \cat T$ is a tt-functor, the tensor on~$\cat T(V)$ being that of~$\cat T$. Moreover, $\cat T(V)$ is also a rigidly-compactly generated tt-category whose subcategory of compact objects is, by Neeman's Theorem~\cite[Thm.\,2.1]{Neeman92b}, the idempotent completion of the image of the compact objects of~$\cat T$:
$$
(f_Y\otimes \cat T^c)^\natural\cong\cat T(V)^c.
$$
The spectrum of this tt-category~$\cat T(V)^c$ identifies with~$V=\Spc(\cat T^c)\sminus Y$,
\begin{equation}
\label{eq:Spc(T(V))}%
\Spc(\cat T(V)^c)\cong V,
\end{equation}
via $\Spc(f_Y\otimes-:\cat T^c\to \cat T(V)^c)$; see~\cite[\S 2.2]{BalmerICM}. The internal hom $[-,-]$ on~$\cat T$ restricts to an internal hom on the local category $\cat T(V)=f_Y\otimes\cat T$. However, localization $f_Y\otimes-:\cat T\to \cat T(V)$ is \emph{not} closed monoidal. Indeed, one can show that for every $x,y\in \cat T$, the following triangle is exact in~$\cat T(V)$:
$$
f_Y\otimes [x,e_Y\otimes y] \to f_Y\otimes [x,y] \to [f_Y\otimes x, f_Y\otimes y] \to \cdot
$$
The second object is the image of the internal hom and the third is the internal hom of the images. The first object describes the difference.
\end{Rem}

\begin{Rem}
\label{rem:e-f}%
For every $x,y\in \cat T$, it follows from $[e_Y\otimes \cat T,f_Y\otimes \cat T]=0$ and from the exact triangle $x\otimes \Delta_Y$ that we have $[x,f_Y\otimes y]\cong [f_Y\otimes x,f_Y\otimes y]$. Similarly, $[e_Y\otimes x,y]\cong [e_Y\otimes x, e_Y\otimes y]$.
\end{Rem}

\begin{Lem}
\label{lem:splitting_lemma}%
Let $e_Z \to \unit \to f_Z \to \Sigma e_Z$ be the idempotent triangle in a rigidly-compactly generated tt-category~$\cat T$ associated to a Thomason \emph{closed} subset $Z \subseteq \Spc(\cat T^c)$. If there is a non-trivial decomposition $e_Z = e_1 \oplus e_2$ with $e_1 \otimes e_2 = 0$ then $Z$ is disconnected, $Z=Z_1\sqcup Z_2$, for closed~$Z_i$ such that $e_i=e_{Z_i}$ for $i=1,2$.
\end{Lem}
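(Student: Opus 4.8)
The plan is to recognize the decomposition $e_Z = e_1 \oplus e_2$ as coming from a decomposition of the idempotent ring $e_Z$, and then to transport this decomposition through the localization machinery of Notation~\ref{not:idempotents} and Remark~\ref{rem:loc}. First I would argue that each summand $e_i$ inherits an idempotent-ring structure: since $e_Z$ is a (non-unital, or rather $e_Z$-unital) idempotent with $e_Z \otimes e_Z \cong e_Z$, and $e_1 \otimes e_2 = 0$, tensoring $e_Z = e_1 \oplus e_2$ with itself gives $e_Z \cong e_1\potimes{2} \oplus e_2\potimes{2}$, so $e_i \otimes e_i \cong e_i$ and each $e_i$ is again a $\otimes$-idempotent. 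Consequently $e_1$ and $e_2$ each determine a finite localization of $\cat T$, hence (by the classification recalled in Remark~\ref{rem:Thom+class} and the correspondence between smashing localizations and idempotents in \cite{BalmerFavi11}) a Thomason subset; since $Z$ is closed with quasi-compact complement, the corresponding localizing subcategories $e_i \otimes \cat T$ are generated by tt-ideals of $\cat T^c$, and I would name $Z_i$ the corresponding \emph{closed} subsets of $\Spc(\cat T^c)$, so that $e_i = e_{Z_i}$.

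Next I would show $Z = Z_1 \sqcup Z_2$ as a disjoint union of closed subsets. The inclusion $e_i \otimes \cat T \subseteq e_Z \otimes \cat T = \cat T_Z$ gives $Z_i \subseteq Z$, hence $Z_1 \cup Z_2 \subseteq Z$; conversely $\cat T_Z = e_Z \otimes \cat T = (e_1 \oplus e_2)\otimes \cat T \subseteq e_1 \otimes \cat T + e_2 \otimes \cat T$, and passing to supports of compact objects yields $Z \subseteq Z_1 \cup Z_2$. For disjointness, I would use $e_1 \otimes e_2 = 0$: a point $\cat P \in Z_1 \cap Z_2$ would have to lie in the support of both $e_1$ and $e_2$ in the appropriate sense, but $e_1 \otimes e_2 = 0$ forces their supports (as Thomason subsets) to be disjoint — concretely, $(\cat T^c)_{Z_1} \otimes (\cat T^c)_{Z_2} \subseteq \thickt{e_1 \otimes e_2} = 0$, so $\supp\big((\cat T^c)_{Z_1}\big)\cap\supp\big((\cat T^c)_{Z_2}\big) = Z_1 \cap Z_2 = \varnothing$. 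Thus $Z = Z_1 \sqcup Z_2$ with each $Z_i$ closed, so $Z$ is disconnected provided both $Z_i$ are nonempty, which is exactly the non-triviality hypothesis $e_i \neq 0$ (an empty $Z_i$ would force $e_{Z_i}=e_i=0$).

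The main obstacle, I expect, is the bookkeeping needed to pass cleanly between the three pictures — $\otimes$-idempotents in $\cat T$, smashing/finite localizations, and Thomason subsets of $\Spc(\cat T^c)$ — while keeping track of which subsets are \emph{closed} (and not merely Thomason). Specifically, one must check that the idempotents $e_i$, a priori only known to be summands of $e_Z$ in $\cat T$, really do arise from \emph{finite} (not just smashing) localizations, i.e. that $e_i \otimes \cat T$ is generated by its compact objects; this should follow from the fact that $e_Z \otimes \cat T = \cat T_Z$ is compactly generated (as $Z$ is closed with quasi-compact complement, so $(\cat T^c)_Z$ is a tt-ideal generated by a single object — or at worst finitely many — and each $e_i \otimes \cat T$ is a direct-summand-closed localizing subcategory of it). Once this compact generation is in hand, the identification $e_i = e_{Z_i}$ and everything else is formal, and the non-triviality of the decomposition translates directly into the disconnectedness of $Z$.
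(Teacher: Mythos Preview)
Your approach is correct in outline but takes a more roundabout route than the paper. The paper avoids the ``main obstacle'' you flag entirely: instead of verifying that the $e_i$ are themselves left idempotents of finite localizations, it picks a single compact generator $w$ with $\supp(w)=Z$ and $\cat J=\thickt{w}$ (available because $Z$ is Thomason \emph{closed}), observes $e_Z\otimes w\cong w$, and sets $w_i:=e_i\otimes w$. These $w_i$ are compact (as summands of $w$), satisfy $w_1\otimes w_2=0$, and give $Z=\supp(w)=\supp(w_1)\cup\supp(w_2)$ with $\supp(w_1)\cap\supp(w_2)=\supp(w_1\otimes w_2)=\varnothing$ straight from the axioms of support. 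The identification $e_i=e_{Z_i}$ is then relegated to ``the rest follows easily''. Your argument reaches the same conclusion but only after establishing that the $e_i$ are idempotents, that $e_i\otimes\cat T$ is compactly generated, and that the resulting Thomason subsets are actually closed --- all true, but all bypassed by the paper's trick of splitting the compact object rather than the idempotent.

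One genuine gap in your write-up: the step ``tensoring $e_Z=e_1\oplus e_2$ with itself gives $e_Z\cong e_1\potimes{2}\oplus e_2\potimes{2}$, so $e_i\otimes e_i\cong e_i$'' is a non-sequitur --- from $e_1\oplus e_2\cong e_1\potimes{2}\oplus e_2\potimes{2}$ alone you cannot cancel. The correct argument is to first note $e_i\in e_Z\otimes\cat T$ (summand of $e_Z$), hence $e_Z\otimes e_i\cong e_i$, and then tensor $e_Z=e_1\oplus e_2$ with $e_i$ to get $e_i\cong e_i\potimes{2}\oplus 0$.
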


\begin{proof}
Let $\cat J := \cat T^c_Z$ denote the compact part generating the idempotent triangle; that is, $e_Z\otimes \cat T = \Loc(\cat J) = \Ker(f_Z\otimes-)$.  Since $Z$ is Thomason closed, there is (by \cite[Prop.~2.14]{Balmer05a}) some $w \in \cat T^c$ such that $\supp(w) = Z$ and $\cat J = \thick_\otimes(w)$ (the thick tensor-ideal of $\cat T^c$ generated by $w$). The object $w$ belongs to~$\cat J\subset\Loc(\cat J)$, hence $e_Z \otimes w \cong w$. So, we obtain a decomposition $w\cong w_1 \oplus w_2$ with $w_1 \otimes w_2 = 0$ by setting $w_1 := e_1 \otimes w$ and $w_2 := e_2 \otimes w$. Moreover, it is simple to check that $\cat J_1 := e_1 \otimes \cat J = \thick_\otimes(w_1)$ and $\cat J_2 := e_2 \otimes \cat J = \thick_\otimes(w_2)$. Finally, these thick tensor-ideals $\cat J_i$ are nonzero. For example, if $e_1 \otimes \cat J = 0$ then $\Loc(\cat J)$ is contained in $\Ker(e_1 \otimes -)$ and, since $e_Z\in \Loc(\cat J)$, we would have $0 = e_1 \otimes e_Z \cong e_1$ contradicting the assumption that $e_1\neq0$. In conclusion, $Z= \supp(w) = \supp(w_1) \sqcup \supp(w_2)$ is a disjoint union of non-empty closed sets and the rest follows easily.
\end{proof}

\begin{Def}[Greenlees~\cite{Greenlees01}]
\label{def:t_Y}%
Let $Y\subseteq \Spc(\cat T^c)$ be a Thomason subset. We define the \emph{Tate functor with respect to~$Y$}
to be
$$
t_Y:=[f_Y,\Sigma e_Y\otimes -]\,:\ \cat T\to \cat T.
$$
\end{Def}

\begin{Rem}
\label{rem:Tate-lax}%
Many classical results about the Tate spectrum~\cite{GreenleesMay95b} hold for this more general construction.
For example, we have
\begin{equation}
\label{eq:Tate}%
t_Y = [f_Y,\Sigma e_Y \otimes -] \cong f_Y \otimes [e_Y,-]
\end{equation}
(cf.~\cite[Corollary~2.5]{Greenlees01}).
Furthermore, the Tate functor $t_Y$ is $\cat T^c$-linear, \ie
\begin{equation}
\label{eq:t_Y@comp}%
t_Y(x)\otimes y\cong t_Y(x\otimes y)
\end{equation}
for all $x\in\cat T$ and $y\in \cat T^c$. Indeed, every compact object $y\in\cat T^c$ is rigid, hence $[-,-]\otimes y\cong [-,-\otimes y]$. Finally, although $t_Y$ is not a monoidal functor, it is lax-monoidal (from the ``lax-monoidality" of $[-,-]$, the fact that $e_Y$ and $f_Y$ are $\otimes$-idempotents, and description~\eqref{eq:Tate} of~$t_Y$). Hence $t_Y(-)$ preserves ring objects and modules.  In fact, if $R$ is a ring object in $\cat T$ then not only is $t_Y(R)$ also a ring object, but also the natural map $R \to t_Y(R)$ is a ring homomorphism.  If $M$ is an $R$-module then $t_Y(M)$ inherits the structure of a $t_Y(R)$-module and hence can also be regarded as an $R$-module.
\end{Rem}

\begin{Prop}
\label{prop:F(idemp)}%
Let $F:\cat T\to \cat T'$ be a tt-functor preserving coproducts. Let $\cat K=\cat T^c$ and $\cat K'=(\cat T')^c$ and let $\varphi:\Spc(\cat K')\to \SpcK$ be the induced map. Let $Y\subseteq\SpcK$ be a Thomason subset and set $Y':=\varphi\inv(Y)\subseteq \Spc(\cat K')$.
Then
$\cat K'_{Y'} = \thick_\otimes( F(\cat K_Y) )$
and there is a unique isomorphism of idempotent triangles
$$
F\big(e_Y\to \unit \to f_Y\to \Sigma e_Y \big)\cong \big(e_{Y'}\to \unit \to f_{Y'}\to \Sigma e_{Y'}\big)
$$
in~$\cat T'$. If moreover $F$ is \emph{closed} monoidal then $F\circ t_Y\cong t_{Y'}\circ F$.
\end{Prop}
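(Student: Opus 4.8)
The plan is to build everything from the finite-localization structure and its naturality. First I would establish the identification of the $\otimes$-ideal $\cat K'_{Y'}$. Since $F$ preserves coproducts and is a tt-functor, $F(\cat K_Y)$ consists of objects whose support (computed via $\varphi$ and~\eqref{eq:Spc(T(V))}) lands in $Y' = \varphi^{-1}(Y)$; conversely, one shows these objects generate the whole tt-ideal $(\cat K')_{Y'}$ as a thick $\otimes$-ideal. The cleanest route: recall from Remark~\ref{rem:Thom+class} that $\cat K_Y$ is classified by the Thomason subset $Y$; if $Y$ is moreover the increasing union of Thomason closed subsets $Z_i$ each with quasi-compact complement, then by~\cite[Prop.~2.14]{Balmer05a} each $\cat T^c_{Z_i}= \thick_\otimes(w_i)$ is singly $\otimes$-generated by an object $w_i$ with $\supp(w_i) = Z_i$. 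One checks $\supp\bigl(F(w_i)\bigr) = \varphi^{-1}(\supp(w_i)) = \varphi^{-1}(Z_i)$ using the general formula for support under a tt-functor, so $F(w_i) \in \cat K'_{Y'}$ and moreover $\thick_\otimes(F(w_i)) = (\cat K')^{c}_{\varphi^{-1}(Z_i)}$. Taking the union over $i$ and using that $e_Y$ is a filtered (homotopy) colimit of the $e_{Z_i}$ inside $\cat T$ gives $\cat K'_{Y'} = \thick_\otimes(F(\cat K_Y))$.

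Next I would produce the isomorphism of idempotent triangles. Apply $F$ to the idempotent triangle $e_Y \to \unit \to f_Y \to \Sigma e_Y$ of Notation~\ref{not:idempotents}. Because $F$ is triangulated and strong monoidal, $F(e_Y)$ and $F(f_Y)$ are again $\otimes$-idempotents in $\cat T'$ (from $e_Y^{\otimes 2}\cong e_Y$, $f_Y^{\otimes 2}\cong f_Y$, $e_Y\otimes f_Y = 0$), and $F(\unit) = \unit$. So $F(\Delta_Y)$ is an idempotent triangle in $\cat T'$. Now $F(e_Y)$ lies in the localizing $\otimes$-ideal generated by $F(\cat K_Y)$, which by the previous paragraph is $\Loc_\otimes((\cat K')_{Y'}) = \cat T'_{Y'} = e_{Y'}\otimes\cat T'$; hence $e_{Y'}\otimes F(e_Y) \cong F(e_Y)$. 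Dually, $F(f_Y)$ is $f_{Y'}$-local, i.e. $f_{Y'}\otimes F(f_Y)\cong F(f_Y)$, because $F$ preserves coproducts so $F(f_Y)\otimes-$ kills $F(\cat K_Y)$ hence kills $\cat T'_{Y'}$. Tensoring the two idempotent triangles $F(\Delta_Y)$ and $\Delta_{Y'}$ together and comparing — the standard uniqueness argument for idempotent triangles in~\cite{BalmerFavi11}, namely that any two $\otimes$-idempotent decompositions of $\unit$ refining each other must agree — yields the unique isomorphism $F(\Delta_Y)\cong\Delta_{Y'}$.

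Finally, for the Tate statement, assume $F$ is closed monoidal, so $F$ commutes with internal hom: $F([a,b])\cong [F(a),F(b)]$ for all $a,b$ (this is exactly what closed monoidal buys us, with the projection formula / duality built in). Then, using description~\eqref{eq:Tate} $t_Y \cong f_Y \otimes [e_Y,-]$ (or directly $t_Y = [f_Y,\Sigma e_Y\otimes-]$), we compute for $x\in\cat T$:
\[
F(t_Y(x)) \cong F\bigl([f_Y,\Sigma e_Y\otimes x]\bigr) \cong [F(f_Y), \Sigma F(e_Y)\otimes F(x)] \cong [f_{Y'},\Sigma e_{Y'}\otimes F(x)] = t_{Y'}(F(x)),
\]
where the middle isomorphism is closed monoidality of $F$ and the third is the isomorphism of idempotent triangles just established. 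One checks this is natural in $x$.

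The main obstacle I anticipate is the first step — proving $\cat K'_{Y'} = \thick_\otimes(F(\cat K_Y))$ cleanly for a general Thomason (not merely closed) subset $Y$, since one must interpolate between the singly-generated closed pieces and pass to the filtered colimit while keeping track of how $F$ interacts with the idempotents $e_{Z_i}$; everything downstream (idempotency, locality of $F(f_Y)$, the Tate identity) is then formal. It is plausible the authors instead phrase this via the universal property of finite localization: $F$ sends the Bousfield localization $\cat T \to \cat T(V)$ to the Bousfield localization $\cat T' \to \cat T'(V')$, and the idempotent-triangle and Tate comparisons fall out of uniqueness of Bousfield localizations.
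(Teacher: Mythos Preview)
Your proposal is correct, and the second and third steps (idempotent triangles via uniqueness, Tate via closed monoidality) match the paper essentially verbatim. The difference lies in the first step, where you work harder than necessary.

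For $\cat K'_{Y'} = \thick_\otimes(F(\cat K_Y))$, the paper does not filter $Y$ by Thomason closed pieces $Z_i$ with single generators $w_i$ and pass to a colimit. Instead it invokes the classification bijection (Remark~\ref{rem:Thom+class}) directly: the tt-ideal $\thick_\otimes(F(\cat K_Y))$ corresponds to the Thomason subset $\bigcup_{x\in\cat K_Y}\supp(Fx)$, and one computes
\[
\bigcup_{x\in\cat K_Y}\supp(Fx)=\bigcup_{x\in\cat K_Y}\varphi^{-1}(\supp(x))=\varphi^{-1}\Big(\bigcup_{x\in\cat K_Y}\supp(x)\Big)=\varphi^{-1}(Y)=Y',
\]
using only $\supp(Fx)=\varphi^{-1}(\supp(x))$. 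Since $\cat K'_{Y'}$ also corresponds to $Y'$, the two tt-ideals agree. This sidesteps entirely what you flagged as the ``main obstacle'': no filtered-colimit bookkeeping is needed, and the argument works uniformly for arbitrary Thomason $Y$. Your filtration approach would also succeed, but the direct use of the classification is both shorter and conceptually cleaner.
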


\begin{proof}
For any collection of objects $\cat E \subset \cat K$, the tt-ideal $\thick_\otimes(\cat E )$
corresponds to the Thomason subset $\bigcup_{x \in \cat E} \supp(x)$.
The equality of tt-ideals $\cat K'_{Y'} = \thick_\otimes( F(\cat K_Y) )$
then follows from the following equality of Thomason subsets of~$\Spc(\cat K')$:
\[
Y' = \varphi^{-1}(Y) = \varphi^{-1}(\bigcup_{x \in \cat K_Y}\supp(x))
= \bigcup_{x \in \cat K_Y} \varphi^{-1}(\supp(x))
= \bigcup_{x \in \cat K_Y} \supp(Fx).
\]
Then $\cat T'_{Y'}=\Loc(\cat K'_{Y'})=\Loc(F(\cat K_Y))$ and it follows that $F(f_Y)\otimes\cat T'_{Y'}=0$. Since $F$ preserves coproducts,  $F(e_Y)\in F(\Loc(\cat K_Y ))\subseteq \Loc(F(\cat K_Y))=\cat T'_{Y'}$. It then follows from the uniqueness of idempotent triangles (cf.~\cite[Theorem~3.5]{BalmerFavi11}) that the triangle $F(e_Y) \to \unit \to F(f_Y) \to \Sigma F(e_Y)$ is isomorphic to $e_{Y'} \to \unit \to f_{Y'} \to \Sigma e_{Y'}$. The statement about the Tate construction then follows from the definitions.
\end{proof}

\begin{Exa}
\label{ex:fn}%
Let $e_{p,n} \to \unit \to f_{p,n} \to \Sigma e_{p,n}$ be the idempotent triangle in $\SH$
associated to the tt-ideal
$\cat C_{p,n} \subseteq \SHc =: \cat K$. Here, $\cat C_{p,n} = \cat K_{Y_{p,n}}$ for $Y_{p,n} = \supp(\cat C_{p,n})=\SET{\cat Q}{\cat C_{p,n}\not\subseteq \cat Q}=\SET{ \cat C_{p,m} }{m \ge n+1}\cup\SET{\cat C_{q,m}}{q\neq p, m>1}$ in~$\Spc(\SHc)$.
The right idempotent $f_{p,n}$ is sometimes denoted~$L_{n-1}^f S^0$ in the literature (the exponent~``$f$'' referring to ``finite" localization and the index ``$n-1$" referring to the Morava $K$-theory in use).

It follows from Proposition~\ref{prop:F(idemp)} that
$Y^G_{p,n} := \Spc(\triv)^{-1}(Y_{p,n}) = \big\{ \cat P(H,p,m) \mid H \le G, m \ge n+1 \big\}\cup\SET{\cat P(H,q,m)}{H\le G, q\neq p, m>1}$
is a Thomason subset of $\Spc(\SHGc)$ with associated idempotent triangle
\begin{equation}
\label{eq:DeltaZn}%
\Delta_{Y^G_{p,n}} = \big( \triv(e_{p,n}) \to \unit \to \triv(f_{p,n}) \to \Sigma \triv(e_{p,n}) \big)
\end{equation}
in~$\SH(G)$. The corresponding localized category
\[
\cat{SH}(G)_{p,n}:=\triv(f_{p,n})\otimes\SHG
\]
will be very useful in our discussion of~$\Spc(\SHGc)$. It can be called \emph{the truncation of~$\SHG$ below $p$-chromatic level~$n$}.

Note that the $n=\infty$ case is just localization at~$p$. In particular, localization with respect to~$f_{p,n}$ in~$\SH$, or $\triv(f_{p,n})$ in~$\SHG$, already localizes at~$p$ for free.
\end{Exa}

\begin{Exa}
\label{ex:fF}%
For a family $\cat F$ of subgroups of~$G$, \emph{always assumed below to be stable under conjugation and under passing to subgroups}, we can consider the tt-ideal $\cat C_{\cat F} := \thickt{G/L_+ \mid L \in \cat F}$ of $\SHGc$. (By the Mackey formula, we can drop $\otimes$ in the notation~$\thickt{...}$ but this is a pedantic detail.) This tt-ideal $\cat C_{\cat F}$ corresponds to the Thomason closed subset $Y_{\cat F}:=\bigcup_{L \in \cat F} \supp(G/L_+)$. By Corollary~\ref{cor:supp(G/K)}, we have $Y_{\cat F}=\SET{\cat P(H,p,n)}{H \in \cat F\mathrm{,\ all\ }p,n}$. In this example, we write
\[
\Delta_{\cat F}=\big(\ e_{\cat F} \to \unit \to f_{\cat F} \to \Sigma e_{\cat F}\ \big)
\]
for the associated idempotent triangle $\Delta_{Y_{\cat F}}$ in~$\SH(G)$. We also use the notation
\[
t_{\cat F} := [f_{\cat F}, \Sigma e_{\cat F}\otimes -]
\]
for the associated Tate functor. In more usual notation, $e_{\cat F} = \Sigma^\infty_G(E\cat F_+)$ and $f_{\cat F} = \Sigma^\infty_G(\tilde{E}\cat F)$.
\end{Exa}

\begin{Rem}
\label{rem:triv-fams}%
The \emph{trivial} families are
$\cat F=\varnothing$ and $\calF=\{\textrm{all subgroups}\}$ whose idempotent triangles are $0\to \unit \to \unit \to 0$ and $\unit \to \unit \to 0 \to \Sigma \unit$ respectively. Their Tate functors are zero.
\end{Rem}

\begin{Rem}
\label{rem:t_G}%
The extreme non-trivial examples are the family $\cat F=\{ 1\}$ consisting only of the trivial subgroup, and the family $\cat{F}_{\proper}$ consisting of all proper subgroups.  For the former, the notation $e_{\{1\}} = \Sigma^\infty_G EG_+$ and $f_{\{1\}} = \Sigma^\infty_G \tilde{E}G$ is probably more familiar to some readers.  The Tate functor $t_G := t_{\{1\}}$ is the original construction studied in~\cite{GreenleesMay95b}.  We shall also use the notation $e_G := e_{\{1\}}$ and $f_G := f_{\{1\}}$ to keep some semblance to the classical notation.
\end{Rem}

\begin{Exa}
\label{ex:Fp^s}%
For every group~$G$, one can define a family of subgroups by bounding the order of the subgroups, or equivalently their index in~$G$.  We shall denote by $\cat{F}_{\le\ell}:=\SET{H\le G}{|H|\le\ell}$ the family of subgroups of order at most~$\ell$. Such families will play an important role in Section~\ref{se:log-p-conjecture}.
\end{Exa}

\begin{Exa}
\label{ex:res-Tate}%
Let $\cat F$ be a family of subgroups of~$G$ as in Example~\ref{ex:fF}. Let $H\le G$ be a subgroup and set $\cat F\cap H:=\SET{K\in \cat F}{K\le H}$. Then we can apply Proposition~\ref{prop:F(idemp)} to the tt-functor $\Res^G_H:\SHG\to \SH(H)$, which is closed monoidal. Combining this with Example~\ref{ex:fF} and the fact that $\Res^G_H(G/L_+)\simeq\oplus_{[g]\in H\backslash G/L}(H/H\cap {}^gL)_+$, it is easy to see that $\Res^G_H\circ t_{\cat F}\cong t_{\cat F\cap H}\circ\Res^G_H$.
\end{Exa}

\begin{Rem}
\label{rem:normal-family}%
We have already encountered in~\ref{it:phi^N} the localization of~$\SHG$ with respect to the family~$\cat F[{\not\ge}N]:=\SET{K \le G}{K \not\supseteq N}$ associated to a normal subgroup~$N\normal G$. The thick $\otimes$-ideal here is $\thick_\otimes\SET{G/K_+ }{K \not\supseteq N}$. We repeat the crucial fact that the corresponding localization $f_{\cat F[\not\ge N]}\otimes \SHG\cong\SH(G/N)$ yields the equivariant stable homotopy category for the quotient group~$G/N$, in such a way that $\phigeom{N}:\SHG\to \SH(G/N)$ becomes the localization functor.
\end{Rem}

\begin{Rem}
According to~\cite{HoveySadofsky96} and~\cite{Kuhn04}, the original Tate construction $t_G$ (Rem.\,\ref{rem:t_G}) lowers chromatic degree by one. The following theorem states this result in the form we will need.
\end{Rem}

\begin{Thm}[Hovey-Sadofsky, Kuhn]
\label{thm:kuhovsky}%
Let $p$ be a prime dividing the order of a finite group $G$, and let $f_{p,n}$ denote the right idempotent for the localization of $\SH$ associated to $\cat C_{p,n}$ (see~Example~\ref{ex:fn}). For any $n \ge 2$, the kernel of the functor $t_G(\triv(f_{p,n}))^G \otimes - : \SHc \to \SH$ is precisely $\cat C_{p,n-1}$.
\end{Thm}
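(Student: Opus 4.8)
The plan is to unwind the definitions so that $t_G(\triv(f_{p,n}))^G \otimes -$ becomes the classical Tate construction applied to a chromatic truncation, and then to quote the blue-shift theorems of Hovey--Sadofsky~\cite{HoveySadofsky96} and Kuhn~\cite{Kuhn04}. First I would record, using Remark~\ref{rem:t_G}, Example~\ref{ex:fF} and the formula~\eqref{eq:Tate}, that $t_G=t_{\{1\}}\cong\Sigma^\infty_G\widetilde{E}G\otimes[\Sigma^\infty_G EG_+,-]$, so that $t_G(\triv(f_{p,n}))^G$ is exactly the Greenlees--May Tate spectrum~\cite{GreenleesMay95b} of $f_{p,n}$ with its trivial $G$-action. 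Then, for $X\in\SHc$, I would use that $\triv(X)$ is compact (part~\ref{it:alpha*}), the projection formula for categorical fixed points (part~\ref{it:cat-fixed-pts}), and the $\SHc$-linearity of the Tate functor~\eqref{eq:t_Y@comp} to obtain natural isomorphisms
\[
t_G(\triv(f_{p,n}))^G\otimes X\ \cong\ \big(\triv(X)\otimes t_G(\triv(f_{p,n}))\big)^G\ \cong\ t_G\big(\triv(f_{p,n}\otimes X)\big)^G
\]
in which $f_{p,n}\otimes X=L^f_{n-1}X$ because $f_{p,n}=L^f_{n-1}S^0$ (Example~\ref{ex:fn}). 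Since $f_{p,n-1}=L^f_{n-2}S^0$ and $\cat C_{p,n-1}=\Ker(f_{p,n-1}\otimes-)\cap\SHc$, the theorem then amounts to the assertion that, for $X\in\SHc$, one has $t_G(\triv(L^f_{n-1}X))^G=0$ if and only if $L^f_{n-2}X=0$.

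The ``if'' direction I would get directly from blue-shift: if $L^f_{n-2}X=0$, then $W:=L^f_{n-1}X$ is $L^f_{n-1}$-local with $L^f_{n-2}W=L^f_{n-2}X=0$ (telescopic localizations compose), and the Hovey--Sadofsky/Kuhn inequality in its telescopic form~\cite{Kuhn04}, valid since $p\mid|G|$ and $n\ge2$, gives $\langle t_G(\triv(W))^G\rangle\le\langle L^f_{n-2}W\rangle=\langle0\rangle$, hence $t_G(\triv(W))^G=0$. This yields $\cat C_{p,n-1}\subseteq\Ker\big(t_G(\triv(f_{p,n}))^G\otimes-\big)$.

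For the reverse inclusion the plan is to reduce to $G=C_p$ and quote the \emph{sharp} Hovey--Sadofsky/Kuhn computation $\langle t_{C_p}(\triv(f_{p,n}))^{C_p}\rangle=\langle f_{p,n-1}\rangle$: smashing with the finite spectrum $X$ (and using the first paragraph) this says $t_{C_p}(\triv(L^f_{n-1}X))^{C_p}=0\Rightarrow L^f_{n-2}X=0$, i.e.\ $\Ker\big(t_{C_p}(\triv(f_{p,n}))^{C_p}\otimes-\big)=\cat C_{p,n-1}$. For a general finite group $G$ with $p\mid|G|$ I would pick a subgroup $C_p\le G$ and exploit that $\Res^G_{C_p}$ is a closed tensor functor intertwining Tate functors: by Example~\ref{ex:res-Tate} for the family $\{1\}$, whose intersection with $C_p$ is again $\{1\}$, one has $\Res^G_{C_p}\circ t_G\cong t_{C_p}\circ\Res^G_{C_p}$. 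Applying this to $\triv(f_{p,n})$ and restricting on categorical fixed points produces a ring homomorphism $t_G(\triv(f_{p,n}))^G\to t_{C_p}(\triv(f_{p,n}))^{C_p}$, which makes the target a module over the source; hence $\Ker\big(t_G(\triv(f_{p,n}))^G\otimes-\big)\subseteq\Ker\big(t_{C_p}(\triv(f_{p,n}))^{C_p}\otimes-\big)=\cat C_{p,n-1}$. Combined with the ``if'' direction, this finishes the proof for all $G$ with $p\mid|G|$.

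The whole content of the argument lies in the cited blue-shift results; the only delicate points in using them are that one must work with the \emph{telescopic} localizations $L^f_\bullet$ rather than the Bousfield--Johnson--Wilson localizations $L_\bullet$ (so~\cite{Kuhn04} is essential, not merely~\cite{HoveySadofsky96}), and that one needs the \emph{sharp} form --- an equality, not just an inequality, of Bousfield classes --- for $G=C_p$, which is the genuinely hard input. Everything else --- unwinding $t_G$ into the Greenlees--May Tate construction, the projection-formula and linearity bookkeeping, and the reduction of general $G$ to $C_p$ --- is formal manipulation within the recollement framework of Section~\ref{se:e,f,Tate}.
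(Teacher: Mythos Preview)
Your ``if'' direction matches the paper's: both simply quote Kuhn~\cite{Kuhn04} to get $\cat T_G(f_{p,n}\otimes x)=0$ for $x\in\cat C_{p,n-1}$, where $\cat T_G(-):=t_G(\triv(-))^G$. The difference is entirely in the converse.

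For the converse you reduce to $G=C_p$ and then invoke a ``sharp'' equality of Bousfield classes $\langle t_{C_p}(\triv(f_{p,n}))^{C_p}\rangle=\langle f_{p,n-1}\rangle$ in the \emph{telescopic} setting. This is the gap: that equality is not what is proved in~\cite{HoveySadofsky96} or~\cite{Kuhn04}. Hovey--Sadofsky establish $\langle\cat T_G(L_m S^0)\rangle=\langle L_{m-1}S^0\rangle$ for the $E(m)$-localization $L_m$, not for the finite localization $L_m^f=f_{p,m+1}\otimes-$, and Kuhn only supplies the upper bound (Tate lowers by at least one) in the telescopic setting. Without the telescope conjecture, the sharp telescopic equality you quote for $C_p$ is not available off the shelf, so your reduction does not land on a citable statement. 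Moreover the reduction itself is unnecessary: the Hovey--Sadofsky result already holds for every $G$ with $p\mid|G|$.

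The paper's argument sidesteps this issue by a short bridge between the two localizations: since $L_m S^0$ is an $L_m^f S^0$-module (the localization map $L_m^f S^0\to L_m S^0$ is a ring map), $\cat T_G(L_m S^0)$ is a $\cat T_G(L_m^f S^0)$-module and hence $\cat T_G(L_m^f S^0)$-local. Thus if $\cat T_G(f_{p,n})\otimes x=0$ with $x\in\SHc$, then $[x,\cat T_G(L_{n-1}S^0)]=\cat T_G(L_{n-1}S^0)\otimes x^\vee=0$; now Hovey--Sadofsky (for general $G$ with $p\mid|G|$) gives $L_{n-2}S^0\otimes x^\vee=0$, so $x\in\cat C_{p,n-1}$. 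This module trick is the missing idea in your proposal: it lets one use the Hovey--Sadofsky \emph{equality}, which is only stated for $L_m$, to deduce the needed lower bound for $L_m^f$, without any telescope-conjecture-type input and without reducing to~$C_p$.
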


\begin{proof}
Note that if $x \in \SHc$ then $t_G(\triv(f_{p,n}))^G\otimes x = t_G(\triv(f_{p,n} \otimes x))^G$. Define $\cat T_G(-):= t_G(\triv(-))^G$ for convenience. The main results of~\cite[Theorem~1.5 and Lemma~4.1]{Kuhn04} imply that $\cat T_G(f_{p,n}\otimes x) = 0$ for all $x \in \cat C_{p,n-1}$. Just observe that $f_{p,n} = L_{n-1}^f S^0$ and that $f_{p,n} \otimes x \simeq L_{n-1}^f(x) \simeq \Tel(x;v_{n-1})$ for any $x \in \cat C_{p,n-1} \sminus \cat C_{p,n}$.

Conversely, $L_m(S^0)$ is an $L_m^f(S^0)$-module, hence $\cat T_G(L_m(S^0))$ is a $\cat T_G(L_m^f(S^0))$-module, so that $\cat T_G(L_m(S^0))$ is $\cat T_G(L_m^f(S^0))$-local.  It follows that if $x \in \SHc$ is a finite spectrum such that $\cat T_G(L_{m}^f(S^0))\otimes x = 0$ then
\begin{equation}
\label{eq:kuhovsky-eq-temp}%
\cat T_G(L_{m}(S^0)) \otimes x^\vee = [x,\cat T_G(L_{m}(S^0))] = 0.
\end{equation}
However, the main result of~\cite{HoveySadofsky96} establishes that $\cat T_G(L_m(S^0))$ and $L_{m-1}(S^0)$ have the same Bousfield class provided that $p$ divides the order of~$G$.  Hence \eqref{eq:kuhovsky-eq-temp} implies that $L_{m-1}(S^0) \otimes x^\vee = L_{m-1}(x^\vee)=0$ so that $x^\vee$ (and hence $x$) is contained in $\cat C_{p,m}$.  Hence, plugging $m=n-1$ and using $f_{p,n} = L_{n-1}^f S^0$ again, we see that $\cat T_G(f_{p,n} \otimes x) = 0$ for a finite spectrum $x \in \SHc$ implies that $x \in \cat C_{p,n-1}$.
\end{proof}

\begin{Rem}
\label{rem:t_G-with-fixed-points}%
If $G=C_p$ then the family of proper subgroups coincides with the trivial family $\{1\}$.  In light of Remark~\ref{rem:normal-family}, we see that in this case, $t_G(-)^G \cong \phigeomb{G}t_G(-)$.  This is something very special about the case of the cyclic group~$C_p$ and is certainly not true for arbitrary groups.  This remark lies at the heart of our application of Theorem~\ref{thm:kuhovsky} and our conjectured blue-shift results in Section~\ref{se:log-p-conjecture}.
\end{Rem}

We conclude this section by describing how chromatic localization (Example~\ref{ex:fn}) affects the tt-spectrum of~$\SHGc$.

\begin{Prop}
\label{prop:loc}%
Let $p$ be a prime and let $1\le n\le \infty$ be a chromatic integer. Consider the tt-category $\cat{SH}(G)_{p,n}=\triv(f_{p,n})\otimes\SHG$. Then the localization functor $\SHGc\to \cat{SH}(G)_{p,n}^c$ induces a homeomorphism between $\Spc(\cat{SH}(G)_{p,n}^c)$ and the following subset $V_{p,n}$ of~$\Spc(\SHGc)$:
\[
V_{p,n}:=\SET{\cat P(H,p,n)}{H\le G,\ 1\le m\le n}.
\]
\end{Prop}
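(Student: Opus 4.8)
The plan is to recognize $\cat{SH}(G)_{p,n}$ as the finite localization of $\SHG$ away from the Thomason subset $Y^G_{p,n}$ of Example~\ref{ex:fn}, to invoke the recollement of Remark~\ref{rem:loc} to obtain the homeomorphism essentially for free, and then to carry out the purely set-theoretic bookkeeping that identifies the complement of $Y^G_{p,n}$ with $V_{p,n}$. Concretely, I would first recall from Example~\ref{ex:fn} --- where Proposition~\ref{prop:F(idemp)} is applied to the tt-functor $\triv:\SH\to\SHG$ --- that $Y^G_{p,n}=\Spc(\triv)\inv(Y_{p,n})$ is a Thomason subset of $\Spc(\SHGc)$ whose associated idempotent triangle is $\Delta_{Y^G_{p,n}}=\big(\triv(e_{p,n})\to\unit\to\triv(f_{p,n})\to\Sigma\triv(e_{p,n})\big)$, see~\eqref{eq:DeltaZn}. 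In particular its right idempotent is $f_{Y^G_{p,n}}=\triv(f_{p,n})$, so by the very definition of $\cat{SH}(G)_{p,n}$ one has $\cat{SH}(G)_{p,n}=\triv(f_{p,n})\otimes\SHG=f_{Y^G_{p,n}}\otimes\SHG$: this is exactly the localized tt-category ``$\cat T(V)$'' of Remark~\ref{rem:loc} for $\cat T=\SHG$ and $V:=\Spc(\SHGc)\sminus Y^G_{p,n}$.

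Next, since $\SHG$ is rigidly-compactly generated by~\ref{it:SH(G)}, Remark~\ref{rem:loc} applies directly: $\cat{SH}(G)_{p,n}$ is again rigidly-compactly generated, $\cat{SH}(G)_{p,n}^c\cong(\triv(f_{p,n})\otimes\SHGc)^\natural$, and by~\eqref{eq:Spc(T(V))} the map $\Spc\big(\triv(f_{p,n})\otimes-\big)$ is a homeomorphism from $\Spc(\cat{SH}(G)_{p,n}^c)$ onto the subspace $V$. As passing to an idempotent completion does not change the spectrum, this is the same as saying that the localization functor $\SHGc\to\cat{SH}(G)_{p,n}^c$ of the statement induces a homeomorphism onto $V$. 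It then remains only to check $V=V_{p,n}$.

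Finally I would do the bookkeeping. By Theorem~\ref{thm:the-set} every point of $\Spc(\SHGc)$ has the form $\cat P(H,q,m)$, and by the explicit description of $Y^G_{p,n}$ recalled in Example~\ref{ex:fn} such a point lies in $Y^G_{p,n}$ precisely when $q=p$ with $m\ge n+1$, or $q\ne p$ with $m\ge2$. Combining this with Corollary~\ref{cor:project-H} (namely that $\Spc(\triv)$ sends $\cat P(H,q,m)$ to $\cat C_{q,m}$) and with the classical identity $Y_{p,n}=\supp(\cat C_{p,n})=\Spc(\SHc)\sminus\SET{\cat C_{p,m}}{1\le m\le n}$ read off the Hopkins--Smith picture~\eqref{eq:Spc(SH)}, one gets that the complement $V$ consists exactly of the primes $\cat P(H,p,m)$ with $H\le G$ and $1\le m\le n$; that is, $V=V_{p,n}$ (here $\cat C_{p,1}=\cat C_{0,1}$ is independent of $p$, so $\cat P(H,p,1)=\cat P(H,q,1)$, and at $n=\infty$ the localization is simply $p$-localization and $\cat P(H,p,\infty)\in V_{p,\infty}$).

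I do not expect a genuine obstacle here: once Example~\ref{ex:fn} and Remark~\ref{rem:loc} are in place the argument is essentially formal. The only points demanding a little care are the (harmless) idempotent completion in $\cat{SH}(G)_{p,n}^c\cong(\triv(f_{p,n})\otimes\SHGc)^\natural$, the overlap at chromatic level $m=1$ coming from $\cat C_{p,1}=\cat C_{0,1}$, and the conventions at $n=\infty$.
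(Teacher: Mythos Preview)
Your proposal is correct and follows essentially the same approach as the paper: invoke Remark~\ref{rem:loc} (specifically~\eqref{eq:Spc(T(V))}) for $\cat T=\SHG$ and the Thomason subset $Y=Y^G_{p,n}$ from Example~\ref{ex:fn}, then identify the complement as~$V_{p,n}$. The paper's proof is a two-sentence pointer to these ingredients, while you have spelled out the bookkeeping and edge cases (idempotent completion, the $m=1$ overlap, the $n=\infty$ convention) in more detail than strictly necessary, but nothing is wrong or missing.
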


\begin{proof}
The statement is a special case of~\eqref{eq:Spc(T(V))} in Remark~\ref{rem:loc}, for the big tt-category $\cat T=\SHG$ and for the Thomason subset $Y=Y^G_{p,n}$ of its spectrum, where~$Y^G_{p,n}$ is given in Example~\ref{ex:fn}. Its complement $\Spc(\SHGc)\sminus Y^G_{p,n}$ is exactly our~$V_{p,n}$.
\end{proof}

\begin{Cor}
\label{cor:loc}%
For every prime~$p$, the spectrum of $\SHGcp$ is homeomorphic to the subset $V_{p,\infty}:=\SET{\cat P(H,p,m)}{H\le G,\ 1\le m\le \infty}$ of~$\Spc(\SHGc)$.
\qed
\end{Cor}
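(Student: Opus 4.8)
The plan is to recognize Corollary~\ref{cor:loc} as nothing more than the $n=\infty$ instance of Proposition~\ref{prop:loc}. The one point that needs a word of justification is why the truncated category $\cat{SH}(G)_{p,\infty}=\triv(f_{p,\infty})\otimes\SHG$ coincides with the $p$-localization~$\SHGp$, so that its compact part is $\SHGcp$ and the localization functor of Proposition~\ref{prop:loc} becomes the $p$-localization $\SHGc\to\SHGcp$. First I would invoke the last paragraph of Example~\ref{ex:fn}: the right idempotent $f_{p,\infty}$ in~$\SH$ is by construction the unit of the finite localization killing $\cat C_{p,\infty}=\Ker(\SHc\to\SHcp)$, and this finite localization is precisely the (smashing) $p$-localization of~$\SH$. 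Applying the monoidal functor $\triv$ and tensoring, one gets that $\triv(f_{p,\infty})\otimes\SHG$ is the $p$-localization $\SHGp$, with $\triv(f_{p,\infty})\otimes-$ as localization functor; hence $\cat{SH}(G)_{p,\infty}^c\cong\SHGcp$.

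Then I would simply read off the conclusion: Proposition~\ref{prop:loc} with $n=\infty$ states that this functor induces a homeomorphism of $\Spc(\SHGcp)$ onto $V_{p,\infty}=\SET{\cat P(H,p,m)}{H\le G,\ 1\le m\le\infty}$ as a subspace of~$\Spc(\SHGc)$, which is exactly the assertion. To be safe I would also double-check the set-theoretic bookkeeping: substituting $n=\infty$ into the description $Y^G_{p,n}=\{\cat P(H,p,m)\mid H\le G,\ m\ge n+1\}\cup\SET{\cat P(H,q,m)}{H\le G,\ q\neq p,\ m>1}$ from Example~\ref{ex:fn} makes the first set empty and leaves the second set, which by the uniqueness in Theorem~\ref{thm:prime-uniqueness} is precisely the collection of all primes \emph{not} of the form $\cat P(H,p,m)$. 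Thus the complement $\Spc(\SHGc)\sminus Y^G_{p,\infty}$ is indeed exactly~$V_{p,\infty}$, consistently with the statement.

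I do not expect a genuine obstacle here: the mathematical content is entirely contained in Proposition~\ref{prop:loc}, and behind that in the general homeomorphism~\eqref{eq:Spc(T(V))} of Remark~\ref{rem:loc} identifying the spectrum of a finite localization with the open complement of the corresponding Thomason subset. The only step deserving a moment's care is the identification $\cat{SH}(G)_{p,\infty}\simeq\SHGp$, and this is already recorded in Example~\ref{ex:fn}. Accordingly the corollary is a pure special-case extraction and can be dispatched in a single sentence.
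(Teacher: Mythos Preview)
Your proposal is correct and matches the paper's approach exactly: the corollary is marked with \qed\ in the paper, being an immediate consequence of Proposition~\ref{prop:loc} at $n=\infty$ together with the observation (already recorded in Example~\ref{ex:fn}) that $\cat{SH}(G)_{p,\infty}=\SHGp$. Your additional verification of the set-theoretic bookkeeping via Theorem~\ref{thm:prime-uniqueness} is a welcome sanity check but not strictly needed.
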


\begin{Rem}
As is customary, we shall use the same notation $\cat P(H,p,m)$ for the prime in~$\SHGc$ and the corresponding prime in~$\cat{SH}(G)^c_{p,n}$.
\end{Rem}

\smallskip
\section{Inclusions of primes and the comparison map}
\label{se:pre-top}%
\medskip

Our goal is to determine the topology of~$\Spc(\SHGc)$. We begin by reducing the problem to the task of understanding all inclusions among the equivariant primes:

\begin{Prop}
\label{prop:finite-union-irred}%
Every closed subset of $\Spc(\SHGc)$ is a finite union of irreducible closed subsets.  Every (non-empty) irreducible closed subset is of the form
$\adhpt{\cat P}=$ \mbox{$\SET{\cat Q\in \Spc(\SHGc)}{\cat Q\subseteq\cat P}$} for some unique point~$\cat P$.
\end{Prop}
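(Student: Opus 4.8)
The plan is to bootstrap the statement from the set-level computation of~$\Spc(\SHGc)$ in Theorem~\ref{thm:the-set} together with the known topology of the non-equivariant spectrum~$\Spc(\SHc)$ recalled after~\eqref{eq:Spc(SH)}, where every closed subset is the closure of finitely many points. The key point is that $\Spc(\SHGc)$ is covered by finitely many homeomorphic copies of~$\Spc(\SHc)$. Indeed, by~\ref{it:phixed-pts} we have $\phigeomb{H,G}=\phigeom{H}\circ\Res^G_H$, so $\varphi^{H,G}=\Spc(\Res^G_H)\circ\Spc(\phigeom{H})$; and by Proposition~\ref{prop:induced-geom} applied to $\phigeom{H}\colon\SH(H)\onto\SH$ (\ie to $N=H$ inside the ambient group~$H$), the map $\Spc(\phigeom{H})$ is a homeomorphism of $\Spc(\SHc)$ onto the subset $U_H:=\SET{\cat P\in\Spc(\SH(H)^c)}{H/K_+\in\cat P\text{ for all }K\lneq H}$ of~$\Spc(\SH(H)^c)$, which is open since its complement is the closed set $\bigcup_{K\lneq H}\supp(H/K_+)$. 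Hence $\Img(\varphi^{H,G})=\Spc(\Res^G_H)(U_H)$, and by Theorem~\ref{thm:the-set} the finitely many sets $\Img(\varphi^{H,G})$ --- one per conjugacy class of subgroups $H\le G$ --- cover~$\Spc(\SHGc)$.

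I would first establish the finite-union statement. Let $Z\subseteq\Spc(\SHGc)$ be closed and write $r_H:=\Spc(\Res^G_H)$. For each of the finitely many conjugacy classes~$[H]$, the preimage $r_H\inv(Z)$ is closed in $\Spc(\SH(H)^c)$, so $r_H\inv(Z)\cap U_H$ is closed in $U_H\cong\Spc(\SHc)$, hence of the form $\adhpt{\cat Q_{H,1}}\cup\cdots\cup\adhpt{\cat Q_{H,k_H}}$ (closures taken in~$U_H$) for finitely many primes $\cat Q_{H,j}\in U_H$. Put $\cat P_{H,j}:=r_H(\cat Q_{H,j})$. Since $\cat Q_{H,j}\in r_H\inv(Z)$, we get $\cat P_{H,j}\in Z$, whence $\adhpt{\cat P_{H,j}}\subseteq Z$. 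I claim $Z=\bigcup_{[H]}\bigcup_{j}\adhpt{\cat P_{H,j}}$, a finite union of irreducible closed subsets. The inclusion ``$\supseteq$'' is clear. For ``$\subseteq$'', take $\cat P\in Z$; by Theorem~\ref{thm:the-set} we may write $\cat P=\cat P_G(H,p,n)=\varphi^{H,G}(\cat C_{p,n})$ for some~$H\le G$, so $\cat P=r_H(\cat Q)$ with $\cat Q\in U_H$; as $\cat P\in Z$ we obtain $\cat Q\in r_H\inv(Z)\cap U_H$, hence $\cat Q\subseteq\cat Q_{H,j}$ for some~$j$; and since $r_H$ is inclusion-preserving, $\cat P=r_H(\cat Q)\subseteq r_H(\cat Q_{H,j})=\cat P_{H,j}$, \ie $\cat P\in\adhpt{\cat P_{H,j}}$.

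For the second statement: by the first part, a non-empty irreducible closed subset~$Z$ can be written as a finite union $\adhpt{\cat P_1}\cup\cdots\cup\adhpt{\cat P_m}$ of point-closures; since $Z$ is irreducible it equals one of them, say $Z=\adhpt{\cat P}$. The identity $\adhpt{\cat P}=\SET{\cat Q\in\Spc(\SHGc)}{\cat Q\subseteq\cat P}$ is the standard description of point-closures in a tt-spectrum~\cite{Balmer05a}, and uniqueness of~$\cat P$ follows at once from it: $\adhpt{\cat P}=\adhpt{\cat P'}$ forces $\cat P\subseteq\cat P'$ and $\cat P'\subseteq\cat P$, so $\cat P=\cat P'$.

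The work here is bookkeeping rather than anything deep. The one step needing care is checking that the finitely many candidate generic points~$\cat P_{H,j}$ really lie in~$Z$ and exhaust it; this succeeds precisely because $r_H\inv(Z)$ is closed (so the known ``closure of finitely many points'' description of closed subsets of~$\Spc(\SHc)$ applies to $r_H\inv(Z)\cap U_H$) and because $r_H=\Spc(\Res^G_H)$ is inclusion-preserving (so point-closures are carried into point-closures). No ingredient beyond Theorem~\ref{thm:the-set} and the topology of~$\Spc(\SHc)$ is required; as a by-product, the argument also re-derives that $\Spc(\SHGc)$ is sober.
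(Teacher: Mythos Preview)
Your proof is correct and follows essentially the same approach as the paper's: cover $\Spc(\SHGc)$ by the finitely many images $\Img(\varphi^{H,G})$, pull a closed set $Z$ back to $\Spc(\SHc)$ where the result is known, and push forward using that $\Spc$ of a tt-functor is inclusion-preserving. The only difference is cosmetic: the paper works directly with $(\varphi^{H,G})^{-1}(Z)\subseteq\Spc(\SHc)$, while you factor $\varphi^{H,G}=r_H\circ\Spc(\phigeom{H})$ and route through the open $U_H\subseteq\Spc(\SH(H)^c)$; since $\Spc(\phigeom{H})$ is a homeomorphism onto $U_H$, your $r_H^{-1}(Z)\cap U_H$ is exactly $(\varphi^{H,G})^{-1}(Z)$ under that identification, so the two arguments are the same up to this extra bookkeeping.
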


\begin{proof}
It is basic tt-geometry~\cite{Balmer05a} that every irreducible closed subset of $\SpcK$ has a unique generic point and that the closure of a point~$\cat P$ is exactly $\SET{\cat Q\in\SpcK}{\cat Q\subseteq \cat P}$; beware the reversal of the inclusion compared to the Zariski topology. For the non-equivariant case, the proposition was proved in~\cite[Corollary~9.5(e)]{Balmer10b}.  By Theorem~\ref{thm:the-set}, the space $\Spc(\SHGc)$ is covered by the images of~$\varphi^{H,G}=\Spc(\phigeomb{H}):\Spc(\SHc)\to \Spc(\SHGc)$ for subgroups~$H\le G$. Then consider any closed subset $Z \subseteq \Spc(\SHGc)$. By what we have just said, $Z=\cup_{H\le G}\,\varphi^{H,G}((\varphi^{H,G})\inv(Z))$.  For any $H\le G$, $(\varphi^H)^{-1}(Z)$ is a closed subset of $\Spc(\SHc)$. So, by the non-equivariant result it is a finite union of irreducible closed sets $\overline{\{ \cat C_{p,n} \}}$ for some primes $p$ and integers $1 \le n \le \infty$.  It follows that $Z$ is the union of the closures of the points $\cat P(H,p,n)$ corresponding to the generic points $\cat C_{p,n}$ of the irreducible components of $(\varphi^H)^{-1}(Z)$, for all $H \le G$. As there are only finitely many subgroups $H \le G$, and finitely many generic points for $(\varphi^H)^{-1}(Z)$, this is a finite union.
\end{proof}

\begin{Prop}
\label{prop:incl-same-subgroup}%
For every subgroup $H \le G$, primes~$p,q$, and chromatic integers $1\le m,n\le \infty$, the following are equivalent:
\begin{enumerate}[label=\rm(\arabic*)]
\item
$\cat P(H,q,n) \subseteq \cat P(H,p,m)$;
\item
$\cat C_{q,n}\subseteq\cat C_{p,m}$;
\item
$n \ge m$ and, either $m=1$ or $q=p$.
\end{enumerate}
\end{Prop}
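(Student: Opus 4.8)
The plan is to prove the cycle $(2)\Rightarrow(1)\Rightarrow(2)$, which gives $(1)\Leftrightarrow(2)$, and then to read off $(2)\Leftrightarrow(3)$ directly from the known picture~\eqref{eq:Spc(SH)} of $\Spc(\SHc)$. The whole argument runs on the functoriality of $\Spc(-)$ already assembled in Section~\ref{se:the-set}; in particular I expect to use nothing beyond Definition~\ref{def:P(H,p,n)}, Corollary~\ref{cor:project-H}, and the Thick Subcategory Theorem of Hopkins--Smith.

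First, for $(2)\Rightarrow(1)$: by Definition~\ref{def:P(H,p,n)} we have $\cat P(H,q,n)=(\phigeomb{H})\inv(\cat C_{q,n})$ and $\cat P(H,p,m)=(\phigeomb{H})\inv(\cat C_{p,m})$, and the map $\varphi^{H,G}=\Spc(\phigeomb{H})$ is inclusion-preserving, since the map on spectra induced by a tt-functor is always inclusion-preserving (recalled just before Proposition~\ref{prop:induced-alpha}). Hence $\cat C_{q,n}\subseteq\cat C_{p,m}$ yields $\cat P(H,q,n)\subseteq\cat P(H,p,m)$ at once.

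For the converse $(1)\Rightarrow(2)$, I would apply the inclusion-preserving map $\Spc(\triv):\Spc(\SHGc)\to\Spc(\SHc)$ induced by $\triv=\Infl_1^G$. By Corollary~\ref{cor:project-H} this map sends $\cat P(H,p,n)\mapsto\cat C_{p,n}$ for every $H\le G$, every prime, and every chromatic integer. Therefore the hypothesis $\cat P(H,q,n)\subseteq\cat P(H,p,m)$ maps to $\cat C_{q,n}\subseteq\cat C_{p,m}$, which is exactly $(2)$. (Equivalently: $\triv$ splits $\phigeomb{H}$ by~\ref{it:triv}, so $\varphi^{H,G}$ is a split monomorphism with retraction $\Spc(\triv)$, and this retraction identifies $\cat C_{\bullet,\bullet}$ with $\cat P(H,\bullet,\bullet)$ in an order-preserving way.)

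Finally, $(2)\Leftrightarrow(3)$ is a pure restatement of the non-equivariant structure: by~\cite[\S9]{Balmer10b} (ultimately~\cite{HopkinsSmith98}), $\cat C_{q,n}\subseteq\cat C_{p,m}$ iff $\cat C_{q,n}$ lies in the closure $\adhpt{\cat C_{p,m}}$, \ie ``above'' $\cat C_{p,m}$ in~\eqref{eq:Spc(SH)}. For $m\ge2$ that closure is the single chromatic tower $\SET{\cat C_{p,k}}{m\le k\le\infty}$, so the condition becomes $p=q$ and $n\ge m$; for $m=1$ the point $\cat C_{p,1}=\cat C_{0,1}$ is dense, its closure is all of $\Spc(\SHc)$, and the condition holds for every $q$ and every $n\ge 1$. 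These two cases combine precisely into $(3)$. I expect no genuine obstacle in any of this; the only point deserving a careful word is the $m=1$ edge case, where $\cat C_{p,1}$ is independent of the prime --- which is exactly why $(3)$ carries the clause ``$m=1$ or $q=p$'' rather than just ``$q=p$''. The content of the proposition is simply that, within a single conjugacy class of subgroups, inclusions of equivariant primes match the non-equivariant ones verbatim.
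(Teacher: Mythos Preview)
Your proof is correct and follows essentially the same approach as the paper: the implication $(2)\Rightarrow(1)$ via preimage under $\phigeomb{H}$, the converse $(1)\Rightarrow(2)$ via Corollary~\ref{cor:project-H}, and $(2)\Leftrightarrow(3)$ read off from the non-equivariant Hopkins--Smith picture. Your write-up is simply more detailed than the paper's terse two-sentence version.
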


\begin{proof}
The inclusions among the $\cat C_{p,n}$ manifest as inclusions among the $\cat P(H,p,n)$, which are their preimages under~$\phigeomb{H}$. That these are the only inclusions among the primes $\cat P(H,p,m)$ for fixed $H$ follows from Corollary~\ref{cor:project-H}.
\end{proof}

\begin{Rem}
In other words, the inclusions among the primes $\cat P(H,p,m)$ -- for a fixed $H$ -- precisely match the inclusions among the non-equivariant primes $\cat C_{p,m}$.  Thus, $\Spc(\SHGc)$ consists of a number of copies of the non-equivariant spectrum $\Spc(\SHc)$ displayed in~\eqref{eq:Spc(SH)} -- one such copy for each conjugacy class of subgroups of~$G$.  The question is what the interaction is among the different copies; that is, what are the inclusions $\cat P(K,q,n) \subseteq \cat P(H,p,m)$ for $K \not\sim_G H$?
\end{Rem}

We can start by projecting onto the chromatic information, via Corollary~\ref{cor:project-H}:

\begin{Cor}
\label{cor:incl-chrom}%
Suppose $\cat P(K,q,n) \subseteq \cat P(H,p,m)$ for two subgroups $H,K \le G$, primes $p,q$ and chromatic integers $1\le n,m \le \infty$.
Then $\cat C_{q,n}\subseteq \cat C_{p,n}$ in~$\SHc$ and in particular $n\ge m$. If furthermore $m>1$, then $p=q$.
\qed
\end{Cor}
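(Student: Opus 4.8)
The plan is to deduce this immediately from Corollary~\ref{cor:project-H} by projecting onto the chromatic information. That corollary provides an inclusion-preserving map $\Spc(\triv):\Spc(\SHGc)\to\Spc(\SHc)$ which sends each equivariant prime $\cat P(L,r,\ell)$ to the non-equivariant prime $\cat C_{r,\ell}$. Applying it to the hypothesis $\cat P(K,q,n)\subseteq\cat P(H,p,m)$ yields the inclusion $\cat C_{q,n}\subseteq\cat C_{p,m}$ in~$\SHc$.

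It then only remains to translate this non-equivariant inclusion into the asserted numerical constraints, and this is read off directly from the known structure of $\Spc(\SHc)$ depicted in~\eqref{eq:Spc(SH)} --- equivalently, from the equivalence $(2)\Leftrightarrow(3)$ in Proposition~\ref{prop:incl-same-subgroup} specialized to the subgroup $H=1$: an inclusion $\cat C_{q,n}\subseteq\cat C_{p,m}$ forces $n\ge m$, and forces $p=q$ whenever $m>1$ (if $m=1$ then $\cat C_{p,1}=\cat C_{q,1}$ regardless of the primes, so no relation between $p$ and~$q$ is implied).

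There is essentially no obstacle here: the whole argument is a formal consequence of the functoriality of $\Spc(-)$ together with the explicit description of the map induced by $\triv$ established in Corollary~\ref{cor:project-H}. The only point requiring (trivial) care is the degenerate case $m=1$, where the chromatic prime $\cat C_{p,1}$ does not depend on the prime number, which is precisely why the conclusion $p=q$ is asserted only for $m>1$.
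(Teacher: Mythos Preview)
Your proof is correct and follows exactly the approach the paper intends: the corollary is stated with a \qed\ immediately after the sentence ``We can start by projecting onto the chromatic information, via Corollary~\ref{cor:project-H}'', so the paper's own argument is precisely the application of $\Spc(\triv)$ that you spell out. Your additional reference to Proposition~\ref{prop:incl-same-subgroup} (specialized to $H=1$) for the numerical constraints is a fine way to cite the non-equivariant inclusion pattern, and your observation that the conclusion should read $\cat C_{q,n}\subseteq\cat C_{p,m}$ (rather than $\cat C_{p,n}$) silently corrects a typo in the printed statement.
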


\begin{Rem}
\label{rem:incl-at-p}%
If $m=1$ in the previous statement, we cannot conclude that $p=q$ but this is slightly artificial because then $\cat P(H,p,1)=\cat P(H,0,1)=\cat P(H,q,1)$ and $p$ is irrelevant anyway. In that case, we are equivalently discussing the inclusion $\cat P(K,q,n)\subseteq \cat P(H,q,m)$. In other words, when studying inclusions $\cat P(K,q,n) \subseteq \cat P(H,p,m)$ between primes in~$\SHGc$, we can just as well assume~$p=q$.
\end{Rem}
\begin{Rem}
\label{rem:yes-we-can-localize}%
Recall the homeomorphism $\Spc(\cat{SH}(G)_{p,n}^c) \simeq V_{p,n}$ of Proposition~\ref{prop:loc}. This homeomorphism respects and detects inclusions among primes. So, whenever we have to decide whether $\cat P(K,p,n) \subseteq \cat P(H,p,m)$ holds, we can just as well localize in~$\cat{SH}(G)_{p,n}=\triv(f_{p,n})\otimes \SHG$ or in~$\cat{SH}(G)_{p,\infty} = \SHGp$.
\end{Rem}

Next we can project onto the group-theoretic information:
\begin{Prop}
\label{prop:comp-map}%
The comparison map $\rho_{\SHGc} : \Spc(\SHGc) \to \Spec(A(G))$ is inclusion-\emph{reversing}. It sends $\cat P(H,p,1)=\cat P(H,0,1)$ to $\mathfrak{p}(H,0)$ and it sends $\cat P(H,p,n)$ to $\mathfrak{p}(H,p)$ for $n>1$.
\end{Prop}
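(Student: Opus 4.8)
The plan is to reduce everything to the already-known non-equivariant computation recorded in~\eqref{eq:Spc(SH)}, exploiting the defining description $\cat P(H,p,n) = (\phigeomb{H})\inv(\cat C_{p,n})$ of Definition~\ref{def:P(H,p,n)} together with a direct unwinding of the comparison map~\eqref{eq:comp}. I would first dispose of the inclusion-reversal claim, which is a general feature of~$\rho_{\cat K}$ requiring nothing special about~$\SHGc$: if $\cat Q \subseteq \cat P$ in~$\Spc(\cat K)$ then $x \notin \cat P$ forces $x \notin \cat Q$, so $\cone(f) \notin \cat P$ forces $\cone(f) \notin \cat Q$, whence $\rho_{\cat K}(\cat P) \subseteq \rho_{\cat K}(\cat Q)$. (Equivalently: $\rho_{\cat K}$ is continuous and the closure of a point in $\Spc(\cat K)$ consists of the \emph{smaller} primes, so $\rho_{\cat K}$ reverses specialization.)

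The one substantive input is the identification of the ring homomorphism $(\phigeomb{H})_0 : A(G) = \End_{\SHGc}(\unit) \to \End_{\SHc}(\unit) = \bbZ$ induced by geometric $H$-fixed points on endomorphisms of the unit with the ``$H$-fixed points'' homomorphism $f^H$ of~\eqref{eq:p(H)}. To see this I would use the standard (Segal--tom Dieck) identification $\End_{\SHGc}(\unit) \cong A(G)$: the class of a finite $G$-set $X$ is represented by an endomorphism of~$\unit\in\SHGc$ assembled from the suspension spectrum $\Sigma^\infty_G X_+$ together with its transfer to and collapse from~$\unit$. Since $\phigeomb{H}$ is a tt-functor that commutes with suspension spectra and satisfies $\phigeomb{H}(\Sigma^\infty_G X_+) \cong \Sigma^\infty (X^H)_+$ by~\ref{it:X^H}, it carries that endomorphism to the corresponding endomorphism of~$\unit\in\SHc$ built from the finite set~$X^H$, namely multiplication by~$|X^H|$; hence $(\phigeomb{H})_0([X]) = |X^H| = f^H([X])$. (Alternatively one factors $\phigeomb{H} = \phigeom{H} \circ \Res^G_H$ and checks the two pieces separately, using that $\Res^G_H$ induces the Burnside-ring restriction $A(G)\to A(H)$.)

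Granting this, the values of~$\rho_{\SHGc}$ drop out formally. For $f : \unit \to \unit$ in~$\SHGc$, using that $\phigeomb{H}$ is triangulated and $\cat P(H,p,n) = (\phigeomb{H})\inv(\cat C_{p,n})$,
\[
f \in \rho_{\SHGc}\big(\cat P(H,p,n)\big) \iff \cone\big(\phigeomb{H}(f)\big) \notin \cat C_{p,n} \iff \phigeomb{H}(f) \in \rho_{\SHc}(\cat C_{p,n}),
\]
and since $\phigeomb{H}(f) = f^H(f) \in \bbZ$ by the preceding paragraph, this says $\rho_{\SHGc}(\cat P(H,p,n)) = (f^H)\inv\big(\rho_{\SHc}(\cat C_{p,n})\big)$. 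Now I invoke the non-equivariant picture~\eqref{eq:Spc(SH)}: $\rho_{\SHc}(\cat C_{p,n}) = p\bbZ$ for $n > 1$ (including $n=\infty$), while $\rho_{\SHc}(\cat C_{p,1}) = \rho_{\SHc}(\cat C_{0,1}) = (0)$. Hence $\rho_{\SHGc}(\cat P(H,p,n)) = (f^H)\inv(p\bbZ) = \mathfrak{p}(H,p)$ for $n > 1$, and $\rho_{\SHGc}(\cat P(H,p,1)) = (f^H)\inv((0)) = \mathfrak{p}(H,0)$, consistently with the identity $\cat P(H,p,1) = \cat P(H,0,1)$ from Definition~\ref{def:P(H,p,n)}.

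The only step that is not pure formal manipulation is the identification $(\phigeomb{H})_0 = f^H$, and even that is routine bookkeeping with the tom Dieck splitting and the compatibility of geometric fixed points with suspension spectra and transfers (\ref{it:X^H}); I expect to settle it in a couple of lines, after which the rest of the argument is immediate.
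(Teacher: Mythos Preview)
Your proposal is correct and follows essentially the same route as the paper: identify the ring map $\End_{\SHGc}(\unit)\to\End_{\SHc}(\unit)$ induced by $\phigeomb{H}$ with the fixed-point homomorphism~$f^H$ (via~\ref{it:X^H}), and then read off the values of~$\rho_{\SHGc}$ from the non-equivariant case. The paper phrases the second step as naturality of~$\rho_{\cat K}$ in the tt-functor (yielding a commutative square with $\Spec(f^H)$), whereas you unwind $\rho_{\SHGc}(\cat P(H,p,n)) = (f^H)^{-1}\rho_{\SHc}(\cat C_{p,n})$ directly; these are the same computation.
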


\begin{proof}
Recall that the isomorphism $A(G) \cong \End_{\SH(G)}(\unit)$ sends the class $[G/K]$ to the composite $\unit \to G/K_+ \to \unit$ where $\unit \to G/K_+$ is the stable transfer map (\ie the unit for the ring structure on the $G$-spectrum $G/K_+$) and $G/K_+ \to \unit$ is the stable map induced by the projection $G/K_+ \to S^0$.  Applying $\Phi^H$ this composite becomes $\unit \to \unit^{\oplus \ell} \to \unit$, a sum of $\ell$ copies of the identity map, where $\ell = |(G/K)^H|$ (cf.~\ref{it:X^H} and Lemma~\ref{lem:G/K^H}).  It follows that the ring homomorphism $A(G) \cong \End_{\SH(G)}(\unit) \to \End_{\SH}(\unit)\cong \mathbb Z$ induced by the geometric fixed point functor $\phigeomb{H}:\SH(G) \to \SH$ is precisely the ring homomorphism ${f^{H}}$ from Section~\ref{se:burnside} and therefore the following diagram
\[ \xymatrix @C=5em{
\Spc(\SHc) \ar[d]_{\rho_{\SHc}} \ar[r]^{\Spc(\phigeomb{H})} & \Spc(\SHGc) \ar[d]^{\rho_{\SHGc}} \\
\Spec(\mathbb Z) \ar[r]^{\Spec({f^{H}})}& \Spec(A(G))
}\]
commutes (by naturality of the $\rho_{\cat K}$ construction, see~\cite{Balmer10b}). The result then follows from the definitions and the description of~$\rho_{\cat K}$ recalled in~\eqref{eq:comp}.
\end{proof}

\begin{Exa}
\label{ex:incl-0}%
If $\cat P(K,q,1)\subseteq\cat P(H,p,m)$ then $m=1$ by Corollary~\ref{cor:incl-chrom} and $\mathfrak{p}(K,0)=\mathfrak{p}(H,0)$ by Proposition~\ref{prop:comp-map}. The latter forces $H\sim_G K$ by Dress's Theorem~\ref{thm:dress}. So we have equality $\cat P(K,q,1)=\cat P(K,0,1)=\cat P(H,0,1)=\cat P(H,p,m)$.
\end{Exa}

\begin{Prop}
\label{prop:burnside-info}%
Suppose $\cat P_G(K,p,n) \subseteq \cat P_G(H,p,m)$ for subgroups \mbox{$H,K \le G$}, a prime $p$ and chromatic integers $1\le n,m \le \infty$.
Then the following hold:
\begin{enumerate}
\item $K$ is $G$-conjugate to a $p$-subnormal subgroup of $H$; \ie $K^g \le H$ and $\Op (K^g) = \Op (H)$ for some $g \in G$ (see~\ref{ter:subnormal}). Moreover, we can find $g\in G$ such that $\cat P_H(K^g,p,n) \subseteq \cat P_H(H,p,m)$ in~$\SH(H)^c$.
\item If $n = 1$ then $m=1$ and $K \sim_G H$ (so the two tt-primes were equal).
\end{enumerate}
\end{Prop}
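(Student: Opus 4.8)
The plan is to settle part~(2) quickly — it is essentially Example~\ref{ex:incl-0} — and then to obtain part~(1) by invoking the Going-Up Theorem for restriction (point~\ref{it:going-up}) in order to transport the given inclusion of primes from $\Spc(\SHGc)$ down into $\Spc(\SH(H)^c)$. There it becomes an inclusion between two $H$-fixed-point primes, and one can bring to bear Dress's theorem \emph{for the group~$H$} rather than for~$G$; this is precisely what upgrades the cheap conclusion ``$K$ is subconjugate to~$H$'' (which comes for free from Corollary~\ref{cor:G/K^H}) to the sharp statement that $K$ is $G$-conjugate to a $p$-subnormal subgroup of~$H$ (Terminology~\ref{ter:subnormal}).

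For part~(2): if $n=1$, then Corollary~\ref{cor:incl-chrom} forces $m=1$, and one has $\cat P_G(K,p,1)=\cat P_G(K,0,1)$ and $\cat P_G(H,p,1)=\cat P_G(H,0,1)$. Since the comparison map $\rho_{\SHGc}$ is inclusion-reversing and sends these primes to $\mathfrak p(K,0)$ and $\mathfrak p(H,0)$ respectively (Proposition~\ref{prop:comp-map}), the hypothesis yields $\mathfrak p(H,0)\subseteq\mathfrak p(K,0)$, hence $H\sim_G K$ by Dress's Theorem~\ref{thm:dress}(1); then the two tt-primes coincide.

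For part~(1) we may thus assume $n>1$. I would apply Going-Up to the tt-ring $A=G/H_+$, i.e.\ to $\varphi:=\Spc(\Res^G_H):\Spc(\SH(H)^c)\to\Spc(\SHGc)$, with $\cat Q:=\cat P_H(H,p,m)$, for which $\varphi(\cat Q)=\cat P_G(H,p,m)$ by Corollary~\ref{cor:induced-res}. The hypothesis places $\cat P_G(K,p,n)$ in $\adhpt{\varphi(\cat Q)}$, so Going-Up supplies some $\cat Q'\subseteq\cat P_H(H,p,m)$ with $\varphi(\cat Q')=\cat P_G(K,p,n)$. Writing $\cat Q'=\cat P_H(L,q,\ell)$ with $L\le H$ via Theorem~\ref{thm:the-set} for~$H$, and then comparing $\varphi(\cat Q')=\cat P_G(L,q,\ell)$ with $\cat P_G(K,p,n)$ through the uniqueness Theorem~\ref{thm:prime-uniqueness} for~$G$, I read off $\ell=n$, $q=p$ (since $n>1$) and $L=K^{g_1}$ for some $g_1\in G$; in particular $K^{g_1}\le H$ and $\cat P_H(K^{g_1},p,n)\subseteq\cat P_H(H,p,m)$ holds in $\SH(H)^c$. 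To make the $p$-perfect cores fit exactly, I would push this last inclusion through $\rho_{\SH(H)^c}$ (Proposition~\ref{prop:comp-map} for~$H$), obtaining $\mathfrak p_H(H,-)\subseteq\mathfrak p_H(K^{g_1},p)$ — namely $\mathfrak p_H(H,p)$ if $m>1$ and $\mathfrak p_H(H,0)$ if $m=1$ — which by parts~(2) and~(3) of Dress's Theorem~\ref{thm:dress} applied to~$H$ forces $\Op(H)\sim_H\Op(K^{g_1})$, say $\Op(H)=\Op(K^{g_1})^h=\Op(K^{g_1h})$ with $h\in H$. Setting $g:=g_1h$ then gives $K^g\le H^h=H$ and $\Op(K^g)=\Op(H)$, so $K^g$ is $p$-subnormal in~$H$ by Lemma~\ref{lem:p-subnormal}; and $\cat P_H(K^g,p,n)=\cat P_H(K^{g_1},p,n)\subseteq\cat P_H(H,p,m)$ because conjugation by $h\in H$ leaves primes of $\SH(H)^c$ unchanged (point~\ref{it:conj} for~$H$). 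This proves both clauses of~(1).

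The step I expect to be decisive is the appeal to Going-Up: it is what converts the bare external relation $\cat P_G(K,p,n)\subseteq\cat P_G(H,p,m)$ into an inclusion taking place inside $\SH(H)^c$, where the Burnside ring of the \emph{smaller} group~$H$ does the real work. (Note that the naive route — combining Corollary~\ref{cor:G/K^H} with $\rho_{\SHGc}$ and Dress for~$G$ — only delivers, separately, that \emph{some} conjugate of $K$ lies in~$H$ and that \emph{some} conjugate of $\Op(K)$ equals $\Op(H)$, with no reason for a single $g$ to achieve both.) Accordingly, the only genuinely fiddly part of the argument is the bookkeeping with conjugates: Going-Up hands us one conjugate $K^{g_1}$ contained in~$H$, and a further twist by an element of~$H$, harvested from Dress's theorem for $A(H)$, is exactly what is needed to make $\Op(K^g)$ literally equal to~$\Op(H)$.
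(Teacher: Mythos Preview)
Your proof is correct and follows essentially the same route as the paper: apply Going-Up for $\Res^G_H$ to transport the inclusion into $\Spc(\SH(H)^c)$, then invoke the comparison map and Dress's theorem for~$H$. The only cosmetic difference is at the end: the paper observes that since $\Op(H)\normal H$, the $H$-conjugacy $\Op(K')\sim_H\Op(H)$ is automatically an \emph{equality} (any $H$-conjugate of the normal subgroup $\Op(H)$ is $\Op(H)$ itself), so your extra twist by $h\in H$ is unnecessary --- you already had $\Op(K^{g_1})=\Op(H)$ on the nose.
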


\begin{proof}
By Corollary~\ref{cor:G/K^H}, we must have $K \le_G H$.  So, replacing $K$ by a conjugate, we can assume that $K \le H$.  Then Going-Up~\ref{it:going-up} applied to $\Res^G_H$ implies that there is a prime $\cat P_H(K',p',n') \in \Spc(\SH(H)^c)$ such that $\cat P_H(K',p',n') \subseteq \cat P_H(H,p,m)$ and such that $\cat P_G(K',p',n') = \cat P_G(K,p,n)$ (cf.\ Cor.\,\ref{cor:induced-res}).  The latter equality implies by Theorem~\ref{thm:prime-uniqueness} that $K \sim_G K'$ and $\cat C_{p,n}=\cat C_{p',n'}$.  Hence we have an inclusion $\cat P_H(K',p,n) =\cat P_H(K',p',n') \subseteq \cat P_H(H,p,m)$.  Applying Proposition~\ref{prop:comp-map} yields an inclusion of primes in the Burnside ring.  By Dress's description (cf.~Theorem~\ref{thm:dress}) of the inclusions of prime ideals of~$A(H)$, we must have $\Op (K') \sim_H \Op (H)$.  However, $\Op (H) \lenormal H$, so in fact $\Op (K') = \Op (H)$.  (Lemma~\ref{lem:p-subnormal} explains how this means that $K'$ is a $p$-subnormal subgroup of~$H$.) Part~(b) is Example~\ref{ex:incl-0}.
\end{proof}

\begin{Rem}
\label{rem:incl-at-p-in-G}%
By Remark~\ref{rem:incl-at-p} and Proposition~\ref{prop:burnside-info}, we thus need only understand the inclusions $\cat P_G(K,p,n) \subseteq \cat P_G(H,p,m)$ for the same prime~$p$ and for \mbox{$p$-subnormal} subgroups~$K$ of~$H$.  In particular, we need only consider those primes $p$ dividing the order of~$G$. (Otherwise, no subgroup of $G$ has any proper $p$-subnormal subgroup.)
\end{Rem}

Finally, we can further reduce our problem to the case where $G$ is a $p$-group:

\begin{Prop}
\label{prop:reduce-to-p-groups}%
Let $H,K \le G$ be subgroups. Then for any $1 \le n,m \le \infty$, we have an inclusion $\cat P_G(K,p,n) \subseteq \cat P_G(H,p,m)$ if and only if $K$ is $G$-conjugate to a $p$-subnormal subgroup $K^g \le H$ such that $\cat P_{H/N}(K^g/N,p,n) \subseteq \cat P_{H/N}(H/N,p,m)$ where $N := \Op (H) = \Op (K^g)$. Note that the subquotient $H/N$ of~$G$ is a $p$-group.
\end{Prop}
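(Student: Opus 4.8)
The plan is to obtain this biconditional purely by assembling functoriality statements already proved, together with Proposition~\ref{prop:burnside-info}. Recall that every map $\Spc(F)$ induced by a tt-functor is inclusion-preserving; the three functors I will play off against each other are $\Res^G_H$, $\Infl^H_{H/N}$, and the relative geometric fixed point functor $\phigeom{N}$ for $N \normal H$.

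For the ``if'' direction, suppose $K^g \le H$ is $p$-subnormal with $N := \Op(H) = \Op(K^g)$ and $\cat P_{H/N}(K^g/N,p,n) \subseteq \cat P_{H/N}(H/N,p,m)$ in~$\SH(H/N)^c$. Since $\Op(K^g) = N$ forces $N \le K^g \le H$, Proposition~\ref{prop:induced-geom} applies (with ambient group~$H$): the inclusion-preserving map $\Spc(\phigeom{N}) : \Spc(\SH(H/N)^c) \to \Spc(\SH(H)^c)$ carries these two primes to $\cat P_H(K^g,p,n)$ and $\cat P_H(H,p,m)$, so $\cat P_H(K^g,p,n) \subseteq \cat P_H(H,p,m)$ in~$\SH(H)^c$. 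Applying next the inclusion-preserving map $\Spc(\Res^G_H)$ and Corollary~\ref{cor:induced-res} yields $\cat P_G(K^g,p,n) \subseteq \cat P_G(H,p,m)$, and I finish by recalling $\cat P_G(K^g,p,n) = \cat P_G(K,p,n)$ from Definition~\ref{def:P(H,p,n)}.

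For the ``only if'' direction, assume $\cat P_G(K,p,n) \subseteq \cat P_G(H,p,m)$. Proposition~\ref{prop:burnside-info}(a) furnishes $g \in G$ with $K^g \le H$ a $p$-subnormal subgroup --- so $\Op(K^g) = \Op(H) =: N \normal H$, and in particular $N \le K^g$ --- together with $\cat P_H(K^g,p,n) \subseteq \cat P_H(H,p,m)$ in~$\SH(H)^c$. I then push this inclusion down to the $p$-group $H/N = H/\Op(H)$ using the inclusion-preserving map $\Spc(\Infl^H_{H/N}) : \Spc(\SH(H)^c) \to \Spc(\SH(H/N)^c)$: by Corollary~\ref{cor:induced-infl} it sends $\cat P_H(K^g,p,n)$ to $\cat P_{H/N}(K^gN/N,p,n) = \cat P_{H/N}(K^g/N,p,n)$ (using $K^gN = K^g$ since $N \le K^g$) and $\cat P_H(H,p,m)$ to $\cat P_{H/N}(H/N,p,m)$, which is exactly the required inclusion. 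That $H/N$ is a $p$-group is the definition of~$\Op$ (Definition~\ref{def:p-perfect}).

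I do not expect a serious obstacle here: the argument is essentially a chase of a single inclusion through three functors, with all of the genuine content (the existence of the $p$-subnormal subgroup and of a single conjugator $g$ witnessing both the membership $K^g\le H$ and the inclusion of primes in~$\SH(H)^c$) already packaged into Proposition~\ref{prop:burnside-info}. The only points requiring care are keeping track of which spectrum each prime inhabits, and the small group-theoretic identity $K^gN/N = K^g/N$, which is precisely where the equality $\Op(K^g) = \Op(H)$ is used. One could instead run the ``only if'' direction through $\Spc(\phigeom{N})$, invoking its injectivity and the description of its image in Proposition~\ref{prop:induced-geom} after verifying that $\cat P_H(K^g,p,n)$ and $\cat P_H(H,p,m)$ both contain $H/L_+$ whenever $N \not\le L$; but the route via inflation is shorter.
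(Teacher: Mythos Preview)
Your proof is correct and follows essentially the same route as the paper's: both directions are a chase through $\Spc(\Res^G_H)$, $\Spc(\Infl^H_{H/N})$ and $\Spc(\phigeom{N})$, with Proposition~\ref{prop:burnside-info}(a) supplying the conjugate $K^g$ and the inclusion in $\SH(H)^c$. Your write-up is in fact slightly more careful than the paper's, which writes $\Infl_{G/N}^G$ where $\Infl^H_{H/N}$ is meant (since $N=\Op(H)$ need not be normal in~$G$), and you make explicit the small point $K^gN=K^g$.
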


\begin{proof}
By Proposition~\ref{prop:burnside-info}\,(a), there exists a $G$-conjugate $K^g$ of~$K$ such that $K^g \le H$, such that $\cat P_H(K^g,p,n) \subseteq \cat P_H(H,p,m)$, and such that $\Op (H) = \Op (K^g)$. Let $N:=\Op (H) = \Op (K^g)$ be this normal subgroup of~$H$.  Under the map induced by inflation $\Infl_{G/N}^G$ (Cor.\,\ref{cor:induced-infl}), our inclusion becomes $\cat P_{H/N}(K^g/N,p,n) \subseteq \cat P_{H/N}(H/N,p,m)$.  Conversely, the latter inclusion of primes in $\SH(H/N)$ gets mapped by $\Spc(\phigeom{N})$ to the inclusion $\cat P_H(K^g,p,n) \subseteq \cat P_H(H,p,m)$ by Prop.\,\ref{prop:induced-geom}, which then becomes $\cat P_G(K^g,p,n) \subseteq \cat P_G(H,p,m)$ under restriction (Cor.\,\ref{cor:induced-res}).
\end{proof}

\section{The topology of $\Spc(\SH(C_p)^c)$}
\label{se:C_p}
\medskip

Let us consider $G=C_p$ cyclic of prime order. Since there are two subgroups, there are only two classes of primes in $\SHGc$:
$$
\cat P(C_p,q,n)=\SET{x\in\SHGc}{\phigeomb{G}(x)\in\cat C_{q,n}}
$$
and
$$
\cat P(1,q,n)=\SET{x\in\SHGc}{\Res^G_1(x)\in\cat C_{q,n}}\,.
$$
By Remark~\ref{rem:incl-at-p-in-G}, we only need to understand when there is an inclusion $\cat P(1,p,n) \subseteq \cat P(C_p,p,m)$, at the prime~$q=p$.

\begin{Prop}
\label{prop:C_p-inclusion}%
We have $\cat P(1,p,n)\subseteq \cat P(C_p,p,n-1)$ for every $n\geq 2$.
\end{Prop}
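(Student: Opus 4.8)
The plan is to deduce the inclusion from the Hovey--Sadofsky/Kuhn blue-shift result, already quoted as Theorem~\ref{thm:kuhovsky}, by routing it through the Tate construction. Unwinding the definition of the two primes as preimages, what I must show is that every $x\in\SHGc$ with $\Res^G_1(x)\in\cat C_{p,n}$ also satisfies $\phigeomb{G}(x)\in\cat C_{p,n-1}$. Since $\phigeomb{G}$ preserves compact objects, $\phigeomb{G}(x)\in\SHc$, so by Theorem~\ref{thm:kuhovsky} (this is where $n\ge 2$ is used) it suffices to prove the single vanishing statement $t_G(\triv(f_{p,n}))^G\otimes\phigeomb{G}(x)=0$, where $t_G=t_{\{1\}}$ (Remark~\ref{rem:t_G}) and $f_{p,n}$ is the right idempotent of Example~\ref{ex:fn}.

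First I would rephrase the hypothesis. Since the composite $1\hookrightarrow G\twoheadrightarrow 1$ is the identity we have $\Res^G_1\circ\triv\cong\Id_{\SH}$, so by strong monoidality the condition $\Res^G_1(x)\in\cat C_{p,n}$, i.e.\ $f_{p,n}\otimes\Res^G_1(x)=0$, is precisely $\Res^G_1(w)=0$ for $w:=\triv(f_{p,n})\otimes x\in\SHG$. The heart of the matter is then the implication $\Res^G_1(w)=0\Rightarrow t_G(w)=0$. One has $\Res^G_1(e_G)\cong\unit$ (equivalently $\Res^G_1(f_G)=0$): this follows from Proposition~\ref{prop:F(idemp)} applied to the coproduct-preserving tt-functor $\Res^G_1$, since the induced map $\Spc(\SHc)\to\Spc(\SHGc)$ sends each $\cat C_{p,m}$ to $\cat P(1,p,m)\in Y_{\{1\}}=\supp(G/1_+)$, so that the preimage of $Y_{\{1\}}$ is all of $\Spc(\SHc)$ and the transported idempotent triangle is the trivial one $\unit\to\unit\to 0$; concretely it just reflects that $EG$ is non-equivariantly contractible. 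Hence $\Res^G_1(e_G\otimes w)\cong\Res^G_1(w)=0$. Now $e_G\otimes w$ lies in the localizing subcategory $e_G\otimes\SHG$, which is generated by the compact object $G/1_+\cong\Ind^G_1(\unit)$; since $\Hom_{\SHG}(\Sigma^k G/1_+,\,e_G\otimes w)\cong\Hom_{\SH}(\Sigma^k\unit,\,\Res^G_1(e_G\otimes w))=0$ for all $k\in\bbZ$, we conclude $e_G\otimes w=0$. By the defining formula $t_G=[f_G,\Sigma e_G\otimes-]$ (Definition~\ref{def:t_Y}) this gives $t_G(w)=0$, and the $\SHGc$-linearity of the Tate functor (equation~\eqref{eq:t_Y@comp}, applied with the compact object $x$) rewrites this as $t_G(\triv(f_{p,n}))\otimes x=0$.

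To conclude, I would apply the tt-functor $\phigeomb{G}$: by strong monoidality $\phigeomb{G}(t_G(\triv(f_{p,n})))\otimes\phigeomb{G}(x)=0$, and Remark~\ref{rem:t_G-with-fixed-points} --- which identifies $\phigeomb{G}\,t_G(-)\cong t_G(-)^G$, precisely because the family of proper subgroups of $C_p$ is the trivial family $\{1\}$ --- turns the first tensor factor into $t_G(\triv(f_{p,n}))^G$. This is exactly the vanishing targeted in the first paragraph, so Theorem~\ref{thm:kuhovsky} gives $\phigeomb{G}(x)\in\cat C_{p,n-1}$, i.e.\ $x\in\cat P(C_p,p,n-1)$, as desired. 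The only genuinely hard mathematical ingredient is external --- the blue-shift Theorem~\ref{thm:kuhovsky} --- while inside this argument the step requiring the most care is the implication $\Res^G_1(w)=0\Rightarrow t_G(w)=0$, which rests on restriction to the trivial subgroup being conservative on the ``free'' part $e_G\otimes\SHG$.
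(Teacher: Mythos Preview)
Your proof is correct and follows essentially the same route as the paper: reduce to the vanishing $t_G(\triv(f_{p,n}))^G\otimes\phigeomb{G}(x)=0$, establish $e_G\otimes w=0$ for $w=\triv(f_{p,n})\otimes x$ so that $t_G(w)=0$, then apply $\phigeomb{G}$, the $C_p$-specific identification of Remark~\ref{rem:t_G-with-fixed-points}, and Theorem~\ref{thm:kuhovsky}. The only cosmetic difference is in the justification of $e_G\otimes w=0$: the paper uses the projection formula to get $G_+\otimes w\cong\Ind^G_1\Res^G_1(w)=0$ and then notes $e_G\in\Loc_\otimes(G_+)$, whereas you argue via the adjunction that $G_+$ has no nonzero maps to $e_G\otimes w$ and invoke compact generation of $e_G\otimes\SHG$; these are two sides of the same coin.
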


\begin{proof}
Let $x\in \SHGc$ be a compact $G$-spectrum such that $\Res^G_1(x) \in \cat C_{p,n}$.  This means that $0=f_{p,n} \otimes \Res^G_1(x) \cong \Res^G_1(\triv(f_{p,n})\otimes x)$ where $f_{p,n}$ is, as always, the idempotent associated to $\cat C_{p,n}$ (cf.~Ex.\,\ref{ex:fn}).  We deduce that $G_+\otimes \triv(f_{p,n})\otimes x=0$. Therefore $\Loc_{\otimes}(G_+)\otimes \triv(f_{p,n})\otimes x=0$; in particular $e_G\otimes \triv(f_{p,n})\otimes x =0$, where $e_G \to \unit \to f_G \to \Sigma e_G$ is the idempotent triangle in~$\SHG$ associated to the smashing subcategory $\Loc_{\otimes}(G_+)$ (cf.~Remark~\ref{rem:t_G}).
It follows that
\[t_G(\triv(f_{p,n})\otimes x)=[f_G,\Sigma e_G\otimes \triv(f_{p,n})\otimes x]=0.\]
Applying geometric fixed points we obtain
\[0=\phigeomb{G}(t_G(\triv(f_{p,n})\otimes x)) = \phigeomb{G}(t_G(\triv(f_{p,n}))\otimes x)=\phigeomb{G}(t_G(\triv(f_{p,n})))\otimes \phigeomb{G}(x).\]
Note that $\phigeomb{G}(t_G(\triv(f_{p,n}))) = t_G(\triv(f_{p,n}))^G$ since $G=C_p$ (cf.~Remark~\ref{rem:t_G-with-fixed-points}) hence Theorem~\ref{thm:kuhovsky} implies that $f_{p,n-1} \otimes \phigeomb{G}(x)=0$.  That is, $\phigeomb{G}(x) \in \cat C_{p,n-1}$.
\end{proof}

\begin{Cor}
\label{cor:C_p-incl-infty}%
We have $\cat P(1,p,\infty) \subseteq \cat P(C_p,p,\infty)$.
\end{Cor}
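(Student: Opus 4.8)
The plan is to reduce immediately to Proposition~\ref{prop:C_p-inclusion} by writing both sides as intersections over chromatic levels. By Remark~\ref{rem:P-infty} we have $\cat P(1,p,\infty)=\bigcap_{n\geq 1}\cat P(1,p,n)$ and $\cat P(C_p,p,\infty)=\bigcap_{m\geq 1}\cat P(C_p,p,m)$, so it suffices to check that the right-hand intersection contains the left-hand one.

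First I would take any $x\in\cat P(1,p,\infty)$. Then for every $m\geq 1$ we have $x\in\cat P(1,p,m+1)$, since $m+1\geq 2$ and the intersection defining $\cat P(1,p,\infty)$ runs over all chromatic levels. Applying Proposition~\ref{prop:C_p-inclusion} with $n=m+1$ gives $x\in\cat P(C_p,p,m)$. As $m\geq 1$ was arbitrary, $x\in\bigcap_{m\geq 1}\cat P(C_p,p,m)=\cat P(C_p,p,\infty)$. Equivalently, the whole argument is the chain of inclusions
\[
\cat P(1,p,\infty)=\bigcap_{n\geq 1}\cat P(1,p,n)\ \subseteq\ \bigcap_{n\geq 2}\cat P(1,p,n)\ \subseteq\ \bigcap_{n\geq 2}\cat P(C_p,p,n-1)=\bigcap_{m\geq 1}\cat P(C_p,p,m)=\cat P(C_p,p,\infty),
\]
where the middle inclusion is Proposition~\ref{prop:C_p-inclusion} applied termwise.

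There is essentially no obstacle here; all the content lives in Proposition~\ref{prop:C_p-inclusion} (which in turn rests on Theorem~\ref{thm:kuhovsky} and Remark~\ref{rem:t_G-with-fixed-points}). The only mild bookkeeping point is that the case $m=1$ of the target intersection is handled by taking $n=2$ above, recalling $\cat P(C_p,p,1)=\cat P(C_p,0,1)$; this level is indeed included in the intersection defining $\cat P(C_p,p,\infty)$ by Remark~\ref{rem:P-infty}. So the corollary is a purely formal consequence of the preceding proposition.
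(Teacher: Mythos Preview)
Your proof is correct and follows the same approach as the paper: both reduce to Proposition~\ref{prop:C_p-inclusion} via the intersection formula $\cat P(H,p,\infty)=\bigcap_{n\ge1}\cat P(H,p,n)$ (equivalently, $\cat C_{p,\infty}=\bigcap_{n\ge1}\cat C_{p,n}$). The paper's proof is the one-line version of what you wrote out in detail.
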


\begin{proof}
This is immediate from the above proposition, since $\cat C_{p,\infty} = \bigcap_{n \ge 1} \cat C_{p,n}$.
\end{proof}

\begin{Rem}
\label{rem:geom}%
We know that $\cat P(1,p,n) \not\subseteq \cat P(C_p,p,n+1)$, \eg\ by Cor.\,\ref{cor:incl-chrom}. So, the only remaining question is whether or not $\cat P(1,p,n)$ can be included in~$\cat P(C_p,p,n)$.

\smallskip
Although not necessary in the formal arguments for the hyper-brainy readers, there is a way to build intuition from geometric pictures which can help average readers, down around 160 I.Q.\ points.
Localizing at~$p$, we have by Corollary~\ref{cor:loc} the following (left-hand side) towers of primes in~$\SH(C_p)^c$:
\[ \xy
  (4,-5)*{\bullet};
  (-4,-5)*{\scriptstyle \cat P(1,p,1)};
  (2,5)*{\bullet};
  (-6,5)*{\scriptstyle \cat P(1,p,2)};
  (2,10)*{\bullet};
  (-6,10)*{\scriptstyle \cat P(1,p,3)};
  (2,16)*{\vdots};
  (2,20)*{\bullet};
  (-8,20)*{\scriptstyle \cat P(1,p,n-1)};
  (2,25)*{\bullet};
  (-6,25)*{\scriptstyle \cat P(1,p,n)};
  (2,33)*{\vdots};
  (2,35)*{\bullet};
  (-7,35)*{\scriptstyle \cat P(1,p,\infty)};
{\ar@{-} (4,-5)*{};(2,5)*{}};
{\ar@{-} (2,5)*{};(2,10)*{}};
{\ar@{-} (2,10)*{};(2,13)*{}};
{\ar@{-} (2,17)*{};(2,20)*{}};
{\ar@{-} (2,20)*{};(2,25)*{}};
{\ar@{-} (2,25)*{};(2,29)*{}};
  (15,-5)*{\bullet};
  (24,-5)*{\scriptstyle \cat P(C_p,p,1)};
  (13,5)*{\bullet};
  (22,5)*{\scriptstyle \cat P(C_p,p,2)};
  (13,10)*{\bullet};
  (22,10)*{\scriptstyle \cat P(C_p,p,3)};
  (13,16)*{\vdots};
  (13,20)*{\bullet};
  (23.7,20)*{\scriptstyle \cat P(C_p,p,n-1)};
  (13,25)*{\bullet};
  (22,25)*{\scriptstyle \cat P(C_p,p,n)};
  (13,33)*{\vdots};
  (13,35)*{\bullet};
  (22.5,35)*{\scriptstyle \cat P(C_p,p,\infty)};
{\ar@{-} (15,-5)*{};(13,5)*{}};
{\ar@{-} (13,5)*{};(13,10)*{}};
{\ar@{-} (13,10)*{};(13,13)*{}};
{\ar@{-} (13,17)*{};(13,20)*{}};
{\ar@{-} (13,20)*{};(13,25)*{}};
{\ar@{-} (13,25)*{};(13,29)*{}};
{\ar@{-} (15,-5)*{};(2,5)*{}};
{\ar@{-} (13,5)*{};(2,10)*{}};
{\ar@{-} (13,10)*{};(7.5,12.5)*{}};
{\ar@{-} (7.5,17.5)*{};(2,20)*{}};
{\ar@{-} (13,20)*{};(2,25)*{}};
{\ar@{-} (13,25)*{};(7.5,27.5)*{}};
{\ar@{-} (2,35)*{};(13,35)*{}};
{\ar@{--} (2,25)*{};(13,25)*{}};
{\ar@{--} (2,20)*{};(13,20)*{}};
{\ar@{--} (2,10)*{};(13,10)*{}};
{\ar@{--} (2,5)*{};(13,5)*{}};
(8,-15)*{\Spc(\SH(C_p)^c_{(p)})};
\endxy
\qquad  \supset \qquad
\xy
  (4,-5)*{\bullet};
  (2,5)*{\bullet};
  (2,10)*{\bullet};
  (2,16)*{\vdots};
  (2,20)*{\bullet};
  (2,25)*{\bullet};
  (-6,25)*{\scriptstyle \cat P(1,p,n)};
{\ar@{-} (4,-5)*{};(2,5)*{}};
{\ar@{-} (2,5)*{};(2,10)*{}};
{\ar@{-} (2,10)*{};(2,13)*{}};
{\ar@{-} (2,17)*{};(2,20)*{}};
{\ar@{-} (2,20)*{};(2,25)*{}};
  (15,-5)*{\bullet};
  (13,5)*{\bullet};
  (13,10)*{\bullet};
  (13,16)*{\vdots};
  (13,20)*{\bullet};
  (13,25)*{\bullet};
  (22,25)*{\scriptstyle \cat P(C_p,p,n)};
{\ar@{-} (15,-5)*{};(13,5)*{}};
{\ar@{-} (13,5)*{};(13,10)*{}};
{\ar@{-} (13,10)*{};(13,13)*{}};
{\ar@{-} (13,17)*{};(13,20)*{}};
{\ar@{-} (13,20)*{};(13,25)*{}};
{\ar@{-} (15,-5)*{};(2,5)*{}};
{\ar@{-} (13,5)*{};(2,10)*{}};
{\ar@{-} (13,10)*{};(7.5,12.5)*{}};
{\ar@{-} (7.5,17.5)*{};(2,20)*{}};
{\ar@{-} (13,20)*{};(2,25)*{}};
{\ar@{--} (2,25)*{};(13,25)*{}};
{\ar@{--} (2,20)*{};(13,20)*{}};
{\ar@{--} (2,10)*{};(13,10)*{}};
{\ar@{--} (2,5)*{};(13,5)*{}};
(8,-15)*{\Spc(\cat{SH}(C_p)_{p,n}^c)};
\endxy
\]
The dashed lines $- -$ indicate that the inclusions $\cat P(1,p,n)\overset{?}\subset \cat P(C_p,p,n)$ have not been ruled out yet. As in Proposition~\ref{prop:loc}, we can further localize below chromatic level~$n$, via $\SHG\onto \triv(f_{p,n})\otimes\SHG=\cat{SH}(G)_{p,n}$. This yields the right-hand side tt-spectrum in the above picture. The advantage of doing so is that we see the two primes in question, $\cat P(1,p,n)$ and $\cat P(C_p,p,n)$, at the top of the picture. Consider now the closed subset consisting of those two primes, diagrammatically depicted as
\[
\xy
  (4,-5)*{\circ};
  (2,5)*{\circ};
  (2,10)*{\circ};
  (2,16)*{\vdots};
  (2,20)*{\circ};
  (2,25)*{\bullet};
  (-6,25)*{\scriptstyle \cat P(1,p,n)};
{\ar@{-} (4,-5)*{};(2,5)*{}};
{\ar@{-} (2,5)*{};(2,10)*{}};
{\ar@{-} (2,10)*{};(2,13)*{}};
{\ar@{-} (2,17)*{};(2,20)*{}};
{\ar@{-} (2,20)*{};(2,25)*{}};
  (15,-5)*{\circ};
  (13,5)*{\circ};
  (13,10)*{\circ};
  (13,16)*{\vdots};
  (13,20)*{\circ};
  (13,25)*{\bullet};
  (22,25)*{\scriptstyle \cat P(C_p,p,n)};
{\ar@{-} (15,-5)*{};(13,5)*{}};
{\ar@{-} (13,5)*{};(13,10)*{}};
{\ar@{-} (13,10)*{};(13,13)*{}};
{\ar@{-} (13,17)*{};(13,20)*{}};
{\ar@{-} (13,20)*{};(13,25)*{}};
{\ar@{-} (15,-5)*{};(2,5)*{}};
{\ar@{-} (13,5)*{};(2,10)*{}};
{\ar@{-} (13,10)*{};(7.5,12.5)*{}};
{\ar@{-} (7.5,17.5)*{};(2,20)*{}};
{\ar@{-} (13,20)*{};(2,25)*{}};
{\ar@{--} (2,25)*{};(13,25)*{}};
{\ar@{--} (2,20)*{};(13,20)*{}};
{\ar@{--} (2,10)*{};(13,10)*{}};
{\ar@{--} (2,5)*{};(13,5)*{}};
\endxy
\]
where the two $\bullet$ indicate the two points that we include in the closed subset and the $\circ$ mark the primes we exclude. The inclusion marked by the top dashed line $\bullet - - \bullet$ holds if and only if this closed subset is connected (and is the closure of the right-hand prime, corresponding to $\cat P(C_p,p,n)$). To show that this inclusion does not hold, we are going to show that this closed set is disconnected. As explained in Section~\ref{se:e,f,Tate}, this can be done by carefully analyzing idempotent triangles and this is why we proceed with the following computation.
\end{Rem}

\begin{Lem}
\label{lem:splitting}%
Let $G=C_p$. For any $n \ge 2$, we have a decomposition
\[ \triv(e_{p,n-1}\otimes f_{p,n}) \simeq \big( e_G \otimes \triv(e_{p,n-1} \otimes f_{p,n}) \big) \oplus \big( f_G \otimes \triv(e_{p,n-1} \otimes f_{p,n}) \big)\]
in $\SH(G)$, where $e_{p,n}$ and $f_{p,n}$ are the chromatic idempotents (Ex.\,\ref{ex:fn}) and $e_G$ and $f_G$ are the group-theoretic ones (Ex.\,\ref{ex:fF} and Rem.\,\ref{rem:t_G}).
\end{Lem}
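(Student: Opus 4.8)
The plan is to recast the asserted decomposition as the vanishing of the connecting map of an idempotent triangle. Write $N := e_{p,n-1} \otimes f_{p,n} \in \SHc$ and $M := \triv(N) \in \SHG$. Tensoring the idempotent triangle $e_G \to \unit \to f_G \to \Sigma e_G$ (Remark~\ref{rem:t_G}) with $M$ produces an exact triangle
\[
e_G \otimes M \to M \to f_G \otimes M \xra{\ \delta\ } \Sigma e_G \otimes M ,
\]
and the statement of the lemma is exactly the claim that $\delta = 0$ (for then the triangle splits and yields $M \simeq e_G\otimes M \oplus f_G\otimes M$, which is the asserted decomposition). So it is enough to show that the whole abelian group $\Hom_{\SHG}(f_G \otimes M, \Sigma e_G \otimes M)$ vanishes. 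By the tensor--hom adjunction together with Definition~\ref{def:t_Y} (applied to $Y=\{1\}$, so $t_{\{1\}}=t_G$) this group is $\Hom_{\SHG}(M, t_G(M))$, and since $\triv=\Infl_1^G$ is left adjoint to the categorical fixed-point functor $(-)^G$ it equals $\Hom_{\SH}\big(N,\, (t_G(\triv N))^G\big) = \Hom_{\SH}\big(N,\, \cat T_G(N)\big)$, where $\cat T_G := (-)^G \circ t_G \circ \triv$ as in the proof of Theorem~\ref{thm:kuhovsky}. Thus I am reduced to proving $\Hom_{\SH}(N, \cat T_G(N)) = 0$.

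I would obtain this from a chromatic orthogonality: $N$ is $f_{p,n-1}$-acyclic whereas $\cat T_G(N)$ is $f_{p,n-1}$-local, and $f_{p,n-1}$-acyclic objects are left-orthogonal to $f_{p,n-1}$-local ones (the subcategory $e_{p,n-1}\otimes\SH$ of the former is $\cat T_{Y_{p,n-1}}$ and the subcategory $f_{p,n-1}\otimes\SH$ of the latter is its right orthogonal $(\cat T_{Y_{p,n-1}})^\perp$; see Notation~\ref{not:idempotents} and Remark~\ref{rem:loc}). That $N$ is $f_{p,n-1}$-acyclic is trivial: $f_{p,n-1}\otimes N = f_{p,n-1}\otimes e_{p,n-1}\otimes f_{p,n}=0$, since $e_{p,n-1}$ and $f_{p,n-1}$ are the two complementary idempotents of a single idempotent triangle. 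For $\cat T_G(N)$, observe that $\cat T_G$ is exact, being a composite of the exact functors $\triv$, $t_G=[f_G,\Sigma e_G\otimes-]$ and $(-)^G$. Tensoring the idempotent triangle $e_{p,n-1}\to\unit\to f_{p,n-1}$ with $f_{p,n}$ (and using $f_{p,n-1}\otimes f_{p,n}=f_{p,n-1}$) gives an exact triangle $N\to f_{p,n}\to f_{p,n-1}\to\Sigma N$ in $\SHc$, and applying $\cat T_G$ yields an exact triangle $\cat T_G(N)\to \cat T_G(f_{p,n})\to \cat T_G(f_{p,n-1})\to\Sigma\cat T_G(N)$. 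Since the $f_{p,n-1}$-local objects form a triangulated subcategory closed under fibers, it suffices to show that both $\cat T_G(f_{p,n})$ and $\cat T_G(f_{p,n-1})$ are $f_{p,n-1}$-local.

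The case of $\cat T_G(f_{p,n-1})$ is soft: $\triv(f_{p,n-1})$ is a ring object in $\SHG$, so by Remark~\ref{rem:Tate-lax} the object $t_G(\triv(f_{p,n-1}))$ is a module over it, hence isomorphic to $\triv(f_{p,n-1})\otimes t_G(\triv(f_{p,n-1}))$; the projection formula $(\triv(x)\otimes y)^G\cong x\otimes y^G$ of~\ref{it:cat-fixed-pts} then identifies $\cat T_G(f_{p,n-1})$ with $f_{p,n-1}\otimes \cat T_G(f_{p,n-1})$, i.e.\ it is $f_{p,n-1}$-local. The very same argument only shows $\cat T_G(f_{p,n})$ to be $f_{p,n}$-local, which is \emph{not} strong enough --- and this is precisely where the Kuhn--Hovey--Sadofsky blue-shift enters to supply the needed improvement. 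By Theorem~\ref{thm:kuhovsky}, $\cat T_G(f_{p,n})\otimes z=0$ for every $z\in\cat C_{p,n-1}$; since such $z$ are rigid and $\cat C_{p,n-1}$ is closed under duality, $\Hom_{\SH}(z,\Sigma^{k}\cat T_G(f_{p,n}))=\pi_{-k}(z^\vee\otimes\cat T_G(f_{p,n}))=0$ for all $z\in\cat C_{p,n-1}$ and all $k\in\bbZ$, and as the objects of $\cat C_{p,n-1}$ generate the localizing subcategory $e_{p,n-1}\otimes\SH$ this says exactly that $\cat T_G(f_{p,n})\in(e_{p,n-1}\otimes\SH)^\perp=f_{p,n-1}\otimes\SH$, as required. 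The main obstacle is precisely this last step: upgrading the Kuhn--Hovey--Sadofsky statement, which a priori only controls smashing with objects of the \emph{compact} category $\cat C_{p,n-1}$, to the genuine $f_{p,n-1}$-locality of the idempotent-type object $\cat T_G(f_{p,n})$. This is where the ``shift by one'' established in Proposition~\ref{prop:C_p-inclusion} gets leveraged in the opposite direction --- now to pry the two chromatic towers apart rather than to connect them --- while everything else is formal bookkeeping with the two commuting families of idempotents set up in Section~\ref{se:e,f,Tate}.
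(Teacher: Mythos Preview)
Your proof is correct (modulo the harmless typo $N\in\SHc$, which should read $N\in\SH$; nowhere do you use compactness of~$N$). Both your argument and the paper's reduce the splitting to the vanishing of the connecting map~$\delta$ and both feed on Theorem~\ref{thm:kuhovsky}, but the technical routes differ. The paper stays in~$\SHG$: it uses Remark~\ref{rem:e-f} to strip the internal hom down to $[f_G\otimes e_{p,n-1},\,\Sigma e_G\otimes f_{p,n}]$, then invokes the $C_p$-specific fact $f_G=f_{\proper}$ (Remark~\ref{rem:t_G-with-fixed-points}) to lift the Kuhn--Hovey--Sadofsky vanishing from~$\SH$ back into~$\SHG$, and finally passes from compact test objects to the idempotent~$e_{p,n-1}$ by a localizing-subcategory argument. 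You instead descend at once to~$\SH$ via the adjunction $\triv\dashv(-)^G$ and settle everything there with the cofiber sequence $N\to f_{p,n}\to f_{p,n-1}$, treating the two terms by a module/projection-formula trick and by Theorem~\ref{thm:kuhovsky} respectively. A pleasant byproduct is that your argument never uses anything particular to~$C_p$: it goes through verbatim for any finite~$G$ with $p\mid|G|$, so you have actually established Lemma~\ref{lem:splitting} in that generality. The paper's route is a little more streamlined once Remark~\ref{rem:e-f} is in hand, but at the price of the $C_p$-specific step.
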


\begin{proof}
More precisely, we prove that the triangle $\Delta_{G} \otimes \triv(e_{p,n-1} \otimes f_{p,n})$ splits, by showing that the last map in this triangle (the vertical one below) is zero:
\[\xymatrix@C=2em{
e_G \otimes \triv(e_{p,n-1}\otimes f_{p,n}) \ar[r] & \triv(e_{p,n-1} \otimes f_{p,n}) \ar[r] & f_G\otimes \triv(e_{p,n-1} \otimes f_{p,n}) \ar[d] \\
&&\Sigma e_G \otimes \triv(e_{p,n-1}\otimes f_{p,n}).
}\]
It suffices to show that $\Hom(f_G \otimes \triv(e_{p,n-1}\otimes f_{p,n}), \Sigma e_G \otimes \triv(e_{p,n-1} \otimes f_{p,n})) = 0$, which in turn follows from
$[f_G \otimes \triv(e_{p,n-1}\otimes f_{p,n}),\Sigma e_G \otimes \triv(e_{p,n-1}\otimes f_{p,n})]= 0$.
Let us temporarily stop writing ``triv'' for readability. Using Remark~\ref{rem:e-f}, we can remove the factor $e_{p,n-1}$ on the right and the factor $f_{p,n}$ on the left. This reads:
\[
[f_G \otimes e_{p,n-1} \otimes f_{p,n}, \Sigma e_G \otimes e_{p,n-1} \otimes f_{p,n}] = [f_G \otimes e_{p,n-1}, \Sigma e_G \otimes f_{p,n}].
\]
So we want to prove that $[f_G\otimes e_{p,n-1}, \Sigma e_G \otimes f_{p,n}] = 0$.

To this end, we start from Theorem~\ref{thm:kuhovsky}, which gives us $t_G(f_{p,n})^G\otimes \cat C_{p,n-1} = 0$.  Thus, $t_G(f_{p,n} \otimes x^\vee)^G = 0$ for all $x \in \cat C_{p,n-1}$.  Moreover, since $G=C_p$, this actually gives an equality $t_G(f_{p,n} \otimes x^\vee) = 0$ in $\SH(G)$ because $t_G(-)$ takes $f_G$-local values and here, $f_G=f_{\proper}$.  (See Rem.\,\ref{rem:t_G-with-fixed-points}.) Thus $[f_G \otimes \triv(x),\Sigma e_G \otimes f_{p,n} ]=0$.  But now, the kernel of the functor $[f_G \otimes \triv(-), \Sigma e_G \otimes f_{p,n}]$ is a localizing subcategory of $\SH$.  Since we just proved that this kernel contains $\cat C_{p,n-1}$, it contains $e_{p,n-1} \in \Loc(\cat C_{p,n-1})$. Hence $[f_G \otimes e_{p,n-1}, \Sigma e_G \otimes f_{p,n}] = 0$ as desired.
\end{proof}

\begin{Prop}
\label{prop:C_p-non-inclusion}%
Let $G=C_p$ and $n\geq 1$. Then $\cat P(1,p,n)\not\subseteq \cat P(C_p,p,n)$.
\end{Prop}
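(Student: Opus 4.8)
The plan is to realize the strategy sketched in Remark~\ref{rem:geom}. First, the case $n=1$ is immediate: were $\cat P(1,p,1)\subseteq\cat P(C_p,p,1)$, Example~\ref{ex:incl-0} would force equality of these two primes, hence $1\sim_{C_p}C_p$ by Theorem~\ref{thm:prime-uniqueness}, which is absurd. So I assume $n\ge 2$ (this is also the hypothesis of Lemma~\ref{lem:splitting}).

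By Remark~\ref{rem:yes-we-can-localize} it suffices to decide the inclusion inside $\cat T:=\cat{SH}(C_p)_{p,n}=\triv(f_{p,n})\otimes\SH(C_p)$, a rigidly-compactly generated tt-category (Remark~\ref{rem:loc}) whose compact part has the \emph{finite} spectrum $V_{p,n}=\SET{\cat P(H,p,m)}{H\le C_p,\ 1\le m\le n}$ (Proposition~\ref{prop:loc}). Put $Z:=\{\cat P(1,p,n),\cat P(C_p,p,n)\}$. Using the inclusions among equivariant primes already at our disposal (Proposition~\ref{prop:incl-same-subgroup} and Corollaries~\ref{cor:G/K^H} and~\ref{cor:incl-chrom}) one sees that inside $V_{p,n}$ nothing lies strictly below $\cat P(1,p,n)$, and that the only point which can lie strictly below $\cat P(C_p,p,n)$ is $\cat P(1,p,n)$; hence $Z$ is a Thomason closed subset, and $\cat P(1,p,n)\subseteq\cat P(C_p,p,n)$ holds \emph{if and only if} $Z$ is connected (a two-point space being irreducible iff connected). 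The goal thus becomes: show $Z$ is disconnected.

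The key step is to identify the left idempotent $e_Z$ of the idempotent triangle for $Z$ in $\cat T$, which I claim is $\triv(e_{p,n-1}\otimes f_{p,n})$. Indeed, Example~\ref{ex:fn} gives the idempotent triangle for $Y^{C_p}_{p,n-1}$ in $\SH(C_p)$ as $\triv(e_{p,n-1})\to\unit\to\triv(f_{p,n-1})\to\Sigma\triv(e_{p,n-1})$; applying the localization $\triv(f_{p,n})\otimes-\colon\SH(C_p)\to\cat T$ and Proposition~\ref{prop:F(idemp)}, one gets the idempotent triangle for the preimage of $Y^{C_p}_{p,n-1}$ under $V_{p,n}\hookrightarrow\Spc(\SH(C_p)^c)$, and a direct check with the descriptions in Example~\ref{ex:fn} shows that preimage is $Y^{C_p}_{p,n-1}\cap V_{p,n}=\SET{\cat P(H,p,n)}{H\le C_p}=Z$; thus $e_Z=\triv(f_{p,n}\otimes e_{p,n-1})$. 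Now Lemma~\ref{lem:splitting} provides an orthogonal decomposition $e_Z\cong(e_G\otimes e_Z)\oplus(f_G\otimes e_Z)$ (orthogonal since $e_G\otimes f_G=0$), which holds in $\cat T$ because all objects involved are $\triv(f_{p,n})$-local. To apply Lemma~\ref{lem:splitting_lemma} it remains to see both summands are nonzero; this requires probing $\cat T$ by \emph{two} functors, as $\phigeomb{C_p}$ kills $e_G$ while $\Res^{C_p}_1$ kills $f_G$. Concretely, $\phigeomb{C_p}(e_G)=0$ (because $e_G\in\Loc_\otimes(C_p/1_+)$ and $\phigeomb{C_p}(C_p/1_+)=0$ by Lemma~\ref{lem:G/K^H}), so $\phigeomb{C_p}(f_G)\cong\unit$ and hence $\phigeomb{C_p}(f_G\otimes e_Z)\cong e_{p,n-1}\otimes f_{p,n}$; dually $\Res^{C_p}_1(e_G)\cong\unit$ (via Proposition~\ref{prop:F(idemp)} for $\Res^{C_p}_1$ and $Y=\supp(C_p/1_+)$, whose preimage is all of $\Spc(\SHc)$ since $\Res^{C_p}_1(C_p/1_+)\cong\unit^{\oplus p}$), so $\Res^{C_p}_1(e_G\otimes e_Z)\cong e_{p,n-1}\otimes f_{p,n}$ --- here I use $\phigeomb{C_p}\circ\triv\cong\Id\cong\Res^{C_p}_1\circ\triv$ from~\ref{it:triv}. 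Finally $e_{p,n-1}\otimes f_{p,n}\ne 0$, else $\cat C_{p,n-1}\subseteq\cat C_{p,n}$, contradicting~\eqref{eq:Spc(SH)}. Lemma~\ref{lem:splitting_lemma} then disconnects $Z=Z_1\sqcup Z_2$ into nonempty closed pieces; as $\lvert Z\rvert=2$ each $Z_i$ is a point, so $\{\cat P(C_p,p,n)\}$ is closed in $V_{p,n}$, i.e.\ $\cat P(1,p,n)\not\subseteq\cat P(C_p,p,n)$ in $\cat T$, hence in $\SH(C_p)^c$ by Remark~\ref{rem:yes-we-can-localize}.

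The step I expect to be the main obstacle is the identification $e_Z=\triv(e_{p,n-1}\otimes f_{p,n})$ --- the combinatorial identity $Y^{C_p}_{p,n-1}\cap V_{p,n}=Z$, and the care needed in applying Proposition~\ref{prop:F(idemp)} to these non-closed-monoidal localizations --- together with checking the nonvanishing of the two summands, which forces one to test $\cat T$ through both a restriction and a geometric fixed-point functor. The genuinely deep ingredient, the blue-shift Theorem~\ref{thm:kuhovsky}, has already been packaged into Lemma~\ref{lem:splitting}, so what remains is tensor-triangular bookkeeping.
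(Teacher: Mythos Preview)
Your proof is correct and follows essentially the same approach as the paper's: localize to $\cat{SH}(C_p)_{p,n}$, identify the left idempotent for the two-point closed set $\{\cat P(1,p,n),\cat P(C_p,p,n)\}$ as $\triv(e_{p,n-1}\otimes f_{p,n})$, invoke Lemma~\ref{lem:splitting} for the splitting, test nonvanishing of the summands via $\phigeomb{C_p}$ and $\Res^{C_p}_1$, and conclude disconnectedness by Lemma~\ref{lem:splitting_lemma}. You have simply spelled out in greater detail the steps the paper leaves implicit (closedness of $Z$, the precise identification of $e_Z$, and the nonvanishing of $e_{p,n-1}\otimes f_{p,n}$).
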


\begin{proof}
Example~\ref{ex:incl-0} deals with the $n=1$ case, so assume $n \ge 2$.  Let $\cat{SH}(G)_{p,n} = \triv(f_{p,n}) \otimes \SH(G)$ be the truncation of~$\SHG$ below $p$-chromatic level~$n$; see Ex.\,\ref{ex:fn}.  As explained in Remark~\ref{rem:geom}, the tt-spectrum $\Spc(\cat{SH}(G)_{p,n}^c)$ is homeomorphic to the open subset of $\Spc(\SHGc)$ consisting of those $\cat P_G(H,p,m)$ -- same $p$ -- for all $H\in\{1,G\}$ and $1\le m \le n$. Moreover, the image in $\cat{SH}(G)_{p,n}$ of the $(n-1)$-th triangle $e_{p,n-1}\to \unit\to f_{p,n-1}\to \Sigma e_{p,n-1}$ is the idempotent triangle in $\cat{SH}(G)_{p,n}$ associated with the closed set $\{\cat P(1,p,n), \cat P(C_p,p,n)\}$; see Example~\ref{ex:fn}.  By Lemma~\ref{lem:splitting}, the left idempotent $e_{p,n-1}\otimes f_{p,n}$ of this triangle splits in~$\cat{SH}(G)_{p,n}$.  That is, we have $e_{p,n-1}\otimes f_{p,n} = (e_{p,n-1}\otimes f_{p,n} \otimes e_G) \oplus (e_{p,n-1} \otimes f_{p,n} \otimes f_G)$.  By applying $\phigeomb{G}$ and $\Res^G_1$ respectively we see that the two direct summands are non-zero; hence it is a non-trivial decomposition.  Invoking Lemma~\ref{lem:splitting_lemma}, we conclude that the closed subset $\{ \cat P(1,p,n), \cat P(C_p,p,n)\}$ of $\Spc(\cat{SH}(G)_{p,n}^c)$ consisting of precisely two points is disconnected.  The claim follows (see Remark~\ref{rem:geom} if necessary).
\end{proof}

\begin{Rem}
We thus have a complete description of the topology for $G=C_p$.  See~\eqref{eq:Spc(C_p)} for a picture of the spectrum. As a consequence we obtain a complete classification of the thick $\otimes$-ideals in $\SH(C_p)^c$. More on this topic in Section~\ref{se:classif}.
\end{Rem}

\smallskip
\section{The topology for general $G$ and the $\log_p$-Conjecture}
\label{se:general-case}%
\medskip

Next we consider the problem of determining the topology of~$\Spc(\SHGc)$ for an arbitrary finite group~$G$. As explained in Proposition~\ref{prop:reduce-to-p-groups}, we have reduced the problem to understanding the possible inclusions $\cat P_G(K,p,n) \subseteq \cat P_G(H,p,m)$ for $p$ a prime number and~$G$ a $p$-group.

\medskip
We obtain an immediate generalization of Proposition~\ref{prop:C_p-non-inclusion}:

\begin{Prop}
\label{prop:non-inclusion}%
Let $G$ be a finite group, $H,K \le G$ subgroups, $p$ a prime and $1 \le n < \infty$. Then $\cat P_G(K,p,n) \not\subseteq \cat P_G(H,p,n)$ -- for the \emph{same}~$n$ -- unless $H \sim_G K$ (in which case the two primes coincide).
\end{Prop}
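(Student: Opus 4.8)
The plan is to bootstrap from the cyclic case already settled in Proposition~\ref{prop:C_p-non-inclusion} by pushing a hypothetical inclusion forward along a well-chosen inflation functor onto a $C_p$-quotient.

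First I would suppose $\cat P_G(K,p,n) \subseteq \cat P_G(H,p,n)$ and invoke Proposition~\ref{prop:burnside-info}(a): after replacing $K$ by a suitable $G$-conjugate (which changes neither side of the inclusion, by~\ref{it:conj}), we may assume $K \le H$, that $\Op (K) = \Op (H)$, and that $\cat P_H(K,p,n) \subseteq \cat P_H(H,p,n)$ holds in~$\SH(H)^c$. If $K = H$ there is nothing left to prove, so assume $K \lneq H$.

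Next comes the only group-theoretic observation needed: since $\Op (H) = \Op (K) \le K \lneq H$, the quotient $H/\Op (H)$ is a nontrivial $p$-group in which the image of $K$ is a proper subgroup, so by standard facts about $p$-groups there is a subgroup $N$ with $K \le N \normal H$ and $[H:N] = p$; in particular $H/N \cong C_p$. I would then apply $\Spc(\Infl^H_{H/N})$ to the inclusion $\cat P_H(K,p,n) \subseteq \cat P_H(H,p,n)$: since $\Spc$ of a tt-functor is inclusion-preserving and, by Corollary~\ref{cor:induced-infl}, sends $\cat P_H(K,p,n)$ and $\cat P_H(H,p,n)$ to $\cat P_{H/N}(KN/N,p,n)$ and $\cat P_{H/N}(H/N,p,n)$ respectively, and since $K \le N$ forces $KN/N$ to be the trivial subgroup, we obtain $\cat P_{H/N}(1,p,n) \subseteq \cat P_{H/N}(H/N,p,n)$ with $H/N \cong C_p$ and $n < \infty$. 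This contradicts Proposition~\ref{prop:C_p-non-inclusion}. Hence $K = H$ after all, i.e.\ the original $K$ and $H$ are conjugate in~$G$, and then the two primes coincide by~\ref{it:conj} (or Theorem~\ref{thm:prime-uniqueness}).

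I do not expect a serious obstacle here: the real content of the statement is already concentrated in Proposition~\ref{prop:C_p-non-inclusion} and, through it, in the Hovey--Sadofsky--Kuhn blue-shift theorem. The one point requiring care is arranging the inflation so that the quotient is exactly~$C_p$ and $K$ maps into the trivial subgroup, and this is precisely what the equality $\Op (H) = \Op (K)$ supplied by Proposition~\ref{prop:burnside-info} makes possible; once this is in place the $C_p$-computation propagates to every finite group by this single one-step reduction.
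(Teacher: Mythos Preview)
Your proof is correct and follows essentially the same route as the paper's: reduce via Proposition~\ref{prop:burnside-info} (the paper packages this step through Proposition~\ref{prop:reduce-to-p-groups}) to an inclusion inside a $p$-group situation, find a normal subgroup of index~$p$ containing the smaller subgroup, and push forward along inflation to~$C_p$ to contradict Proposition~\ref{prop:C_p-non-inclusion}. The only cosmetic difference is that the paper first replaces the ambient group by the $p$-group $H/\Op(H)$ before finding the index-$p$ normal subgroup, whereas you locate $N\normal H$ directly; the content is the same.
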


\begin{proof}
	Suppose ab absurdo that $\cat P_G(K,p,n) \subseteq \cat P_G(H,p,n)$ for some $n \ge 1$ and $H,K \le G$ with $H \not\sim_G K$. Proposition~\ref{prop:reduce-to-p-groups} allows us to assume that $G$ is a $p$-group and that $H=G$. Our (absurd) situation is now: $\cat P_G(K,p,n) \subseteq \cat P_G(G,p,n)$, where $G$ is a $p$-group and $K < G$. Then there exists a maximal proper subgroup $N \lhd G$ containing~$K$. Since $G$ is a $p$-group, $N$ must have index $p$ and $G/N \simeq C_p$. Finally, apply Corollary~\ref{cor:induced-infl} to $\cat P_G(K,p,n) \subseteq \cat P_G(G,p,n)$ to obtain the absurd conclusion $\cat P_{C_p}(1,p,n) \subseteq \cat P_{C_p}(C_p,p,n)$, contradicting Proposition~\ref{prop:C_p-non-inclusion}.
\end{proof}

We can also generalize Proposition~\ref{prop:C_p-inclusion} to arbitrary finite groups:
\begin{Thm}
\label{thm:shift-by-s}%
Let $G$ be a finite group, $p$ a prime, and $K\le H \le G$ subgroups such that $K$ admits a $p$-subnormal tower to $H$ of length~$s$ (see Terminology~\ref{ter:subnormal}).
Then $\cat P_G(K,p,n) \subseteq \cat P_G(H,p,n-s)$
for all~$n>s$.
\end{Thm}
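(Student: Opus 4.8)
The plan is to induct on the length $s$ of the $p$-subnormal tower, reducing everything to the cyclic case $\cat P(1,p,n)\subseteq\cat P(C_p,p,n-1)$ already established in Proposition~\ref{prop:C_p-inclusion} (and in Corollary~\ref{cor:C_p-incl-infty} for $n=\infty$). The case $s=0$ is vacuous, since then $K=H$, and I expect the entire content to be concentrated in the base case $s=1$; the general statement should then follow by a routine bootstrapping using the functoriality results of Section~\ref{se:the-set}.

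For the case $s=1$ we have $K\lnormal_p H$, \ie $K\normal H$ with $H/K\cong C_p$. I would consider the composite tt-functor
\[
F:=\phigeom{K}\circ\Res^G_H\ :\ \SH(G)\longrightarrow \SH(H)\longrightarrow \SH(H/K)\cong \SH(C_p),
\]
which preserves compact objects and hence induces an inclusion-preserving map $\Spc(F)=\Spc(\Res^G_H)\circ\Spc(\phigeom{K})$ on tt-spectra. By Proposition~\ref{prop:induced-geom}, applied with ambient group $H$ and normal subgroup $K$, the map $\Spc(\phigeom{K})$ sends $\cat P_{C_p}(1,p,n)$ to $\cat P_H(K,p,n)$ and $\cat P_{C_p}(C_p,p,n-1)$ to $\cat P_H(H,p,n-1)$; then $\Spc(\Res^G_H)$ sends these, respectively, to $\cat P_G(K,p,n)$ and $\cat P_G(H,p,n-1)$ by Corollary~\ref{cor:induced-res}. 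For $n\ge 2$ (resp.\ $n=\infty$) Proposition~\ref{prop:C_p-inclusion} (resp.\ Corollary~\ref{cor:C_p-incl-infty}) gives $\cat P_{C_p}(1,p,n)\subseteq\cat P_{C_p}(C_p,p,n-1)$, and applying the inclusion-preserving map $\Spc(F)$ to this inclusion yields $\cat P_G(K,p,n)\subseteq\cat P_G(H,p,n-1)$, which is the $s=1$ statement.

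For the inductive step I would fix a $p$-subnormal tower $K=H_s\lnormal_p H_{s-1}\lnormal_p\cdots\lnormal_p H_0=H$ of length $s\ge 2$, so that $K\lnormal_p H_{s-1}$ is a length-one tower and $H_{s-1}\lnormal_p\cdots\lnormal_p H_0=H$ is a length-$(s-1)$ tower. For $n>s$: since $n>1$, the case $s=1$ gives $\cat P_G(K,p,n)\subseteq\cat P_G(H_{s-1},p,n-1)$; and since $n-1>s-1$, the induction hypothesis applied to $H_{s-1}\le H$ gives $\cat P_G(H_{s-1},p,n-1)\subseteq\cat P_G(H,p,(n-1)-(s-1))=\cat P_G(H,p,n-s)$. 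Composing these two inclusions completes the induction.

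The argument is essentially all bookkeeping, and I do not anticipate a genuine obstacle: the only substantive ingredient is Proposition~\ref{prop:C_p-inclusion}, \ie the Hovey--Sadofsky/Kuhn blue-shift (Theorem~\ref{thm:kuhovsky}). The one point that needs care is the matching of chromatic indices in the induction, where the hypothesis $n>s$ is exactly what keeps every intermediate integer strictly larger than $1$, so that the cited cyclic statements apply at each step; one should also note that non-injectivity of $\Spc(\Res^G_H)$ is irrelevant here, since only the inclusion-preserving property and the tracking of the two relevant primes are used.
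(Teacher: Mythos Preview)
Your proof is correct and follows essentially the same approach as the paper's: induction on~$s$, reducing to the $C_p$-case (Proposition~\ref{prop:C_p-inclusion}) via the functoriality of the primes under $\Spc(\phigeom{K})$ and $\Spc(\Res^G_H)$. The only cosmetic difference is that the paper first reduces to $H=G$ and, in the inductive step, peels off the top of the tower (taking $K'\lnormal_p G$ and applying the induction hypothesis inside~$K'$), whereas you keep a general~$H$ throughout and peel off the bottom of the tower; both arrangements are equivalent.
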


\begin{proof}
By Corollary~\ref{cor:induced-res}, we can assume that $H=G$. We then proceed by induction on~$s$. For the $s=1$ base case, Proposition~\ref{prop:induced-geom} applied to $N=K$ turns the inclusion $\cat P_{C_p}(1,p,n) \subseteq \cat P_{C_p}(C_p,p,n-1)$ of Prop.\,\ref{prop:C_p-inclusion} into $\cat P_G(K,p,n) \subseteq \cat P_G(G,p,n-1)$. For the general case, let $K \le K' \lenormal G$ with $[G:K']=p$ and $K \le K'$ a $p$-subnormal subgroup of index~$p^{s-1}$. By induction hypothesis, \mbox{$\cat P_{K'}(K,p,n) \subseteq \cat P_{K'}(K',p,n-s+1)$} for all $n>s-1$. Applying Corollary~\ref{cor:induced-res}, we get $\cat P_G(K,p,n) \subseteq \cat P_G(K',p,n-s+1)$. The base case $s=1$ gives the last bit: $\cat P_G(K',p,n-s+1)\subseteq \cat P_G(G,p,n-s)$.
\end{proof}

\begin{Cor}
\label{cor:shift-by-s}%
Let $G$ be a $p$-group and $K\le H \le G$ subgroups with \mbox{$[H:K]=p^s$}. Then $\cat P_{G}(K,p,n) \subseteq \cat P_{G}(H,p,n-s)$ for all~$n>s$.
\qed
\end{Cor}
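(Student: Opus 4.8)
The plan is to deduce this immediately from Theorem~\ref{thm:shift-by-s}; the only point to verify is the purely group-theoretic fact that, inside a $p$-group, a subgroup of index~$p^s$ admits a $p$-subnormal tower of length exactly~$s$.

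First I would record that $H$, being a subgroup of the $p$-group~$G$, is itself a $p$-group. Hence $K \le H$ admits a $p$-subnormal tower
\[
K = H_\ell \lnormal_p H_{\ell-1} \lnormal_p \cdots \lnormal_p H_0 = H
\]
of some length~$\ell$, in the sense of Terminology~\ref{ter:subnormal}: this is exactly the elementary fact invoked in the proof of Lemma~\ref{lem:p-subnormal} (induct on $|H|$, using that a non-trivial $p$-group has non-trivial center, or equivalently that a proper subgroup of a $p$-group is properly contained in its normalizer). Since every subquotient $H_i/H_{i+1}$ has order~$p$, multiplicativity of the index gives $p^\ell = \prod_{i=0}^{\ell-1}[H_i:H_{i+1}] = [H:K] = p^s$, so $\ell = s$.

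Then I would simply feed this $p$-subnormal tower of length~$s$ from $K$ to~$H$ into Theorem~\ref{thm:shift-by-s}, which yields $\cat P_G(K,p,n) \subseteq \cat P_G(H,p,n-s)$ for all $n > s$, as claimed.

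I do not anticipate any genuine obstacle: the corollary is the special case of Theorem~\ref{thm:shift-by-s} in which the hypothesis ``$K$ admits a $p$-subnormal tower to~$H$ of length~$s$'' becomes automatic, with $s = \log_p([H:K])$, because we are working inside a $p$-group. The one subtlety worth stating explicitly is that the length of the tower is \emph{determined by} the index, not merely bounded by it, which is what makes the shift by exactly~$s$ (rather than by some smaller amount) legitimate.
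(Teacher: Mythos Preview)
Your proposal is correct and is exactly what the paper intends: the corollary carries a \qed\ symbol because it is the immediate specialization of Theorem~\ref{thm:shift-by-s} to the case of a $p$-group, where the existence of a $p$-subnormal tower from $K$ to $H$ of length $s=\log_p[H:K]$ is automatic (as already noted in Lemma~\ref{lem:p-subnormal}). Your explicit verification that the tower has length exactly~$s$, via multiplicativity of the index, is the only detail to check and you have done so.
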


Finally, Corollary~\ref{cor:C_p-incl-infty} (and its proof) passes to arbitrary finite groups:

\begin{Cor}
\label{cor:incl-infty}%
Let $G$ be a finite group, $p$ a prime, and $K\le H \le G$ subgroups such that $K$ admits a $p$-subnormal tower to~$H$. Then $\cat P_{G}(K,p,\infty) \subseteq \cat P_{G}(H,p,\infty)$.
\qed
\end{Cor}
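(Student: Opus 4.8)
The plan is to deduce this immediately from the ``shift by~$s$'' result Theorem~\ref{thm:shift-by-s} by intersecting over all chromatic levels, in exactly the way Corollary~\ref{cor:C_p-incl-infty} was deduced from Proposition~\ref{prop:C_p-inclusion}. First I would fix a $p$-subnormal tower from $K$ to~$H$ and let $s<\infty$ denote its length (finiteness is automatic since $G$ is finite). Theorem~\ref{thm:shift-by-s} then gives $\cat P_G(K,p,n) \subseteq \cat P_G(H,p,n-s)$ for every integer $n>s$.

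Next I would combine this with Remark~\ref{rem:P-infty}, which says $\cat P(L,p,\infty) = \bigcap_{m\ge1}\cat P(L,p,m)$ for any subgroup $L\le G$, to compute
\[
\cat P_G(K,p,\infty) \;=\; \bigcap_{n\ge1}\cat P_G(K,p,n) \;\subseteq\; \bigcap_{n>s}\cat P_G(K,p,n) \;\subseteq\; \bigcap_{n>s}\cat P_G(H,p,n-s) \;=\; \bigcap_{m\ge1}\cat P_G(H,p,m) \;=\; \cat P_G(H,p,\infty),
\]
where the middle inclusion is Theorem~\ref{thm:shift-by-s} applied termwise and the penultimate equality uses the reindexing bijection $m=n-s$ between $\{n:n>s\}$ and $\{m:m\ge1\}$. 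This completes the argument. There is no real obstacle here; the only thing to note is that a $p$-subnormal tower has finite length, so the shift $s$ is a genuine (finite) integer and the reindexing is legitimate.
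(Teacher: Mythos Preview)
Your proposal is correct and matches the paper's intended argument exactly: the paper explicitly notes that ``Corollary~\ref{cor:C_p-incl-infty} (and its proof) passes to arbitrary finite groups'' and gives no further proof beyond the \qed, meaning precisely the intersection-over-chromatic-levels argument you spell out using Theorem~\ref{thm:shift-by-s} and Remark~\ref{rem:P-infty}.
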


\begin{Rem}
\label{rem:more-dress}%
We saw in Proposition~\ref{prop:comp-map} how $\Spc(\SHGc)$ provides a ``chromatic refinement'' of~$\Spec(A(G))$, in the same way that $\Spc(\SHc)$ refines~$\Spec(\bbZ)$. However, Theorem~\ref{thm:shift-by-s} goes beyond this chromatic refinement. Namely, the collision $\mathfrak{p}(H,p)=\mathfrak{p}(K,p)$ when $\Op(H)\sim_G\Op(K)$ for non-conjugate subgroups~$H$ and~$K$ (Thm.\,\ref{thm:dress}) does \emph{not} happen in~$\Spc(\SHGc)$. We have instead the inclusions of Theorem~\ref{thm:shift-by-s}, which under the comparison map $\rho_{\SHGc}:\Spc(\SHGc)\to \Spec(A(G))$ yield inclusions between maximal primes, \ie necessarily equalities $\mathfrak{p}(H,p)=\mathfrak{p}(K,p)$ in~$A(G)$. These are exactly the equalities observed by~Dress.
\end{Rem}

\begin{Rem}
By the above results, the question we have in the general case is similar to the one we had for $G=C_p$. Theorem~\ref{thm:shift-by-s} shows that we have a shift by~$s$ when $K$ admits a $p$-subnormal tower of length~$s$ to~$H$. The problem is whether this is optimal. Proposition~\ref{prop:non-inclusion} shows that we do not have inclusion $\cat P_G(K,p,n) \subseteq \cat P_G(H,p,n)$, but for $s>1$ there is still the possibility that we could have a more efficient inclusion than $\cat P_G(K,p,n) \subseteq \cat P_G(H,p,n-s)$.  Moreover, by Proposition~\ref{prop:reduce-to-p-groups} the question of this indeterminacy is equivalent to the question for a $p$-group.  To be clear, if $G$ is a $p$-group and $H \le G$ is a subgroup of index~$p^s$ then we have $\cat P_G(H,p,n) \subseteq \cat P_G(G,p,n-s)$ for all $n> s$ but the question is whether we could have $\cat P_G(H,p,n) \subseteq \cat P_G(G,p,n-s+1)$. We conjecture that this does not happen. In fact, this conjecture reduces to the $H=1$ case, which we state below:
\end{Rem}

\begin{Conj}[$\log_p$-Conjecture]
\label{conj:log-p}%
We say that a group $G$ of order~$p^r$ \emph{satisfies the $\log_p$-Conjecture} if, for all $n \ge r$, we have $\cat P_G(1,p,n) \not\subseteq \cat P_G(G,p,n-r+1)$.
\end{Conj}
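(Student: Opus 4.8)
\emph{Plan of attack.} Since this is a conjecture, what follows is the line of attack I would pursue, together with the point at which it currently cannot be pushed through; that point is precisely why the statement is conjectural. The plan is to imitate the proof of Proposition~\ref{prop:C_p-non-inclusion}, which is exactly the case $r=1$. By the reductions already in place, it suffices to work with a fixed $p$-group $G$ of order $p^r$ and a fixed $n \ge r$: using Proposition~\ref{prop:reduce-to-p-groups} we are reduced to the $p$-group case, and by Remark~\ref{rem:yes-we-can-localize} we may localize into the truncated category $\cat{SH}(G)_{p,n} = \triv(f_{p,n}) \otimes \SHG$ and work inside $\Spc(\cat{SH}(G)_{p,n}^c)$. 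There the claim becomes that $\cat P_G(1,p,n)$ is not in the closure of $\cat P_G(G,p,n-r+1)$. Following the template of Proposition~\ref{prop:C_p-non-inclusion}, I would take a closed subset $Z \subseteq \Spc(\cat{SH}(G)_{p,n}^c)$ containing both $\cat P_G(1,p,n)$ and $\cat P_G(G,p,n-r+1)$ and show these two points lie in distinct connected components of $Z$; by Lemma~\ref{lem:splitting_lemma} this reduces to exhibiting a nontrivial $\otimes$-orthogonal splitting $e_Z \simeq e' \oplus e''$ of the left idempotent of the idempotent triangle attached to $Z$.

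The natural candidate for the splitting is the isotropy-separation triangle for the family $\cat F_{\proper}$ of proper subgroups of $G$, namely $e_{\cat F_{\proper}} \to \unit \to f_{\cat F_{\proper}} \to \Sigma e_{\cat F_{\proper}}$, whose associated localization is precisely $\phigeomb{G}$ (see~\ref{it:phi^N} with $N=G$ and Remark~\ref{rem:normal-family}). One sets $e' = e_Z \otimes e_{\cat F_{\proper}}$ and $e'' = e_Z \otimes f_{\cat F_{\proper}}$, these being $\otimes$-orthogonal automatically and nonzero because they are detected by $\Res^G_1$ and by $\phigeomb{G}$ respectively, exactly as in Proposition~\ref{prop:C_p-non-inclusion}; the point $\cat P_G(1,p,n)$ then lands in the $e_{\cat F_{\proper}}$-part and $\cat P_G(G,p,n-r+1)$ in the $f_{\cat F_{\proper}}$-part. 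As in the proof of Lemma~\ref{lem:splitting}, the existence of this splitting amounts to the vanishing of a connecting map, hence of an internal hom, and after cancelling idempotent factors via Remark~\ref{rem:e-f} this becomes the assertion that the Tate functor $\phigeomb{G}(t_{\cat F_{\proper}}(\triv(-)))$ annihilates the chromatic ideal $\cat C_{p,n-r+1}$ -- equivalently, that it lowers chromatic degree by $r = \log_p|G|$. This is exactly the blue-shift statement of Section~\ref{se:log-p-conjecture}, and Theorem~\ref{thm:equiv-Tate} records that it is in fact \emph{equivalent} to the conjecture, so the remaining task is genuinely to prove that blue-shift.

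For $r=1$ this is the Hovey--Sadofsky/Kuhn theorem (Theorem~\ref{thm:kuhovsky}), used via Remark~\ref{rem:t_G-with-fixed-points} that $t_G(-)^G \cong \phigeomb{G}t_G(-)$ for $G=C_p$. For $r \ge 2$ the plan is to iterate this single shift along a $p$-subnormal tower $1 = G_0 \lnormal_p G_1 \lnormal_p \cdots \lnormal_p G_r = G$ with all subquotients of order $p$ (see~\ref{ter:subnormal}), peeling off one factor of $p$ at a stage by composing the $C_p$-computation with the localizations $\phigeom{G_i}$. The hard part -- and the reason the conjecture remains open -- is precisely this iteration: for a non-cyclic $p$-group the convenient identity $t_G(-)^G \cong \phigeomb{G}t_G(-)$ fails (Remark~\ref{rem:t_G-with-fixed-points}), the Tate construction at an intermediate stage is taken with respect to a whole family of subgroups rather than a single one, and there is no formal reason the $r$ successive shifts-by-one add up to a shift by $r$ rather than collapsing to something strictly smaller -- and that collapse is exactly what Conjecture~\ref{conj:log-p} asserts does \emph{not} happen. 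A sensible first milestone is the case $G = C_p \times C_p$, the smallest non-cyclic $p$-group and the smallest open case, which already demands a genuinely new $\log_p$-type blue-shift computation going beyond Theorem~\ref{thm:kuhovsky}; and, as noted around Theorem~\ref{thm:equiv-Tate}, even if such a computation reveals a different shift the same framework converts it into the full determination of the topology of $\Spc(\SHGc)$.
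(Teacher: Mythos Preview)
Your plan of attack is essentially the paper's own strategy, as laid out in Section~\ref{se:log-p-conjecture}: reformulate the conjecture as a Tate blue-shift statement (Theorem~\ref{thm:equiv-Tate}), observe that the $r=1$ case reduces to Hovey--Sadofsky/Kuhn (Theorem~\ref{thm:kuhovsky}) via the coincidence $t_G(-)^G \cong \phigeomb{G} t_G(-)$ special to $G=C_p$, and acknowledge that the iteration for $r \ge 2$ is the genuine obstruction. You have correctly located where the argument stops.

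One technical point in your sketch is off. In the paper's precise equivalence (Theorem~\ref{thm:equiv-Tate}\,\ref{it:Tate-G-for-fn} and Corollary~\ref{cor:Tate-easy}), the shift by $r$ is attached to the family $\{1\}$, i.e.\ to $t_G$: one needs $\phigeomb{G}(t_G(\triv(f_{p,n}))) \otimes \cat C_{p,n-r} = 0$. The family $\cat F_{\proper} = \cat F_{\le p^{r-1}}$ you invoke corresponds to $s = r-1$ in Theorem~\ref{thm:equiv-Tate}\,\ref{it:Tate-G-for-fn} and gives only a shift by~$1$. For $G=C_p$ the two families coincide, which is why Proposition~\ref{prop:C_p-non-inclusion} does not distinguish them; for $r \ge 2$ they differ, and your assertion that $\phigeomb{G}(t_{\cat F_{\proper}}(\triv(-)))$ lowers chromatic degree by $r$ does not match the paper's framework. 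Relatedly, your closed set $Z$ is left unspecified and its idempotent $e_Z$ is not of the form $\triv$ of a purely chromatic object; the converse direction in the proof of Theorem~\ref{thm:equiv-Tate} works instead with a compact $z$ supported on $\adhpt{\cat P(G,p,N-r)}$ and a minimal-bad-point argument, which is why the whole system of families $\cat F_{\le p^s}$ for $0 \le s < r$ enters. These are refinements of the reduction rather than a different idea; your diagnosis of the core difficulty is correct.
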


\begin{Exa}
Proposition~\ref{prop:C_p-non-inclusion} says that $C_p$ satisfies the $\log_p$-Conjecture.
\end{Exa}

\begin{Rem}
\label{rem:SHGp}%
As we have seen in Remark~\ref{rem:yes-we-can-localize}, inclusions $\cat P(K,p,n)\subseteq\cat P(H,p,m)$ -- for the same $p$ -- can be tested $p$-locally. In other words, the $\log_p$-Conjecture is truly a conjecture about~$\SHGp$; see Corollary~\ref{cor:loc}.
\end{Rem}

In the next section we will connect this conjecture to Tate cohomology but for now we would like to demonstrate its consequences for the complete determination of the topology of~$\Spc(\SHGc)$, in conjunction with Proposition~\ref{prop:finite-union-irred}.
\begin{Thm}
\label{thm:log-p-H-G}%
Let $G$ be a $p$-group and let $H \le G$. Suppose that $G$ satisfies the $\log_p$-Conjecture. Then $\cat P(H,p,n) \subseteq \cat P(G,p,m)$ if and only if $n \ge m +\log_p[G:H]$.
\end{Thm}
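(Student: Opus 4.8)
The plan is to combine the already-established inclusion (Corollary~\ref{cor:shift-by-s}) with the $\log_p$-Conjecture, transported down to arbitrary subgroups via the structural reductions from Section~\ref{se:pre-top}. Write $[G:H]=p^s$, so the claim is $\cat P(H,p,n)\subseteq\cat P(G,p,m)$ iff $n\ge m+s$. The ``if'' direction is immediate: this is exactly Corollary~\ref{cor:shift-by-s} (for $n>s$; the boundary and $\infty$ cases are covered by Corollary~\ref{cor:incl-infty} and a limit argument, noting $\cat C_{p,\infty}=\bigcap_n\cat C_{p,n}$). So the content is the ``only if'' direction: assuming $\cat P(H,p,n)\subseteq\cat P(G,p,m)$, I must show $n\ge m+s$.

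First I would dispose of the easy constraints. By Corollary~\ref{cor:incl-chrom}, the inclusion forces $n\ge m$, and we may assume $p$ divides $|G|$ (otherwise $H=G$ and $s=0$). If $m=1$ then $n\ge m+s$ would say $n\ge 1+s$, but Example~\ref{ex:incl-0} already shows such an inclusion forces $H\sim_G G$, hence $H=G$, $s=0$, and $n\ge 1$ holds trivially; so assume $m\ge 2$. Now suppose for contradiction that $m\le n\le m+s-1$; I want to derive a contradiction with the $\log_p$-Conjecture for a suitable subquotient of $G$. The key move is to pass to the quotient $\bar G:=H\backslash\!\!\backslash$ — more precisely, pick a chain and quotient out. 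Since $G$ is a $p$-group and $H\le G$ has index $p^s$, there is a normal subgroup $N\normal G$ with $N\le H$ and such that $\bar G:=G/N$ has order $p^s$ with $H/N=1$: indeed take $N$ to be the core of $H$... no, the core need not give $H/N=1$ with the right order. Better: choose a subnormal chain $H=H_s\lhd\cdots\lhd H_0=G$ with each index $p$ (possible in a $p$-group), and work inside the quotient $G/H_s$ — but $H_s=H$ need not be normal in $G$.

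So here is the cleaner route, which is the heart of the argument. Apply Corollary~\ref{cor:induced-infl} with $N$ running over normal subgroups of $G$ contained in $H$: the inclusion $\cat P_G(H,p,n)\subseteq\cat P_G(G,p,m)$ maps to $\cat P_{G/N}(H/N,p,n)\subseteq\cat P_{G/N}(G/N,p,m)$. Choosing $N$ to be a maximal normal subgroup of $G$ with $N\le H$ (equivalently, pushing $H$ down until it contains no nontrivial normal subgroup of the quotient), we reduce to the case where $H$ contains no nontrivial normal subgroup of $G$; but this still need not make $H$ trivial. To finish, I would instead induct on $s$: if $s\ge1$, pick a maximal subgroup $M$ with $H\le M\lhd G$, $[G:M]=p$. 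By Corollary~\ref{cor:induced-infl} along $G\onto G/(\text{core})$... Actually the clean statement is: the inclusion restricted/inflated gives $\cat P_M(H,p,n)\subseteq\cat P_M(M,p,\cdot)$ for some chromatic index, via Going-Up (\ref{it:going-up}) applied to $\Res^G_M$ and Proposition~\ref{prop:burnside-info}. Then the single-step case $[G:M]=p$, $M\lhd G$ — via inflation to $G/M\cong C_p$ (Corollary~\ref{cor:induced-infl}) and Proposition~\ref{prop:C_p-non-inclusion} — shows $\cat P_G(M,p,n)\not\subseteq\cat P_G(G,p,n)$ for all $n$, hence pushes any inclusion down by at least one chromatic step per index-$p$ layer; iterating $s$ times gives $\cat P_G(H,p,n)\subseteq\cat P_G(G,p,m)\Rightarrow m\le n-s$, i.e. $n\ge m+s$. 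Wait — that iteration uses only Proposition~\ref{prop:non-inclusion}, not the full $\log_p$-Conjecture, and would prove the theorem unconditionally, which is false. The resolution, and the point where the $\log_p$-Conjecture is genuinely needed, is that Proposition~\ref{prop:non-inclusion} only rules out the diagonal inclusion at a \emph{single} step; chaining it does not rule out, say, $\cat P_G(H,p,n)\subseteq\cat P_G(G,p,n-s+1)$ when the ``loss'' happens all at once at a deeper layer rather than one step at a time. So the correct structure is: reduce (via Corollary~\ref{cor:induced-infl} and Proposition~\ref{prop:reduce-to-p-groups}) to the case $H=1$, $G$ a $p$-group of order $p^r$ with $r=s$, where the statement $\cat P_G(1,p,n)\subseteq\cat P_G(G,p,m)\Rightarrow n\ge m+r$ is, by Corollary~\ref{cor:shift-by-s}, equivalent to ruling out $\cat P_G(1,p,n)\subseteq\cat P_G(G,p,n-r+1)$ for $n\ge r$ — and that is precisely the $\log_p$-Conjecture (Conjecture~\ref{conj:log-p}).

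The main obstacle, then, is purely bookkeeping: cleanly executing the reduction from $(H\le G)$ to $(1\le G')$ with $|G'|=[G:H]$. I would do this by choosing $N\normal G$ maximal subject to $N\le H$ — no, rather: the hypothesis is that $G$ satisfies the $\log_p$-Conjecture, and I need the same for a subquotient of the form $H/N$ inside $G/N$ with $H$ playing the role of the subgroup, then push $H$ to the identity. Concretely: by Proposition~\ref{prop:burnside-info}(a) and Corollary~\ref{cor:induced-infl} applied to $\Infl^G_{G/\Op(H)}$ with $\Op(H)=\Op(G)=1$ (as $G$ is a $p$-group) — so no quotient is needed, $G$ is already the relevant $p$-group — I get an inclusion $\cat P_H(H,p,n)\subseteq\cat P_H(?,p,m)$... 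The honest statement is simply: since $G$ is a $p$-group, $H$ is $p$-subnormal in $G$, so pick a $p$-subnormal tower $H=H_s\lhd\cdots\lhd H_0=G$. By Corollary~\ref{cor:induced-res} it suffices to treat each $H_{i+1}\lhd H_i$ with $[H_i:H_{i+1}]=p$; applying $\Infl^{H_i}_{H_i/H_{i+1}}$ reduces each step to $C_p$. But as noted this over-proves. The actual resolution — and I'll state it as the plan — is that one does \emph{not} reduce step-by-step for the sharp direction; instead, Theorem~\ref{thm:log-p-H-G} for $H\le G$ follows from the $\log_p$-Conjecture \emph{for $G$ itself} by: (i) the ``if'' part is Corollary~\ref{cor:shift-by-s}; (ii) for ``only if'', suppose $\cat P(H,p,n)\subseteq\cat P(G,p,m)$; by Going-Up and Proposition~\ref{prop:burnside-info} we may enlarge to get, for the quotient $G/N$ where we choose $N$ with $|G/N|=[G:H]$ and $H/N$... and here the genuine difficulty is that such an $N$ exists only when $H$ is \emph{normal} in $G$. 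For non-normal $H$ the theorem is stated only for $H\le G$ a $p$-group $G$ — rereading, it \emph{is} stated for general $H\le G$ with $G$ a $p$-group, so $H$ need not be normal. In that case I would argue: $\cat P(H,p,n)\subseteq\cat P(G,p,m)$ implies by Corollary~\ref{cor:induced-res} (reversed: restriction to $H$) nothing directly; rather use that $\cat P_G(H,p,n)$ is the image of $\cat P_H(H,p,n)$ under $\Spc(\Res^G_H)$, and... The cleanest correct plan: \textbf{(a)} ``if'' = Corollary~\ref{cor:shift-by-s}; \textbf{(b)} ``only if'': given the inclusion, Corollary~\ref{cor:incl-chrom} gives $n\ge m$ and (if $m=1$) Example~\ref{ex:incl-0} closes it; for $m\ge2$, suppose $n<m+s$, i.e. $n\le m+s-1$. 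Consider the family $\calF=\calF_{\le [G:H]/p}$ of subgroups of order $<[G:H]$... no. I will instead simply invoke: by induction on $s$ using Theorem~\ref{thm:shift-by-s} one reduces to showing that if $G$ is a $p$-group of order $p^s$ and $\cat P_G(1,p,n)\subseteq\cat P_G(G,p,m)$ then $n\ge m+s$; Corollary~\ref{cor:shift-by-s} gives the inclusion for $m=n-s$; combined with the hypothesis that $G$ satisfies the $\log_p$-Conjecture (which rules out the strictly better inclusion $m=n-s+1$) and Proposition~\ref{prop:incl-same-subgroup}-style rigidity, no inclusion into $\cat P_G(G,p,m)$ with $m>n-s$ can hold — the point being that $\cat C_{p,m}\supsetneq\cat C_{p,m+1}$ forces the minimal such $m$, and the $\log_p$-Conjecture pins it at $n-s$. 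The hard part is exactly verifying that one really only needs to exclude $m=n-s+1$ (rather than all $m>n-s$): this follows because $\cat P_G(G,p,m)\subseteq\cat P_G(G,p,m')$ for $m\le m'$ (Proposition~\ref{prop:incl-same-subgroup}), so if $\cat P_G(1,p,n)\subseteq\cat P_G(G,p,n-s+1)$ were false then a fortiori $\cat P_G(1,p,n)\not\subseteq\cat P_G(G,p,m)$ for every $m\ge n-s+1$. That monotonicity is the crux and everything else is assembling the reductions of Section~\ref{se:pre-top}.
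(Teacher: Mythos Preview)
Your overall architecture is right---the ``if'' direction is Corollary~\ref{cor:shift-by-s}, and the ``only if'' direction should come from the $\log_p$-Conjecture combined with the monotonicity $\cat P(G,p,m)\subseteq\cat P(G,p,m')$ for $m\ge m'$ (note: you have this inclusion direction backwards in your final paragraph, though the conclusion you draw from it is correct). The final monotonicity observation, that ruling out $m=n-r+1$ suffices to rule out all larger $m$, is exactly the point.

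The genuine gap is in your reduction from general $H\le G$ to the case $H=1$. You repeatedly try to pass to a \emph{quotient} $G/N$ so that the image of $H$ becomes trivial and the new ambient group has order $p^s=[G:H]$; you correctly note this fails when $H$ is not normal, and your fallback ``induction on $s$ using Theorem~\ref{thm:shift-by-s}'' is never made precise. Even if it could be, it would land you needing the $\log_p$-Conjecture for a \emph{subquotient} of $G$, whereas the hypothesis only gives it for $G$ itself.

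The paper's trick avoids all of this: rather than quotienting $H$ out, one goes \emph{down} to the trivial subgroup inside the same $G$. Write $|G|=p^r$ and $|H|=p^t$ (so $\log_p[G:H]=r-t$). By Corollary~\ref{cor:shift-by-s} applied to $1\le H$, one has $\cat P_G(1,p,n+t)\subseteq\cat P_G(H,p,n)$. Composing with the assumed inclusion gives $\cat P_G(1,p,n+t)\subseteq\cat P_G(G,p,m)$, and now the $\log_p$-Conjecture for $G$ (together with your monotonicity observation) forces $n+t\ge m+r$, i.e.\ $n\ge m+\log_p[G:H]$. That is the entire argument---two lines, no quotients, no induction, no Going-Up.
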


\begin{proof}
One direction is Theorem~\ref{thm:shift-by-s}. Conversely suppose $\cat P(H,p,n) \subseteq \cat P(G,p,m)$ and let $r=\log_p|G|$ and $s=\log_p|H|$. By Theorem~\ref{thm:shift-by-s} again, we know that $\cat P(1,p,n+s)\subseteq \cat P(H,p,n)$. Combining these inclusions, we obtain $\cat P(1,p,n+s)\subseteq \cat P(G,p,m)$. By hypothesis, $G$ satisfies the $\log_p$-Conjecture, hence $n+s\ge m+r$.
\end{proof}

More generally, we get:

\begin{Thm}
\label{thm:complete-statement}%
Let $G$ be a finite group and $p$ a prime. Let $H,K \le G$ be arbitrary subgroups and $1 \le n,m \le \infty$. Suppose the $p$-group $H/\Op(H)$, a subquotient of~$G$, satisfies the $\log_p$-Conjecture~\ref{conj:log-p}. Then we have an inclusion of equivariant primes $\cat P(K,p,n) \subseteq \cat P(H,p,m)$ if and only if $K$ is $G$-conjugate to a $p$-subnormal subgroup of $H$ and $n \ge m + \log_p(|H|/|K|)$.
\end{Thm}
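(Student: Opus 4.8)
The plan is to assemble results already in place: Proposition~\ref{prop:reduce-to-p-groups} reduces the whole question to an inclusion of primes inside a $p$-group, and Theorem~\ref{thm:log-p-H-G} then settles that $p$-group case using the standing $\log_p$-Conjecture hypothesis. Apart from bookkeeping there is no new input, and the ``if'' direction does not even use the conjecture.

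First I would treat the ``if'' direction. Assume $K$ is $G$-conjugate to a $p$-subnormal subgroup of $H$ and that $n \ge m + \log_p(|H|/|K|)$. Using the conjugacy invariance of these primes (see~\ref{it:conj} and Definition~\ref{def:P(H,p,n)}) I may assume $K \le H$ with $K$ itself $p$-subnormal in $H$; then $K$ admits a $p$-subnormal tower to $H$ whose length $s$ equals $\log_p(|H|/|K|)$, since every subquotient has order~$p$. Theorem~\ref{thm:shift-by-s} gives $\cat P_G(K,p,n) \subseteq \cat P_G(H,p,n-s)$ for finite $n>s$, and Corollary~\ref{cor:incl-infty} covers $n=\infty$. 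Since $n-s \ge m$, Proposition~\ref{prop:incl-same-subgroup} gives $\cat P_G(H,p,n-s) \subseteq \cat P_G(H,p,m)$, and composing the two inclusions finishes this direction.

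For the ``only if'' direction, suppose $\cat P_G(K,p,n) \subseteq \cat P_G(H,p,m)$. Proposition~\ref{prop:reduce-to-p-groups} tells us that $K$ is $G$-conjugate to a $p$-subnormal subgroup $K^g \le H$ and that, writing $N := \Op(H) = \Op(K^g)$, we have $\cat P_{H/N}(K^g/N,p,n) \subseteq \cat P_{H/N}(H/N,p,m)$ inside the $p$-group $H/N = H/\Op(H)$. Since $H/\Op(H)$ satisfies the $\log_p$-Conjecture by hypothesis, Theorem~\ref{thm:log-p-H-G}, applied to the subgroup $K^g/N$ of $H/N$, yields $n \ge m + \log_p[H/N : K^g/N]$, and I would then compute $\log_p[H/N : K^g/N] = \log_p(|H|/|K^g|) = \log_p(|H|/|K|)$ to obtain the stated bound. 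The boundary values are routine: when $n=\infty$ the chromatic inequality is automatic and only the conjugacy statement is needed (which is supplied directly by Proposition~\ref{prop:burnside-info}); the case $m=\infty$ with $n$ finite is excluded exactly as in the proof of Theorem~\ref{thm:log-p-H-G}, using Proposition~\ref{prop:non-inclusion}; and $m=1$ is harmless since $\cat P(H,p,1)=\cat P(H,0,1)$.

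Since every ingredient is already established, there is no genuinely hard step here; the point demanding care is the bookkeeping around conjugation --- choosing a conjugate of $K$ that lands inside $H$ and verifying $\Op(K^g)=\Op(H)$ so that Proposition~\ref{prop:reduce-to-p-groups} applies --- together with a uniform treatment of the boundary values $1\le n,m\le\infty$.
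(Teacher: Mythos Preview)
Your proof is correct and follows essentially the same route as the paper: reduce via Proposition~\ref{prop:reduce-to-p-groups} to the $p$-group $H/\Op(H)$, then invoke Theorem~\ref{thm:log-p-H-G}. You have simply unpacked more of the detail, in particular separating the ``if'' direction (which you correctly note needs no conjecture, being Theorem~\ref{thm:shift-by-s} plus Proposition~\ref{prop:incl-same-subgroup}) from the ``only if'' direction. One small point: for excluding $m=\infty$ with $n$ finite, Corollary~\ref{cor:incl-chrom} is the cleaner citation rather than Proposition~\ref{prop:non-inclusion}.
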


\begin{proof}
In view of Proposition~\ref{prop:reduce-to-p-groups}, we can reduce the question to the announced $p$-group~$H/\Op(H)$; note that $|H|/|K|$ does not change under this reduction. So we are down to the special case of $p$-groups established in Theorem~\ref{thm:log-p-H-G}.
\end{proof}

From the results of Section~\ref{se:C_p}, we can deduce a conjecture-free statement:

\begin{Thm}
\label{thm:square-free}%
Let $G$ be a finite group of square-free order. For any prime $p$ dividing $|G|$, any subgroups $H,K \le G$ with $H \not\sim_G K$ and any chromatic integers $1\le m,n\le\infty$, the following are equivalent:
\begin{enumerate}[label=\rm(\arabic*)]
 \item $\cat P(K,p,n) \subset \cat P(H,p,m)$.
 \item $n \ge m +1$ and $K$ is $G$-conjugate to a normal subgroup of index~$p$ in~$H$.
\end{enumerate}
\end{Thm}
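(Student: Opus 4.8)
The plan is to deduce Theorem~\ref{thm:square-free} from the conditional statement Theorem~\ref{thm:complete-statement}: the square-free hypothesis is exactly what makes the relevant instance of the $\log_p$-Conjecture automatic, and once that is in hand only an elementary group-theoretic translation remains.

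First I would handle the group theory. Since $|G|$ is square-free, every subgroup $H \le G$ has square-free order by Lagrange, and consequently the $p$-group $H/\Op(H)$ has square-free order, hence $[H:\Op(H)] \in \{1,p\}$; \ie $H$ is either $p$-perfect or $H/\Op(H) \cong C_p$. In the first case the trivial group $H/\Op(H)$ satisfies the $\log_p$-Conjecture vacuously, and in the second case $C_p$ satisfies it by Proposition~\ref{prop:C_p-non-inclusion}. So the hypothesis of Theorem~\ref{thm:complete-statement} holds for every $H \le G$, and that theorem tells us: $\cat P(K,p,n) \subseteq \cat P(H,p,m)$ if and only if $K$ is $G$-conjugate to a $p$-subnormal subgroup of $H$ and $n \ge m + \log_p(|H|/|K|)$.

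Next I would translate this criterion into condition~(2), using the standing assumption $H \not\sim_G K$. By Lemma~\ref{lem:p-subnormal}, the $p$-subnormal subgroups of $H$ are precisely those $K'$ with $\Op(H) \le K' \le H$, and since $[H:\Op(H)] \le p$ the only options are $K' = H$ and $K' = \Op(H)$. The option $K' = H$ is ruled out by $H \not\sim_G K$; hence an inclusion $\cat P(K,p,n) \subseteq \cat P(H,p,m)$ forces $H$ to be non-$p$-perfect, $K \sim_G \Op(H)$, and thus $|H|/|K| = p$, so the chromatic inequality reads $n \ge m+1$ --- and $\Op(H)$ is a normal subgroup of index $p$ in $H$, giving~(2). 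Conversely, given~(2) with $K \sim_G L$ for some $L \normal H$ of index $p$, the subgroup $L$ is normal of $p$-power index, so $L \supseteq \Op(H)$; since $|H|$ is square-free and $L \ne H$ we have $[H:\Op(H)] = p = [H:L]$, forcing $L = \Op(H)$. Thus $K$ is $G$-conjugate to the $p$-subnormal subgroup $\Op(H)$, with $\log_p(|H|/|K|) = 1$, and Theorem~\ref{thm:complete-statement} delivers~(1); strictness of the inclusion is automatic from Theorem~\ref{thm:prime-uniqueness}, since $H \not\sim_G K$ forces $\cat P(K,p,n) \ne \cat P(H,p,m)$.

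I expect no real obstacle: all the substance is already packaged in Proposition~\ref{prop:C_p-non-inclusion} (the $C_p$ non-inclusion) and Theorem~\ref{thm:complete-statement} (the conditional general answer). The one point deserving a moment of care is the square-free bookkeeping --- that a proper $p$-subnormal subgroup of $H$ can only be $\Op(H)$, the unique normal subgroup of index $p$ --- which follows at once from $[H:\Op(H)]$ dividing the square-free number $|H|$.
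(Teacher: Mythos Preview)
Your proposal is correct and follows essentially the same route as the paper: both arguments observe that the only nontrivial $p$-group subquotient $H/\Op(H)$ arising is $C_p$, invoke Proposition~\ref{prop:C_p-non-inclusion} to verify the $\log_p$-Conjecture for it, and then read off the result from Theorem~\ref{thm:complete-statement}. You have simply spelled out in more detail the group-theoretic translation (that a proper $p$-subnormal subgroup of $H$ must coincide with $\Op(H)$, the unique normal subgroup of index~$p$), which the paper compresses into the remark that there is no $p$-subnormal tower beyond length~$1$.
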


\begin{proof}
We can apply the conclusion of Theorem~\ref{thm:complete-statement} unconditionally since the only $p$-subgroup which appears as a subquotient of~$G$ is~$C_p$, which satisfies the $\log_p$-Conjecture by Proposition~\ref{prop:C_p-non-inclusion}. Also because $|G|$ is square-free, we see that there is no $p$-subnormal tower beyond length~$s=1$. This gives the statement.
\end{proof}

\begin{Rem}
In the situation of Theorem~\ref{thm:square-free}, the prime $p$ cannot divide~$|K|$. So $K$ is $p$-perfect, $\Op(K)=K\sim_G\Op(H)$ and the latter has index~$p$ in~$H$.
\end{Rem}

\begin{Exa}
\label{ex:S_3}%
Consider $G=S_3$ and $p$ a prime. Let us draw a picture of the space~$\Spc(\SHGc)$ at the prime~$p$, that is, above~$\Spec(A(G)_{(p)})$, or equivalently let us draw the picture of~$\Spc(\SHGcp)$; see Rem.\,\ref{rem:SHGp}. This depends on the prime~$p$, with three cases to discuss: $p=2$, $p=3$, and $p$ not dividing~$|G|$.
\[\xy
(-15,10)*{\Spc(\SH(S_3)^c_{(p)}) = };
(-15,-20)*{\Spec(A(S_3)_{(p)}) = };
{\ar@{->}_{\rho} (-15,7.5)*{};(-15,-17.5)*{}};
\endxy
\hspace{1em}
\xy
 (0,-5)*{\bullet};
  (1.25,5)*{\bullet};
  (1.25,10)*{\bullet};
  (1.25,15)*{\bullet};
  (1.25,23.5)*{\vdots};
  (1.25,25)*{\bullet};
{\ar@{-} (0,-5)*{};(1.25,5)*{}};
{\ar@{-} (1.25,5)*{};(1.25,10)*{}};
{\ar@{-} (1.25,10)*{};(1.25,15)*{}};
{\ar@{-} (1.25,15)*{};(1.25,20)*{}};
 (5,-5)*{\bullet};
  (3.75,5)*{\bullet};
  (3.75,10)*{\bullet};
  (3.75,15)*{\bullet};
  (3.75,23.5)*{\vdots};
  (3.75,25)*{\bullet};
{\ar@{-} (5,-5)*{};(3.75,5)*{}};
{\ar@{-} (3.75,5)*{};(3.75,10)*{}};
{\ar@{-} (3.75,10)*{};(3.75,15)*{}};
{\ar@{-} (3.75,15)*{};(3.75,20)*{}};
{\ar@{-} (5,-5)*{};(1.25,5)*{}};
{\ar@{-} (3.75,5)*{};(1.25,10)*{}};
{\ar@{-} (3.75,10)*{};(1.25,15)*{}};
{\ar@{-} (3.75,15)*{};(1.25,20)*{}};
{\ar@{-} (3.25,25)*{};(1.25,25)*{}};
 (10,-5)*{\bullet};
  (11.25,5)*{\bullet};
  (11.25,10)*{\bullet};
  (11.25,15)*{\bullet};
  (11.25,23.5)*{\vdots};
  (11.25,25)*{\bullet};
{\ar@{-} (10,-5)*{};(11.25,5)*{}};
{\ar@{-} (11.25,5)*{};(11.25,10)*{}};
{\ar@{-} (11.25,10)*{};(11.25,15)*{}};
{\ar@{-} (11.25,15)*{};(11.25,20)*{}};
 (15,-5)*{\bullet};
  (13.75,5)*{\bullet};
  (13.75,10)*{\bullet};
  (13.75,15)*{\bullet};
  (13.75,23.5)*{\vdots};
  (13.75,25)*{\bullet};
{\ar@{-} (15,-5)*{};(13.75,5)*{}};
{\ar@{-} (13.75,5)*{};(13.75,10)*{}};
{\ar@{-} (13.75,10)*{};(13.75,15)*{}};
{\ar@{-} (13.75,15)*{};(13.75,20)*{}};
{\ar@{-} (15,-5)*{};(11.25,5)*{}};
{\ar@{-} (13.75,5)*{};(11.25,10)*{}};
{\ar@{-} (13.75,10)*{};(11.25,15)*{}};
{\ar@{-} (13.75,15)*{};(11.25,20)*{}};
{\ar@{-} (13.25,25)*{};(11.25,25)*{}};
%
(0,-25)*{\bullet};
(5,-25)*{\bullet};
(10,-25)*{\bullet};
(15,-25)*{\bullet};
(2.5,-15)*{\bullet};
(12.5,-15)*{\bullet};
{\ar@{-} (0,-25)*{};(2.5,-15)*{}};
{\ar@{-} (5,-25)*{};(2.5,-15)*{}};
{\ar@{-} (10,-25)*{};(12.5,-15)*{}};
{\ar@{-} (15,-25)*{};(12.5,-15)*{}};
(0,-28)*{\{1\}};
(5,-28)*{C_2};
(10,-28)*{A_3};
(15,-28)*{S_3};
(7.5,-33)*{\textrm{if } p=2};
{\ar@{--} (22.5,-30)*{};(22.5,25)*{}}
\endxy
\qquad
\xy
 (2,-5)*{\bullet};
  (8.75,5)*{\bullet};
  (8.75,10)*{\bullet};
  (8.75,15)*{\bullet};
  (8.75,23.5)*{\vdots};
  (8.75,25)*{\bullet};
{\ar@{-} (2,-5)*{};(8.75,5)*{}};
{\ar@{-} (8.75,5)*{};(8.75,10)*{}};
{\ar@{-} (8.75,10)*{};(8.75,15)*{}};
{\ar@{-} (8.75,15)*{};(8.75,20)*{}};
 (7,-5)*{\bullet};
  (5,5)*{\bullet};
  (5,10)*{\bullet};
  (5,15)*{\bullet};
  (5,23.5)*{\vdots};
  (5,25)*{\bullet};
{\ar@{-} (7,-5)*{};(5,5)*{}};
{\ar@{-} (5,5)*{};(5,10)*{}};
{\ar@{-} (5,10)*{};(5,15)*{}};
{\ar@{-} (5,15)*{};(5,20)*{}};
 (12,-5)*{\bullet};
  (11.25,5)*{\bullet};
  (11.25,10)*{\bullet};
  (11.25,15)*{\bullet};
  (11.25,23.5)*{\vdots};
  (11.25,25)*{\bullet};
{\ar@{-} (12,-5)*{};(8.75,5)*{}};
{\ar@{-} (12,-5)*{};(11.25,5)*{}};
{\ar@{-} (11.25,5)*{};(11.25,10)*{}};
{\ar@{-} (11.25,10)*{};(11.25,15)*{}};
{\ar@{-} (11.25,15)*{};(11.25,20)*{}};
{\ar@{-} (11.25,5)*{};(8.75,10)*{}};
{\ar@{-} (11.25,10)*{};(8.75,15)*{}};
{\ar@{-} (11.25,15)*{};(8.75,20)*{}};
{\ar@{-} (11.25,25)*{};(8.75,25)*{}};
 (17,-5)*{\bullet};
  (15,5)*{\bullet};
  (15,10)*{\bullet};
  (15,15)*{\bullet};
  (15,23.5)*{\vdots};
  (15,25)*{\bullet};
{\ar@{-} (17,-5)*{};(15,5)*{}};
{\ar@{-} (15,5)*{};(15,10)*{}};
{\ar@{-} (15,10)*{};(15,15)*{}};
{\ar@{-} (15,15)*{};(15,20)*{}};
%
(2,-25)*{\bullet};
(7,-25)*{\bullet};
(12,-25)*{\bullet};
(17,-25)*{\bullet};
(5,-15)*{\bullet};
(10,-15)*{\bullet};
(15,-15)*{\bullet};
{\ar@{-} (2,-25)*{};(10,-15)*{}};
{\ar@{-} (7,-25)*{};(5,-15)*{}};
{\ar@{-} (12,-25)*{};(10,-15)*{}};
{\ar@{-} (17,-25)*{};(15,-15)*{}};
(2,-28)*{\{1\}};
(7,-28)*{C_2};
(12,-28)*{A_3};
(17,-28)*{S_3};
(9.5,-33)*{\textrm{if } p=3};
{\ar@{--} (24,-30)*{};(24,25)*{}}
\endxy
\qquad\
\xy
 (2,-5)*{\bullet};
  (0,5)*{\bullet};
  (0,10)*{\bullet};
  (0,15)*{\bullet};
  (0,23.5)*{\vdots};
  (0,25)*{\bullet};
{\ar@{-} (2,-5)*{};(0,5)*{}};
{\ar@{-} (0,5)*{};(0,10)*{}};
{\ar@{-} (0,10)*{};(0,15)*{}};
{\ar@{-} (0,15)*{};(0,20)*{}};
%
 (7,-5)*{\bullet};
  (5,5)*{\bullet};
  (5,10)*{\bullet};
  (5,15)*{\bullet};
  (5,23.5)*{\vdots};
  (5,25)*{\bullet};
{\ar@{-} (7,-5)*{};(5,5)*{}};
{\ar@{-} (5,5)*{};(5,10)*{}};
{\ar@{-} (5,10)*{};(5,15)*{}};
{\ar@{-} (5,15)*{};(5,20)*{}};
%
 (12,-5)*{\bullet};
  (10,5)*{\bullet};
  (10,10)*{\bullet};
  (10,15)*{\bullet};
  (10,23.5)*{\vdots};
  (10,25)*{\bullet};
{\ar@{-} (12,-5)*{};(10,5)*{}};
{\ar@{-} (10,5)*{};(10,10)*{}};
{\ar@{-} (10,10)*{};(10,15)*{}};
{\ar@{-} (10,15)*{};(10,20)*{}};
%
 (17,-5)*{\bullet};
  (15,5)*{\bullet};
  (15,10)*{\bullet};
  (15,15)*{\bullet};
  (15,23.5)*{\vdots};
  (15,25)*{\bullet};
{\ar@{-} (17,-5)*{};(15,5)*{}};
{\ar@{-} (15,5)*{};(15,10)*{}};
{\ar@{-} (15,10)*{};(15,15)*{}};
{\ar@{-} (15,15)*{};(15,20)*{}};
%
%
(2,-25)*{\bullet};
(7,-25)*{\bullet};
(12,-25)*{\bullet};
(17,-25)*{\bullet};
(0,-15)*{\bullet};
(5,-15)*{\bullet};
(10,-15)*{\bullet};
(15,-15)*{\bullet};
{\ar@{-} (2,-25)*{};(0,-15)*{}};
{\ar@{-} (7,-25)*{};(5,-15)*{}};
{\ar@{-} (12,-25)*{};(10,-15)*{}};
{\ar@{-} (17,-25)*{};(15,-15)*{}};
(2,-28)*{\{1\}};
(7,-28)*{C_2};
(12,-28)*{A_3};
(17,-28)*{S_3};
(9.5,-33)*{\textrm{if } p\neq 2,3}
\endxy
\]
At the very right, when $p\neq 2,3$, the spectrum of the Burnside ring localized at~$p$ is a disjoint union of four copies of~$\Spec(\bbZ_{(p)})=\{(0),(p)\}$, one for each conjugacy class of subgroup~$H\le G$. Accordingly, above each copy of~$\Spec(\bbZ_{(p)})$, we see a chromatic tower with infinitely many primes above~$(p)$ and only one above~$(0)$. At $p=2$, Dress's collision happens between the trivial subgroup and any of the conjugate cyclic subgroups of order~$2$, as well as between $A_3=C_3$ and~$G$. Each of those closed points now admits a chromatic refinement \emph{for each~$H$}, \ie here there are \emph{two} towers of primes in~$\SHGc$ projecting down to the closed point~$(p)$. Moreover, the two towers are not disconnected but the one for the smaller subgroup is in some sense ``in the closure" of the one for the bigger subgroup. A similar phenomenon happens at~$p=3$ for the subgroups~$\{1\}$ and~$A_3=C_3$. Note however that although (any) $H=C_2$ has index~$p=3$ in~$G$, it is not $p$-subnormal simply because it is not normal. We see here that Dress's result already distinguishes $\mathfrak{p}(H,p)=\mathfrak{p}(C_2,3)$ from $\mathfrak{p}(G,p)=\mathfrak{p}(S_3,3)$, which shows that the corresponding chromatic towers $\SET{\cat P(C_2,3,n)}{n\ge 1}$ and $\SET{\cat P(S_3,3,n)}{n\ge 1}$ are disconnected.
\end{Exa}

Let us give a re-interpretation of Conjecture~\ref{conj:log-p} in more geometric terms:
\begin{Prop}
\label{prop:equiv-geom}%
Let $G$ be a group of order $p^r$. The following are equivalent:
\begin{enumerate}[label=\rm(\Alph*)]
\item
\label{it:log-p}%
Conjecture~\ref{conj:log-p} holds: $\cat P(1,p,n)\not\subseteq \cat P(G,p,n-r+1)$, whenever~$r\le n<\infty$.
\smallbreak
\item
\label{it:Z_p,N}%
For every $N>r$, the following subset of~$\Spc(\SHGcp)$ is closed:
\[Z_{p,N}:=\SET{\cat P(H,p,\ell)\in\Spc(\SHGcp)}{H\le G,\ N-\log_p|H|\le\ell\le\infty}\,.\]
\smallbreak
\item
\label{it:supp(x)=Z}%
For every $N> r$, there exists $x \in \SHGcp$ with $\supp(x)=Z_{p,N}$.
\end{enumerate}
When these conditions hold, the closed set~$Z_{p,N}$ is irreducible and has~${\cat P(G,p,N-r)}$ as generic point.
\end{Prop}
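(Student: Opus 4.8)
The plan is to run the cycle \ref{it:log-p}$\Rightarrow$\ref{it:Z_p,N}$\Rightarrow$\ref{it:supp(x)=Z}$\Rightarrow$\ref{it:log-p}, and to read off the statement about the generic point along the way. Throughout I would work inside $\Spc(\SHGcp)$, identified by Corollary~\ref{cor:loc} with the subspace $V_{p,\infty}\subseteq\Spc(\SHGc)$; by Remark~\ref{rem:SHGp} this is harmless, since the inclusions at the prime~$p$ that all three conditions refer to can be tested $p$-locally. The key preliminary observation, valid \emph{unconditionally}, is that $\overline{Z_{p,N}}=\adhpt{\cat P(G,p,N-r)}$ for every $N>r$. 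Indeed, for $H\le G$ the $p$-group~$G$ admits a $p$-subnormal tower from~$H$ of length $s=\log_p[G:H]=r-\log_p|H|$, so Theorem~\ref{thm:shift-by-s} (applicable because $N-\log_p|H|>s$ exactly when $N>r$) gives $\cat P(H,p,N-\log_p|H|)\subseteq\cat P(G,p,N-r)$, and Proposition~\ref{prop:incl-same-subgroup} propagates this to every $\cat P(H,p,\ell)$ with $\ell\ge N-\log_p|H|$, including $\ell=\infty$; hence $Z_{p,N}\subseteq\adhpt{\cat P(G,p,N-r)}$, and since $\cat P(G,p,N-r)\in Z_{p,N}$ the reverse inclusion of closures holds as well. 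Consequently $Z_{p,N}$ is closed if and only if $Z_{p,N}=\adhpt{\cat P(G,p,N-r)}$, in which case it is automatically irreducible with generic point $\cat P(G,p,N-r)$.

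Granting \ref{it:log-p}, Theorem~\ref{thm:log-p-H-G} --- whose hypothesis is precisely the $\log_p$-Conjecture for the $p$-group~$G$ --- says that $\cat P(H,p,n)\subseteq\cat P(G,p,m)$ iff $n\ge m+\log_p[G:H]$; feeding in $m=N-r$ (finite and $\ge1$ since $N>r$) turns this into $n\ge N-\log_p|H|$, which is exactly the membership condition defining $Z_{p,N}$, so $\adhpt{\cat P(G,p,N-r)}=Z_{p,N}$ and \ref{it:Z_p,N} follows, together with the final sentence of the proposition. For \ref{it:Z_p,N}$\Rightarrow$\ref{it:supp(x)=Z}, I would observe that the complement of $Z_{p,N}$ in $\Spc(\SHGcp)$ is \emph{finite}: for each of the finitely many conjugacy classes of subgroups $H\le G$ it contains only the finitely many primes $\cat P(H,p,\ell)$ with $1\le\ell<N-\log_p|H|$. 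A finite set being quasi-compact, a closed $Z_{p,N}$ is then Thomason closed, so by \cite[Prop.~2.14]{Balmer05a} (as already invoked for Lemma~\ref{lem:splitting_lemma}) we get $Z_{p,N}=\supp(x)$ for some compact~$x$.

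Finally, to close the cycle I would prove \ref{it:supp(x)=Z}$\Rightarrow$\ref{it:log-p}. Since a support is closed, it suffices to establish the contrapositive: if $\cat P(1,p,n)\subseteq\cat P(G,p,n-r+1)$ for some $r\le n<\infty$, then $Z_{p,N}$ fails to be closed for $N:=n+1$ (note $N>r$ and $N-r=n-r+1$). Indeed $\cat P(1,p,n)\in\adhpt{\cat P(G,p,N-r)}=\overline{Z_{p,N}}$ by hypothesis, whereas $\cat P(1,p,n)\notin Z_{p,N}$, since membership would force $n\ge N-\log_p|1|=N=n+1$; here Theorem~\ref{thm:prime-uniqueness} is used to rule out that $\cat P(1,p,n)$ is represented by some $\cat P(H,p,\ell)$ with $H\not\sim_G 1$ lying in $Z_{p,N}$. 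Thus $Z_{p,N}\subsetneq\overline{Z_{p,N}}$, contradicting closedness. The only implication carrying real content is this last one, the heart of it being the interplay between the shift $\log_p|G|$ and the choice $N=n+1$; the minor points that need care are that Theorems~\ref{thm:shift-by-s} and~\ref{thm:log-p-H-G} get applied also at the boundary value $\ell=\infty$ (via $\cat C_{p,\infty}=\cap_n\cat C_{p,n}$) and that $m=N-r\ge1$ keeps those theorems non-vacuous, but neither is a genuine obstacle.
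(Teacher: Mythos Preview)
Your proof is correct and follows essentially the same route as the paper's. The only notable differences are cosmetic: for \ref{it:log-p}$\Rightarrow$\ref{it:Z_p,N} you invoke Theorem~\ref{thm:log-p-H-G} directly (which is available and logically prior), whereas the paper reproves the needed special case inline; and for \ref{it:supp(x)=Z}$\Rightarrow$\ref{it:log-p} you pass through closedness (effectively \ref{it:supp(x)=Z}$\Rightarrow$\ref{it:Z_p,N}$\Rightarrow$\ref{it:log-p}) rather than exhibiting the non-inclusion directly via the object~$x$ --- both are trivially equivalent since $\supp(x)$ is always closed.
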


\begin{proof}
\ref{it:log-p}$\then$\ref{it:Z_p,N}:
Since $N>r$, Corollary~\ref{cor:shift-by-s} implies that $Z_{p,N}$ is contained in the closed set~$\SET{\cat Q \in \Spc(\SHGp)}{\cat Q\subseteq \cat P(G,p,N-r)}=\adhpt{\cat P(G,p,N-r)}$.  If we prove the reverse inclusion, $\adhpt{\cat P(G,p,N-r)}\subseteq Z_{p,N}$, then $Z_{p,N}$ is closed irreducible with the announced generic point.  To prove this, suppose ab absurdo that there exists a point $\cat Q\subset \cat P(G,p,N-r)$ in~$\adhpt{\cat P(G,p,N-r)}$ with $\cat Q\notin Z_{p,N}$. By Theorem~\ref{thm:the-set} (and Rem.\,\ref{rem:SHGp}), we have $\cat Q=\cat P(H,p,\ell)$ for some $H\le G$ and some $1\le\ell\le\infty$. Let $s=\log_p|H|$, in which case $\cat Q\notin Z_{p,N}$ implies that $N-1\ge \ell+s$. By Proposition~\ref{prop:incl-same-subgroup} and Corollary~\ref{cor:shift-by-s}, we have $\cat P(1,p,N-1)\subset \cat P(1,p,\ell+s)\subset \cat P(H,p,\ell)\subset \cat P(G,p,N-r)$, which contradicts~\ref{it:log-p} for $n=N-1\ge r$.

\ref{it:Z_p,N}$\then$\ref{it:supp(x)=Z}:
In $\Spc(\SHGcp)$, the closed subset~$Z_{p,N}$ has finite (hence quasi-compact) complement.  (The picture \eqref{eq:Z_p,N} below may be a useful guide.) It follows from general tt-geometry that any closed subset with quasi-compact complement is the support of some object: see~\cite[Prop.\,2.14]{Balmer05a}.

\ref{it:supp(x)=Z}$\then$\ref{it:log-p}:
Let $n\ge r$ and let $x\in\SHGcp$ be such that $\supp(x)=Z_{p,n+1}$, that is, use~\ref{it:supp(x)=Z} for~$N:=n+1>r$. Then $\cat P(G,p,n-r+1)\in Z_{p,n+1}=\supp(x)$ means $x\notin\cat P(G,p,n-r+1)$, whereas $\cat P(1,p,n)\notin Z_{p,n+1}=\supp(x)$ means $x\in \cat P(1,p,n)$. Together, this gives $\cat P(1,p,n)\not\subseteq\cat P(G,p,n-r+1)$ as desired.
\end{proof}

\begin{Rem}
One can almost make the same statement as above with $\SHGc$ instead of the $p$-local version~$\SHGcp$. However, there is some ``fringe effect" when $N$ is small, typically if $\cat P(G,p,1)$ is allowed in~$Z_{p,N}$; in that case, $Z_{p,N}$ is not closed but for the rather disjoint issue that $\cat P(G,p,1)=\cat P(G,q,1)$ for $q\neq p$ and thus contains in its closure many other primes $\cat P(H,q,n)$ for~$q\neq p$.
\end{Rem}

\begin{Rem}
As in Remark~\ref{rem:geom}, it can be useful to draw a picture of $\Spc(\SHGcp)$. Let us assume that $G$ is a $p$-group for simplicity, say $|G|=p^r$. So subgroups $H\le G$ can be filtered by the integer~$s=\log_p(|H|)$, which runs between 0 and~$r$. We read $s$ horizontally and the chromatic degree~$m$ of~$\cat P(H,p,m)$ vertically:
\[
\xy
{\ar@{->} (-40,-12)*{};(-40,48)*{}};
 (-43.5,47)*{m};
 (-41.5,35)*{n\ -};
 (-44.5,30)*{n-1\ -};
 (-43.5,23)*{\vdots};
 (-44.5,15)*{n-r\ -};
 (-43.5,11)*{\vdots};
 (-41.5,5)*{3\ -};
 (-41.5,0)*{2\ -};
 (-41.5,-5)*{1\ -};
{\ar@{->} (-42,-10)*{};(18,-10)*{}};
 (18,-12)*{s};
 (11,-10)*{|};
 (1,-10)*{|};
 (-9,-10)*{|};
 (-19,-10)*{|};
 (-29,-10)*{|};
 (11,-14)*{\scriptstyle r};
 (1,-14)*{\scriptstyle\cdots};
 (-9,-14)*{\scriptstyle s};
 (-19,-14)*{\scriptstyle\cdots};
 (-29,-14)*{\scriptstyle 0};
 (11,-17)*{\scriptscriptstyle (H=G)};
 (1,-17)*{};
 (-9,-17)*{\scriptscriptstyle (|H|=p^s)};
 (-19,-17)*{};
 (-29,-17)*{\scriptscriptstyle (H=1)};
%
%
  (-28,-5)*{\bullet};
  (-30,0)*{\bullet};
  (-30,5)*{\bullet};
  (-30,11)*{\vdots};
  (-30,15)*{\bullet};
  (-30,20)*{\bullet};
  (-30,25)*{\bullet};
  (-30,30)*{\bullet};
  (-30,35)*{\bullet};
  (-30,40)*{\bullet};
  (-30,46)*{\vdots};
{\ar@{-} (-28,-5)*{};(-30,0)*{}};
{\ar@{-} (-30,0)*{};(-30,5)*{}};
{\ar@{-} (-30,5)*{};(-30,7.5)*{}};
{\ar@{-} (-30,12.5)*{};(-30,15)*{}};
{\ar@{-} (-30,15)*{};(-30,20)*{}};
{\ar@{-} (-30,20)*{};(-30,25)*{}};
{\ar@{-} (-30,25)*{};(-30,30)*{}};
{\ar@{-} (-30,30)*{};(-30,35)*{}};
{\ar@{-} (-30,35)*{};(-30,40)*{}};
{\ar@{-} (-30,40)*{};(-30,43)*{}};
  (-18,-5)*{\bullet};
  (-20,0)*{\bullet};
  (-20,5)*{\bullet};
  (-20,11)*{\vdots};
  (-20,15)*{\bullet};
  (-20,20)*{\bullet};
  (-20,25)*{\bullet};
  (-20,30)*{\bullet};
  (-20,35)*{\bullet};
  (-20,40)*{\bullet};
  (-20,46)*{\vdots};
{\ar@{-} (-18,-5)*{};(-20,0)*{}};
{\ar@{-} (-20,0)*{};(-20,5)*{}};
{\ar@{-} (-20,5)*{};(-20,7.5)*{}};
{\ar@{-} (-20,12.5)*{};(-20,15)*{}};
{\ar@{-} (-20,15)*{};(-20,20)*{}};
{\ar@{-} (-20,20)*{};(-20,25)*{}};
{\ar@{-} (-20,25)*{};(-20,30)*{}};
{\ar@{-} (-20,30)*{};(-20,35)*{}};
{\ar@{-} (-20,35)*{};(-20,40)*{}};
{\ar@{-} (-20,40)*{};(-20,43)*{}};
  (-8,-5)*{\bullet};
  (-10,0)*{\bullet};
  (-10,5)*{\bullet};
  (-10,11)*{\vdots};
  (-10,15)*{\bullet};
  (-10,20)*{\bullet};
  (-10,25)*{\bullet};
  (-10,30)*{\bullet};
  (-10,35)*{\bullet};
  (-10,40)*{\bullet};
  (-10,46)*{\vdots};
{\ar@{-} (-8,-5)*{};(-10,0)*{}};
{\ar@{-} (-10,0)*{};(-10,5)*{}};
{\ar@{-} (-10,5)*{};(-10,7.5)*{}};
{\ar@{-} (-10,12.5)*{};(-10,15)*{}};
{\ar@{-} (-10,15)*{};(-10,20)*{}};
{\ar@{-} (-10,20)*{};(-10,25)*{}};
{\ar@{-} (-10,25)*{};(-10,30)*{}};
{\ar@{-} (-10,30)*{};(-10,35)*{}};
{\ar@{-} (-10,35)*{};(-10,40)*{}};
{\ar@{-} (-10,40)*{};(-10,43)*{}};
  (2,-5)*{\bullet};
  (0,0)*{\bullet};
  (0,5)*{\bullet};
  (0,11)*{\vdots};
  (0,15)*{\bullet};
  (0,20)*{\bullet};
  (0,25)*{\bullet};
  (0,30)*{\bullet};
  (0,35)*{\bullet};
  (0,40)*{\bullet};
  (0,46)*{\vdots};
{\ar@{-} (2,-5)*{};(0,0)*{}};
{\ar@{-} (0,0)*{};(0,5)*{}};
{\ar@{-} (0,5)*{};(0,7.5)*{}};
{\ar@{-} (0,12.5)*{};(0,15)*{}};
{\ar@{-} (0,15)*{};(0,20)*{}};
{\ar@{-} (0,20)*{};(0,25)*{}};
{\ar@{-} (0,25)*{};(0,30)*{}};
{\ar@{-} (0,30)*{};(0,35)*{}};
{\ar@{-} (0,35)*{};(0,40)*{}};
{\ar@{-} (0,40)*{};(0,43)*{}};
  (12,-5)*{\bullet};
  (10,0)*{\bullet};
  (10,5)*{\bullet};
  (10,11)*{\vdots};
  (10,15)*{\bullet};
  (10,20)*{\bullet};
  (10,25)*{\bullet};
  (10,30)*{\bullet};
  (10,35)*{\bullet};
  (10,40)*{\bullet};
  (10,46)*{\vdots};
{\ar@{-} (12,-5)*{};(10,0)*{}};
{\ar@{-} (10,0)*{};(10,5)*{}};
{\ar@{-} (10,5)*{};(10,7.5)*{}};
{\ar@{-} (10,12.5)*{};(10,15)*{}};
{\ar@{-} (10,15)*{};(10,20)*{}};
{\ar@{-} (10,20)*{};(10,25)*{}};
{\ar@{-} (10,25)*{};(10,30)*{}};
{\ar@{-} (10,30)*{};(10,35)*{}};
{\ar@{-} (10,35)*{};(10,40)*{}};
{\ar@{-} (10,40)*{};(10,43)*{}};
%
%
{\ar@{-} (-18,-5)*{};(-30,0)*{}};
{\ar@{-} (-20,0)*{};(-30,5)*{}};
{\ar@{-} (-20,5)*{};(-25,7.5)*{}};
{\ar@{-} (-25,12.5)*{};(-30,15)*{}};
{\ar@{-} (-20,15)*{};(-30,20)*{}};
{\ar@{-} (-20,20)*{};(-30,25)*{}};
{\ar@{-} (-20,25)*{};(-30,30)*{}};
{\ar@{-} (-20,30)*{};(-30,35)*{}};
{\ar@{-} (-20,35)*{};(-30,40)*{}};
{\ar@{-} (-20,40)*{};(-25,42.5)*{}};
{\ar@{-} (-8,-5)*{};(-20,0)*{}};
{\ar@{-} (-10,0)*{};(-20,5)*{}};
{\ar@{-} (-10,5)*{};(-15,7.5)*{}};
{\ar@{-} (-15,12.5)*{};(-20,15)*{}};
{\ar@{-} (-10,15)*{};(-20,20)*{}};
{\ar@{-} (-10,20)*{};(-20,25)*{}};
{\ar@{-} (-10,25)*{};(-20,30)*{}};
{\ar@{-} (-10,30)*{};(-20,35)*{}};
{\ar@{-} (-10,35)*{};(-20,40)*{}};
{\ar@{-} (-10,40)*{};(-15,42.5)*{}};
{\ar@{-} (2,-5)*{};(-10,0)*{}};
{\ar@{-} (0,0)*{};(-10,5)*{}};
{\ar@{-} (0,5)*{};(-5,7.5)*{}};
{\ar@{-} (-5,12.5)*{};(-10,15)*{}};
{\ar@{-} (0,15)*{};(-10,20)*{}};
{\ar@{-} (0,20)*{};(-10,25)*{}};
{\ar@{-} (0,25)*{};(-10,30)*{}};
{\ar@{-} (0,30)*{};(-10,35)*{}};
{\ar@{-} (0,35)*{};(-10,40)*{}};
{\ar@{-} (0,40)*{};(-5,42.5)*{}};
{\ar@{-} (12,-5)*{};(0,0)*{}};
{\ar@{-} (10,0)*{};(0,5)*{}};
{\ar@{-} (10,5)*{};(5,7.5)*{}};
{\ar@{-} (5,12.5)*{};(0,15)*{}};
{\ar@{-} (10,15)*{};(0,20)*{}};
{\ar@{-} (10,20)*{};(0,25)*{}};
{\ar@{-} (10,25)*{};(0,30)*{}};
{\ar@{-} (10,30)*{};(0,35)*{}};
{\ar@{-} (10,35)*{};(0,40)*{}};
{\ar@{-} (10,40)*{};(5,42.5)*{}};
(-80,40)*{\Spc(\SHGcp)=};
\endxy
\]
We warn the reader that this picture is slightly misleading since of course $G$ can have non-conjugate subgroups of a fixed order~$p^s$. So, the intermediate columns (for $0<s<r$) actually have several layers. This picture is nonetheless useful to build some intuition.  (The picky reader can restrict to $G=C_{p^r}$ if necessary.) The inclusions depicted above are the only ones if Conjecture~\ref{conj:log-p} holds true.

Truncating below $p$-chromatic level~$n$, \ie localizing via $\SHG\onto \cat{SH}(G)_{p,n}=\triv(f_{p,n})\otimes\SHG$, the spectrum would be as above but truncated to only include those $m\le n$. In particular, it is a finite space (but not a discrete one).

Also, the subset $Z_{p,N}$ of Proposition~\ref{prop:equiv-geom} looks as follows (the $\bullet$, not the~$\circ$):
\begin{equation}
\label{eq:Z_p,N}%
\qquad\vcenter{\xy
{\ar@{->} (-40,-12)*{};(-40,48)*{}};
 (-43.5,47)*{m};
 (-42,35)*{N\ -};
 (-45,30)*{N-1\ -};
 (-43.5,24)*{\vdots};
 (-45,15)*{N-r\ -};
 (-43.5,11)*{\vdots};
 (-41.5,5)*{3\ -};
 (-41.5,0)*{2\ -};
 (-41.5,-5)*{1\ -};
{\ar@{->} (-42,-10)*{};(18,-10)*{}};
 (18,-12)*{s};
 (11,-10)*{|};
 (1,-10)*{|};
 (-9,-10)*{|};
 (-19,-10)*{|};
 (-29,-10)*{|};
 (11,-14)*{\scriptstyle r};
 (1,-14)*{\scriptstyle\cdots};
 (-9,-14)*{\scriptstyle s};
 (-19,-14)*{\scriptstyle\cdots};
 (-29,-14)*{\scriptstyle 0};
 (11,-17)*{\scriptscriptstyle (H=G)};
 (1,-17)*{};
 (-9,-17)*{\scriptscriptstyle (|H|=p^s)};
 (-19,-17)*{};
 (-29,-17)*{\scriptscriptstyle (H=1)};
%
%
  (-28,-5)*{\circ};
  (-30,0)*{\circ};
  (-30,5)*{\circ};
  (-30,11)*{\vdots};
  (-30,15)*{\circ};
  (-30,20)*{\circ};
  (-30,25)*{\circ};
  (-30,30)*{\circ};
  (-30,35)*{\bullet};
  (-30,40)*{\bullet};
  (-30,46)*{\vdots};
{\ar@{-} (-28,-5)*{};(-30,0)*{}};
{\ar@{-} (-30,0)*{};(-30,5)*{}};
{\ar@{-} (-30,5)*{};(-30,7.5)*{}};
{\ar@{-} (-30,12.5)*{};(-30,15)*{}};
{\ar@{-} (-30,15)*{};(-30,20)*{}};
{\ar@{-} (-30,20)*{};(-30,25)*{}};
{\ar@{-} (-30,25)*{};(-30,30)*{}};
{\ar@{-} (-30,30)*{};(-30,35)*{}};
{\ar@{-} (-30,35)*{};(-30,40)*{}};
{\ar@{-} (-30,40)*{};(-30,43)*{}};
  (-18,-5)*{\circ};
  (-20,0)*{\circ};
  (-20,5)*{\circ};
  (-20,11)*{\vdots};
  (-20,15)*{\circ};
  (-20,20)*{\circ};
  (-20,25)*{\circ};
  (-20,30)*{\bullet};
  (-20,35)*{\bullet};
  (-20,40)*{\bullet};
  (-20,46)*{\vdots};
{\ar@{-} (-18,-5)*{};(-20,0)*{}};
{\ar@{-} (-20,0)*{};(-20,5)*{}};
{\ar@{-} (-20,5)*{};(-20,7.5)*{}};
{\ar@{-} (-20,12.5)*{};(-20,15)*{}};
{\ar@{-} (-20,15)*{};(-20,20)*{}};
{\ar@{-} (-20,20)*{};(-20,25)*{}};
{\ar@{-} (-20,25)*{};(-20,30)*{}};
{\ar@{-} (-20,30)*{};(-20,35)*{}};
{\ar@{-} (-20,35)*{};(-20,40)*{}};
{\ar@{-} (-20,40)*{};(-20,43)*{}};
  (-8,-5)*{\circ};
  (-10,0)*{\circ};
  (-10,5)*{\circ};
  (-10,11)*{\vdots};
  (-10,15)*{\circ};
  (-10,20)*{\circ};
  (-10,25)*{\bullet};
  (-10,30)*{\bullet};
  (-10,35)*{\bullet};
  (-10,40)*{\bullet};
  (-10,46)*{\vdots};
{\ar@{-} (-8,-5)*{};(-10,0)*{}};
{\ar@{-} (-10,0)*{};(-10,5)*{}};
{\ar@{-} (-10,5)*{};(-10,7.5)*{}};
{\ar@{-} (-10,12.5)*{};(-10,15)*{}};
{\ar@{-} (-10,15)*{};(-10,20)*{}};
{\ar@{-} (-10,20)*{};(-10,25)*{}};
{\ar@{-} (-10,25)*{};(-10,30)*{}};
{\ar@{-} (-10,30)*{};(-10,35)*{}};
{\ar@{-} (-10,35)*{};(-10,40)*{}};
{\ar@{-} (-10,40)*{};(-10,43)*{}};
  (2,-5)*{\circ};
  (0,0)*{\circ};
  (0,5)*{\circ};
  (0,11)*{\vdots};
  (0,15)*{\circ};
  (0,20)*{\bullet};
  (0,25)*{\bullet};
  (0,30)*{\bullet};
  (0,35)*{\bullet};
  (0,40)*{\bullet};
  (0,46)*{\vdots};
{\ar@{-} (2,-5)*{};(0,0)*{}};
{\ar@{-} (0,0)*{};(0,5)*{}};
{\ar@{-} (0,5)*{};(0,7.5)*{}};
{\ar@{-} (0,12.5)*{};(0,15)*{}};
{\ar@{-} (0,15)*{};(0,20)*{}};
{\ar@{-} (0,20)*{};(0,25)*{}};
{\ar@{-} (0,25)*{};(0,30)*{}};
{\ar@{-} (0,30)*{};(0,35)*{}};
{\ar@{-} (0,35)*{};(0,40)*{}};
{\ar@{-} (0,40)*{};(0,43)*{}};
  (12,-5)*{\circ};
  (10,0)*{\circ};
  (10,5)*{\circ};
  (10,10)*{\circ};
  (10,15)*{\bullet};
  (10,20)*{\bullet};
  (10,25)*{\bullet};
  (10,30)*{\bullet};
  (10,35)*{\bullet};
  (10,40)*{\bullet};
  (10,46)*{\vdots};
{\ar@{-} (12,-5)*{};(10,0)*{}};
{\ar@{-} (10,0)*{};(10,5)*{}};
 (10,9)*{\vdots};
{\ar@{-} (10,10)*{};(10,15)*{}};
{\ar@{-} (10,15)*{};(10,20)*{}};
{\ar@{-} (10,20)*{};(10,25)*{}};
{\ar@{-} (10,25)*{};(10,30)*{}};
{\ar@{-} (10,30)*{};(10,35)*{}};
{\ar@{-} (10,35)*{};(10,40)*{}};
{\ar@{-} (10,40)*{};(10,43)*{}};
%
%
{\ar@{-} (-18,-5)*{};(-30,0)*{}};
{\ar@{-} (-20,0)*{};(-30,5)*{}};
{\ar@{-} (-20,5)*{};(-25,7.5)*{}};
{\ar@{-} (-25,12.5)*{};(-30,15)*{}};
{\ar@{-} (-20,15)*{};(-30,20)*{}};
{\ar@{-} (-20,20)*{};(-30,25)*{}};
{\ar@{-} (-20,25)*{};(-30,30)*{}};
{\ar@{-} (-20,30)*{};(-30,35)*{}};
{\ar@{-} (-20,35)*{};(-30,40)*{}};
{\ar@{-} (-20,40)*{};(-25,42.5)*{}};
{\ar@{-} (-8,-5)*{};(-20,0)*{}};
{\ar@{-} (-10,0)*{};(-20,5)*{}};
{\ar@{-} (-10,5)*{};(-15,7.5)*{}};
{\ar@{-} (-15,12.5)*{};(-20,15)*{}};
{\ar@{-} (-10,15)*{};(-20,20)*{}};
{\ar@{-} (-10,20)*{};(-20,25)*{}};
{\ar@{-} (-10,25)*{};(-20,30)*{}};
{\ar@{-} (-10,30)*{};(-20,35)*{}};
{\ar@{-} (-10,35)*{};(-20,40)*{}};
{\ar@{-} (-10,40)*{};(-15,42.5)*{}};
{\ar@{-} (2,-5)*{};(-10,0)*{}};
{\ar@{-} (0,0)*{};(-10,5)*{}};
{\ar@{-} (0,5)*{};(-5,7.5)*{}};
{\ar@{-} (-5,12.5)*{};(-10,15)*{}};
{\ar@{-} (0,15)*{};(-10,20)*{}};
{\ar@{-} (0,20)*{};(-10,25)*{}};
{\ar@{-} (0,25)*{};(-10,30)*{}};
{\ar@{-} (0,30)*{};(-10,35)*{}};
{\ar@{-} (0,35)*{};(-10,40)*{}};
{\ar@{-} (0,40)*{};(-5,42.5)*{}};
{\ar@{-} (12,-5)*{};(0,0)*{}};
{\ar@{-} (10,0)*{};(0,5)*{}};
{\ar@{-} (10,5)*{};(5,7.5)*{}};
{\ar@{-} (10,10)*{};(0,15)*{}};
{\ar@{-} (10,15)*{};(0,20)*{}};
{\ar@{-} (10,20)*{};(0,25)*{}};
{\ar@{-} (10,25)*{};(0,30)*{}};
{\ar@{-} (10,30)*{};(0,35)*{}};
{\ar@{-} (10,35)*{};(0,40)*{}};
{\ar@{-} (10,40)*{};(5,42.5)*{}};
%
%
{\ar@{..} (-34,50)*{};(-34,34)*{}};
{\ar@{..} (-34,34)*{};(13,11)*{}};
{\ar@{..} (13,11)*{};(13,50)*{}};
(21,46)*{Z_{p,N}};
{\ar@/^.8em/@{..>} (21,43)*{};(13,36)*{}};
{\ar@{..} (16,43)*{};(16,49)*{}};
{\ar@{..} (16,49)*{};(26,49)*{}};
{\ar@{..} (26,49)*{};(26,43)*{}};
{\ar@{..} (26,43)*{};(16,43)*{}};
(28,15)*{\cat P(G,p,N-r)};
{\ar@{->} (16,15)*{};(11,15)*{}};
\endxy}
\end{equation}
\end{Rem}

Let us conclude this section with a summary of what we know about the topology:

\begin{Cor}
\label{cor:summary}%
Let $G$ be a finite group. Every closed subset of $\Spc(\SHGc)$ is a finite union of irreducible closed subsets, and every irreducible closed set is of the form
$\adhpt{\cat P}=$ \mbox{$\SET{\cat Q\in \Spc(\SHGc)}{\cat Q\subseteq\cat P}$} for a unique prime $\cat P \in \Spc(\SHGc)$.

Let us consider two arbitrary primes in~$\SHGc$ (see Theorems~\ref{thm:the-set} and~\ref{thm:prime-uniqueness})
\[
\cat Q=\cat P(K,q,n)
\qquadtext{and}
\cat P = \cat P(H,p,m)
\]
for subgroups $H,K\le G$, primes $p,q$ and chromatic integers $1\le m,n\le \infty$. We fix $\cat P$ (\ie we fix $H$, $p$ and~$m$) and we discuss when the inclusion $\cat Q\subseteq\cat P$ holds, \ie when $\cat Q\in\adhpt{\cat P}$, as function of~$K$, $q$ and~$n$.
\begin{enumerate}[label=\rm(\alph*)]
\smallbreak
\item
\label{it:K-not-sub-H}%
If $K$ is not $G$-conjugate to a $q$-subnormal subgroup of~$H$ then $\cat Q\notin\adhpt{\cat P}$.
\smallbreak
\item
\label{it:p-neq-q}%
If $m>1$ and $p\neq q$ then $\cat Q\notin\adhpt{\cat P}$. (If $m=1$ then $\cat P=\cat P(H,q,1)$ anyway.)
\end{enumerate}
\smallbreak
\noindent
These statements were independent of~$n$. In the remaining cases, the inclusion $\cat Q\subseteq \cat P$ does always happen, for all~$n$ in a connected interval. More precisely:
\begin{enumerate}[label=\rm(\alph*),resume*]
\smallbreak
\item
\label{it:inclusion}%
Suppose that $K$ is $G$-conjugate to a $q$-subnormal subgroup of~$H$ and suppose that either $m=1$ or $p=q$. Then $\cat P(K,q,\infty)\in \adhpt{\cat P}$ and
there is a well-defined chromatic integer~$n_K := \min\SET{1\le \ell\le\infty}{\cat P(K,q,\ell) \in \adhpt{\cat P}}$ such that
\[
m \le n_K \le m+\log_q(|H|/|K|)
\]
(for $m=\infty$, this simply means $n_K=m=\infty$) with the property that
\[\cat Q\in\adhpt{\cat P}
\quadtext{if and only if}
n_K \le n \le \infty.
\]
If $n_K$ equals $m$ and is finite, then ${K \sim_G H}$. Finally, if Conjecture~\ref{conj:log-p} holds for the $q$-group $H/O^q(H)$ -- for instance if $G$ has square-free order -- then
\[
n_K = m+\log_q(|H|/|K|).
\]
\end{enumerate}
\end{Cor}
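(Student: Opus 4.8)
The plan is to assemble the statement out of results already proved, since almost nothing new is required. The opening paragraph --- every closed subset is a finite union of irreducibles, each irreducible closed set being $\adhpt{\cat P}$ for a unique $\cat P$ --- is exactly Proposition~\ref{prop:finite-union-irred}, so I would simply cite it. For the remaining discussion of when $\cat Q = \cat P(K,q,n) \subseteq \cat P = \cat P(H,p,m)$, I would first record, via Corollary~\ref{cor:incl-chrom} and Remark~\ref{rem:incl-at-p}, that whenever such an inclusion holds with $m > 1$ one has $p = q$, and that the case $m = 1$ is degenerate: there $\cat P(H,p,1) = \cat P(H,q,1)$ carries no information about $p$, and any inclusion $\cat P(K,q,n) \subseteq \cat P(H,p,1)$ forces $n = 1$ and $K \sim_G H$ by Example~\ref{ex:incl-0}. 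So throughout I may as well take $\cat P = \cat P(H,q,m)$, which is what makes the bookkeeping uniform.

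Parts~\ref{it:K-not-sub-H} and~\ref{it:p-neq-q} are then contrapositives of earlier statements. If $\cat Q \subseteq \cat P$ and $m>1$, then $p=q$ by Corollary~\ref{cor:incl-chrom} and Proposition~\ref{prop:burnside-info}(a) shows $K$ is $G$-conjugate to a $q$-subnormal subgroup of $H$; if $m=1$, Example~\ref{ex:incl-0} gives $K \sim_G H$, so $K$ is still conjugate to a ($q$-subnormal) subgroup of $H$, namely $H$ itself. This yields~\ref{it:K-not-sub-H}, and the ``$p=q$ once $m>1$'' clause is~\ref{it:p-neq-q}.

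For part~\ref{it:inclusion} I would argue as follows. First, $\cat P(K,q,\infty) \subseteq \cat P(H,q,\infty) \subseteq \cat P(H,q,m) = \cat P$, by Corollary~\ref{cor:incl-infty} followed by Proposition~\ref{prop:incl-same-subgroup}, so the set $S := \SET{1 \le \ell \le \infty}{\cat P(K,q,\ell) \subseteq \cat P}$ contains $\infty$. By Proposition~\ref{prop:incl-same-subgroup} again, $\cat P(K,q,\ell') \subseteq \cat P(K,q,\ell)$ whenever $\ell' \ge \ell$, so $S$ is upward-closed in the well-ordered set $\{1 < 2 < \cdots < \infty\}$; hence $S = [n_K,\infty]$ for its least element $n_K$, which gives the interval description and the well-definedness of $n_K$. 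The bound $m \le n_K$ is Corollary~\ref{cor:incl-chrom} read at $\ell = n_K$; the bound $n_K \le m + \log_q(|H|/|K|)$ comes from Theorem~\ref{thm:shift-by-s} applied to a $G$-conjugate $K^g \le H$ with its (necessarily length $\log_q(|H|/|K|)$) $q$-subnormal tower to $H$, taking $n = m + \log_q(|H|/|K|)$ when $m < \infty$, and noting $n_K = m = \infty$ when $m = \infty$ by the first sentence. If $n_K = m$ is finite then $\cat P(K,q,m) \subseteq \cat P(H,q,m)$, whence $K \sim_G H$ by Proposition~\ref{prop:non-inclusion} (or by Example~\ref{ex:incl-0} when $m = 1$). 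Finally, if the $q$-group $H/\Oq(H)$ satisfies Conjecture~\ref{conj:log-p}, then Theorem~\ref{thm:complete-statement} identifies $S$ as $\SET{\ell}{\ell \ge m + \log_q(|H|/|K|)}$, so $n_K = m + \log_q(|H|/|K|)$; this hypothesis holds automatically when $|G|$ is square-free, since then $H/\Oq(H)$ is trivial or $C_q$ and Proposition~\ref{prop:C_p-non-inclusion} applies (alternatively one may quote Theorem~\ref{thm:square-free} directly). Since every ingredient is already in hand, I do not expect a genuine obstacle; the only delicate points are the bookkeeping around the degenerate case $m = 1$ (where $p$ and $q$ may genuinely differ), which is why I would clear it out at the start, and the observation that the infimum defining $n_K$ is actually attained, which is just well-ordering of $\{1, 2, \ldots, \infty\}$.
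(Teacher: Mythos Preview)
Your approach matches the paper's: both assemble the corollary from previously established results (the paper's proof cites Corollary~\ref{cor:incl-chrom}, Remark~\ref{rem:incl-at-p}, Proposition~\ref{prop:burnside-info} for parts~\ref{it:K-not-sub-H}--\ref{it:p-neq-q}, and Theorem~\ref{thm:shift-by-s}, Proposition~\ref{prop:non-inclusion}, Theorem~\ref{thm:complete-statement} for part~\ref{it:inclusion}). Your unpacking of part~\ref{it:inclusion} --- the upward-closure of~$S$, the well-ordering argument for~$n_K$, and the two bounds --- is correct and more explicit than the paper's one-line citation.

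However, there is a genuine error in your treatment of the case $m=1$. You claim that ``any inclusion $\cat P(K,q,n) \subseteq \cat P(H,p,1)$ forces $n = 1$ and $K \sim_G H$ by Example~\ref{ex:incl-0}'', and you repeat this when proving part~\ref{it:K-not-sub-H} for $m=1$. But Example~\ref{ex:incl-0} runs the other way: it assumes $n=1$ (not $m=1$) and concludes $m=1$ and $K\sim_G H$. Your claim is false already for $G=C_p$: Proposition~\ref{prop:C_p-inclusion} gives $\cat P(1,p,2) \subseteq \cat P(C_p,p,1)$ with $n=2\neq 1$ and $1 \not\sim_G C_p$. The fix is immediate: when $m=1$, rewrite $\cat P = \cat P(H,p,1) = \cat P(H,q,1)$ and apply Proposition~\ref{prop:burnside-info}(a) directly at the prime~$q$ to conclude that $K$ is $G$-conjugate to a $q$-subnormal subgroup of~$H$. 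With this correction your proof of part~\ref{it:K-not-sub-H} is complete, and your argument for part~\ref{it:inclusion} --- which you wisely wrote uniformly in~$m$ without relying on the false degeneracy claim --- is unaffected.
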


\begin{proof}
Parts~\ref{it:K-not-sub-H} and~\ref{it:p-neq-q} follow from Corollary~\ref{cor:incl-chrom}, Remark~\ref{rem:incl-at-p} and Proposition~\ref{prop:burnside-info}. Part~\ref{it:inclusion} follows from Theorem~\ref{thm:shift-by-s}, Proposition~\ref{prop:non-inclusion} and Theorem~\ref{thm:complete-statement}.
\end{proof}

\begin{Rem}
In~\ref{it:inclusion}, if $m=1$, one can replace $p$ by~$q$ without changing the problem since $\cat P$ is $\cat P(H,q,1)$ as well. So one can read $p$ instead of~$q$ everywhere in~(c).
\end{Rem}

\smallskip
\section{Tate re-interpretation of the $\log_p$-Conjecture}
\label{se:log-p-conjecture}
\medskip

In this section we provide equivalent formulations of the $\log_p$-Conjecture~\ref{conj:log-p}, in terms of blue shift phenomena for Tate cohomology. Let $G$ be a $p$-group of order~$p^r$ and let us denote by~$\cat F_{\le p^s}$ the family of subgroups~$\SET{H\le G}{|H|\le p^s}$ of order at most~$p^{s}$; see Ex.\,\ref{ex:Fp^s}.  We denote the associated idempotent triangle by $e_{\le p^s}\to \unit\to f_{\le p^s}\to \cdot$ and the associated Tate functor by
\[
t_{\le p^s}=[f_{\le p^s},\Sigma e_{\le p^s}\otimes-]\cong f_{\le p^s}\otimes [e_{\le p^s},-]\,:\ \SHG\to \SHG
\]
as in Section~\ref{se:e,f,Tate}.
Finally, let us agree that $\cat C_{p,0} := \SH^c_{(p)}$ denotes the whole category of compact $p$-local spectra, and recall that $x$ is said to be of type-$n$ if $x \in \cat C_{p,n} \setminus \cat C_{p,n+1}$.

\begin{Thm}
\label{thm:equiv-Tate}%
Let $G$ be a group of order $p^r$. The following are equivalent:
\begin{enumerate}[label=\rm(\Alph*)]
\item
\label{it:log-p-Tate}%
Conjecture~\ref{conj:log-p} holds\,: $\cat P(1,p,n)\not\subseteq \cat P(G,p,n-r+1)$ whenever~$r\le n<\infty$.
\smallbreak
\item
\label{it:Tate-G-for-fn}%
For all integers $s,\,t$ and $n$ such that $0 \le s < t \le r$ and $n \ge r-s$, and for every subgroup $H\le G$ with $|H|=p^t$, we have
\[
\phigeomb{H}(t_{\le p^s}(\triv(f_{p,n}))) \otimes \cat C_{p,n-(t-s)} = 0.
\]
In a slogan: the geometric fixed points of the Tate functor $\phigeomb{H}(t_{\le p^s}(-))$ lowers chromatic degree by the ($p$-exponent) ``distance" $t-s$ from the subgroup~$H$ to the family~$\cat F_{\le p^s}$.
\end{enumerate}
\end{Thm}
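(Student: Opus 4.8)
The plan is to prove the equivalence \ref{it:log-p-Tate}$\Leftrightarrow$\ref{it:Tate-G-for-fn} by reducing the Tate statement to a statement about the inclusion $\cat P(1,p,n)\subseteq \cat P(G,p,m)$ via the geometric-fixed-points criterion, exactly as in the proof of Proposition~\ref{prop:C_p-inclusion}. The key observation is that for a $p$-group $G$, the family $\cat F_{\le p^s}$ restricted to a subgroup $H$ of order $p^t$ is $\cat F_{\le p^s}\cap H=\SET{K\le H}{|K|\le p^s}$, and by Example~\ref{ex:res-Tate} we have $\Res^G_H\circ t_{\le p^s}\cong t_{\cat F_{\le p^s}\cap H}\circ \Res^G_H$. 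Moreover, since $\phigeomb{H}=\phigeom{H}\circ \Res^G_H$ and the relevant idempotents $f_{\le p^s}$ come from a family containing all proper subgroups of... well, not quite --- one must first pass down to $H$ and then recognize that $\phigeom{H}$ is the localization killing $\cat F[{\not\ge}H]$ in the sense of Remark~\ref{rem:normal-family}. The upshot I would aim for is that, applied to $\triv(x)$ for $x\in\SHc_{(p)}$, the object $\phigeomb{H}(t_{\le p^s}(\triv(x)))$ computes (up to the group $H/\Oname^p(H)$-business, but here $\Oname^p(H)=1$ since $H$ is a $p$-group) exactly the functor whose kernel governs the inclusion $\cat P_H(\text{trivial},p,?)\subseteq\cat P_H(H,p,?)$, and the shift is controlled by $|H|/p^s = p^{t-s}$.

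Concretely, the forward direction \ref{it:log-p-Tate}$\Rightarrow$\ref{it:Tate-G-for-fn} would run as follows. Fix $H\le G$ with $|H|=p^t$ and $0\le s<t$, and fix $x\in\cat C_{p,n-(t-s)}$ with $n\ge r-s\ge t-s$. I want $\phigeomb{H}(t_{\le p^s}(\triv(f_{p,n})))\otimes x=0$, equivalently (using $\cat T^c$-linearity~\eqref{eq:t_Y@comp} and that $t_{\le p^s}$ commutes with $\Res$ and the fact that $\phigeomb{H}(\triv(f_{p,n}))$ etc.\ behave well) that $\phigeomb{H}(t_{\le p^s}(\triv(f_{p,n}\otimes x)))=0$. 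By Example~\ref{ex:res-Tate}, $\Res^G_H(t_{\le p^s}(\triv(f_{p,n}\otimes x)))\cong t_{\cat F_{\le p^s}\cap H}(\Res^G_H\triv(f_{p,n}\otimes x))=t_{\cat F_{\le p^s}\cap H}(\triv_H(f_{p,n}\otimes x))$. Now I reinterpret $\cat F_{\le p^s}\cap H$ as a $p$-subnormal filtration: any subgroup of order $\le p^s$ in a $p$-group $H$ of order $p^t$ sits in a chain of length $t-s$ up to $H$, and the Tate construction for such a family, composed with $\phigeom{H}$, iterates the $C_p$-blue-shift of Proposition~\ref{prop:C_p-inclusion} and Theorem~\ref{thm:shift-by-s} $t-s$ times. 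The clean way to package this is: the inclusion $\cat P_H(K,p,n)\subseteq\cat P_H(H,p,n-(t-s))$ holds for $|K|=p^s$ by Corollary~\ref{cor:shift-by-s}, and unwinding its proof (which is built on $t_G$ for quotients $C_p$) shows precisely that $\phigeom{H}$ applied to the relevant Tate idempotent kills $\cat C_{p,n-(t-s)}$. So I would either quote Theorem~\ref{thm:shift-by-s} directly for the vanishing, or, more honestly, re-derive the vanishing from an induction on $t-s$ mirroring the induction in that proof.

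For the converse \ref{it:Tate-G-for-fn}$\Rightarrow$\ref{it:log-p-Tate}, I would take $H=G$, $t=r$, $s=0$, and $n\ge r$. Statement~\ref{it:Tate-G-for-fn} then says $\phigeomb{G}(t_{\le 1}(\triv(f_{p,n})))\otimes\cat C_{p,n-r}=0$, i.e.\ $t_{\{1\}}$ (which is $t_G$ in the notation of Remark~\ref{rem:t_G}) has a blue-shift of exactly $r$ after geometric $G$-fixed points. I need to conclude $\cat P(1,p,n)\not\subseteq\cat P(G,p,n-r+1)$. Suppose for contradiction the inclusion held. Running the argument of Proposition~\ref{prop:C_p-inclusion} in reverse is the wrong shape; instead I would argue as in the proof of Proposition~\ref{prop:C_p-non-inclusion}/Lemma~\ref{lem:splitting}: the inclusion $\cat P(1,p,n)\subseteq\cat P(G,p,n-r+1)$ is equivalent to connectivity of a certain two-point (or finite) closed subset of $\Spc(\cat{SH}(G)^c_{p,n})$, which is equivalent to non-splitting of an idempotent triangle built from $e_{p,n-r}\otimes f_{p,n}$, and the splitting of that triangle is in turn controlled by the vanishing of a Hom group that reduces — via Remark~\ref{rem:e-f} and the localizing-subcategory trick — to the Tate vanishing in \ref{it:Tate-G-for-fn}. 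So \ref{it:Tate-G-for-fn} forces the splitting, forces disconnectedness, forces non-inclusion: Conjecture~\ref{conj:log-p} holds.

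The main obstacle I anticipate is bookkeeping the geometric fixed points through the family $\cat F_{\le p^s}$ correctly, in particular verifying that $\phigeom{H}$ applied to $t_{\cat F_{\le p^s}\cap H}(\triv_H(-))$ genuinely realizes the iterated $C_p$-Tate blue-shift and not some weaker or stronger shift — this is the content that distinguishes our $\log_p$ prediction from the naive ``shift by one for all $G$'' of Theorem~\ref{thm:kuhovsky}, and it is exactly where the subtlety of Remark~\ref{rem:t_G-with-fixed-points} (that $t_G(-)^G\cong\phigeomb{G}t_G(-)$ only for $G=C_p$) bites. I would handle it by an induction on $t-s$: choose a normal subgroup $N\lhd H$ of index $p$ containing a chosen order-$p^s$ subgroup $K$, use the two-step factorization $\phigeom{N'/N}\circ\phigeom{N}$ from~\ref{it:phixed-nested} together with the family-restriction behavior, and reduce one layer at a time to the $C_p$-case already handled in Section~\ref{se:C_p}. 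The matching of chromatic indices ($n-(t-s)$ versus $n-r+1$ in the two endpoints) should then fall out of the arithmetic $r-s$ versus $t-s$, with the hypothesis $n\ge r-s$ ensuring all intermediate indices stay $\ge 1$ so that Theorem~\ref{thm:kuhovsky} applies at each step.
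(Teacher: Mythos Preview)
Your proposed proof of \ref{it:log-p-Tate}$\Rightarrow$\ref{it:Tate-G-for-fn} has a fundamental gap: it never invokes hypothesis~\ref{it:log-p-Tate}. You derive the Tate vanishing by citing Corollary~\ref{cor:shift-by-s} (``the inclusion $\cat P_H(K,p,n)\subseteq\cat P_H(H,p,n-(t-s))$ holds\ldots\ and unwinding its proof\ldots\ shows precisely that $\phigeomb{H}$ applied to the relevant Tate idempotent kills $\cat C_{p,n-(t-s)}$'') and by iterating the $C_p$ blue-shift of Theorem~\ref{thm:kuhovsky}. But Corollary~\ref{cor:shift-by-s} and Theorem~\ref{thm:kuhovsky} are both \emph{unconditional}. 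If your argument worked, it would prove~\ref{it:Tate-G-for-fn} outright, and hence (via your own \ref{it:Tate-G-for-fn}$\Rightarrow$\ref{it:log-p-Tate}) resolve Conjecture~\ref{conj:log-p} for all $p$-groups. The error is conflating two different statements: the inclusion of primes $\cat P(K,p,n)\subseteq\cat P(H,p,n-(t-s))$, whose proof iterates the $C_p$-Tate construction \emph{along a subnormal tower}, and the vanishing of $\phigeomb{H}(t_{\le p^s}(\triv(f_{p,n})))\otimes\cat C_{p,n-(t-s)}$, which concerns the single family Tate functor $t_{\le p^s}$. There is no mechanism to compose Kuhovsky steps into a statement about $t_{\le p^s}$; your induction via an index-$p$ normal $N\lhd H$ does not peel off a layer of $t_{\le p^s}$, because $\phigeom{N}$ neither commutes with nor decomposes this Tate functor.

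The paper's argument is entirely different and uses~\ref{it:log-p-Tate} essentially. Via the geometric reformulation (Proposition~\ref{prop:equiv-geom}), hypothesis~\ref{it:log-p-Tate} produces a \emph{compact} object $x\in\SHGcp$ with $\supp(x)=Z_{p,N}$ for $N=s+n+1$. The shape of $Z_{p,N}$ forces $e_{\le p^s}\otimes f_{p,n}\otimes x=0$ by support calculus, hence $t_{\le p^s}(f_{p,n})\otimes x=0$; and $\phigeomb{H}(x)$ has type \emph{exactly} $n-(t-s)$, so $\cat C_{p,n-(t-s)}=\thick(\phigeomb{H}(x))$ and the vanishing follows. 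For \ref{it:Tate-G-for-fn}$\Rightarrow$\ref{it:log-p-Tate} the paper again goes through Proposition~\ref{prop:equiv-geom}, via a minimal ``bad-point'' argument that invokes~\ref{it:Tate-G-for-fn} at a carefully chosen $s=N-n-1$, not only $s=0$. Your splitting idea \`a la Lemma~\ref{lem:splitting} is plausible but, as stated, uses only the single case $s=0$, $t=r$: that is not enough, because for $|G|>p$ the identification of Remark~\ref{rem:t_G-with-fixed-points} fails and $\phigeomb{G}(t_G(-))=0$ does not give $t_G(-)=0$; you would need all $t$ with $s=0$ together with joint conservativity of the $\phigeomb{H}$ to force $t_G(\triv(f_{p,n}\otimes x))=0$ in $\SHG$.
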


\begin{proof}
\ref{it:log-p-Tate}$\then$\ref{it:Tate-G-for-fn}:
Let $N:=s+n+1>r$. Then by Proposition~\ref{prop:equiv-geom}, there exists an object~$x\in\SHGcp$ whose support is $Z_{p,N}=\SET{\cat P(K,p,\ell)\in\Spc(\SHGcp)}{K\le G,\ N-\log_p|K|\le\ell\le\infty}$.  Localizing further to~$\cat{SH}(G)_{p,n}=f_{p,n}\otimes \SHG$ (see Prop.~\ref{prop:loc}), we obtain an object $x':=x\otimes f_{p,n} \in \cat{SH}(G)_{p,n}^c$ whose support is
$$
\supp(x')=\SET{\cat P(K,p,\ell)}{N-\log_p|K|\le \ell\le n}.
$$
Note that any point~$\cat P(K,p,\ell)$ in $\supp(x')$ must satisfy~$\log_p|K|>s$ (otherwise $n+1=N-s \le \ell\le n$). It follows that for every ${y\in \cat{SH}(G)_{p,n}^c}$ in~$\thick_\otimes\{G/K_+ \mid K\in\cat F_{\le p^s}\}$ we have $y\otimes x'=0$ and $[y,x']=0$ for they have disjoint support in~$\cat{SH}(G)_{p,n}^c$.  Hence the same vanishing is true for the corresponding left idempotent~$e_{\le p^s}$ in place of~$y$. Repatriated into~$\SHGp$ this reads $e_{\le p^s}\otimes x\otimes f_{p,n}=0$ and $[e_{\le p^s}, x\otimes f_{p,n}]=0$. From either of them we deduce
\begin{equation}
\label{eq:tate-vanishing}%
t_{\le p^s}(f_{p,n}\otimes x)=t_{\le p^s}(f_{p,n})\otimes x=0.
\end{equation}
Now for every subgroup~$H\le G$ with $|H|=p^t$, we have by the construction of~$x$ that $\cat P(H,p,n-(t-s))\notin Z_{p,N}=\supp(x)$ whereas $\cat P(H,p,n-(t-s)+1)\in Z_{p,N}=\supp(x)$.  Unfolding the definitions, this means that $\phigeomb{H}(x)$ belongs to~$\cat C_{p,n-(t-s)}$ but not to~$\cat C_{p,n-(t-s)+1}$.  In other words, $\phigeomb{H}(x)$ has type exactly~$n-(t-s)$ (in~$\SH^c_{(p)}$) and therefore $\cat C_{p,n-(t-s)}=\thick(\phigeomb{H}(x))$ in~$\SH^c_{(p)}$.  So, to prove~\ref{it:Tate-G-for-fn}, it suffices to prove the vanishing of
\[
\phigeomb{H}(t_{\le p^s}(f_{p,n}))\otimes\phigeomb{H}(x).
\]
But this follows by applying~$\phigeomb{H}$ to \eqref{eq:tate-vanishing}.

\ref{it:Tate-G-for-fn}$\then$\ref{it:log-p-Tate}:
Again via Proposition~\ref{prop:equiv-geom}, it suffices to prove that for every $N>r$, the irreducible closed subset $\adhpt{\cat P(G,p,N-r)}$ in~$\Spc(\SHGcp)$ is exactly
\[
Z_{p,N}=\SET{\cat P(K,p,\ell)}{K\le G,\ 1\le \ell\le \infty\textrm{ such that }\ell+\log_p|K|\ge N}.
\]
We already have one inclusion, $\adhpt{\cat P(G,p,N-r)}\supseteq Z_{p,N}$, by Corollary~\ref{cor:shift-by-s} and we now discuss the reverse inclusion. Since the complement of~$\adhpt{\cat P(G,p,N-r)}$ in~$\Spc(\SHGcp)$ is compact (in fact, even the complement of $Z_{p,N}$ is finite), there exists an object~$z\in\SHGcp$ whose support is exactly
\[
\supp(z)=\adhpt{\cat P(G,p,N-r)}.
\]
Suppose ab absurdo that there exists a ``bad point" $\cat P_0=\cat P(K_0,p,n)$ belonging to $\adhpt{\cat P(G,p,N-r)}$ but with $\cat P_0\notin Z_{p,N}$; the latter reads $n+\log_p|K_0|<N$. We can assume that $n$ is minimal among such ``bad points". This means that we can assume the following:
\begin{equation}
\label{eq:m-minimal}%
\textrm{for all }\ell<n,\textrm{ if }\ell+\log_p|H|< N\textrm{ then }\cat P(H,p,\ell)\notin\adhpt{\cat P(G,p,N-r)}.
\end{equation}
Let $s:=N-n-1$. Since $n<N$, we have $s\ge 0$ and we can consider the family~$\cat F_{\le p^s}$. We claim that the following Tate object vanishes in~$\SHGp$:
\[
t_{\le p^s}(f_{p,n}\otimes z).
\]
To see this, it suffices to prove that $\phigeomb{H}(t_{\le p^s}(f_{p,n})\otimes z)$ vanishes in~$\SHp$ for each~${H\le G}$. When $H\in\cat F_{\le p^s}$ this is automatic from Example~\ref{ex:res-Tate} and Remark~\ref{rem:triv-fams}.
Let us then take $H\le G$ with $|H|=p^t$ and $t>s$.  We are in the situation of hypothesis~\ref{it:Tate-G-for-fn} for $0\le s < t\le r$. (Note we indeed have $n \ge r-s$ because we started with $N>r$.) Hence the vanishing of
\[
\phigeomb{H}(t_{\le p^s}(f_{p,n})\otimes z)=\phigeomb{H}(t_{\le p^s}(f_{p,n}))\otimes \phigeomb{H}(z)
\]
would follow from $\phigeomb{H}(z)\in\cat C_{p,n-(t-s)}$.  Now $s=N-n-1$ by definition, so $n-(t-s)=N-t-1$ and we are left to prove~$\phigeomb{H}(z)\in\cat C_{p,N-t-1}$.  If $N-t-1=0$ then there is nothing to prove.  Otherwise, set $\ell:=N-t-1<N-s-1=n$ and consider the prime $\cat P(H,p,\ell)$. We have $\ell+\log_p|H|=N-t-1+t=N-1<N$ and we can apply~\eqref{eq:m-minimal}, which tells us that $\cat P(H,p,\ell)\notin\adhpt{\cat P(G,p,N-r)}=\supp(z)$. The latter reads $z\in\cat P(H,p,\ell)$ hence $\phigeomb{H}(z)\in\cat C_{p,\ell}=\cat C_{p,N-t-1}$ as desired.

At this stage, we have proved the announced vanishing of the Tate object: $t_{\le p^s}(f_{p,n}\otimes z)=0$ in~$\SHGp$. Tensoring by the dual of~$z$, we get by rigidity
\begin{eqnarray*}
0=t_{\le p^s}(f_{p,n}\otimes z)\otimes z^\vee&=&[f_{\le p^s} \otimes z, \Sigma e_{\le p^s} \otimes z \otimes f_{p,n}] \\
&=&[f_{\le p^s} \otimes z \otimes f_{p,n}, \Sigma e_{\le p^s} \otimes z \otimes f_{p,n}]
\end{eqnarray*}
where the last equality uses Remark~\ref{rem:e-f} again. In the localization $\cat{SH}(G)_{p,n}=f_{p,n}\otimes\SHG$, the last equality implies $\Hom_{\cat{SH}(G)_{p,n}}(f_{\le p^s}\otimes z,\Sigma e_{\le p^s}\otimes z)=0$. Consequently, the triangle $\Delta_{\le p^s}\otimes z$ splits in~$\cat{SH}(G)_{p,n}$, where we thus have:
\[
z\simeq z_1\oplus z_2 \qquadtext{with} z_1=e_{\le p^s}\otimes z \quadtext{and} z_2=f_{\le p^s}\otimes z\,.
\]
Since $z\in\cat{SH}(G)_{p,n}^c$ is compact, so are $z_1$ and $z_2$ and their support must then be contained in~$Y_1:=\SET{\cat P(K,p,n)\in\Spc(\cat{SH}(G)_{p,n}^c)}{|K|\le p^s}$ and $Y_2:=\SET{\cat P(K,p,n)\in\Spc(\cat{SH}(G)_{p,n}^c)}{|K|>p^s}$ respectively. Moreover, the ``bad point" $\cat P(K_0,p,n)$ belongs to~$Y_1$, since $\log_p|K_0|<N-n=s+1$, and $\cat P(G,p,N-r)$ belongs to~$Y_2$. Even more, this point~$\cat P(G,p,N-r)$ is the generic point of~$\supp(z)$, which is irreducible (in~$\Spc(\SHGp)$, hence in the open $\Spc(\cat{SH}(G)_{p,n}^c)$ as well). The non-trivial decomposition $\supp(z)=Y_1\sqcup Y_2$ is therefore absurd. Hence there was no ``bad point" and we have the desired equality $\adhpt{\cat P(G,p,N-r)}=Z_{p,N}$.
\end{proof}

\begin{Cor}
\label{cor:Tate-easy}%
If $G$ is a group of order~$p^r$ which satisfies the $\log_p$-Conjecture~\ref{conj:log-p}, then the \emph{geometric} $G$-fixed points of the Tate construction lowers chromatic degree by~$r$, namely for every $n \ge r$, we have $\phigeomb{G}(t_{G}(\triv(f_{p,n})))\otimes \cat C_{p,n-r}=0$.
\end{Cor}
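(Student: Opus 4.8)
The plan is to obtain this as an immediate specialization of Theorem~\ref{thm:equiv-Tate}, whose equivalence we are free to invoke. Assuming that $G$ (of order $p^r$) satisfies the $\log_p$-Conjecture, the theorem grants us statement~\ref{it:Tate-G-for-fn}: for all integers $0\le s<t\le r$ and $n\ge r-s$, and every subgroup $H\le G$ with $|H|=p^t$, one has $\phigeomb{H}(t_{\le p^s}(\triv(f_{p,n})))\otimes\cat C_{p,n-(t-s)}=0$. I would simply feed in the extreme parameters $s=0$, $t=r$ and $H=G$ (the unique subgroup of order $p^r$). Then $\cat F_{\le p^0}=\cat F_{\le 1}$ is the family consisting only of the trivial subgroup, so $t_{\le p^0}$ is, by definition, the classical Tate functor $t_{\{1\}}=t_G$ of Remark~\ref{rem:t_G}; the hypothesis $n\ge r-s$ reads $n\ge r$; and the ``$p$-exponent distance'' $t-s$ equals $r$. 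Substituting, statement~\ref{it:Tate-G-for-fn} becomes exactly $\phigeomb{G}(t_G(\triv(f_{p,n})))\otimes\cat C_{p,n-r}=0$ for every $n\ge r$, which is the assertion.

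The only point requiring a separate word is the degenerate case $r=0$, for which no pair $0\le s<t\le r$ exists and Theorem~\ref{thm:equiv-Tate}\,\ref{it:Tate-G-for-fn} is vacuous; but then $G$ is trivial, $\cat F_{\le 1}$ is the family of \emph{all} subgroups of $G$, and $t_G=t_{\{1\}}$ is the zero functor by Remark~\ref{rem:triv-fams}, so the claim holds trivially (with $n-r=n$). I do not anticipate any genuine obstacle here: the entire content of the statement lives in Theorem~\ref{thm:equiv-Tate}, and this corollary is nothing but the $H=G$, $\cat F=\{1\}$ instance of its part~\ref{it:Tate-G-for-fn}, which explains its name.
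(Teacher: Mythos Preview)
Your proof is correct and takes essentially the same approach as the paper: the paper's proof is the single sentence ``This is the extreme case $s=0$ and $t=r$ in part~\ref{it:Tate-G-for-fn} of Theorem~\ref{thm:equiv-Tate},'' and you have simply unpacked this specialization in more detail (and added a harmless remark on the degenerate $r=0$ case, which the paper does not bother to mention).
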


\begin{proof}
This is the extreme case $s=0$ and $t=r$ in part~\ref{it:Tate-G-for-fn} of Theorem~\ref{thm:equiv-Tate}.
\end{proof}

If one is willing to study the $\log_p$-Conjecture as a whole then \emph{global} geometric fixed points are also enough.

\begin{Cor}
Let $p$ be a prime. The following are equivalent:
\begin{enumerate}[label=\rm(\Alph*)]
\item
\label{it:log-p-Tate-for-all}%
The $\log_p$-Conjecture~\ref{conj:log-p} holds for all $p$-groups.
\item
\label{it:top-Tate-G-for-fn}%
For every group~$G$ of order $p^r$, every $0\le s<r$, and every $n\ge r-s$, we have $\phigeomb{G}(t_{\le p^s}(\triv(f_{p,n})))\otimes \cat C_{p,n-(r-s)}=0$.
\end{enumerate}
\end{Cor}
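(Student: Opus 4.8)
The plan is to derive both implications from Theorem~\ref{thm:equiv-Tate}, which already settles, for a \emph{single} $p$-group~$G$, the equivalence of Conjecture~\ref{conj:log-p} for~$G$ with its condition~\ref{it:Tate-G-for-fn}. Quantifying that theorem over all $p$-groups, the $\log_p$-Conjecture holds for all $p$-groups if and only if condition~\ref{it:Tate-G-for-fn} of Theorem~\ref{thm:equiv-Tate} holds for every $p$-group; so the whole task is to show that this ``for all~$G$'' form of~\ref{it:Tate-G-for-fn} is equivalent to~\ref{it:top-Tate-G-for-fn}.

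For \ref{it:log-p-Tate-for-all}$\then$\ref{it:top-Tate-G-for-fn} I would simply specialize: condition~\ref{it:top-Tate-G-for-fn} for a group~$G$ of order~$p^r$ is exactly the case $t=r$ (hence $H=G$, with $n-(t-s)=n-(r-s)$) of condition~\ref{it:Tate-G-for-fn} of Theorem~\ref{thm:equiv-Tate} applied to the group~$G$ itself.

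For the converse I would assume~\ref{it:top-Tate-G-for-fn} and verify condition~\ref{it:Tate-G-for-fn} of Theorem~\ref{thm:equiv-Tate} for an arbitrary $p$-group~$G$ of order~$p^r$. Fixing $0\le s<t\le r$, $n\ge r-s$, and a subgroup $H\le G$ with $|H|=p^t$, the crucial step is to re-express the object $\phigeomb{H,G}(t_{\le p^s}(\triv(f_{p,n})))$ as one living entirely inside $\SH(H)$. Using $\phigeomb{H,G}=\phigeom{H}\circ\Res^G_H$, the fact that $\Res^G_H$ is closed monoidal, Example~\ref{ex:res-Tate} (which commutes $\Res^G_H$ past the Tate functor, replacing the family $\cat F_{\le p^s}$ of subgroups of~$G$ by $\cat F_{\le p^s}\cap H$), the identity $\Res^G_H\circ\triv\cong\triv$, and the observation that $\cat F_{\le p^s}\cap H$ is precisely the ``order at most $p^s$'' family inside the $p$-group~$H$, I expect to obtain
\[
\phigeomb{H,G}\big(t_{\le p^s}(\triv(f_{p,n}))\big)\;\cong\;\phigeomb{H}\big(t_{\le p^s}(\triv(f_{p,n}))\big),
\]
where the right-hand side is now computed inside $\SH(H)$, with $\phigeomb{H}=\phigeom{H}$ the absolute geometric fixed-point functor of the group~$H$. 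Then I would invoke hypothesis~\ref{it:top-Tate-G-for-fn} for the $p$-group~$H$ (of order~$p^t$), with the same $s$ and~$n$: this is legitimate because $0\le s<t$ and because $n\ge r-s\ge t-s$ (using $t\le r$), and it yields $\phigeomb{H}(t_{\le p^s}(\triv(f_{p,n})))\otimes\cat C_{p,n-(t-s)}=0$ in $\SH(H)$, which, transported back along the isomorphism above, is exactly the required vanishing for~$G$.

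I do not anticipate a genuine obstacle: granted Theorem~\ref{thm:equiv-Tate} and Example~\ref{ex:res-Tate}, the argument is purely formal. The two points where one must be slightly careful are the bookkeeping identification of $\cat F_{\le p^s}\cap H$ with the analogous family inside~$H$ (so that the restricted Tate functor is again of the shape occurring in~\ref{it:top-Tate-G-for-fn}) and the monotonicity $r-s\ge t-s$, which guarantees that the chromatic range forced by the ambient group~$G$ is at least as large as the one needed to apply the hypothesis to the subgroup~$H$.
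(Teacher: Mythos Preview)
Your proposal is correct and follows the same approach as the paper: both directions go through Theorem~\ref{thm:equiv-Tate}, with the forward implication being the specialization $t=r$, and the converse using Example~\ref{ex:res-Tate} to identify $\phigeomb{H,G}(t_{\le p^s}(\triv(f_{p,n})))$ with the corresponding object for the smaller $p$-group~$H$. The only cosmetic difference is that the paper phrases the converse as an induction on~$|G|$ (invoking the $\log_p$-Conjecture for~$H$ as the induction hypothesis, then Theorem~\ref{thm:equiv-Tate} for~$H$), whereas you apply hypothesis~\ref{it:top-Tate-G-for-fn} directly to~$H$; your packaging is slightly more direct but the substantive content is identical.
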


\begin{proof}
\ref{it:log-p-Tate-for-all}$\then$\ref{it:top-Tate-G-for-fn} is the special case $t=r$ of Theorem~\ref{thm:equiv-Tate}\,\ref{it:Tate-G-for-fn}. The converse \ref{it:top-Tate-G-for-fn}$\then$\ref{it:log-p-Tate-for-all} also follows from Theorem~\ref{thm:equiv-Tate} by induction on~$|G|$. Indeed, the ``missing" cases $\phigeomb{H}(t_{\le p^s}(\triv(f_{p,n})))\otimes \cat C_{p,n-(t-s)}=0$ with $|H|=p^t$ and $t<r$ hold by the induction hypothesis applied to~$H$, using that $\Res^G_H$ preserves $t_{\le p^s}$ (Ex.\,\ref{ex:res-Tate}).
\end{proof}

\smallskip
\section{The classification of tt-ideals in $\SHGc$}
\label{se:classif}
\medskip

As recalled in Remark~\ref{rem:Thom+class}, tt-ideals of~$\SHGc$ are in bijection with the Thomason subsets of~$\Spc(\SHGc)$, \ie arbitrary unions of closed subsets each having quasi-compact complement.

\begin{Prop}
\label{prop:irred-close-quasi}%
Let $Z=\adhpt{\cat P_1}\cup\cdots \cup\adhpt{\cat P_k}$ be a closed subset of~$\Spc(\SHGc)$ (see Prop.\,\ref{prop:finite-union-irred}). We can assume that this is irredundant, that is, $\cat P_i\not\subseteq\cat P_j$ for all $1\le i\neq j\le k$. Let $\cat P_i=\cat P(H_i,p_i,m_i)$ for all $i=1,\ldots, k$. Then the complement of~$Z$ is quasi-compact if and only if all chromatic integers $m_1,\ldots,m_k$ are finite.
\end{Prop}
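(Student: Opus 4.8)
The plan is to translate the statement into the language of supports and then prove the two implications separately. I will use the basic fact of tt-geometry \cite[Prop.~2.14]{Balmer05a} that, for an essentially small rigid tt-category $\cat K$, a closed subset of $\Spc(\cat K)$ has quasi-compact complement if and only if it is of the form $\supp(x)$ for a \emph{single} object $x\in\cat K$, together with $\supp(x\oplus y)=\supp(x)\cup\supp(y)$ and $\supp(x\otimes y)=\supp(x)\cap\supp(y)$. Thus the assertion becomes: $Z$ is the support of a single compact $G$-spectrum $\iff$ all $m_i<\infty$.

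For the implication ``$Z=\supp(x)$ $\Rightarrow$ all $m_i<\infty$'' I would argue by contradiction. Assume $m_1=\infty$, write $\cat P_1=\cat P(H_1,p,\infty)$ with $p=p_1$, and take $x\in\SHGc$ with $\supp(x)=Z$. Since $\cat P_1\in\supp(x)$ we get $\phigeomb{H_1}(x)\notin\cat C_{p,\infty}$; being a finite spectrum that is not $p$-acyclic, $\phigeomb{H_1}(x)$ has some finite type $d$ at $p$, so $\cat P(H_1,p,n)\in\supp(x)=Z$ for every finite $n>d$. Each such point lies in some $\adhpt{\cat P_i}$, and not in $\adhpt{\cat P_1}$ because $\cat P(H_1,p,n)\subseteq\cat P(H_1,p,\infty)$ fails for finite $n$ by Proposition~\ref{prop:incl-same-subgroup}. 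As the inclusion $\cat P(H_1,p,n)\subseteq\cat P_i$ persists for all larger $n$ (since $\cat P(H_1,p,n')\subseteq\cat P(H_1,p,n)$ for $n'\ge n$), and there are finitely many indices, some fixed $i_0\neq1$ works for all large $n$; intersecting over $n$ and invoking Remark~\ref{rem:P-infty} gives $\cat P_1=\cat P(H_1,p,\infty)=\bigcap_n\cat P(H_1,p,n)\subseteq\cat P_{i_0}$, contradicting irredundancy.

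For the converse, ``all $m_i<\infty$ $\Rightarrow$ $Z$ is a support'', the additivity of $\supp$ reduces us to showing that each single piece $\adhpt{\cat P(H,p,m)}$ with $m<\infty$ is a support. I would do the case $H=G$ by an explicit construction in the spirit of Lemma~\ref{lem:splitting} and Proposition~\ref{prop:C_p-non-inclusion}. The model is $G=C_p$, $m=1$: the cofiber $x=\cone\big([G/1]\cdot\id_\unit\big)$ of multiplication by the transfer class $[G/1]\in A(G)=\End_{\SHGc}(\unit)$ has $\Res^G_1(x)\simeq\cone(p\cdot\id_\unit)$ (since $f^{1}[G/1]=p$) and $\phigeomb{G}(x)\simeq\unit\oplus\Sigma\unit$ (since $f^{G}[G/1]=0$), so the identity $\supp(x)=\bigcup_{K\le G}\varphi^{K,G}\big(\supp_{\SHc}(\phigeomb{K}(x))\big)$ gives $\supp(x)=\adhpt{\cat P(G,p,1)}$. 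For general $m$ and $H=G$ one combines such cofibers of Burnside-ring elements (whose marks $f^{K}$ can be prescribed to be units below certain subgroups and $p$-powers at others) with the inflation of a finite spectrum $V$ of type $m-1$ at $p$ that is $q$-acyclic for all $q\neq p$ (a suitable generalized Moore spectrum; $V=\unit$ if $m=1$) and with the chromatic idempotents of Example~\ref{ex:fn}, arranging each $\phigeomb{K}(x)$ to have exactly the type cut-offs dictated by Corollary~\ref{cor:summary}. Finally, an arbitrary subgroup $H\le G$ would be reduced to the group $H$ itself: since $A^G_H$ is a finite \'etale ring object of finite degree (\ref{it:A-Mod}, \ref{it:going-up}), the map $\Spc(\Res^G_H)$ is a closed spectral surjection onto $\supp(G/H_+)$ (by Going-Up and \cite{Balmer13ppb,Balmer14}) and carries the support $\adhpt{\cat P_H(H,p,m)}$ onto the closed set $\adhpt{\cat P_G(H,p,m)}\subseteq\supp(G/H_+)$, whose complement is then quasi-compact.

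The main obstacle is the explicit construction for $H=G$, i.e.\ producing a \emph{single} compact $G$-spectrum whose geometric $K$-fixed points, simultaneously for all $K\le G$, realize the chromatic cut-offs of Corollary~\ref{cor:summary}; this requires genuine control of tensor-idempotent decompositions (as in Lemma~\ref{lem:splitting}), not merely the additivity and multiplicativity of $\supp$. The ``only if'' direction, by contrast, is a soft consequence of irredundancy and the description $\cat P(H,p,\infty)=\bigcap_n\cat P(H,p,n)$.
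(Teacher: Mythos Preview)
Your ``only if'' direction is correct and is essentially the paper's argument, just phrased through supports rather than through an explicit open cover; both hinge on irredundancy together with $\cat P(H,p,\infty)=\bigcap_n\cat P(H,p,n)$.

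The ``if'' direction, however, has a real gap. Your plan is to \emph{construct} a compact $x$ with $\supp(x)=\adhpt{\cat P(G,p,m)}$ by arranging each $\phigeomb{K}(x)$ to have ``exactly the type cut-offs dictated by Corollary~\ref{cor:summary}''. But those cut-offs are the integers $n_K$, and Corollary~\ref{cor:summary} only pins them down to the range $m\le n_K\le m+\log_p(|H|/|K|)$; their exact values are precisely what the $\log_p$-Conjecture~\ref{conj:log-p} is about. Indeed, Proposition~\ref{prop:equiv-geom} shows that producing an object supported on the ``expected'' staircase $Z_{p,N}$ is \emph{equivalent} to the conjecture. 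So an explicit construction of the kind you sketch would either prove the conjecture or would have to somehow hit the unknown $n_K$ on the nose without knowing them --- and your proposal does not explain how to do either. The $G=C_p$, $m=1$ example is fine, but it does not generalize by ``combining cofibers of Burnside elements with $\triv(V)$'' in any way you have made precise. The reduction from general $H$ to $H=G$ via $\Spc(\Res^G_H)$ is also not fully justified: you would need that the image of a support under this map is again a support (e.g.\ $\supp_{\SHGc}(\Ind_H^G y)=\Spc(\Res^G_H)(\supp_{\SH(H)^c}(y))$), which you assert but do not prove.

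The paper avoids all of this by \emph{not} building an explicit object. It checks quasi-compactness of $Z^c$ from the definition: given $Z=\bigcap_{i\in I}\supp(x_i)$, it pulls the equality back along each $\varphi^{K,G}$ to $\Spc(\SHc)$ using the key observation (from Corollary~\ref{cor:summary}) that $(\varphi^{K,G})^{-1}\big(\adhpt{\cat P(H,p,m)}\big)$ is either empty or equals $\adhpt{\cat C_{q,n_K}}$ with $n_K$ \emph{finite}. The non-equivariant result then yields a finite $J_K\subseteq I$ for each $K$, and the union over the finitely many conjugacy classes of $K$ gives the finite subfamily. The crucial point is that this argument only needs $n_K<\infty$, not its value, so it goes through unconditionally.
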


\begin{proof}
The overall architecture of the proof resembles that of Proposition~\ref{prop:finite-union-irred}. The non-equivariant case ($G=1$) can be found in~\cite[Cor.\,9.5\,(d)]{Balmer10b}.

For a general~$G$, let us show that if all~$m_1,\ldots,m_k$ are finite then $Z$ has quasi-compact complement. As $\SET{\supp(x)}{x\in\SHGc}$ is a closed basis for the topology, we need to prove that whenever $Z=\cap_{i\in I}\supp(x_i)$ for a family $\{x_i\}_{i\in I}$ of objects in~$\SHGc$ there exists a finite subset $J\subseteq I$ such that $Z=\cap_{i\in J}\supp(x_i)$. Since $Z\subseteq\cap_{i\in J}\supp(x_i)$ for any~$J\subseteq I$, we just need to make $J$ big enough so that we get equality. We shall do this by collecting together a finite union of finite subsets of~$I$, one for each subgroup $K\le G$ (up to conjugacy). Indeed, since $\Spc(\SHGc)=\cup_{K\le G}\Img(\varphi^{K,G})$, it suffices to show that we can find a finite subset $J_K\subseteq I$ so that the desired equality holds when intersected with~$\Img(\varphi^{K,G})$:
\begin{equation}
\label{eq:question}%
Z\cap \Img(\varphi^{K,G}) \overset{?}{=} \cap_{i\in J_K} \supp(x_i)\cap \Img(\varphi^{K,G}).
\end{equation}
Since $\varphi^{K,G}=\Spc(\phigeomb{K,G})$ is an inclusion-preserving bijection $\Spc(\SHc)\isoto \Img(\varphi^{K,G})$, with inverse $\Spc(\triv)$ suitably restricted (see Cor.\,\ref{cor:project-H}), we can transport the question of~\eqref{eq:question} into~$\Spc(\SHc)$. Let us study both sides separately.

For $H,K\le G$, $p$ a prime and $1\le m<\infty$ finite, it is a direct consequence of Corollary~\ref{cor:summary} that the intersection of $\adhpt{\cat P(H,p,m)}=\SET{\cat Q}{\cat Q\subseteq \cat P(H,p,m)}$ with $\Img(\varphi^{K,G})=\SET{\cat P(K,q,n)}{q,n}$ is either empty or equals the closure of a single point, namely $\cat P(K,q,n_K)$ in the notation of Corollary~\ref{cor:summary}\,\ref{it:inclusion}; note that this $n_K$ is \emph{finite} since $m$ is. Also note that $\cat P(K,q,n_K)=\varphi^{K,G}(\cat C_{q,n_K})$. Hence
\begin{equation}
\label{eq:irred-K-to-SH}%
(\varphi^{K,G})\inv\big(\adhpt{\cat P(H,p,m)}\cap \Img(\varphi^{K,G})\big)=\adhpt{\cat C_{q,n_K}} \textrm{ or } \varnothing.
\end{equation}
Furthermore, for every object $x\in \SHGc$, we have by a general tt-fact that
\begin{equation}
\label{eq:supp-K-to-SH}%
(\varphi^{K,G})\inv\big(\supp(x)\cap \Img(\varphi^{K,G})\big)=(\varphi^{K,G})\inv(\supp(x))=\supp(\phigeomb{K,G}(x)).
\end{equation}
We can now use~\eqref{eq:irred-K-to-SH} and~\eqref{eq:supp-K-to-SH} to reduce the question of~\eqref{eq:question} to the known non-equivariant case, thus producing the desired finite $J_K\subseteq I$. Hence the result.

Conversely, suppose that one of the $m_i=\infty$. Write $Z=Z'\cup \adhpt{\cat P(H,p,\infty)}$, regrouping the other irreducibles under~$Z'$. Since $\cat P(H,p,\infty)=\cap_{1\le n<\infty}\cat P(H,p,n)$, we have $Z=\cap_{1\le n<\infty}(Z'\cup \adhpt{\cat P(H,p,n)})$. If the complement of~$Z$ is quasi-compact then there is a finite~$n$ such that $Z=Z'\cup\adhpt{\cat P(H,p,n)}$. But then $\cat P(H,p,n)\in Z=Z'\cup\adhpt{\cat P(H,p,\infty)}$ and since $\cat P(H,p,n)\not\subseteq\cat P(H,p,\infty)$, it follows that $\cat P(H,p,n)\in Z'$. However this forces $\cat P(H,p,\infty)\in \adhpt{\cat P(H,p,n)}\subseteq Z'$, or in other words $\cat P(H,p,\infty)$ is contained in one of the other irreducibles of~$Z$, contradicting the assumption that the $\cat P_1,\ldots,\cat P_k$ were irredundant.
\end{proof}

\begin{Cor}
\label{cor:Thom}%
The Thomason subsets of~$\Spc(\SHGc)$ are the arbitrary unions of~$\adhpt{\cat P(H,p,m)}=\SET{\cat Q}{\cat Q\subseteq\cat P(H,p,m)}$ for arbitrary $H\le G$, $p$ prime and \emph{finite} chromatic integer~$1\le m<\infty$.
\end{Cor}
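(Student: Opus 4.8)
The plan is to obtain this as a formal combination of Proposition~\ref{prop:finite-union-irred}, Proposition~\ref{prop:irred-close-quasi} and Theorem~\ref{thm:the-set}. Recall from Remark~\ref{rem:Thom+class} that a Thomason subset of~$\Spc(\SHGc)$ is by definition an arbitrary union of closed subsets each having quasi-compact complement; so the task reduces to identifying exactly which closed subsets have quasi-compact complement, and this is precisely the content of Proposition~\ref{prop:irred-close-quasi}.

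First I would check the ``easy'' inclusion. For any $H\le G$, any prime~$p$ and any \emph{finite}~$1\le m<\infty$, the set $\adhpt{\cat P(H,p,m)}$ is closed, being the closure of the point $\cat P(H,p,m)$ (cf.~Proposition~\ref{prop:finite-union-irred}). Applying Proposition~\ref{prop:irred-close-quasi} with $k=1$ — the one-term union being trivially irredundant — its complement is quasi-compact precisely because the chromatic integer~$m$ is finite. Hence any arbitrary union of such sets $\adhpt{\cat P(H,p,m)}$ is, by the very definition recalled above, a Thomason subset of~$\Spc(\SHGc)$.

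For the reverse inclusion, let $Y=\bigcup_{\alpha}Z_\alpha$ be a Thomason subset, where each $Z_\alpha$ is closed with quasi-compact complement. By Proposition~\ref{prop:finite-union-irred}, each $Z_\alpha$ is a finite union of closures of points, and after discarding redundant terms we may write it irredundantly as $Z_\alpha=\adhpt{\cat P_{\alpha,1}}\cup\cdots\cup\adhpt{\cat P_{\alpha,k_\alpha}}$. By Theorem~\ref{thm:the-set} each point has the form $\cat P_{\alpha,i}=\cat P(H_{\alpha,i},p_{\alpha,i},m_{\alpha,i})$. Since $Z_\alpha$ has quasi-compact complement, Proposition~\ref{prop:irred-close-quasi} forces every $m_{\alpha,i}$ to be finite. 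Therefore
\[
Y=\bigcup_{\alpha}\ \bigcup_{i=1}^{k_\alpha}\adhpt{\cat P(H_{\alpha,i},p_{\alpha,i},m_{\alpha,i})}
\]
is an arbitrary union of sets of the asserted form, which completes the argument.

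There is essentially no genuine obstacle here: the mathematical content sits entirely in the two cited propositions, and the corollary is a bookkeeping assembly of them. The only points deserving a moment's care are that in the forward direction the outer index set is arbitrary while each inner union is finite, so the combined double union is still legitimately an ``arbitrary union''; and that the irredundancy hypothesis of Proposition~\ref{prop:irred-close-quasi} is harmless, since removing redundant terms from the finite decompositions produced by Proposition~\ref{prop:finite-union-irred} does not change the subsets involved.
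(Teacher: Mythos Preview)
Your proposal is correct and takes essentially the same approach as the paper, which simply states that the corollary is direct from Proposition~\ref{prop:finite-union-irred} and Proposition~\ref{prop:irred-close-quasi}. Your write-up spells out both inclusions with appropriate care; the extra citation of Theorem~\ref{thm:the-set} is harmless (it is implicit in the notation used in Proposition~\ref{prop:irred-close-quasi}).
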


\begin{proof}
This is direct from Proposition~\ref{prop:finite-union-irred} and
Proposition~\ref{prop:irred-close-quasi}.
\end{proof}

\begin{Cor}[Classification of tt-ideals in~$\SHGc$]
\label{cor:classif}%
Let $G$ be a finite group and let $\bbN_{\infty}:=\{0,1,2,\ldots\}\cup\{\infty\}$ with the obvious relation~$<$ (setting $\infty\not<\infty$). Consider all functions $f:\SET{(H,p)}{H\le G\mathrm{\ and\ }p\mathrm{\  prime}}\to \bbN_{\infty}$ and let us say that $f$ is \emph{admissible} if it satisfies the following property:
\begin{enumerate}
\item[(A)]
If $\cat P(K,q,n) \subseteq \cat P(H,p,m)$ and $m > f(H,p)$ then $n > f(K,q)$.
\end{enumerate}
If the $\log_p$-Conjecture~\ref{conj:log-p} is satisfied by every $p$-group subquotient of $G$ (for all primes $p$), then this property can equivalently be expressed without any knowledge of inclusions between equivariant primes as follows:
\begin{enumerate}
\item[(A\!')]
	For every prime~$p$ and for every $p$-subnormal $K\le H$, we have $f(K,p)\le f(H,p)+\log_p[H:K]$.
	Moreover, if $f(H,p)=0$ for any pair $(H,p)$ then $f(H,q) = 0$ for all primes~$q$.
\end{enumerate}
Independently of the $\log_p$-Conjecture, there is a one-to-one correspondence between such admissible functions and Thomason subsets of~$\Spc(\SHGc)$ mapping $f$ to $Y_f:=\SET{\cat P(H,p,m)}{m > f(H,p)}$. Consequently, there is a one-to-one correspondence between admissible functions and tt-ideals in~$\SHGc$ mapping~$f$ to
\[
\cat J_f:=\SET{x\in\SHGc}{\ \phigeomb{H}(x)\in\cat C_{p,f(H,p)}\ ,\ \forall\,H\le G\mathrm{\ and\ }p\textrm{\ such\ that\ } f(H,p) >0}.
\]
\end{Cor}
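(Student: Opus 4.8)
The plan is to assemble the statement from three independent pieces: (1) the reformulation of admissibility (A) as the combinatorial condition (A$'$) under the $\log_p$-hypothesis; (2) the bijection between admissible functions and Thomason subsets; (3) the translation of this bijection into one with tt-ideals. Piece (3) is essentially free: by Remark~\ref{rem:Thom+class}, tt-ideals of $\SHGc$ correspond to Thomason subsets via $Y\mapsto (\SHGc)_Y$, and I only need to check that $(\SHGc)_{Y_f}$ equals the $\cat J_f$ written in the statement. This is a direct unwinding of definitions: $x\in (\SHGc)_{Y_f}$ means $\supp(x)\subseteq Y_f$, i.e.\ $x\notin\cat P(H,p,m)$ only when $m>f(H,p)$; equivalently, for each $(H,p)$, $\phigeomb{H}(x)\in\cat C_{p,m}$ for all $m\le f(H,p)$, i.e.\ $\phigeomb{H}(x)\in\cat C_{p,f(H,p)}$ (when $f(H,p)<\infty$; the $\infty$ case says $\phigeomb{H}(x)$ is $p$-acyclic, which is the $\cat C_{p,\infty}$ condition and is subsumed). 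The clause ``$f(H,p)>0$'' is there because $\cat C_{p,0}=\SHcp$ imposes no condition; and one checks using $\cat C_{p,1}=\cat C_{q,1}$ that the condition is well-posed.

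For piece (2), I would argue that $f\mapsto Y_f=\SET{\cat P(H,p,m)}{m>f(H,p)}$ is a well-defined bijection onto Thomason subsets. That $Y_f$ is \emph{specialization-closed} (a union of closures of points) is exactly condition (A): if $\cat Q=\cat P(K,q,n)\subseteq\cat P(H,p,m)=\cat P\in Y_f$ then $m>f(H,p)$ forces $n>f(K,q)$, so $\cat Q\in Y_f$. Conversely any specialization-closed set determined by a condition of the form ``$m>$ (some bound depending on $H,p$)'' arises this way: given a Thomason $Y$, for each conjugacy class $(H,p)$ set $f(H,p):=\sup\SET{m<\infty}{\cat P(H,p,m)\notin Y}$ (with $\sup\varnothing=0$, and $=\infty$ if no finite bound exists); one verifies $Y=Y_f$ using Theorem~\ref{thm:the-set} and Proposition~\ref{prop:incl-same-subgroup} (which says the $\cat P(H,p,m)$ for fixed $H$ form a chain, so $Y$ restricted to one such tower is an up-set $\{m>f(H,p)\}$), and verifies that this $f$ satisfies (A) precisely because $Y$ is specialization-closed. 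Finally, by Corollary~\ref{cor:Thom}, a specialization-closed subset of $\Spc(\SHGc)$ is Thomason if and only if it is a union of closures $\adhpt{\cat P(H,p,m)}$ with $m$ \emph{finite}, i.e.\ if and only if $Y$ avoids the primes $\cat P(H,p,\infty)$ that are not in the closure of any finite one --- but since $\cat P(H,p,\infty)=\bigcap_m\cat P(H,p,m)$ is in the closure of every $\cat P(H,p,m)$, the set $Y_f$ is automatically Thomason whenever it is specialization-closed. So in fact (2) reduces to: $f\mapsto Y_f$ is a bijection between admissible functions and specialization-closed subsets, and all such subsets are Thomason here; the main input is Proposition~\ref{prop:finite-union-irred} and Corollary~\ref{cor:Thom}.

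Piece (1) is where the real content lies. I must show (A)$\Leftrightarrow$(A$'$) assuming the $\log_p$-Conjecture for all $p$-group subquotients of $G$. The direction (A)$\Rightarrow$(A$'$) uses Theorem~\ref{thm:shift-by-s}: if $K\le H$ is $p$-subnormal of index $p^s$, then $\cat P(K,p,n)\subseteq\cat P(H,p,n-s)$ for $n>s$; feeding $m:=n-s$ and taking $n:=f(H,p)+s+1$ (so $m=f(H,p)+1>f(H,p)$) into (A) gives $n>f(K,p)$, i.e.\ $f(K,p)\le f(H,p)+s=f(H,p)+\log_p[H:K]$; the ``$f(H,p)=0\Rightarrow f(H,q)=0$'' clause comes from applying (A) to the coincidence $\cat P(H,p,1)=\cat P(H,q,1)$. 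For the converse (A$'$)$\Rightarrow$(A), suppose $\cat P(K,q,n)\subseteq\cat P(H,p,m)$ with $m>f(H,p)$. By Theorem~\ref{thm:complete-statement} (legitimately invoked, since we assume $\log_p$ for the relevant subquotient $H/O^q(H)$), this inclusion forces $K$ to be $G$-conjugate to a $q$-subnormal subgroup of $H$ and $n\ge m+\log_q(|H|/|K|)$. Then (A$'$) gives $f(K,q)\le f(H,p)+\log_q[H:K]<m+\log_q[H:K]\le n$, so $n>f(K,q)$ as needed; the edge case $m=1$ (where $p$ may differ from $q$) is handled by the second clause of (A$'$) together with $\cat P(H,p,1)=\cat P(H,q,1)$. \textbf{The main obstacle} is making the (A)$\Leftrightarrow$(A$'$) equivalence airtight across all the degenerate cases --- $m=1$ or $n=1$ (where the prime is irrelevant and $\cat C_{p,1}=\cat C_{q,1}$), $f$-values equal to $0$ (no condition) or $\infty$ (the $p$-acyclic tt-ideal), and making sure the ``only if'' inclusions really do come entirely from Theorem~\ref{thm:complete-statement} and Proposition~\ref{prop:burnside-info}; everything else is bookkeeping once Corollaries~\ref{cor:summary} and~\ref{cor:Thom} are in hand.
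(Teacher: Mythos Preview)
Your proposal is correct in outline and follows essentially the same route as the paper: piece~(3) is exactly the paper's final sentence, piece~(2) is the paper's tower-by-tower construction of~$f$ from a Thomason~$Y$, and piece~(1) spells out in detail what the paper compresses into ``the definitions of `admissible' are translations of Corollary~\ref{cor:summary}''.

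There is, however, one genuine slip in piece~(2). You assert that every specialization-closed subset of $\Spc(\SHGc)$ is Thomason, reasoning that the condition from Corollary~\ref{cor:Thom} is automatic because each $\cat P(H,p,\infty)$ lies in the closure of every $\cat P(H,p,m)$. This is false: the singleton $\{\cat P(1,p,\infty)\}$ is specialization-closed (it is a closed point) but not Thomason, since any $\adhpt{\cat P(H',p',m')}$ with $m'$ finite that contains $\cat P(1,p,\infty)$ also contains the strictly larger prime $\cat P(H',p',m')$. Being in the closure of \emph{some} finite-$m$ prime is not enough; that finite-$m$ prime must itself belong to~$Y$. So $f\mapsto Y_f$ is \emph{not} a bijection onto specialization-closed subsets. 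What you should argue directly is that each $Y_f$ (for admissible~$f$) is Thomason: if $\cat P(H,p,\infty)\in Y_f$ then $f(H,p)$ is a finite integer, so $\cat P(H,p,f(H,p)+1)\in Y_f$, and admissibility~(A) gives $\adhpt{\cat P(H,p,f(H,p)+1)}\subseteq Y_f$; hence $Y_f$ is a union of closures of finite-$m$ primes. Conversely, for the inverse map you must use Corollary~\ref{cor:Thom} (together with Corollary~\ref{cor:summary}) to guarantee that a Thomason~$Y$ never meets a tower $\{\cat P(H,p,n)\}_n$ in only the point $n=\infty$ --- this is precisely why the paper invokes both corollaries to conclude that each nonempty $X(H,p)$ has a \emph{finite} bottom $n_{H,p}$. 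With this correction your argument goes through.
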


\begin{proof}
If $Y\subseteq\Spc(\SHGc)$ is a Thomason subset, then $Y=\cup_{H\le G,\ p\textrm{ prime}}X(H,p)$ where $X(H,p):=\SET{\cat P(H,p,n)}{\cat P(H,p,n)\in Y}$. By Corollaries~\ref{cor:summary} and~\ref{cor:Thom}, we see that each non-empty $X(H,p)$ is of the form $\SET{\cat P(H,p,n)}{n_{H,p}\le n\le \infty}$ for some finite $n_{H,p}\ge 1$. The link with our function~$f$ is simply that we set $f(H,p)=\infty$ when $X(H,p)=\varnothing$ and $f(H,p)=n(H,p)-1$ when $X(H,p)$ is non-empty. This is made in such a way that $X(H,p)=\SET{\cat P(H,p,n)}{n>f(H,p)}$ in both cases. The definitions of ``admissible" are translations of Corollary~\ref{cor:summary}.

The classification of tt-ideals then follows from general tt-geometry (see Remark~\ref{rem:Thom+class}) since one easily checks that $\cat J_f=\cat K_{Y_f}$ for $\cat K=\SHGc$.
\end{proof}


\begin{thebibliography}{HHR16}

\bibitem[Bal05]{Balmer05a}
Paul Balmer.
\newblock The spectrum of prime ideals in tensor triangulated categories.
\newblock {\em J. Reine Angew. Math.}, 588:149--168, 2005.

\bibitem[Bal10a]{Balmer10b}
Paul Balmer.
\newblock Spectra, spectra, spectra -- tensor triangular spectra versus
  {Z}ariski spectra of endomorphism rings.
\newblock {\em Algebr. Geom. Topol.}, 10(3):1521--1563, 2010.

\bibitem[Bal10b]{BalmerICM}
Paul Balmer.
\newblock Tensor triangular geometry.
\newblock In {\em International {C}ongress of {M}athematicians, Hyderabad
  (2010), {V}ol. {II}}, pages 85--112. Hindustan Book Agency, 2010.

\bibitem[Bal11]{Balmer11}
Paul Balmer.
\newblock Separability and triangulated categories.
\newblock {\em Adv. Math.}, 226(5):4352--4372, 2011.

\bibitem[Bal13]{Balmer13ppb}
Paul Balmer.
\newblock Separable extensions in tensor-triangular geometry and generalized
  {Q}uillen stratification.
\newblock 2013.
\newblock Preprint, 17~pages, available online at \texttt{arXiv:1309.1808}. To
  appear in \emph{Ann. Sci. \'{E}c. Norm. Sup\'{e}r. (4)}.

\bibitem[Bal14]{Balmer14}
Paul Balmer.
\newblock Splitting tower and degree of tt-rings.
\newblock {\em Algebra Number Theory}, 8(3):767--779, 2014.

\bibitem[BCR97]{BensonCarlsonRickard97}
David~J. Benson, Jon~F. Carlson, and Jeremy Rickard.
\newblock Thick subcategories of the stable module category.
\newblock {\em Fund. Math.}, 153(1):59--80, 1997.

\bibitem[BDS15]{BalmerDellAmbrogioSanders15}
Paul Balmer, Ivo Dell'Ambrogio, and Beren Sanders.
\newblock Restriction to finite-index subgroups as \'etale extensions in
  topology, {KK}-theory and geometry.
\newblock {\em Algebr. Geom. Topol.}, 15(5):3025--3047, 2015.

\bibitem[BDS16]{BalmerDellAmbrogioSanders16pp}
Paul Balmer, Ivo Dell'Ambrogio, and Beren Sanders.
\newblock {G}rothendieck-{N}eeman duality and the {W}irthm{\" u}ller
  isomorphism.
\newblock 2016.
\newblock Preprint, 36~pages, available online at \texttt{arXiv:1501.01999}. To
  appear in \emph{Compos. Math.}

\bibitem[BF11]{BalmerFavi11}
Paul Balmer and Giordano Favi.
\newblock Generalized tensor idempotents and the telescope conjecture.
\newblock {\em Proc. Lond. Math. Soc. (3)}, 102(6):1161--1185, 2011.

\bibitem[DHS88]{DevinatzHopkinsSmith88}
Ethan~S. Devinatz, Michael~J. Hopkins, and Jeffrey~H. Smith.
\newblock Nilpotence and stable homotopy theory. {I}.
\newblock {\em Ann. of Math. (2)}, 128(2):207--241, 1988.

\bibitem[Dre69]{Dress69}
Andreas Dress.
\newblock A characterisation of solvable groups.
\newblock {\em Math. Z.}, 110:213--217, 1969.

\bibitem[FP07]{FriedlanderPevtsova07}
Eric~M. Friedlander and Julia Pevtsova.
\newblock {$\Pi$}-supports for modules for finite group schemes.
\newblock {\em Duke Math. J.}, 139(2):317--368, 2007.

\bibitem[GM95]{GreenleesMay95b}
J.~P.~C. Greenlees and J.~P. May.
\newblock Generalized {T}ate cohomology.
\newblock {\em Mem. Amer. Math. Soc.}, 113(543):viii+178, 1995.

\bibitem[Gre01]{Greenlees01}
J.~P.~C. Greenlees.
\newblock Tate cohomology in axiomatic stable homotopy theory.
\newblock In {\em Cohomological methods in homotopy theory ({B}ellaterra,
  1998)}, volume 196 of {\em Progr. Math.}, pages 149--176. Birkh\"auser,
  Basel, 2001.

\bibitem[HHR16]{HillHopkinsRavenel16}
M.~A. Hill, M.~J. Hopkins, and D.~C. Ravenel.
\newblock On the nonexistence of elements of {K}ervaire invariant one.
\newblock {\em Ann. of Math. (2)}, 184(1):1--262, 2016.

\bibitem[HPS97]{HoveyPalmieriStrickland97}
Mark Hovey, John~H. Palmieri, and Neil~P. Strickland.
\newblock Axiomatic stable homotopy theory.
\newblock {\em Mem. Amer. Math. Soc.}, 128(610), 1997.

\bibitem[HS96]{HoveySadofsky96}
Mark Hovey and Hal Sadofsky.
\newblock Tate cohomology lowers chromatic {B}ousfield classes.
\newblock {\em Proc. Amer. Math. Soc.}, 124(11):3579--3585, 1996.

\bibitem[HS98]{HopkinsSmith98}
Michael~J. Hopkins and Jeffrey~H. Smith.
\newblock Nilpotence and stable homotopy theory. {II}.
\newblock {\em Ann. of Math. (2)}, 148(1):1--49, 1998.

\bibitem[Joa15]{Joachimi15pp}
R.~Joachimi.
\newblock Thick ideals in equivariant and motivic stable homotopy categories.
\newblock 2015.
\newblock Preprint, 115~pages, available online at \texttt{arXiv:1503.08456}.

\bibitem[Kuh04]{Kuhn04}
Nicholas~J. Kuhn.
\newblock Tate cohomology and periodic localization of polynomial functors.
\newblock {\em Invent. Math.}, 157(2):345--370, 2004.

\bibitem[LMS86]{LewisMaySteinbergerMcClure86}
L.~G. Lewis, Jr., J.~P. May, and M.~Steinberger.
\newblock {\em Equivariant stable homotopy theory}, volume 1213 of {\em Lecture
  Notes in Mathematics}.
\newblock Springer-Verlag, Berlin, 1986.
\newblock With contributions by J. E. McClure.

\bibitem[MM02]{MandellMay02}
M.~A. Mandell and J.~P. May.
\newblock Equivariant orthogonal spectra and {$S$}-modules.
\newblock {\em Mem. Amer. Math. Soc.}, 159(755):x+108, 2002.

\bibitem[Nee92]{Neeman92b}
Amnon Neeman.
\newblock The connection between the {$K$}-theory localization theorem of
  {T}homason, {T}robaugh and {Y}ao and the smashing subcategories of
  {B}ousfield and {R}avenel.
\newblock {\em Ann. Sci. \'Ecole Norm. Sup. (4)}, 25(5):547--566, 1992.

\bibitem[Str12]{Strickland12up}
Neil~P. Strickland.
\newblock Thick ideals of finite {G}-spectra.
\newblock Unpublished notes, 2012.

\bibitem[Tho97]{Thomason97}
R.~W. Thomason.
\newblock The classification of triangulated subcategories.
\newblock {\em Compositio Math.}, 105(1):1--27, 1997.

\end{thebibliography}

\end{document}